\setlist[enumerate,1]{label={(\roman*)}}
\pgfplotsset{width=10cm,compat=1.9}
\newcommand{\zerodisplayskips}{%
  \setlength{\abovedisplayskip}{5pt}%
  \setlength{\belowdisplayskip}{5pt}%
  \setlength{\abovedisplayshortskip}{5pt}%
  \setlength{\belowdisplayshortskip}{5pt}}
\appto{\normalsize}{\zerodisplayskips}
\appto{\small}{\zerodisplayskips}
\appto{\footnotesize}{\zerodisplayskips}
\newtheorem{theorem}{Theorem}[section]
    \newtheorem{corollary}[theorem]{Corollary}
    \newtheorem{lemma}[theorem]{Lemma}
    \newtheorem{proposition}[theorem]{Proposition}
    \newtheorem{assumption}[theorem]{Assumption}
    \newtheorem{definition}[theorem]{Definition}
    \newtheorem{notations}[theorem]{Notations}
    \newtheorem{remark}[theorem]{Remark}
\setlist[enumerate,1]{label={(\roman*)}}
\theoremstyle{nonumberplain}
\newtheorem{proof-wo}{Proof}
\newtheorem{proof}{Proof}
\newcommand{\eus}{\EuScript}
\newcommand{\llb}{\llbracket}
\newcommand{\rrb}{\rrbracket}
\let\div\undefined
\DeclareMathOperator{\div}{div\,}
\DeclareMathOperator{\curl}{curl\,}
\DeclareMathOperator{\supp}{supp\,}
\newcommand{\iprod}{\mathbin{\lrcorner}}
\title{Hodge decompositions and maximal regularities for Hodge Laplacians in homogeneous function spaces on the half-space \thanks{MSC 2020: 35Q35, 42B37, 46B70, 46E35, 58A10}\thanks{Key words: Homogeneous Sobolev spaces, Homogeneous Besov spaces, Differential forms, Hodge decomposition, Interpolation with boundary conditions, Maximal regularity, Evolutionary Stokes systems, Half-space}}
\author{Anatole \textsc{Gaudin}\thanks{{Aix-Marseille Université, CNRS, Centrale Marseille, I2M, Marseille, France} - \textbf{email:} anatole.gaudin@univ-amu.fr}}
\begin{document}

\maketitle
\begin{abstract} In this article, the Hodge decomposition for any degree of differential forms is investigated on the whole space $\mathbb{R}^n$ and the half-space $\mathbb{R}^n_+$  on different scale of function spaces namely homogeneous and inhomogeneous Besov and Sobolev space, $\dot{\mathrm{H}}^{s,p}$, $\dot{\mathrm{B}}^{s}_{p,q}$, ${\mathrm{H}}^{s,p}$ and ${\mathrm{B}}^{s}_{p,q}$, for all $p\in(1,+\infty)$ , $s\in(-1+\frac{1}{p},\frac{1}{p})$. The bounded holomorphic functional calculus, and other functional analytic properties, of Hodge Laplacians is also investigated in the half-space, and yields similar results for Hodge-Stokes and other related operators via the proven Hodge decomposition. As consequences, the homogeneous operator and interpolation theory revisited by Danchin, Hieber, Mucha and Tolksdorf is applied to homogeneous function spaces subject to boundary conditions and leads to various maximal regularity results with global-in-time estimates that could be of use in fluid dynamics. Moreover, the bond between the Hodge Laplacian and the Hodge decomposition will even enable us to state the Hodge decomposition for higher order Sobolev and Besov spaces with additional compatibility conditions, for regularity index $s\in(-1+\frac{1}{p},2+\frac{1}{p})$. In order to make sense of all those properties in desired function spaces, we also give appropriate meaning of partial traces on the boundary in the appendix.

La raison d'être of this paper lies in the fact that the chosen realization of homogeneous function spaces is suitable for non-linear and boundary value problems, but requires a careful approach to reprove results that are already morally known.
\end{abstract}

\addtocontents{toc}{\protect\thispagestyle{empty}}
\tableofcontents

\renewcommand{\arraystretch}{1.5}



\section{Introduction}

\subsection{Motivations and interests}

\subsubsection{One Laplacian to rule (almost) them all: the differential form formalism and the Hodge decomposition}

The study of incompressible fluid dynamics, and in particular the treatment of Navier-Stokes equations, relies mostly on the Helmholtz decomposition of vector fields in appropriate function spaces. The Helmholtz decomposition of vector field $u\,:\,\Omega \longrightarrow \mathbb{C}^n$, is given by a vector field $v\,:\,\Omega \longrightarrow \mathbb{C}^n$ and  and a function $q\,:\,\Omega \longrightarrow \mathbb{C}$, such that
\begin{align*}
    u = v+\nabla q \,\text{ and }\, \div v = 0 \text{ ( with possibly } v\cdot\nu_{|_{\partial\Omega}} = 0 \text{). }
\end{align*}
This point is central since incompressibility condition for the velocity of a fluid $u$ is carried over by the condition $\div u=0$.

In the interest of the Navier-Stokes and related equations, one wants the above decomposition to hold topologically in an appropriate normed vector space of functions\footnote{From here the divergence will be understood in the distributional sense.} with uniqueness (up to a constant for $q$). It is indeed true in $\mathrm{L}^2(\Omega,\mathbb{C}^n)$, since $\mathbb{P}$, the usual Helmholtz-Leray projector on divergence free vector fields with null tangential trace at the boundary, \textit{i.e.} such that
\begin{align*}
    \mathbb{P}\,:\, \mathrm{L}^2(\Omega,\mathbb{C}^n) \longrightarrow \mathrm{L}^2_{\sigma}(\Omega)=\{\, u\in \mathrm{L}^2(\Omega,\mathbb{C}^n)\,{|}\, \div u = 0\text{, } u\cdot\nu_{|_{\partial\Omega}} =0\,\}\text{, }
\end{align*}
is well defined, linear, bounded and unique by construction of the orthogonal projector on a closed subspace of an Hilbert space, here $\mathrm{L}^2_{\sigma}(\Omega)\subset\mathrm{L}^2(\Omega,\mathbb{C}^n)$. It gives the classical orthogonal and topological Helmholtz decomposition, see \cite[Chapter~2,~Section~2.5]{SohrBook2001},
\begin{align*}
    \mathrm{L}^2(\Omega,\mathbb{C}^n)= \mathrm{L}^2_{\sigma}(\Omega)\overset{\perp}{\oplus}\overline{\nabla {\mathrm{H}}^{1,2}(\Omega,\mathbb{C})}\text{, }
\end{align*}
for any (bounded) Lipschitz domain $\Omega$, see \cite[Lemma~2.5.3]{SohrBook2001}. Here $\mathrm{H}^{1,2}(\Omega,\mathbb{C})$ is the standard $\mathrm{L}^2$-Sobolev space of order 1 on $\Omega$.

The $\mathrm{L}^2$-theory for the Helmholtz decomposition on a domain $\Omega$ relies mostly on pure Hilbertian operator theory. However, the question about the $\mathrm{L}^p$-theory, $p\neq 2$, \textit{i.e.} to know if
\begin{align}\label{HodgeDecompLpIntro}
    \mathrm{L}^p(\Omega,\mathbb{C}^n)= \mathrm{L}^p_{\sigma}(\Omega){\oplus}\overline{\nabla {\mathrm{H}}^{1,p}(\Omega,\mathbb{C})}\text{, }
\end{align}
(or even the Sobolev or Besov counterpart) is actually a harder question, which falls generally in the field of harmonic analysis. The underlying range of Lebesgue and Sobolev exponents for which such decomposition holds will generally depends on the regularity of the boundary and the geometry of the domain $\Omega$.

The $\mathrm{L}^p$ setting has been widely studied, we mention the work of Fabes, Mendez and Mitrea, \cite[Theorem~12.2]{FabesMendezMitrea1998}, where the result has been proven for bounded Lipschitz domains: \eqref{HodgeDecompLpIntro} holds whenever $p\in (3/2-\varepsilon,3+\varepsilon)$. The work of Sohr and Simader \cite[Theorem~1.4]{SimaderSohr1992} yields \eqref{HodgeDecompLpIntro}  for $\mathrm{C}^1$ bounded and exterior domains, allowing $p\in(1,+\infty)$. 
For general unbounded domains, when $p\neq 2$, the decomposition \eqref{HodgeDecompLpIntro} may fail: see the counterexample by Bogovski\v{i} \cite[Section~2]{Bogovskii1986}. Tolksdorf has shown in his PhD dissertation \cite[Theorem~5.1.10]{TolksdorfPhDThesis2017} that \eqref{HodgeDecompLpIntro} is true for all $p\in(\tfrac{2n}{(2n+1)}-\varepsilon,\tfrac{2n}{(2n-1)}+\varepsilon)$, provided $\Omega$ is a special Lipschitz domain, $\varepsilon>0$ depending on $\Omega$. We also mention the works of Farwig, Kozono and Sohr where the decomposition is investigated in a more exotic setting in \cite{FarwigKozonoSohr2005,FarwigKozonoSohr2007} for general uniformly $\mathrm{C}^1$ unbounded domains.

Our interest here is the case of the half-space $\mathbb{R}^n_+$, where the Helmholtz is mainly known to be true on $\mathrm{L}^p(\mathbb{R}^n_+,\mathbb{C}^n)$ for all $p\in(1,+\infty)$, see \cite[Remark~III.1.2]{bookGaldi2011}: we aim to generalize this result to the scale of inhomogeneous, and homogeneous Sobolev and Besov spaces on the half-space. To be more precise, we want to investigate decompositions of the type
\begin{align}\label{eq:HelmholtzDecompHspIntro}
    \dot{\mathrm{H}}^{s,p}(\mathbb{R}^n_+,\mathbb{C}^n) = \dot{\mathrm{H}}^{s,p}_{\sigma}(\mathbb{R}^n_+,\mathbb{C}^n) \oplus \overline{\nabla \dot{\mathrm{H}}^{s+1,p}(\mathbb{R}^n_+,\mathbb{C}^n)}\text{, } 
\end{align}
and similarly for Besov spaces, and their inhomogeneous counterparts, provided $s\in\mathbb{R}$, $p\in(1,+\infty)$.

In the scale of inhomogeneous and homogeneous Besov and Sobolev spaces on bounded and exterior $\mathrm{C}^{2,1}$ domains the Helmholtz decomposition was shown by Fujiwara and Yamazaki \cite[Theorem~3.1]{FujiwaraYamazaki2007}: the Helmholtz decomposition holds on $\mathrm{H}^{s,p}(\Omega,\mathbb{C}^n)$ and $\mathrm{B}^{s}_{p,q}(\Omega,\mathbb{C}^n)$, $p\in (1,+\infty)$, $s\in(-1+1/p,1/p)$, $q\in[1,+\infty]$, even allowing $p=1,+\infty$ in case of Besov spaces. We also mention the work of Monniaux and Mitrea \cite[Proposition~2.16]{MitreaMonniaux2008}  on bounded Lipschitz domains where the results is true for (inhomogeneous) Sobolev spaces that lies near the family $(\mathrm{H}^{s,2})_{|s|<1/2}$.

It has been notified in several works, e.g. see \cite[Introduction]{GeissertHeckTrunk2013}, \cite[Section~4]{MonniauxShen2018}, that the following Laplace operator acting on vector fields,
\begin{align}\label{eq:HodgeLapR3}
    -\Delta_{\mathcal{H}}:=-\Delta u = \curl \curl u - \nabla \div u \text{, and }\, [ u\cdot\nu_{|_{\partial\Omega}} = 0 \text{, } \nu\times\curl u_{|_{\partial\Omega}} = 0]
\end{align}
called the\footnote{In  fact, this is \underline{\textit{a}} Hodge Laplacian, the one with tangential boundary conditions, we do not make the distinction here for introductory purposes.} Hodge Laplacian, has a strong bond with, and respects, the Helmholtz decomposition in the sense that for all $u$ in the domain of above Laplacian, $\mathbb{P}u$ also lies in, and we have
\begin{align*}
    -\mathbb{P}\Delta u = \curl \curl u =  -\Delta \mathbb{P} u\text{, and }\, [\mathbb{P} u\cdot\nu_{|_{\partial\Omega}} = 0 \text{, } \nu\times\curl \mathbb{P} u_{|_{\partial\Omega}} = 0]\text{. }
\end{align*}

Therefore, since the Hodge Laplacian and the Helmholtz-Leray projector seem to copy the corresponding behavior of the whole space, it seems reasonable to infer that
\begin{align}\label{eq:HodgeHelmotzLerayProj1}
    \mathbb{P} = \mathrm{I} +\nabla\underline{\div}(-\Delta_{\mathcal{H}})^{-1}\text{, }
\end{align}
where $\underline{\div}$ drives a boundary condition $\nu \cdot u_{|_{\partial\Omega}} =0$. 

But, the above use of $\curl$ operators restricts us to the three dimensional case. We can avoid such trouble, by the mean of the differential forms formalism, so that \eqref{eq:HodgeLapR3} becomes
\begin{align}\label{eq:HodgeLapdefIntro}
    -\Delta_{\mathcal{H}}u:=-\Delta u = \mathrm{d}^\ast \mathrm{d} + \mathrm{d} \mathrm{d}^\ast u = (\mathrm{d}+\mathrm{d}^\ast)^2 u \text{, and }\, [ \nu\iprod u_{|_{\partial\Omega}} = 0 \text{, } \nu\iprod \mathrm{d} u_{|_{\partial\Omega}} = 0]
\end{align}
where $\mathrm{d}\,:\,\Lambda^{k}\longrightarrow\Lambda^{k+1}$ is the exterior derivative, defined on the complexified exterior algebra of $\mathbb{R}^n$, $\Lambda = \Lambda^0 \oplus \Lambda^1 \oplus \ldots \oplus \Lambda^n$, and satisfies $\mathrm{d}^2=0$. The operator $\mathrm{d}^\ast:\,\Lambda^{k}\longrightarrow\Lambda^{k-1}$ is the formal dual operator of $\mathrm{d}$, satisfying also $(\mathrm{d}^\ast)^2=0$ so that on $\mathbb{R}^3$, we can make the identifications
\begin{align*}
    \mathrm{d}_{|_{\Lambda^{1}}} = \curl \text{, } &\mathrm{d}_{|_{\Lambda^{0}}} = \nabla\text{, }\\
    \mathrm{d}^{\ast}_{|_{\Lambda^{2}}} = \underline{\curl} \text{, } &\mathrm{d}^\ast_{|_{\Lambda^{1}}} = -\underline{\div}\text{, }\\
    \nu\iprod ()_{|_{\Lambda^{1}}} = \nu \cdot () \text{, } & \nu\iprod ()_{|_{\Lambda^{2}}} = \nu\times () \text{. }
\end{align*}
The $\underline{\curl}$ operator drives a boundary condition $\nu\times u_{|_{\partial\Omega}}=0$.

Notice this definition still makes sense for differential forms of any degree, in arbitrary dimension. One would check that \eqref{eq:HodgeLapdefIntro} reduce to the Neumann Laplacian in the case of $0$-forms identified with scalar-valued functions.

Going back to the case of vector fields, instead of \eqref{eq:HodgeHelmotzLerayProj1}, the above formalism and the fact that $\mathrm{d}$ and $\mathrm{d}^\ast$ are nilpotent, and then commutes (at least formally) with $\Delta_{\mathcal{H}}$, we may infer the next formula, similar to the one mentioned in \cite[Section~5]{AuscherCouhlonDuongHofmann2004}:
\begin{align}\label{eq:HodgeHelmotzLerayProj2}
    \mathbb{P} = \mathrm{I} -\mathrm{d}\mathrm{d}^\ast(-\Delta_{\mathcal{H}})^{-1} = \mathrm{I} -\mathrm{d} (-\Delta_{\mathcal{H}})^{-1/2}\mathrm{d}^\ast(-\Delta_{\mathcal{H}})^{-1/2} \text{. }
\end{align}
Under the use of the differential forms formalism, the desired Helmholtz decomposition \eqref{eq:HelmholtzDecompHspIntro} becomes, for $k\in\llb 0,n\rrb$ different degrees of differential forms,
\begin{align}\label{eq:HodgeDecompHspIntro}
    \dot{\mathrm{H}}^{s,p}(\mathbb{R}^n_+,\Lambda^{k}) = \dot{\mathrm{H}}^{s,p}_{\mathfrak{t},\sigma}(\mathbb{R}^n_+,\Lambda^{k}) \oplus \overline{\mathrm{d} \dot{\mathrm{H}}^{s+1,p}(\mathbb{R}^n_+,\Lambda^{k-1})}
\end{align}
which is called the \textbf{Hodge decomposition} instead of the Helmholtz decomposition. Here, the space $\dot{\mathrm{H}}^{s,p}_{\mathfrak{t},\sigma}(\mathbb{R}^n_+,\Lambda^{k})$ stands for $k$-differential forms $u$ whose coefficients lies in $\dot{\mathrm{H}}^{s,p}(\mathbb{R}^n_+,\mathbb{C})$, and such that $\mathrm{d}^{\ast} u =0$, and $\nu\iprod u_{|_{\partial \mathbb{R}^n_+}} =0$.

The Hodge decomposition for differential forms is treated by Schwartz \cite[Theorem~2.4.2, Theorem~2.4.14]{Schwartz1995} on smooth compact Riemannian manifold $M$ with smooth boundary where the decomposition is stated on $\mathrm{H}^{k,p}(M)$, $k\in\mathbb{N}$, $p\in(1,+\infty)$. For the case of $\Omega$ a bounded Lipschitz domains of $\mathbb{R}^n$, we refer to the work of Monniaux and M${}^{\text{c}}$Intosh \cite[Theorem~4.3,~Theorem~7.1]{McintoshMonniaux2018} where the Hodge decomposition is proved to be true on $\mathrm{L}^p(\Omega,\Lambda)$ for all $p\in(\tfrac{2n}{(2n+1)}-\varepsilon,\tfrac{2n}{(2n-1)}+\varepsilon)$ where $\varepsilon>0$ depends on $\Omega$. The bounded holomorphic functional calculus of the Hodge Laplacian is also proved for the same range of indices. One may also consult the work of Mitrea and Monniaux, and Hofmann, Mitrea and Monniaux, \cite{MitreaMonniaux2009-2,HofmannMitreaMonniaux2011}, for the treatment of the Hodge Laplacian on bounded Lipschitz domains of compact Riemannian manifolds, where functional analytic properties like analyticity of the generated semigroup, or boundedness of associated Riesz transforms are investigated.

One may wonder about the superficiality of proving an identity like \eqref{eq:HodgeDecompHspIntro} for general differential forms, instead of vector fields (differential forms of degree $1,n-1$) only. In fact, the differential forms formalism has shown its efficiency allowing to treat some partial differential equations initially restricted to the three dimensional setting in arbitrary dimension. See for instance \cite{Monniaux2021,Denis2022}, where the magnetohydrodynamical (MHD) system is treated, so that either the triplet $\Lambda^{1},\Lambda^{2},\Lambda^{3}$ or the triplet $\Lambda^{n-3},\Lambda^{n-2},\Lambda^{n-1}$ are involved. Indeed, the magnetic field is in fact not an effective vector field but a $2$-form, identified, when $n=3$, with a vector field. We also mention that reformulation using differential forms for this kind of systems allows to look at vorticity-like formulation of the Navier-Stokes (and related) equations, it is also purely intrinsic so that one can perform a similar treatment on manifolds.

To reach our goal, the idea will be to prove that the formula \eqref{eq:HodgeHelmotzLerayProj2} holds on $\mathrm{L}^2(\mathbb{R}^n_+,\Lambda)$, yielding an operator for which we can also prove its boundedness on Sobolev and Besov spaces, so that we are able to obtain

\begin{theorem}[{see Theorem \ref{thm:HodgeDecompRn+} \& Corollary \ref{cor:extendedHodgeDecompComptabilityConditions}}]\label{thm:HodgeDecompRn+Intro}Let $p\in(1,+\infty)$, $s\in(-1+1/p,1/p)$, and let $k\in\llb 0,n\rrb$. It holds that
\begin{enumerate}
    \item The (generalized) Helmholtz-Leray projector $\mathbb{P}\,:\, \dot{\mathrm{H}}^{s,p}(\mathbb{R}^n_+,\Lambda^k)\longrightarrow \dot{\mathrm{H}}^{s,p}_{\mathfrak{t},\sigma}(\mathbb{R}^n_+,\Lambda^k)$ is well defined and bounded. Moreover the following identity is true
    \begin{align*}
        \mathbb{P} = \mathrm{I} - \mathrm{d}(-\Delta_{\mathcal{H}})^{-\frac{1}{2}}\mathrm{d}^\ast(-\Delta_{\mathcal{H}})^{-\frac{1}{2}} \text{.}
    \end{align*}

    \item The following Hodge decomposition holds
    \begin{align*}
        \dot{\mathrm{H}}^{s,p}(\mathbb{R}^n_+,\Lambda^k) = \dot{\mathrm{H}}^{s,p}_{\mathfrak{t},\sigma}(\mathbb{R}^n_+,\Lambda^k) \oplus \dot{\mathrm{H}}^{s,p}_{\gamma}(\mathbb{R}^n_+,\Lambda^k)\text{. }
    \end{align*}
\end{enumerate}
Moreover, the result remain true if we replace
\begin{itemize}
    \item $\dot{\mathrm{H}}^{s,p}$ by $\dot{\mathrm{B}}^{s}_{p,q}$, $q\in[1,+\infty]$ ;
    \item  $(\dot{\mathrm{H}},\dot{\mathrm{B}})$ by $({\mathrm{H}},{\mathrm{B}})$.
\end{itemize}
The symbol $\mathrm{X}_\gamma$ stands for the range of $\mathrm{I}-\mathbb{P}$ in $\mathrm{X}$.\footnote{The subscript (or exponent in case of Besov spaces) $\gamma$ is a legacy of the writing of $\mathrm{G}$ spaces as spaces of gradients of scalar functions in the case of vector fields.}
\end{theorem}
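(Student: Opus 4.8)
The plan is to bootstrap everything from the $\mathrm{L}^2$ theory and then transport it across the whole scale by means of the functional calculus. First I would set up $-\Delta_{\mathcal{H}}$ on $\mathrm{L}^2(\mathbb{R}^n_+,\Lambda^k)$ as the nonnegative self-adjoint operator attached to the closed quadratic form of $\mathrm{d}+\mathrm{d}^\ast$ together with the tangential conditions $\nu\iprod u_{|_{\partial\mathbb{R}^n_+}}=0$ and $\nu\iprod\mathrm{d} u_{|_{\partial\mathbb{R}^n_+}}=0$. The spectral theorem then supplies $(-\Delta_{\mathcal{H}})^{-1/2}$ and a bounded holomorphic functional calculus on $\mathrm{L}^2$. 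Exploiting the nilpotency $\mathrm{d}^2=(\mathrm{d}^\ast)^2=0$, so that the cross terms in $(\mathrm{d}+\mathrm{d}^\ast)^2$ vanish and $-\Delta_{\mathcal{H}}=\mathrm{d}^\ast\mathrm{d}+\mathrm{d}\mathrm{d}^\ast$, I would verify that $\mathrm{d}\mathrm{d}^\ast(-\Delta_{\mathcal{H}})^{-1}$ is exactly the orthogonal projection onto the exact forms $\overline{\mathrm{d}\,\dot{\mathrm{H}}^{1,2}}$ and that its complement lands in the co-closed tangential forms. This delivers the identity $\mathbb{P}=\mathrm{I}-\mathrm{d}(-\Delta_{\mathcal{H}})^{-1/2}\mathrm{d}^\ast(-\Delta_{\mathcal{H}})^{-1/2}$ and the orthogonal decomposition in the base case $s=0$, $p=2$.

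The second step is to extend each factor across the scale. The operators $\mathrm{d}(-\Delta_{\mathcal{H}})^{-1/2}$ and $(-\Delta_{\mathcal{H}})^{-1/2}\mathrm{d}^\ast$ are Hodge--Riesz transforms, and I would show each extends to a bounded map on $\dot{\mathrm{H}}^{s,p}(\mathbb{R}^n_+,\Lambda^\bullet)$ for $p\in(1,+\infty)$ and $s\in(-1+\tfrac1p,\tfrac1p)$. This rests on the bounded holomorphic functional calculus of $-\Delta_{\mathcal{H}}$ on these homogeneous spaces, together with the fact that $\mathrm{d}$ and $\mathrm{d}^\ast$ lower regularity by one while changing degree by $\pm1$; since $(-\Delta_{\mathcal{H}})^{-1/2}$ is homogeneous of order $-1$, each composite is of order zero and preserves $\dot{\mathrm{H}}^{s,p}$. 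Density of the smooth compactly supported forms (or of $\mathrm{L}^2\cap\dot{\mathrm{H}}^{s,p}$) then lets me pass to the limit from the $\mathrm{L}^2$ identity, so that the operator-theoretic formula for $\mathbb{P}$, and in particular $\mathbb{P}^2=\mathbb{P}$ and $\mathrm{d}^\ast\mathbb{P}=0$, survive on the whole scale. The Besov statements follow by real interpolation and the inhomogeneous ones by the analogous inhomogeneous calculus.

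Once $\mathbb{P}$ is a bounded idempotent on $\dot{\mathrm{H}}^{s,p}$, the topological direct sum $\dot{\mathrm{H}}^{s,p}=\operatorname{Ran}\mathbb{P}\oplus\operatorname{Ran}(\mathrm{I}-\mathbb{P})$ is automatic, and $\operatorname{Ran}(\mathrm{I}-\mathbb{P})=\dot{\mathrm{H}}^{s,p}_\gamma$ by definition. It then remains to identify $\operatorname{Ran}\mathbb{P}$ with $\dot{\mathrm{H}}^{s,p}_{\mathfrak{t},\sigma}$: the inclusion $\subset$ comes from $\mathrm{d}^\ast\mathbb{P}=0$ and the tangential boundary condition built into the calculus of $-\Delta_{\mathcal{H}}$, while $\supset$ follows from $\mathbb{P}$ acting as the identity on co-closed tangential forms.

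I expect the genuine obstacle to be the range restriction $s\in(-1+\tfrac1p,\tfrac1p)$. This is precisely the regime in which the tangential trace $\nu\iprod u_{|_{\partial\mathbb{R}^n_+}}$ imposes no constraint — traces are ill-defined at these low regularities — so that the boundary condition defining $\dot{\mathrm{H}}^{s,p}_{\mathfrak{t},\sigma}$ interacts cleanly with interpolation; outside this range the trace becomes meaningful and one is forced into the compatibility-condition picture of the referenced corollary. Consequently the heart of the argument is to establish the bounded holomorphic functional calculus of $-\Delta_{\mathcal{H}}$ uniformly across these homogeneous spaces, and to check that the chosen homogeneous realization, the domain of the square root, and the boundary conditions are all mutually compatible — exactly the \emph{morally known} facts that this setting forces one to reprove with care.
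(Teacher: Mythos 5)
Your outline coincides with the paper's strategy: build the $\mathrm{L}^2$ theory from the sesquilinear form of the Hodge--Dirac operator, obtain the identity $\mathbb{P}=\mathrm{I}-\mathrm{d}(-\Delta_{\mathcal{H}})^{-1/2}\mathrm{d}^\ast(-\Delta_{\mathcal{H}})^{-1/2}$ there, extend the Hodge--Riesz transforms to $\dot{\mathrm{H}}^{s,p}$ and the Besov scale, and read off the decomposition from the bounded idempotent. However, the one step you yourself identify as ``the heart of the argument'' --- the bounded holomorphic functional calculus of $-\Delta_{\mathcal{H}}$ on $\dot{\mathrm{H}}^{s,p}(\mathbb{R}^n_+,\Lambda)$ and the resulting order-zero bounds for $\mathrm{d}(-\Delta_{\mathcal{H}})^{-1/2}$ and $\mathrm{d}^\ast(-\Delta_{\mathcal{H}})^{-1/2}$ --- is asserted rather than proved, and it does not follow from abstract homogeneity considerations alone. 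The paper's mechanism is concrete and specific to $\mathbb{R}^n_+$: the componentwise even/odd reflection $\mathrm{E}_{\mathcal{H},\mathfrak{t}}$ (Dirichlet reflection on components with $n\in I$, Neumann reflection otherwise) intertwines the half-space operator with the free Laplacian, $\mathrm{E}_{\mathcal{H}}(\lambda\mathrm{I}-\Delta_{\mathcal{H}})^{-1}=(\lambda\mathrm{I}-\Delta)^{-1}\mathrm{E}_{\mathcal{H}}$, so every resolvent bound, the functional calculus, and the Riesz transform estimates are transferred from $\mathbb{R}^n$, where they are plain Fourier multiplier facts. Without this (or an equivalent) transference device your second step has no engine.

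Relatedly, your explanation of the restriction $s\in(-1+\tfrac1p,\tfrac1p)$ is not the one that actually operates. Partial traces $\nu\iprod u_{|_{\partial\mathbb{R}^n_+}}$ \emph{are} defined in this range for $u\in\dot{\mathrm{D}}^{s}_{p}(\delta)$ (this is the content of the trace theorems in the appendix), and the boundary condition in $\dot{\mathrm{H}}^{s,p}_{\mathfrak{t},\sigma}$ is a genuine constraint encoded by the adjoint relation $\mathrm{d}^\ast=\underline{\delta}$. The true source of the restriction is that $\mathbbm{1}_{\mathbb{R}^n_+}$ is a pointwise multiplier of $\dot{\mathrm{H}}^{s,p}(\mathbb{R}^n)$ exactly for $s\in(-1+\tfrac1p,\tfrac1p)$, which is what makes the reflection extension $\mathrm{E}_{\mathcal{H},\mathfrak{t}}$ bounded and hence the whole transference scheme work. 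If you supply the reflection lemma and its commutation with $\mathrm{d}$ and $\delta$, the rest of your argument goes through as written.
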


The way we reach Theorem \ref{thm:HodgeDecompRn+Intro} through intermediate results and proofs is so that we recover many different properties of the Hodge Laplacian as well as its bounded holomorphic functional calculus on Sobolev and Besov spaces almost for free. This is due to the particular structure of the boundary of $\mathbb{R}^n_+$, and the properties of the Laplacian on the whole space $\mathbb{R}^n$. This, above Theorem \ref{thm:HodgeDecompRn+Intro}, and the fact that one can define the Hodge-Stokes operator as
\begin{align*}
    u\in \dot{\mathrm{D}}^{s}_{p}({\mathbb{A}_\mathcal{H}}) = \mathbb{P}\dot{\mathrm{D}}^{s}_{p}({\Delta_\mathcal{H}})\,\text{ and }\,\mathbb{A}_{\mathcal{H}} u := -{\Delta_\mathcal{H}} u = \mathrm{d}^{\ast}\mathrm{d} u \text{, }
\end{align*}
will yield automatically
\begin{theorem}[{see Theorem \ref{thm:HinftyFuncCalcHodgeStokesMaxwell}}]\label{thm:HinftyFuncCalcHodgeStokesMaxwellIntro} Let $p\in(1,+\infty)$, $s\in(-1+1/p,1/p)$. For all $\mu\in (0,\pi)$, the operator $\mathbb{A}_{\mathcal{H}}$ admits a bounded ($\mathrm{\mathbf{H}}^{\infty}(\Sigma_\mu)$-)holomorphic functional calculus on $\dot{\mathrm{H}}^{s,p}_{\mathfrak{t},\sigma}(\mathbb{R}^n_+,\Lambda)$.
Moreover, the result remains true if we replace
\begin{itemize}
    \item $\dot{\mathrm{H}}^{s,p}$ by $\dot{\mathrm{B}}^{s}_{p,q}$, $q\in[1,+\infty]$;
    \item  $(\dot{\mathrm{H}},\dot{\mathrm{B}})$ by $({\mathrm{H}},{\mathrm{B}})$.
\end{itemize}
\end{theorem}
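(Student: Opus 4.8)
The plan is to transfer the bounded holomorphic functional calculus from the Hodge Laplacian $-\Delta_{\mathcal{H}}$ (which is established earlier in the paper on the full-form space) to the Hodge-Stokes operator $\mathbb{A}_{\mathcal{H}}$ acting on the divergence-free subspace $\dot{\mathrm{H}}^{s,p}_{\mathfrak{t},\sigma}$, using the Hodge decomposition of Theorem \ref{thm:HodgeDecompRn+Intro} as the bridge. The key structural fact is that $\mathbb{A}_{\mathcal{H}}$ is nothing but the \emph{part} of $-\Delta_{\mathcal{H}}$ in the closed, complemented, $(-\Delta_{\mathcal{H}})$-invariant subspace $\dot{\mathrm{H}}^{s,p}_{\mathfrak{t},\sigma}$. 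The general principle I would invoke is: if an operator $A$ has a bounded $\mathrm{H}^\infty(\Sigma_\mu)$-calculus on $X$, and $Y\subset X$ is a closed subspace which is the range of a bounded projection $\Pi$ commuting with the resolvent of $A$, then the part $A_Y$ of $A$ in $Y$ also has a bounded $\mathrm{H}^\infty(\Sigma_\mu)$-calculus, with the same angle, and indeed $f(A_Y)=f(A)_{|Y}=\Pi f(A)_{|Y}$ for every $f\in\mathrm{H}^\infty(\Sigma_\mu)$.

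First I would record that $-\Delta_{\mathcal{H}}$ is sectorial (indeed injective with dense range, and $0$-sectorial) on $\dot{\mathrm{H}}^{s,p}(\mathbb{R}^n_+,\Lambda)$ with a bounded $\mathrm{H}^\infty(\Sigma_\mu)$-calculus for every $\mu\in(0,\pi)$ — this is the content of the earlier Hodge-Laplacian theorem, applied on the full space of forms $\Lambda=\bigoplus_k\Lambda^k$, and on each of the function-space scales $\dot{\mathrm{H}}^{s,p}$, $\dot{\mathrm{B}}^{s}_{p,q}$, and their inhomogeneous versions. Second, I would verify that the Helmholtz–Leray projector $\mathbb{P}$ of Theorem \ref{thm:HodgeDecompRn+Intro}(i) commutes with $(-\Delta_{\mathcal{H}})$ and hence with its resolvent: since $\mathbb{P}=\mathrm{I}-\mathrm{d}(-\Delta_{\mathcal{H}})^{-1/2}\mathrm{d}^\ast(-\Delta_{\mathcal{H}})^{-1/2}$ is built out of functional-calculus expressions in $-\Delta_{\mathcal{H}}$ together with the operators $\mathrm{d},\mathrm{d}^\ast$ which are (formally, and after verification on the appropriate cores) nilpotent and commute with $-\Delta_{\mathcal{H}}=(\mathrm{d}+\mathrm{d}^\ast)^2$, the projector $\mathbb{P}$ lies in the bicommutant and, in particular, $\mathbb{P}(\lambda+\Delta_{\mathcal{H}})^{-1}=(\lambda+\Delta_{\mathcal{H}})^{-1}\mathbb{P}$.

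Third, with commutation in hand, the restriction $\mathbb{A}_{\mathcal{H}}=(-\Delta_{\mathcal{H}})_{|\dot{\mathrm{H}}^{s,p}_{\mathfrak{t},\sigma}}$ is sectorial on $\dot{\mathrm{H}}^{s,p}_{\mathfrak{t},\sigma}$ of the same angle, its resolvent is the restriction of the resolvent, and for any bounded holomorphic $f$ defined on $\Sigma_\nu$ with polynomial decay at $0$ and $\infty$ the Dunford–Riesz integral defining $f(\mathbb{A}_{\mathcal{H}})$ is literally the restriction of $f(-\Delta_{\mathcal{H}})$ to the invariant subspace; the uniform estimate $\|f(\mathbb{A}_{\mathcal{H}})\|\le\|\mathbb{P}\|\,\|f(-\Delta_{\mathcal{H}})\|\lesssim\|f\|_{\mathrm{H}^\infty(\Sigma_\mu)}$ then follows, and one extends from the dense class of decaying $f$ to all of $\mathrm{H}^\infty(\Sigma_\mu)$ by the usual convergence lemma (McIntosh's approximation). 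The four scale-cases are handled uniformly since each input ingredient — sectoriality of $-\Delta_{\mathcal{H}}$, boundedness of $\mathbb{P}$, and the Hodge decomposition — is available on all four scales by the preceding theorems.

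\emph{The main obstacle} I expect is not the abstract transference step, which is soft, but justifying the commutation $\mathbb{P}(\lambda+\Delta_{\mathcal{H}})^{-1}=(\lambda+\Delta_{\mathcal{H}})^{-1}\mathbb{P}$ rigorously on the homogeneous spaces in the full range $s\in(-1+1/p,1/p)$. The delicate points are that $\mathrm{d}$ and $\mathrm{d}^\ast$ are only densely defined and unbounded, that the boundary conditions $\nu\iprod u=0$, $\nu\iprod\mathrm{d}u=0$ must be respected so that $\mathbb{P}$ genuinely maps the domain of $-\Delta_{\mathcal{H}}$ into itself, and that on homogeneous spaces one must control decay/normalization so that the negative fractional powers $(-\Delta_{\mathcal{H}})^{-1/2}$ in the formula for $\mathbb{P}$ are well defined. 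I would handle this by first establishing all commutation relations on $\mathrm{L}^2(\mathbb{R}^n_+,\Lambda)$, where the Hilbertian Hodge theory and self-adjointness make the nilpotency and boundary conditions transparent, and then extending to the general scales by density together with the boundedness statements already proved (the boundedness of $\mathbb{P}$ and of the functional calculus of $-\Delta_{\mathcal{H}}$), so that no new hard analysis is required beyond what the earlier theorems supply.
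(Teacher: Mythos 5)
Your proposal is correct and follows essentially the same route as the paper: the paper establishes the bounded $\mathrm{\mathbf{H}}^{\infty}$-calculus of $-\Delta_{\mathcal{H}}$ on the full form-valued spaces (Theorem \ref{thm:MetaThm1HodgeLaplacianRn+}), the commutation of $\mathbb{P}$ with the resolvent and semigroup (Corollary \ref{cor:commutrelationHodgeProjectorHspBspqRn+}), and the Hodge decomposition (Theorem \ref{thm:HodgeDecompRn+}), and then obtains Theorem \ref{thm:HinftyFuncCalcHodgeStokesMaxwell} exactly by the restriction-to-a-complemented-invariant-subspace argument you describe. The only cosmetic difference is that the paper proves the commutation relations on each Sobolev/Besov scale directly from uniqueness of the resolvent problem (Lemma \ref{lem:commutrelationHspBspqHodgeRn+}) rather than by density from the $\mathrm{L}^2$ case, but the overall strategy is identical.
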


We mention that our strategy is not morally so different from the one presented in \cite[Beginning~of~Section~4]{GeissertHeckTrunk2013}, identifying some Neumann and Dirichlet boundary conditions on various components. However, the treatment of boundary value is done in a more careful way, adapted with the scale of homogeneous function spaces, thanks to a weak-strong correspondence of (partial) traces by the mean of appropriate results in the Appendix \ref{Append:TracesofFunctions}. 

\subsubsection{Global-in-time estimates in \texorpdfstring{$\mathrm{L}^q$}{Lq}-maximal regularity: the role of homogeneous function spaces and their interpolation} Another tool which is central in the study of parabolic equations and also for a large class of fluid dynamics problems is the $\mathrm{L}^q$-maximal regularity.

The general problem of global in time $\mathrm{L}^q$-maximal regularity is: for a closed operator $(\mathrm{D}(A),A)$ on a Banach space $X$, let us consider the evolution equation
\begin{align}\label{eq:ACPinftyIntro}
    \left\{\begin{array}{rl}
            \partial_t u(t) +Au(t)  =& f(t) \,\text{, } t\in(0,+\infty) \text{, }\\
            u(0) =& 0\text{. }
    \end{array}
    \right.\text{. }
\end{align}
Provided $q\in[1,+\infty]$ and $f\in\mathrm{L}^{q}((0,+\infty),X)$, can we solve uniquely \eqref{eq:ACPinftyIntro}, with \textit{a priori} estimate
\begin{align}\label{ineq:estimateMaxRegIntro}
    \lVert (\partial_t u, Au)\rVert_{\mathrm{L}^q((0,+\infty),X)} \lesssim \lVert f\rVert_{\mathrm{L}^q((0,+\infty),X)} \text{ ?}
\end{align}

When $X$ is a UMD Banach space (\textit{i.e.} a space such that the Hilbert transform is bounded on $\mathrm{L}^r(\mathbb{R},X)$ for one (or equivalently all) $r\in(1,+\infty)$), the problem has been extensively studied in \cite[Chapter~III,~Section~4]{AmmanBookParabolicVolI1995}, \cite{bookDenkHieberPruss2003} and \cite{KunstmannWeis2004}. It has been proved in this case, that the truthfulness of \eqref{ineq:estimateMaxRegIntro} for $q\in(1,+\infty)$ is equivalent to the $\mathcal{R}$-boundedness of the resolvent $A$ of angle $\phi_{{A}}^{\mathcal{R}}<\frac{\pi}{2}$, \textit{i.e.} if $\sigma(A)\subset \overline{\Sigma}_{\phi}$, and for some $\frac{\pi}{2}>\mu>\phi$, the set
\begin{align*}
    \{\lambda(\lambda \mathrm{I}-A)^{-1}\}_{\lambda \in \mathbb{C}\setminus {\Sigma}_{\mu}}
\end{align*}
is $\mathcal{R}$-bounded, and $\phi_{{A}}^{\mathcal{R}}$ is the infimum on all such $\mu$. This result was initially due to Weis, see \cite[Theorem~4.2]{Weis2001}. 

It has been shown in many cases that Stokes operators satisfy the $\mathrm{L}^q$-maximal regularity on $\mathrm{L}^p_{\sigma}(\Omega)$, for various class of open sets $\Omega$ for $p,q\in(1,+\infty)$, with various boundary conditions and this has been widely used to treat various fluid dynamics problems, mainly Navier-Stokes equations. See for instance \cite{GigaSohr1991,Tolksdorf2018-1,TolksdorfWatanabe,MitreaMonniaux2009-1,Monniaux2013,Monniaux2021,HieberMonniaux2013,HieberNesensohnPrussSchade2016,Hieber2020}.

Getting back to the abstract problem \eqref{eq:ACPinftyIntro}, when $X$ is UMD, $q\in(1,+\infty)$ and $A$ is invertible, \textit{i.e.} $0\in\rho(A)$, it is known that the solution $u$ belongs to $\mathrm{C}^0_0(\mathbb{R}_+,(X,\mathrm{D}(A))_{1-1/q,q})$ with estimate
\begin{align}\label{eq:estimatecontinuousMaxRegIntro}
    \lVert u \rVert_{\mathrm{L}^{\infty}(\mathbb{R}_+,(X,\mathrm{D}(A))_{1-1/q,q})} \lesssim_{A,q} \lVert f\rVert_{\mathrm{L}^q(\mathbb{R}_+,X)}\text{, }
\end{align}
where $(\cdot,\cdot)_{\theta,r}$, $(\theta,r)\in(0,1)\times [1,+\infty]$, stands for the real interpolation functor; check for instance \cite[Chapter~III,~Theorem~4.10.2]{AmmanBookParabolicVolI1995}.

If $0\in\sigma(A)$, one only has $u\in \mathrm{C}^0(\mathbb{R}_+,(X,\mathrm{D}(A))_{1-1/q,q})$, with for each $T<+\infty$,
\begin{align}\label{eq:traceEstnoninvertible}
    \lVert u \rVert_{\mathrm{L}^{\infty}([0,T),(X,\mathrm{D}(A))_{1-1/q,q})} \lesssim_{A,q,T} \lVert f\rVert_{\mathrm{L}^q(\mathbb{R}_+,X)}\text{, }
\end{align}
where the implicit constant blows up as $T$ goes to $+\infty$. Notice that this is the case for the (negative) Laplacian $A=-\Delta$ on $X=\mathrm{L}^p(\mathbb{R}^n)$, $p\in(1,+\infty)$, which is quite inconvenient, see \cite[Section~1]{Gaudin2023} and reference therein for more details.

Another issue is that one cannot reach $\mathrm{L}^1$ and $\mathrm{L}^\infty$-maximal regularity estimates through above theory, but one may recover such kind of results if $X$ is replaced by a real interpolation space between $X$ and $\mathrm{D}(A)$, say $Y_{r}^{\theta}:=(X,\mathrm{D}(A))_{\theta,r}$ $(\theta,r)\in(0,1)\times[1,+\infty]$. Indeed, a theorem of Da Prato and Grisvard, see \cite[Theorem~4.15]{DaPratoGrisvard1975}, gives us that, provided either
\begin{itemize}
    \item $0\in\rho(A)$ and $T\in(0,+\infty]$,
    \item $0\in\sigma(A)$ and $T\in(0,+\infty)$,
\end{itemize}
for $q\in[1,+\infty)$, $\theta\in(0,1/q)$, the solution $u$ to \eqref{eq:ACPinftyIntro} belongs to $\mathrm{C}^{0}_b([0,T),Y_{q}^{1+\theta-1/q})$ and satisfies
\begin{align}\label{eq:DPGIntro}
    \lVert u\rVert_{\mathrm{L}^\infty((0,T),Y_{q}^{1+\theta-1/q})} \lesssim_{A,q(,T)} \lVert (\partial_t u, Au)\rVert_{\mathrm{L}^q((0,T),Y_{q}^{\theta})} \lesssim_{A,q(,T)} \lVert f\rVert_{\mathrm{L}^q((0,T),Y_{q}^{\theta})}\text{. }
\end{align}

If one wants to recover global in time estimates in \eqref{eq:DPGIntro} by the mean of \cite[Theorem~4.15]{DaPratoGrisvard1975}, we have to assume that $A$ is invertible, hence $0\in\rho(A)$. However, for $q=1$ similar estimates have been shown for several non-invertible operators and were of major importance to achieve existence in critical function spaces for some fluid dynamic problems like global well-posedness of Navier-Stokes equations, even for inhomogeneous flows, or free boundary problems, see for instance \cite{Chemin1999,DanchinMucha2009,DanchinMucha2015,OgawaShimizu2016,OgawaShimizu2021,OgawaShimizu2022}.

While the work of Ogawa and Shimizu \cite{OgawaShimizu2022} provides a powerful framework for many applications, we are mainly restricted to a specific class of second order elliptic operator with "smooth enough" coefficients. A different and more abstract approach was brought by the recent work of Danchin, Hieber, Mucha and Tolksdorf \cite{DanchinHieberMuchaTolk2020}, where the idea was to give an homogeneous version of the Da Prato-Grisvard theorem \cite[Theorem~4.15]{DaPratoGrisvard1975}, in the sense that \eqref{eq:DPGIntro} holds with implicit constant uniform with respect to the time variable even when $0\in\sigma(A)$. But further assumptions have to be made, mainly the injectivity of $A$ on $X$. Their idea was to replace the real interpolation space $Y^{\theta}_q=(X,\mathrm{D}(A))_{\theta,q}$ in \eqref{eq:DPGIntro} by
\begin{align*}
    (X,\mathrm{D}(\mathring{A}))_{\theta,q}
\end{align*}
where $\mathrm{D}(\mathring{A})$ is called the homogeneous domain of $A$ and stands \textit{morally} for the \textit{closure} of $\mathrm{D}(A)$ with respect to the (semi-)norm $\lVert A \cdot \rVert_X$. Such kind of investigation was already made in Haase's book \cite[Chapter~6]{bookHaase2006} where the \textit{completion} is considered instead of the closure. 

If $A$ is a non-degenerate elliptic operator of order $2m$ equal to its principal part, densely defined on $X=\mathrm{L}^{p}(\mathbb{R}^n)$, such that $\mathrm{D}({A})=\mathrm{H}^{2m,p}(\mathbb{R}^n)$, we should have $\mathrm{D}(\mathring{A})=\dot{\mathrm{H}}^{2m,p}(\mathbb{R}^n)$ which encounter no trouble of definition, if one consider the construction of homogeneous Sobolev and Besov spaces as equivalence classes of tempered distributions up to a polynomial, see \cite[Chapter~8,~Section~3]{bookHaase2006}, \cite[chapter~5]{bookTriebel1983}. In this case, we obtain
\begin{align*}
    (X,\mathrm{D}({A}))_{\theta,q} = {\mathrm{B}}^{2m\theta}_{p,q}(\mathbb{R}^n) \text{ and }(X,\mathrm{D}(\mathring{A}))_{\theta,q} = \dot{\mathrm{B}}^{2m\theta}_{p,q}(\mathbb{R}^n) \text{. }
\end{align*}

However, if one wants to consider a similar problem on a domain, here the half-space $\mathbb{R}^n_+$, with some boundary conditions, with the definition of homogeneous function spaces as class of tempered distributions up to a polynomial, it is not clear that we can make a proper meaning of boundary conditions or traces. To overcome such difficulties, a construction of homogeneous Sobolev spaces $\dot{\mathrm{H}}^{s,p}(\mathbb{R}^n_+)$ and a review of homogeneous Besov spaces on $\mathbb{R}^n_+$ allowing to check interpolation between homogeneous spaces, and to recover boundary conditions in some cases, is done in \cite[Chapters~3~\&~4]{DanchinHieberMuchaTolk2020} continued in \cite{Gaudin2022}. This construction is based on, and consistent with, the one of homogeneous Besov spaces on the whole space achieved in \cite[Chapter~2]{bookBahouriCheminDanchin}. This leads in some cases to non-complete normed vector spaces as $\mathrm{D}(\mathring{A})$ could be if one wants to consider it as the (moral) \textit{closure} of $\mathrm{D}(A)$ in an appropriate subspace of $\dot{\mathrm{H}}^{2m,p}(\mathbb{R}^n_+)$, which may be not complete. That is why the construction of the homogeneous domain of $A$, its real interpolation spaces with $X$, and the homogeneous Da Prato-Grisvard theorem from \cite[Chapter~2]{DanchinHieberMuchaTolk2020} are interesting: they allow $\mathrm{D}(\mathring{A})$ to be a non-complete normed vector space\footnote{We notice that real interpolation of non-complete vector space makes sense, see \cite[Chapter~3]{BerghLofstrom1976} where completeness is not needed to deal with the $K$-method.}. This could be necessary if one wants to deal with boundary conditions. One can even recover the construction given in \cite[Chapter~6]{bookHaase2006} when the completion is used instead of the closure to construct the homogeneous domain.

We also mention that the homogeneous operator and interpolation theory revisited by Danchin, Hieber, Mucha, and Tolksdorf in \cite[Chapter~2]{DanchinHieberMuchaTolk2020} also gives a way to circumvent the lack of global-in-time estimates in the usual $\mathrm{L}^q$-maximal regularity framework, for the trace estimate issue \eqref{eq:traceEstnoninvertible}. This have been done by the author in a previous paper by the author, see \cite[Sections~1,~2\&~4]{Gaudin2023}.

Finally, if one applies the homogeneous interpolation and then the homogeneous Da Prato-Grisvard theorem as done in \cite[Chapters~2,~3~\&~4]{DanchinHieberMuchaTolk2020}, choosing $X\subset \mathrm{L}^p(\mathbb{R}^n_+)$ to be a closed subspace, and $\mathrm{D}(A)$ to be a closed subset of $\mathrm{H}^{2m,p}(\mathbb{R}^n_+)$, it would lead to $\mathrm{L}^1_t(\dot{\mathrm{B}}^{2m\theta}_{p,1})$-maximal regularity results with $\theta \in(0,1)$. Proceeding this way disallow to obtain  $\mathrm{L}^1_t(\dot{\mathrm{B}}^{0}_{p,1})$ or $\mathrm{L}^1_t(\dot{\mathrm{B}}^{\alpha}_{p,1})$-maximal regularity results, for $\alpha<0$.
Our idea is to replace the use of $\mathrm{L}^p(\mathbb{R}^n_+)$ as a ground space by $\dot{\mathrm{H}}^{s,p}(\mathbb{R}^n_+)$, with $p\in(1,+\infty)$, $s\in(-1+1/p,1/p)$, so that we may expect to realize $A$ on $\dot{\mathrm{H}}^{s,p}(\mathbb{R}^n_+)$ with domain
\begin{align*}
    \mathrm{D}(A) \subset \dot{\mathrm{H}}^{s,p}(\mathbb{R}^n_+)\cap \dot{\mathrm{H}}^{s+2m,p}(\mathbb{R}^n_+)\text{.}
\end{align*}
Therefore, for $s\in(-1+1/p,1/p)$ and $\theta\in(0,1)$, it seems reasonable to expect $\mathrm{L}^1_t(\dot{\mathrm{B}}^{s+2m\theta}_{p,1})$-maximal regularity results, and then recover maximal regularity for some non-positive index of regularity.

To reach such realizations of $A$ on homogeneous Sobolev spaces of fractional order on the whole space or on the half-space, we are going to use the construction started \cite[Chapter~3]{DanchinHieberMuchaTolk2020}, and continued in \cite[Section~2]{Gaudin2022}. The Appendix \ref{Append:TracesofFunctions} is dedicated to the meaning of partial traces in such function spaces to ensure that one can realize operators with boundary conditions on $\dot{\mathrm{H}}^{s,p}(\mathbb{R}^n_+)$, provided $s\in(-1+1/p,1/p)$, see also \cite[Section~3]{Gaudin2022} for usual traces results. We will also provide additional tools that will be useful to compute homogeneous interpolation spaces in presence of boundary conditions as in Section \ref{Sect:HomInterpDomainsandHomMaxReg}.

In our case, considering first the Hodge Laplacian, then the Hodge-Stokes operator, we will be able to apply the homogeneous Da Prato-Grisvard theorem \cite[Theorem~2.20]{DanchinHieberMuchaTolk2020}, as well as the usual $\mathrm{L}^q$-maximal regularity for UMD Banach spaces or \cite[Theorem~4.7]{Gaudin2023}, to reach various maximal regularity results as the next one.
\begin{theorem}[{see Theorems \ref{thm:MaxRxBesovUMDHodgeStokesRn+}, \ref{thm:MaxRxHspUMDHodgeStokesRn+}, \ref{thm:MaxRxBesovHodgeStokesRn+} \& \ref{thm:MaxRxBesovNavierSlipStokesRn+}}]Let $p\in(1,+\infty)$, $q\in[1,+\infty)$, $s\in(-1+1/p,1/p+2/q)$, $s,s+2-2/q\notin \mathbb{N}+\frac{1}{p}$, such that $(\mathcal{C}_{s+2-2/q,p,q})$\footnote{The condition ($\mathcal{C}_{\cdot,\cdot,\cdot}$) is here to ensure the completeness of considered function spaces. See \eqref{AssumptionCompletenessExponents} below.} is satisfied and let $T\in(0,+\infty]$.
For any $f\in \mathrm{L}^q((0,T),\dot{\mathrm{B}}^{s,\sigma}_{p,q,\mathcal{\mathcal{H}}}(\mathbb{R}^n_+,\Lambda))$, $u_0\in \dot{\mathrm{B}}^{2+s-\frac{2}{q},\sigma}_{p,q,\mathcal{H}}(\mathbb{R}^n_+,\Lambda)$, there exists a unique mild solution $u \in \mathrm{C}^0_b([0,T),\dot{\mathrm{B}}^{2+s-\frac{2}{q},\sigma}_{p,q,\mathcal{H}}(\mathbb{R}^n_+,\Lambda))$ to
\begin{equation*}\tag{HSS}\footnote{For introductory purpose, the notations here are either not precise enough or quite redundant. For instance, the condition  $\mathrm{d}^\ast u =0$ already implies the boundary condition $\nu \iprod u_{|_{\partial\mathbb{R}^n_+}}=0$.}
    \left\{ \begin{array}{rclr}
         \partial_t u - \Delta u  & = &f,& \text{ on } (0,T)\times\mathbb{R}^n_+,\\
         \mathrm{d}^\ast u& = & 0,& \text{ on } (0,T)\times\mathbb{R}^n_+,\\
         \nu\iprod\mathrm{d} u_{|_{\partial\mathbb{R}^n_+}}& = & 0,& \text{ on } (0,T)\times\partial\mathbb{R}^n_+,\\
         \nu\iprod u_{|_{\partial\mathbb{R}^n_+}}& = & 0,& \text{ on } (0,T)\times\partial\mathbb{R}^n_+,\\
         u(0)& = & u_0,& \text{ in } \dot{\mathrm{B}}^{2+s-\frac{2}{q}}_{p,q}(\mathbb{R}^n_+,\Lambda),
    \end{array}
    \right.
\end{equation*}
with estimate
\begin{align*}
    \left\lVert u \right\rVert_{\mathrm{L}^\infty((0,T),\dot{\mathrm{B}}^{2+s-\frac{2}{q}}_{p,q}(\mathbb{R}^n_+))} + \left\lVert (\partial_t u, \nabla^2 u) \right\rVert_{\mathrm{L}^q((0,T),\dot{\mathrm{B}}^{s}_{p,q}(\mathbb{R}^n_+))} \lesssim_{p,q,s,n} \left\lVert f \right\rVert_{\mathrm{L}^q((0,T),\dot{\mathrm{B}}^{s}_{p,q}(\mathbb{R}^n_+))} + \left\lVert u_0 \right\rVert_{\dot{\mathrm{B}}^{2+s-\frac{2}{q}}_{p,q}(\mathbb{R}^n_+)}.
\end{align*}
In the case $q=+\infty$, if we assume in addition $u_0\in \dot{\mathrm{D}}^{s}_{p}(\mathbb{A}_{\mathcal{H}}^2)$, we have
\begin{align*}
     \left\lVert (\partial_t u, \nabla^2 u) \right\rVert_{\mathrm{L}^\infty((0,T),\dot{\mathrm{B}}^{s}_{p,\infty}(\mathbb{R}^n_+))} \lesssim_{p,s,n} \left\lVert f \right\rVert_{\mathrm{L}^\infty((0,T),\dot{\mathrm{B}}^{s}_{p,\infty}(\mathbb{R}^n_+))} + \left\lVert \mathbb{A}_{\mathcal{H}} u_0 \right\rVert_{\dot{\mathrm{B}}^{s}_{p,\infty}(\mathbb{R}^n_+)}.
\end{align*}
\end{theorem}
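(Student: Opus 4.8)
The plan is to recast the boundary value problem (HSS) as the abstract homogeneous Cauchy problem governed by the Hodge--Stokes operator $\mathbb{A}_{\mathcal{H}}$, and then to feed it into the maximal-regularity machinery already set up above.

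First I would reduce (HSS) to an abstract evolution equation. The constraint $\mathrm{d}^\ast u = 0$ together with $\nu\iprod u_{|_{\partial\mathbb{R}^n_+}}=0$ forces $u(t)$ to lie in the divergence-free space, so the Hodge decomposition of Theorem~\ref{thm:HodgeDecompRn+Intro} gives $\mathbb{P}u=u$ and, since $f$ is already constrained, $\mathbb{P}f=f$. Applying $\mathbb{P}$ to $\partial_t u-\Delta u=f$ and absorbing the two remaining conditions $\nu\iprod\mathrm{d} u_{|_{\partial\mathbb{R}^n_+}}=0$ and $\nu\iprod u_{|_{\partial\mathbb{R}^n_+}}=0$ into the domain of the Hodge Laplacian turns (HSS) into
\begin{align*}
\partial_t u+\mathbb{A}_{\mathcal{H}}u=f \ \text{ on } (0,T), \qquad u(0)=u_0,
\end{align*}
posed on the constrained space $\dot{\mathrm{B}}^{s,\sigma}_{p,q,\mathcal{H}}(\mathbb{R}^n_+,\Lambda)$ (whose norm is the ambient $\dot{\mathrm{B}}^{s}_{p,q}$ norm). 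The mild solution is the Duhamel integral $u(t)=e^{-t\mathbb{A}_{\mathcal{H}}}u_0+\int_0^t e^{-(t-\tau)\mathbb{A}_{\mathcal{H}}}f(\tau)\,\mathrm{d}\tau$, which makes sense because Theorem~\ref{thm:HinftyFuncCalcHodgeStokesMaxwellIntro} endows $\mathbb{A}_{\mathcal{H}}$ with a bounded $\mathrm{\mathbf{H}}^\infty(\Sigma_\mu)$-calculus for every $\mu\in(0,\pi)$; in particular $\mathbb{A}_{\mathcal{H}}$ is injective and sectorial of angle $0$, hence generates a bounded analytic semigroup on every space of the $\dot{\mathrm{H}}$ and $\dot{\mathrm{B}}$ scale in the admissible range.

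Next I would fix the functional setting for the homogeneous Da Prato--Grisvard theorem \cite[Theorem~2.20]{DanchinHieberMuchaTolk2020}. I choose the anchor space and interpolation parameter
\begin{align*}
X:=\dot{\mathrm{H}}^{s_0,p}_{\mathfrak{t},\sigma}(\mathbb{R}^n_+,\Lambda), \qquad s_0=s-2\theta, \qquad \theta\in\left(0,\tfrac{1}{q}\right),
\end{align*}
arranging $s_0\in(-1+\tfrac1p,\tfrac1p)$; such a choice of $\theta$ exists precisely when $s<\tfrac1p+\tfrac2q$, which is the upper bound in the hypotheses. On $X$, $\mathbb{A}_{\mathcal{H}}$ is injective and sectorial of angle $<\tfrac{\pi}{2}$ by Theorem~\ref{thm:HinftyFuncCalcHodgeStokesMaxwellIntro}, so the homogeneous Da Prato--Grisvard theorem applies and yields, with constants uniform in $T\in(0,+\infty]$, the bound $\|(\partial_t u,\mathbb{A}_{\mathcal{H}}u)\|_{\mathrm{L}^q((0,T),Y_\theta)}\lesssim\|f\|_{\mathrm{L}^q((0,T),Y_\theta)}$ together with $\|u\|_{\mathrm{L}^\infty((0,T),Y_{\theta+1-1/q})}\lesssim\|f\|_{\mathrm{L}^q((0,T),Y_\theta)}+\|u_0\|_{Y_{\theta+1-1/q}}$, where $Y_\vartheta:=(X,\mathrm{D}(\mathring{\mathbb{A}}_{\mathcal{H}}))_{\vartheta,q}$. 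For $q\in(1,+\infty)$ one may alternatively route through the UMD space $\dot{\mathrm{H}}^{s,p}_{\mathfrak{t},\sigma}$, where the bounded $\mathrm{\mathbf{H}}^\infty$-calculus upgrades to $\mathcal{R}$-sectoriality of angle $<\tfrac{\pi}{2}$ and Weis' theorem, combined with \cite[Theorem~4.7]{Gaudin2023} for the global-in-time trace estimate of a merely injective operator, gives the same conclusion; but the Da Prato--Grisvard route covers $q=1$ and negative indices uniformly.

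The crux---and the step I expect to be the main obstacle---is to identify the homogeneous interpolation spaces with the constrained Besov spaces,
\begin{align*}
\bigl(X,\mathrm{D}(\mathring{\mathbb{A}}_{\mathcal{H}})\bigr)_{\vartheta,q}=\dot{\mathrm{B}}^{s_0+2\vartheta,\sigma}_{p,q,\mathcal{H}}(\mathbb{R}^n_+,\Lambda), \qquad \vartheta\in(0,1),
\end{align*}
so that $Y_\theta=\dot{\mathrm{B}}^{s,\sigma}_{p,q,\mathcal{H}}$ and $Y_{\theta+1-1/q}=\dot{\mathrm{B}}^{2+s-2/q,\sigma}_{p,q,\mathcal{H}}$ exactly as demanded. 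This means computing homogeneous interpolation in the simultaneous presence of the divergence constraint $\mathrm{d}^\ast u=0$ and the tangential boundary conditions; I would carry it out by commuting $\mathbb{P}$ through the interpolation functor (legitimate by its boundedness, Theorem~\ref{thm:HodgeDecompRn+Intro}), thereby reducing to the interpolation of the homogeneous domains of the scalar Neumann/Dirichlet Hodge Laplacian, which is handled by the interpolation-with-boundary-conditions tools of Section~\ref{Sect:HomInterpDomainsandHomMaxReg} and the partial-trace characterisations of Appendix~\ref{Append:TracesofFunctions}. Here the hypotheses $s,s+2-2/q\notin\mathbb{N}+\tfrac1p$ are exactly the non-exceptional trace exponents for which the boundary condition is ``seen'' by interpolation, and the completeness condition $(\mathcal{C}_{s+2-2/q,p,q})$ is what guarantees the trace space is a genuine Banach space into which $u(0)$ may be taken.

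Finally I would convert the abstract estimate into the stated one. Since $\mathbb{A}_{\mathcal{H}}u=-\Delta u$ on the divergence-free space, the control of $\|\mathbb{A}_{\mathcal{H}}u\|_{\mathrm{L}^q((0,T),\dot{\mathrm{B}}^{s}_{p,q})}$ bounds $\|\Delta u\|_{\mathrm{L}^q((0,T),\dot{\mathrm{B}}^{s}_{p,q})}$, and the boundedness of the second-order Riesz transforms on these homogeneous Besov spaces, i.e. $\|\nabla^2 u\|_{\dot{\mathrm{B}}^{s}_{p,q}}\simeq\|\Delta u\|_{\dot{\mathrm{B}}^{s}_{p,q}}$, upgrades this to the Hessian estimate; uniqueness is immediate from the a priori bound applied to $f=0$, $u_0=0$. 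For the endpoint $q=+\infty$, where the real-interpolation trace theory degenerates, the extra hypothesis $u_0\in\dot{\mathrm{D}}^{s}_{p}(\mathbb{A}_{\mathcal{H}}^2)$ furnishes enough smoothing that $\mathbb{A}_{\mathcal{H}}e^{-t\mathbb{A}_{\mathcal{H}}}u_0$ stays bounded in $\dot{\mathrm{B}}^{s}_{p,\infty}$ uniformly in $t$, and a limiting argument from the finite-$q$ estimates delivers the stated $\mathrm{L}^\infty$-in-time bound.
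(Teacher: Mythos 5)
Your proposal is correct and follows essentially the same route as the paper: recast (HSS) as the abstract Cauchy problem for $\mathbb{A}_{\mathcal{H}}$, apply the homogeneous Da Prato--Grisvard theorem (Theorem \ref{thm:DaPratoGrisvardHom2020}) on an anchor space $\dot{\mathrm{H}}^{s-2\theta,p}_{\mathfrak{t},\sigma}$ with $\theta\in(0,1/q)$, and identify the interpolation spaces $(\dot{\mathrm{H}}^{s_0,p}_{\mathfrak{t},\sigma},\dot{\mathrm{D}}^{s_0}_{p}(\mathring{\mathbb{A}}_{\mathcal{H}}))_{\vartheta,q}$ with the constrained Besov spaces by commuting $\mathbb{P}$ with the heat semigroup and reducing to the Dirichlet/Neumann interpolation of Proposition \ref{prop:InterpHomDomainLaplacians}, which is exactly how Proposition \ref{prop:InterpHomDomainHodgeMaxwellStokes} is proved. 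The only cosmetic difference is that the paper's $q=+\infty$ endpoint is covered directly by the Da Prato--Grisvard statement with $u_0\in\mathrm{D}(A^2)$ rather than by a limiting argument from finite $q$.
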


\subsection*{Acknowledgment} The author would like to thank Sylvie Monniaux for her numerous feedbacks which helped to improve the manuscript. The author would also like to thank Yoshizaku Giga for personal discussions that heavily inspired the last section \ref{sec:NaviernoSlipBCRn+}.

\subsection{Notations, definitions and review of usual concepts}

Throughout this paper the dimension will be $n\geqslant 2$, and $\mathbb{N}$ will be the set of non-negative integers.

\subsubsection{Smooth and measurable functions}

Denote by $\eus{S}(\mathbb{R}^n,\mathbb{C})$ the space of complex valued Schwartz function, and $\eus{S}'(\mathbb{R}^n,\mathbb{C})$ its dual called the space of tempered distributions. The Fourier transform on $\eus{S}'(\mathbb{R}^n,\mathbb{C})$ is written $\eus{F}$, and is pointwise defined for any $f\in\mathrm{L}^1(\mathbb{R}^n,\mathbb{C})$ by
\begin{align*}
  \eus{F}f(\xi) :=\int_{\mathbb{R}^n} f(x)\,e^{-ix\cdot\xi}\,\mathrm{d}x\text{, } \xi\in\mathbb{R}^n\text{. }
\end{align*}
Additionnally, for $p\in[1+\infty]$, we will write $p'=\tfrac{p}{p-1}$ its \textit{\textbf{H\"{o}lder conjugate}}.

For any $m\in\mathbb{N}$, the map $\nabla^m\,:\,\eus{S}'(\mathbb{R}^n,\mathbb{C})\longrightarrow \eus{S}'(\mathbb{R}^n,\mathbb{C}^{n^m})$ is defined as $\nabla^m u := (\partial^\alpha u)_{|\alpha|=m}$.
We denote by $(e^{t\Delta})_{t\geqslant0}$ and $(e^{-t(-\Delta)^\frac{1}{2}})_{t\geqslant0}$ respectively the heat and Poisson semigroup on $\mathbb{R}^n$. We also introduce operators $\nabla'$ and $\Delta'$ which are respectively the gradient and the Laplacian on $\mathbb{R}^{n-1}$ identified with the $n-1$ first variables of $\mathbb{R}^n$, \textit{i.e.} $\nabla'=(\partial_{x_1}, \ldots, \partial_{x_{n-1}})$ and $\Delta' = \partial_{x_1}^2 + \ldots + \partial_{x_{n-1}}^2$.

When $\Omega$ is an open set of $\mathbb{R}^n$, for $p\in[1,+\infty)$, $\mathrm{L}^p(\Omega,\mathbb{C})$ is the normed vector space of complex valued (Lebesgue-) measurable functions whose $p$-th power is integrable with respect to the Lebesgue measure, $\eus{S}(\overline{\Omega},\mathbb{C})$ (\textit{resp.} $\mathrm{C}_c^\infty(\overline{\Omega},\mathbb{C})$) stands for functions which are restrictions on $\Omega$ of elements of $\eus{S}(\mathbb{R}^n,\mathbb{C})$ (\textit{resp.} $\mathrm{C}_c^\infty(\mathbb{R}^n,\mathbb{C})$). Unless the contrary is explicitly stated, we will always identify $\mathrm{L}^p(\Omega,\mathbb{C})$ (resp. $\mathrm{C}_c^\infty(\Omega,\mathbb{C})$) as the subspace of function in $\mathrm{L}^p(\mathbb{R}^n,\mathbb{C})$ (resp. $\mathrm{C}_c^\infty(\mathbb{R}^n,\mathbb{C})$) supported in $\overline{\Omega}$ through the extension by $0$ outside $\Omega$. $\mathrm{L}^\infty(\Omega,\mathbb{C})$ stands for the space of essentially bounded (Lebesgue-) measurable functions.

For $s\in\mathbb{R}$, $p\in[1,+\infty)$, $\ell^p_s(\mathbb{Z},\mathbb{C})$, stands for the normed vector space of $p$-summable sequences of complexes numbers with respect to the counting measure $2^{ksp}\mathrm{d}k$;  $\ell^\infty_s(\mathbb{Z},\mathbb{C})$ stands for sequences $(x_k)_{k\in\mathbb{Z}}$ such that $(2^{ks}x_k)_{k\in\mathbb{Z}}$  is bounded.
More generally, when $X$ is a Banach space, for $p\in[1,+\infty]$, one may also consider $\mathrm{L}^p(\Omega,X)$ which stands for the space of (Bochner-)measurable functions $u\,:\,\Omega\longrightarrow X$, such that $t\mapsto\lVert u(t)\rVert_X \in \mathrm{L}^p(\Omega,\mathbb{R})$, similarly one may consider $\ell^p_s(\mathbb{Z},X)$.

\subsubsection{(Bi)Sectorial operators on Banach spaces}

We introduce the following subsets of the complex plane
\begin{align*}
    \Sigma_\mu &:=\{ \,z\in\mathbb{C}^\ast\,:\,\lvert\mathrm{arg}(z)\rvert<\mu\,\}\text{, if } \mu\in(0,\pi)\text{, }\\
    S_\mu & := (-\Sigma_\mu)\cup \Sigma_\mu \text{, if } \mu\in(0,\frac{\pi}{2})\text{, }
\end{align*}
we also define $\Sigma_0 := (0,+\infty)$, $S_0 := \mathbb{R}$, and later we are going to consider $\overline{\Sigma}_\mu$, and $\overline{S}_\mu$ their closure.

An operator $(\mathrm{D}(A),A)$ on complex valued Banach space $X$ is said to be $\omega$-\textit{\textbf{sectorial}}, if for a fixed $\omega\in (0,\pi)$, both conditions are satisfied
\begin{enumerate}
    \item $\sigma(A)\subset \overline{\Sigma}_\omega $, where $\sigma(A)$ stands for the spectrum of $A$ ;
    \item For all $\mu\in(\omega,\pi)$, $\sup_{\lambda\in \mathbb{C}\setminus\overline{\Sigma}_\mu}\lVert \lambda(\lambda \mathrm{I}-A)^{-1}\rVert_{X\rightarrow X} < +\infty$.
\end{enumerate}
Similarly, $(\mathrm{D}(A),A)$ is said to be $\omega$\textit{\textbf{-bisectorial}}, for a fixed $\omega\in (0,\frac{\pi}{2})$, if $\sigma(A)\subset \overline{S}_\omega $, and for all $\mu\in(\omega,\frac{\pi}{2})$, $\sup_{\lambda\in \mathbb{C}\setminus\overline{S}_\mu}\lVert \lambda( \lambda\mathrm{I}- A)^{-1}\rVert_{X\rightarrow X} < +\infty$.

The following proposition will be of paramount importance throughout the present work.
\begin{proposition}[{\cite[Proposition~3.2.2]{EgertPhDThesis2015}} ]Let $(\mathrm{D}(A),A)$ be a sectorial operator on a Banach space $X$. Then the following assertions hold.
\begin{enumerate}
    \item If $k\in\mathbb{N}$, and $x\in\overline{\mathrm{D}(A)}$, then
    \begin{align*}
        \lim_{t\rightarrow +\infty} t^{k}(t\mathrm{I}+A)^{-k}x =x \,\text{ and }\, \lim_{t\rightarrow +\infty} A^{k}(t\mathrm{I}+A)^{-k}x =0 \text{. }
    \end{align*}
    \item If $k\in\mathbb{N}$, and $x\in\overline{\mathrm{R}(A)}$, then
    \begin{align*}
        \lim_{t\rightarrow 0} t^{k}(t\mathrm{I}+A)^{-k}x =0 \,\text{ and }\, \lim_{t\rightarrow 0} A^{k}(t\mathrm{I}+A)^{-k}x =x \text{. }
    \end{align*}
    In particular, $\mathrm{N}(A)\cap\overline{\mathrm{R}(A)}=\{ 0 \}$, so that $X=\overline{\mathrm{R}(A)}$ implies that $A$ is injective.
    \item For every $k\in\mathbb{N}$, $\mathrm{D}(A^k)\cap{\mathrm{R}(A^k)}$ is dense in $\overline{\mathrm{D}(A)}\cap\overline{\mathrm{R}(A)}$.
    \item If $X$ is reflexive, then $A$ is densely defined and induces a topological decomposition
    \begin{align*}
        X = \mathrm{N}(A)\oplus\overline{\mathrm{R}(A)}\text{. }
    \end{align*}
\end{enumerate}
\end{proposition}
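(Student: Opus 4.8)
The plan is to reduce everything to the single uniformly bounded family of operators $R_t := t(t\mathrm{I}+A)^{-1}$, $t>0$, whose uniform bound $\sup_{t>0}\lVert R_t\rVert_{X\to X}\leqslant M$ is precisely the sectoriality estimate applied at the points $-t\in\mathbb{C}\setminus\overline{\Sigma}_\mu$ (which lie in the resolvent set since $\mu<\pi$). The algebraic backbone is the resolvent identity $A(t\mathrm{I}+A)^{-1}=\mathrm{I}-R_t$, so that $t^k(t\mathrm{I}+A)^{-k}=R_t^k$ and $A^k(t\mathrm{I}+A)^{-k}=(\mathrm{I}-R_t)^k$; all of these are functions of $A$ and hence commute with one another and with $A$, and each has norm bounded uniformly in $t$.

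For (i) and (ii) I would first establish strong convergence of $R_t$ on a dense set and then upgrade to the closure by uniform boundedness. For $x\in\mathrm{D}(A)$ one has $(\mathrm{I}-R_t)x=(t\mathrm{I}+A)^{-1}Ax$, whence $\lVert(\mathrm{I}-R_t)x\rVert\leqslant (M/t)\lVert Ax\rVert\to0$ as $t\to+\infty$; for $x=Ay\in\mathrm{R}(A)$ one has $R_t x=t(\mathrm{I}-R_t)y$, whence $\lVert R_tx\rVert\leqslant t(1+M)\lVert y\rVert\to0$ as $t\to0$. A standard $3\varepsilon$-argument using $\sup_t\lVert R_t\rVert\leqslant M$ then extends $R_tx\to x$ to all $x\in\overline{\mathrm{D}(A)}$ (as $t\to+\infty$) and $R_tx\to0$ to all $x\in\overline{\mathrm{R}(A)}$ (as $t\to0$). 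The $k$-th powers follow from the telescoping identity $R_t^k-\mathrm{I}=\sum_{j=0}^{k-1}R_t^{j}(R_t-\mathrm{I})$ together with the binomial expansion of $(\mathrm{I}-R_t)^k$, the finitely many surplus factors being controlled by uniform boundedness. The ``in particular'' clause is immediate: if $x\in\mathrm{N}(A)$ then $(t\mathrm{I}+A)x=tx$ gives $R_tx=x$ for every $t$, so if moreover $x\in\overline{\mathrm{R}(A)}$ then $x=\lim_{t\to0}R_tx=0$; and $X=\overline{\mathrm{R}(A)}$ forces $\mathrm{N}(A)=\mathrm{N}(A)\cap\overline{\mathrm{R}(A)}=\{0\}$.

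For (iii) the idea is to build one regularizing operator that lands in $\mathrm{D}(A^k)\cap\mathrm{R}(A^k)$ yet approximates the identity on $\overline{\mathrm{D}(A)}\cap\overline{\mathrm{R}(A)}$. I would take
\[
S_{s,t} := A^k(s\mathrm{I}+A)^{-k}\,t^k(t\mathrm{I}+A)^{-k}=(\mathrm{I}-R_s)^k R_t^k .
\]
Because all factors commute, $S_{s,t}x$ can be written both as $A^k[\,\cdots]$, so it lies in $\mathrm{R}(A^k)$, and as $t^k(t\mathrm{I}+A)^{-k}\bigl[A^k(s\mathrm{I}+A)^{-k}x\bigr]$, so it lies in $\mathrm{D}(A^k)$. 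Since $R_t^k$ maps $\overline{\mathrm{R}(A)}$ into itself, for fixed $x\in\overline{\mathrm{D}(A)}\cap\overline{\mathrm{R}(A)}$ I would first send $t\to+\infty$ to get $R_t^k x\to x$ inside $\overline{\mathrm{R}(A)}$ by (i), then send $s\to0$ to get $(\mathrm{I}-R_s)^k(R_t^k x)\to R_t^k x$ by (ii); a two-step $\varepsilon$-argument yields $S_{s,t}x\to x$, proving density.

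The main obstacle is (iv), where reflexivity must be exploited through a weak-compactness/mean-ergodic argument. For the decomposition, fix $x\in X$ and choose $t_n\to0$ with $R_{t_n}x\rightharpoonup Px$ (weak sequential compactness of bounded sets in the reflexive space $X$). Each $R_{t_n}x\in\mathrm{D}(A)$, and $AR_{t_n}x=t_n(x-R_{t_n}x)\to0$ strongly; since the graph of the closed operator $A$ is a closed, hence weakly closed, subspace of $X\times X$, the weak limit $(Px,0)$ lies in it, so $Px\in\mathrm{N}(A)$. Simultaneously $x-R_{t_n}x=A(t_n\mathrm{I}+A)^{-1}x\in\mathrm{R}(A)$, and the weakly closed subspace $\overline{\mathrm{R}(A)}$ contains the weak limit $x-Px$; hence $x=Px+(x-Px)\in\mathrm{N}(A)+\overline{\mathrm{R}(A)}$, the sum being direct by (ii), which also forces the weak limit to be unique and $P$ to be a well-defined linear projection. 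Density of the domain is obtained by the twin argument as $t_n\to+\infty$: from $u_n:=(t_n\mathrm{I}+A)^{-1}x\to0$ strongly and $Au_n=x-R_{t_n}x\rightharpoonup x-w$, weak closedness of the graph forces $x-w=A0=0$, so the weak limit $w$ of $R_{t_n}x$ equals $x$; as each $R_{t_n}x\in\mathrm{D}(A)$ and $\overline{\mathrm{D}(A)}$ is weakly closed, $x\in\overline{\mathrm{D}(A)}$. Boundedness of $P$, hence topological directness of the splitting, follows from $\sup_t\lVert R_t\rVert\leqslant M$. The delicate points throughout (iv) are the passage from strong to weak closedness of the graph and the verification that the weak limits assemble into a genuine bounded projection.
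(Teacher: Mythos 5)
The paper does not prove this proposition: it is quoted verbatim from Egert's thesis \cite[Proposition~3.2.2]{EgertPhDThesis2015} and used as a black box, so there is no in-paper argument to compare against. Your proof is correct and is the standard one found in that reference (and in Haase's book): approximate identities $R_t=t(t\mathrm{I}+A)^{-1}$ with strong convergence on dense sets upgraded by uniform boundedness for (i)--(ii), the commuting regularizer $(\mathrm{I}-R_s)^kR_t^k$ for (iii), and the weak-compactness/weakly-closed-graph argument in the reflexive case for (iv) — the only cosmetic caveat being that the second limit in (i) (and the corresponding one in (ii)) tacitly requires $k\geqslant 1$.
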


Provided $\mu\in(0,\pi)$, we denote by $\mathrm{\mathbf{H}}^\infty(\Sigma_\mu)$, the set of bounded holomorphic functions on $\Sigma_\mu$, the same goes with $S_\mu$ instead of $\Sigma_\mu$, for $\mu\in[0,\frac{\pi}{2})$.

If $(\mathrm{D}(A),A)$ is $\omega$-(bi)sectorial with $\omega\in[0,\pi)$ (resp. $[0,\frac{\pi}{2})$), for $\mu\in(\omega,\pi)$ (resp. $(\omega,\frac{\pi}{2})$) we say that $A$ admits a \textit{\textbf{bounded}} (or $\mathrm{\mathbf{H}}^\infty(\Sigma_\mu)$- (resp. $\mathrm{\mathbf{H}}^\infty(S_\mu)$-)) \textit{\textbf{holomorphic functional calculus}} on $X$ (of angle $\mu$), if for $\theta\in (\omega,\mu)$, there exists a constant $K_\theta$, such that for all $f\in\mathrm{\mathbf{H}}^\infty(\Sigma_\theta)$ (resp. $\mathrm{\mathbf{H}}^\infty(S_\theta)$), we have that
\begin{align*}
    \lVert f(A)\rVert_{X\rightarrow X}\leqslant K_\theta \lVert f \rVert_{\mathrm{L}^\infty}\text{. }
\end{align*}
For all $x\in \mathrm{D}(A)\cap\mathrm{R}(A)$, $f(A)x$ is defined by the following convergent integral, 
\begin{align}\label{eq:DunfordintegralFuncCalc}
    f(A)x = \frac{1}{2 i \pi}\int_{\partial \Sigma_\theta} f(z)(z\mathrm{I}-A)^{-1}x\,\mathrm{d}z\text{, }
\end{align}
and the same goes with $\partial S_\theta$ instead of $\partial \Sigma_\theta$ for the bisectorial case, both boundary being oriented counterclockwise.

We also say that $A$ has  {\textbf{bounded imaginary powers}} (BIP) of type $\theta_A$ if $f(z)=z^{is}$ plugged in \eqref{eq:DunfordintegralFuncCalc} yields a bounded linear operator for all $s\in\mathbb{R}$, and
\begin{align*}
    \theta_A := \inf \left\{ \omega\geqslant 0\,\bigg|\, \sup_{s\in\mathbb{R}} e^{-\omega|s|}\lVert A^{is}\rVert_{X \rightarrow X}<+\infty\right\}\text{ . }
\end{align*}
Notice that if $A$ has a bounded holomorphic functional calculus then it has bounded imaginary powers.

The functional calculus of sectorial operators is widely reviewed in several references but we mention here Haase's book \cite{bookHaase2006}. However, there is only few references known to the author that deal with a systematic treatment of bisectorial operators, Egert did such a treatment producing proofs of corresponding usual results in his Thesis, see \cite[Chapter~3]{EgertPhDThesis2015}.

\subsubsection{Interpolation of normed vector spaces}

Let $(X,\left\lVert\cdot\right\rVert_X)$ and $(Y,\left\lVert\cdot\right\rVert_Y)$ be two normed vector spaces. We write $X\hookrightarrow Y$ to say that $X$ embeds continuously in $Y$. Now let us recall briefly basics of interpolation theory. If there exists a Hausdorff topological vector space $Z$, such that $X,Y\subset Z$, then $X\cap Y$ and $X+Y$ are normed vector spaces with their canonical norms, and one can define the $K$-functional of $z\in X+Y$, for any $t>0$ by
\begin{align*}
    K(t,z,X,Y) := \underset{\substack{(x,y)\in X\times Y,\\ z=x+y}}{\inf}\left({\left\lVert{x}\right\rVert_{X}+t\left\lVert{y}\right\rVert_{Y}}\right)\text{. }
\end{align*}
This allows us to construct, for any $\theta\in(0,1)$, $q\in[1,+\infty]$, the real interpolation spaces between $X$ and $Y$ with indexes $\theta,q$ as
\begin{align*}
    (X,Y)_{\theta,q} := \left\{\, x\in X+Y\,\Big{|}\,t\longmapsto t^{-\theta}K(t,x,X,Y)\in\mathrm{L}^q_\ast(\mathbb{R}_+)\,\right\}\text{, }
\end{align*}
where $\mathrm{L}^q_\ast(\mathbb{R}_+):=\mathrm{L}^q((0,+\infty),\mathrm{d}t/t)$. The interested reader could check \cite[Chapter~1]{bookLunardiInterpTheory}, \cite[Chapter~3]{BerghLofstrom1976} for more informations about real interpolation and its applications.

\subsubsection{Sobolev and Besov spaces on \texorpdfstring{$\mathbb{R}^n$}{Rn}}
To deal with Sobolev and Besov spaces on the whole space, we need to introduce Littlewood-Paley decomposition given by $\phi\in \mathrm{C}_c^\infty(\mathbb{R}^n)$, radial, real-valued, non-negative,
such that
\begin{itemize}[label={$\bullet$}]
    \item $\supp \phi \subset B(0,4/3)$;
    \item ${\phi}_{|_{B(0,3/4)}}=1$;
\end{itemize}
so we define the following functions for any $j\in\mathbb{Z}$ for all $\xi\in\mathbb{R}^n$,
\begin{align*}
    \phi_j(\xi):=\phi(2^{-j}\xi)\text{, }\qquad \psi_j(\xi) := \phi_{j}(\xi/2)-\phi_{j}(\xi)\text{,}
\end{align*}
and the family $(\psi_j)_{j\in\mathbb{Z}}$ has the following properties
\begin{itemize}[label={$\bullet$}]
    \item $\mathrm{supp}(\psi_j)\subset \{\,\xi\in\mathbb{R}^n\,|\,3\cdot 2^{j-2}\leqslant\left\lvert{\xi}\right\rvert \leqslant 2^{j+2}/3\,\}$;
    \item $\forall\xi\in\mathbb{R}^n\setminus\{0\}$, $\sum\limits_{j=-M}^N{\psi_j}(\xi)\xrightarrow[N,M\rightarrow+\infty]{} 1$.
\end{itemize}
Such a family $(\phi,(\psi_j)_{j\in\mathbb{Z}})$ is called a Littlewood-Paley family. Now, we consider the two following families of operators associated with their Fourier multipliers :
\begin{itemize}[label={$\bullet$}]
    \item The \textit{\textbf{homogeneous}} family of Littlewood-Paley dyadic decomposition operators $(\dot{\Delta}_j)_{j\in\mathbb{Z}}$, where
    \begin{align*}
        \dot{\Delta}_j := \eus{F}^{-1}\psi_j\eus{F},
    \end{align*}
    \item The \textit{\textbf{inhomogeneous}} family of Littlewood-Paley dyadic decomposition operators $({\Delta}_k)_{k\in\mathbb{Z}}$, where
    \begin{align*}
       {\Delta}_{-1} := \eus{F}^{-1}\phi\eus{F}\text{, }
    \end{align*}
    $\Delta_k:=\dot{\Delta}_k$ for any $k\geqslant 0$, and $\Delta_k:=0$ for any $k\leqslant-2$.
\end{itemize}
One may notice, as a direct application of Young's inequality for the convolution, that they are all uniformly bounded families of operators on $\mathrm{L}^p(\mathbb{R}^n)$, $p\in[1,+\infty]$.

Both family of operators lead for $s\in\mathbb{R}$, $p,q\in[1,+\infty]$, $u\in \eus{S}'(\mathbb{R}^n)$ to the following quantities,
\begin{align*}
    \left\lVert u \right\rVert_{\mathrm{B}^{s}_{p,q}(\mathbb{R}^n)}= \left\lVert(2^{ks}\left\lVert {\Delta}_k u \right\rVert_{\mathrm{L}^{p}(\mathbb{R}^n)})_{k\in\mathbb{Z}}\right\rVert_{\ell^{q}(\mathbb{Z})}\text{ and }
    \left\lVert u \right\rVert_{\dot{\mathrm{B}}^{s}_{p,q}(\mathbb{R}^n)} = \left\lVert(2^{js}\left\lVert \dot{\Delta}_j u \right\rVert_{\mathrm{L}^{p}(\mathbb{R}^n)})_{j\in\mathbb{Z}}\right\rVert_{\ell^{q}(\mathbb{Z})}\text{, }
\end{align*}
respectively named the inhomogeneous and homogeneous Besov norms, but the homogeneous norm is not really a norm since $\left\lVert u \right\rVert_{\dot{\mathrm{B}}^{s}_{p,q}(\mathbb{R}^n)}=0$ does not imply that $u=0$. Thus, following \cite[Chapter~2]{bookBahouriCheminDanchin} and \cite[Chapter~3]{DanchinHieberMuchaTolk2020}, we introduce a subspace of tempered distributions such that $\left\lVert \cdot \right\rVert_{\dot{\mathrm{B}}^{s}_{p,q}(\mathbb{R}^n)}$ is point-separating, say
\begin{align*}
   \eus{S}'_h(\mathbb{R}^n) &:= \left\{ u\in \eus{S}'(\mathbb{R}^n)\,\Big{|}\,\forall \Theta \in \mathrm{C}_c^\infty(\mathbb{R}^n),\,  \left\lVert \Theta(\lambda \mathfrak{D}) u \right\rVert_{\mathrm{L}^\infty(\mathbb{R}^n)} \xrightarrow[\lambda\rightarrow+\infty]{} 0\right\}\text{,}
\end{align*}
where for $\lambda>0$, $\Theta(\lambda \mathfrak{D})u = \eus{F}^{-1}{\Theta}(\lambda\cdot)\eus{F}u$. Notice that $\eus{S}'_h(\mathbb{R}^n)$ does not contain any polynomials, and for any $p\in[1,+\infty)$, $\mathrm{L}^p(\mathbb{R}^n)\subset\eus{S}'_h(\mathbb{R}^n)$.

One can also define the following quantities called the inhomogeneous and homogeneous Sobolev spaces' potential norms
\begin{align*}
    \left\lVert {u} \right\rVert_{\mathrm{\mathrm{H}}^{s,p}(\mathbb{R}^n)} := \left\lVert {(\mathrm{I}-\Delta)^\frac{s}{2} u} \right\rVert_{\mathrm{L}^{p}(\mathbb{R}^n)}\text{ and } \left\lVert {u} \right\rVert_{\dot{\mathrm{H}}^{s,p}(\mathbb{R}^n)} := \Big\lVert \sum_{j\in\mathbb{Z}} (-\Delta)^\frac{s}{2}\dot{\Delta}_{j} u  \Big\rVert_{{\mathrm{L}}^{p}(\mathbb{R}^n)}\text{, }
\end{align*}
where $(-\Delta)^\frac{s}{2}$ is understood on $u\in \eus{S}'_h(\mathbb{R}^n)$ by the action on its dyadic decomposition, \textit{i.e.}
\begin{align*}
    (-\Delta)^\frac{s}{2}\dot{\Delta}_j u := \eus{F}^{-1}|\xi|^s\eus{F}\dot{\Delta}_j u\text{,}
\end{align*}
which gives a family of $\mathrm{C}^\infty$ functions with at most polynomial growth.

Hence for any  $p,q\in[1,+\infty]$, $s\in\mathbb{R}$, we define
\begin{itemize}[label={$\bullet$}]
    \item the inhomogeneous and homogeneous Sobolev (Bessel and Riesz potential) spaces,\\
    \resizebox{0.90\textwidth}{!}{$
        \mathrm{\mathrm{H}}^{s,p}(\mathbb{R}^n)=\left\{\, u\in\eus{S}'(\mathbb{R}^n) \,\big{|}\, \left\lVert {u} \right\rVert_{\mathrm{\mathrm{H}}^{s,p}(\mathbb{R}^n)}<+\infty \,\right\}\text{, }\dot{\mathrm{H}}^{s,p}(\mathbb{R}^n)=\left\{\, u\in\eus{S}'_h(\mathbb{R}^n) \,\big{|}\, \left\lVert {u} \right\rVert_{\dot{\mathrm{H}}^{s,p}(\mathbb{R}^n)}<+\infty \,\right\}\text{ ; }$}
    \item and the inhomogeneous and homogeneous Besov spaces,\\
    \resizebox{0.90\textwidth}{!}{$\mathrm{B}^{s}_{p,q}(\mathbb{R}^n)=\left\{\, u\in\eus{S}'(\mathbb{R}^n) \,\big{|}\, \left\lVert {u} \right\rVert_{\mathrm{B}^{s}_{p,q}(\mathbb{R}^n)}<+\infty \,\right\}\text{, }\dot{\mathrm{B}}^{s}_{p,q}(\mathbb{R}^n)=\left\{\, u\in\eus{S}'_h(\mathbb{R}^n) \,\big{|}\, \left\lVert {u} \right\rVert_{\dot{\mathrm{B}}^{s}_{p,q}(\mathbb{R}^n)}<+\infty \,\right\}\text{, }
    $}
\end{itemize}
which are all normed vector spaces.

The treatment of homogeneous Besov spaces $\dot{\mathrm{B}}^{s}_{p,q}(\mathbb{R}^n)$, $s\in\mathbb{R}$, $p,q\in[1,+\infty]$, defined on $\eus{S}'_h(\mathbb{R}^n)$ has been done in an extensive manner in \cite[Chapter~2]{bookBahouriCheminDanchin}. However, the corresponding construction for homogeneous Sobolev spaces $\dot{\mathrm{H}}^{s,p}(\mathbb{R}^n)$, $s\in\mathbb{R}$, $p\in(1,+\infty)$. See \cite[Chapter~1]{bookBahouriCheminDanchin} for the case $p=2$, \cite[Chapter~3]{DanchinHieberMuchaTolk2020} for the case $s\in\mathbb{N}$, \cite[Subsection~ 2.1]{Gaudin2022} for the case $s\in\mathbb{R}$.

The following subspace of Schwartz functions, say
\begin{align*}
    \eus{S}_0(\mathbb{R}^n):=\left\{\, u\in \eus{S}(\mathbb{R}^n)\,\left|\, 0\notin \supp\left(\eus{F}f\right) \,\right.\right\}\text{, }
\end{align*}
is a nice dense subspace in $\mathrm{L}^p(\mathbb{R}^n)$, $\mathrm{\mathrm{H}}^{s,p}(\mathbb{R}^n)$, $\dot{\mathrm{H}}^{s,p}(\mathbb{R}^n)$, $\mathrm{B}^{s}_{p,q}(\mathbb{R}^n)$ and $\dot{\mathrm{B}}^{s}_{p,q}(\mathbb{R}^n)$, for all $p\in(1,+\infty)$, $q\in[1,+\infty)$, $s\in\mathbb{R}$

The inhomogeneous spaces  $\mathrm{L}^p(\mathbb{R}^n)$, $\mathrm{\mathrm{H}}^{s,p}(\mathbb{R}^n)$, and $\mathrm{B}^{s}_{p,q}(\mathbb{R}^n)$ are all complete for all $p,q\in [1,+\infty]$, $s\in\mathbb{R}$, but in this setting homogenenous spaces are no longer always complete (see \cite[Proposition~1.34,~Remark~2.26]{bookBahouriCheminDanchin}). Indeed, it can be shown (see \cite[Theorem~2.25]{bookBahouriCheminDanchin}) that homogeneous Besov spaces $\dot{\mathrm{B}}^{s}_{p,q}(\mathbb{R}^n)$ are complete whenever $(s,p,q)\in\mathbb{R}\times(1,+\infty)\times[1,+\infty]$ satisfies
\begin{align*}\tag{$\mathcal{C}_{s,p,q}$}\label{AssumptionCompletenessExponents}
    \left[ s<\frac{n}{p} \right]\text{ or }\left[q=1\text{ and } s\leqslant\frac{n}{p} \right]\text{, }
\end{align*}
From now, and until the end of this paper, we write $(\mathcal{C}_{s,p})$ for the statement $(\mathcal{C}_{s,p,p})$.  One may show that, similarly, $\dot{\mathrm{H}}^{s,p}(\mathbb{R}^n)$ is complete whenever $(\mathcal{C}_{s,p})$ is satisfied, see \cite[Proposition~2.4]{Gaudin2022}.

We recall that all $s>0$, $(p,q)\in(1,+\infty)\times[1,+\infty]$, we have $\mathrm{L}^p(\mathbb{R}^n)\cap \dot{\mathrm{H}}^{s,p}(\mathbb{R}^n) = \mathrm{\mathrm{H}}^{s,p}(\mathbb{R}^n)$, and $\mathrm{L}^p(\mathbb{R}^n)\cap \dot{\mathrm{B}}^{s}_{p,q}(\mathbb{R}^n) =\mathrm{B}^{s}_{p,q}(\mathbb{R}^n)$ with equivalent norms, see \cite[Theorem~6.3.2]{BerghLofstrom1976} for more details.

According to \cite[Section~6.4]{BerghLofstrom1976}, for all $s\in\mathbb{R}$, $p,q\in(1,+\infty)\times[1,+\infty]$, $\mathrm{\mathrm{H}}^{s,p}(\mathbb{R}^n)$ and $\mathrm{B}^{s}_{p,q}(\mathbb{R}^n)$ are both complete, and moreover they are reflexive when $q\neq1,+\infty$, and we have
\begin{align}
    (\mathrm{\mathrm{H}}^{s,p}(\mathbb{R}^n))' = \mathrm{H}^{-s,p'}(\mathbb{R}^n)\text{, } (\mathrm{B}^{s}_{p,q}(\mathbb{R}^n))'=\mathrm{B}^{-s}_{p',q'}(\mathbb{R}^n) \text{, }\\
    (\mathcal{B}^{s}_{p,\infty}(\mathbb{R}^n))'=\mathrm{B}^{-s}_{p',1}(\mathbb{R}^n) \text{, } (\mathrm{B}^{s}_{p,1}(\mathbb{R}^n))'=\mathrm{B}^{-s}_{p',\infty}(\mathbb{R}^n) \text{. }
\end{align}

We recall also the usual real interpolation identities,
\begin{align*}
    (\mathrm{H}^{s_0,p}(\mathbb{R}^n),\mathrm{H}^{s_1,p}(\mathbb{R}^n))_{\theta,q}= \mathrm{B}^{s}_{p,q}(\mathbb{R}^n)\text{, }&\qquad (\mathrm{B}^{s_0}_{p,q_0}(\mathbb{R}^n),\mathrm{B}^{s_1}_{p,q_1}(\mathbb{R}^n))_{\theta,q} = \mathrm{B}^{s}_{p,q}(\mathbb{R}^n)\text{, }
\end{align*}
whenever $(p,q_0,q_1,q)\in(1,+\infty)\times[1,+\infty]^3$($p\neq 1,+\infty$, when dealing with Sobolev (Bessel potential) spaces), $\theta\in(0,1)$, $s_0\neq s_1$ two real numbers, such that
\begin{align*}
    s:= (1-\theta)s_0+ \theta s_1\text{, }
\end{align*}
see \cite[Theorem~6.4.5]{BerghLofstrom1976}. A similar statement is available for homogeneous function spaces.
\begin{proposition}[ {\cite[Proposition~2.10]{Gaudin2022}} ]\label{prop:InterpHomSpacesRn}Let $(p,q,q_0,q_1)\in(1,+\infty)\times[1,+\infty]^3$, $s_0,s_1\in\mathbb{R}$, such that $s_0\neq s_1$, and set
\begin{align*}
    s:= (1-\theta)s_0+ \theta s_1\text{.}
\end{align*}
Assuming $(\mathcal{C}_{s_0,p})$ (resp. $(\mathcal{C}_{s_0,p,q_0})$), we get the following 
\begin{align}
    (\dot{\mathrm{H}}^{s_0,p}(\mathbb{R}^n),\dot{\mathrm{H}}^{s_1,p}(\mathbb{R}^n))_{\theta,q}=(\dot{\mathrm{B}}^{s_0}_{p,q_0}(\mathbb{R}^n),\dot{\mathrm{B}}^{s_1}_{p,q_1}(\mathbb{R}^n))_{\theta,q}=\dot{\mathrm{B}}^{s}_{p,q}(\mathbb{R}^n)\text{.}\label{eq:realInterpHomBspqRn}
\end{align}
If moreover, one consider $p_0,p_1\in(1,+\infty)$, and assume that $(\mathcal{C}_{s_0,p_0})$ and $(\mathcal{C}_{s_1,p_1})$ are true then also is $(\mathcal{C}_{s,p_\theta})$ and
\begin{align}
    [\dot{\mathrm{H}}^{s_0,p_0}(\mathbb{R}^n),\dot{\mathrm{H}}^{s_1,p_1}(\mathbb{R}^n)]_{\theta} = \dot{\mathrm{H}}^{s,p_\theta}(\mathbb{R}^n) \text{,}\label{eq:complexInterpHomHspRn}
\end{align}
and if $(\mathcal{C}_{s_0,p_0,q_0})$ and $(\mathcal{C}_{s_1,p_1,q_1})$ are satisfied then $(\mathcal{C}_{s,p_\theta,q_\theta})$ is also satisfied with
\begin{align}
    [\dot{\mathrm{B}}^{s_0}_{p_0,q_0}(\mathbb{R}^n),\dot{\mathrm{B}}^{s_1}_{p_1,q_1}(\mathbb{R}^n)]_{\theta} = \dot{\mathrm{B}}^{s}_{p_\theta,q_\theta}(\mathbb{R}^n)\text{,}\label{eq:complexInterpHomBspqRn}
\end{align}
where $\frac{1}{p_\theta}:=\frac{1-\theta}{p_0}+\frac{\theta}{p_1}$ and $q_\theta$ defined similarly.
\end{proposition}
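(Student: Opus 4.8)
The plan is to realize every space in both scales as a complemented subspace (a \emph{retract}) of a weighted vector-valued sequence space, and then to transport the known interpolation identities for those sequence spaces through the retraction. Throughout, the ambient Hausdorff space making the couples compatible is $\eus{S}'_h(\mathbb{R}^n)$, and I use the Littlewood-Paley coretraction $u\mapsto(\dot{\Delta}_j u)_{j\in\mathbb{Z}}$ together with its reconstruction $(f_j)_{j}\mapsto\sum_j\widetilde{\dot{\Delta}}_j f_j$, where $\widetilde{\dot{\Delta}}_j:=\dot{\Delta}_{j-1}+\dot{\Delta}_j+\dot{\Delta}_{j+1}$, which compose to the identity on $\eus{S}'_h(\mathbb{R}^n)$ thanks to the support properties of the $\psi_j$. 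These maps are bounded in the homogeneous setting and identify $\dot{\mathrm{B}}^{s}_{p,q}(\mathbb{R}^n)$ with a retract of $\ell^q_s(\mathbb{Z};\mathrm{L}^p(\mathbb{R}^n))$; via the homogeneous Littlewood-Paley square-function characterization, valid for $p\in(1,+\infty)$, they also identify $\dot{\mathrm{H}}^{s,p}(\mathbb{R}^n)$ with a retract of $\mathrm{L}^p(\mathbb{R}^n;\ell^2_s(\mathbb{Z}))$.

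For the real interpolation identity \eqref{eq:realInterpHomBspqRn} I would first treat the pure Besov couple. By the retraction theorem for the real method --- which, as recalled in the footnote, needs no completeness and holds for normed couples via the $K$-functional --- the space $(\dot{\mathrm{B}}^{s_0}_{p,q_0},\dot{\mathrm{B}}^{s_1}_{p,q_1})_{\theta,q}$ is the image under reconstruction of $(\ell^{q_0}_{s_0}(\mathrm{L}^p),\ell^{q_1}_{s_1}(\mathrm{L}^p))_{\theta,q}$. Since $s_0\neq s_1$, the classical interpolation of weighted mixed-norm sequence spaces gives $\ell^q_s(\mathrm{L}^p)$, with the interpolated weight and the fine index dictated by $\theta,q$ alone; transporting back yields $\dot{\mathrm{B}}^{s}_{p,q}$. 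For the Sobolev couple I would avoid commuting the real functor with $\mathrm{L}^p(\cdot)$ and instead squeeze: the embeddings $\dot{\mathrm{B}}^{s_i}_{p,1}\hookrightarrow\dot{\mathrm{H}}^{s_i,p}\hookrightarrow\dot{\mathrm{B}}^{s_i}_{p,\infty}$ together with the monotonicity of $(\cdot,\cdot)_{\theta,q}$ sandwich $(\dot{\mathrm{H}}^{s_0,p},\dot{\mathrm{H}}^{s_1,p})_{\theta,q}$ between $(\dot{\mathrm{B}}^{s_0}_{p,1},\dot{\mathrm{B}}^{s_1}_{p,1})_{\theta,q}$ and $(\dot{\mathrm{B}}^{s_0}_{p,\infty},\dot{\mathrm{B}}^{s_1}_{p,\infty})_{\theta,q}$; by the pure Besov case both outer spaces equal $\dot{\mathrm{B}}^{s}_{p,q}$ (again using $s_0\neq s_1$), forcing the middle space to be $\dot{\mathrm{B}}^{s}_{p,q}$ as well. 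This proves both equalities in \eqref{eq:realInterpHomBspqRn} at once.

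For the complex identities \eqref{eq:complexInterpHomHspRn}--\eqref{eq:complexInterpHomBspqRn}, completeness is no longer optional: the functor $[\cdot,\cdot]_\theta$ is defined for Banach couples, which is exactly why $(\mathcal{C}_{s_0,p_0})$, $(\mathcal{C}_{s_1,p_1})$ (resp. with $q_0,q_1$) are imposed --- by \cite[Proposition~2.4]{Gaudin2022} and \eqref{AssumptionCompletenessExponents} they render the endpoint spaces complete. I would then apply the complex retraction theorem to reduce to the complex interpolation of the sequence targets and invoke Calderón's product formula for mixed-norm spaces, namely $[\ell^{q_0}_{s_0}(\mathrm{L}^{p_0}),\ell^{q_1}_{s_1}(\mathrm{L}^{p_1})]_\theta=\ell^{q_\theta}_{s}(\mathrm{L}^{p_\theta})$ and $[\mathrm{L}^{p_0}(\ell^2_{s_0}),\mathrm{L}^{p_1}(\ell^2_{s_1})]_\theta=\mathrm{L}^{p_\theta}(\ell^2_{s})$, transporting back to $\dot{\mathrm{B}}^{s}_{p_\theta,q_\theta}$ and $\dot{\mathrm{H}}^{s,p_\theta}$. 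It then remains to check that the output exponents again satisfy \eqref{AssumptionCompletenessExponents}: since $s$, $1/p_\theta$ and $1/q_\theta$ are affine in $\theta$, the strict condition $s<n/p$ is preserved under convex combination, and only the borderline branch ($q=1$, $s=n/p$) needs to be verified by hand in the few mixed cases --- this propagation of $(\mathcal{C})$ to the interpolated triple is the genuinely new bookkeeping.

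The main obstacle, and the reason the statement is more than a transcription of \cite{BerghLofstrom1976}, is the interplay between the non-complete realization on $\eus{S}'_h(\mathbb{R}^n)$ and the two methods: one must verify that the retraction/coretraction pair is well defined and compatible with the ambient topology in the homogeneous setting, carefully isolate the real-method steps (which tolerate non-completeness) from the complex-method steps (which do not), and confirm that the completeness conditions \eqref{AssumptionCompletenessExponents} close up under both the real and the complex interpolation of exponents. The transversality $s_0\neq s_1$ is what collapses the fine index onto the interpolation parameter in the real case and is used throughout; without it the squeezing argument breaks down.
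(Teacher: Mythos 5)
First, a caveat on the comparison itself: the paper does not prove this proposition — it is imported verbatim from \cite[Proposition~2.10]{Gaudin2022} — so your attempt can only be measured against the standard route taken there and in \cite[Theorem~2.36]{bookBahouriCheminDanchin}, which is a direct estimate of the $K$-functional obtained by splitting the Littlewood--Paley series of $u$ itself at a $t$-dependent frequency threshold, not an abstract retraction onto sequence spaces.

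This difference is not cosmetic: your retraction argument has a genuine gap at the $s_1$ endpoint of the real-interpolation identity \eqref{eq:realInterpHomBspqRn}. The hypothesis there is deliberately one-sided — only $(\mathcal{C}_{s_0,p,q_0})$ is assumed, and $\dot{\mathrm{B}}^{s_1}_{p,q_1}(\mathbb{R}^n)$ may well fail \eqref{AssumptionCompletenessExponents} (take $s_0=0$, $s_1=2$, $p=n$). For such exponents the reconstruction map $(f_j)_j\mapsto\sum_j\widetilde{\dot{\Delta}}_j f_j$ is \emph{not} defined from all of $\ell^{q_1}_{s_1}(\mathbb{Z};\mathrm{L}^p)$ into $\eus{S}'_h(\mathbb{R}^n)$: the high-frequency tail converges in $\eus{S}'$ by duality against Schwartz functions, but the low-frequency part is only controlled by $\lVert\widetilde{\dot{\Delta}}_j f_j\rVert_{\mathrm{L}^\infty}\lesssim 2^{j(n/p-s_1)}c_j$ with $(c_j)\in\ell^{q_1}$, which is summable over $j\to-\infty$ exactly when $(\mathcal{C}_{s_1,p,q_1})$ holds. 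This is the same phenomenon that makes the $\eus{S}'_h$-realization non-complete there, so the retraction cannot be salvaged by passing to $\eus{S}'/\mathcal{P}$ and choosing representatives. Consequently the retraction theorem for the real method does not apply to the couple as you have set it up, and your Besov identity — on which the Sobolev sandwich then rests — is unproved in precisely the regime the proposition is designed to cover. The direct $K$-functional computation avoids this entirely because it only ever decomposes an actual element $u$ of the sum space (whose partial Littlewood--Paley sums are automatically well-defined in $\eus{S}'_h$), never an arbitrary sequence. Your complex-interpolation argument does not suffer from this defect, since there both endpoint conditions $(\mathcal{C}_{s_i,p_i,q_i})$ are assumed and the reconstruction is legitimate; the sandwich $\dot{\mathrm{B}}^{s_i}_{p,1}\hookrightarrow\dot{\mathrm{H}}^{s_i,p}\hookrightarrow\dot{\mathrm{B}}^{s_i}_{p,\infty}$ and the affine bookkeeping for $(\mathcal{C}_{s,p_\theta,q_\theta})$ are also fine, modulo the usual caveat that Calder\'on's formula for $[\ell^{q_0}_{s_0}(\mathrm{L}^{p_0}),\ell^{q_1}_{s_1}(\mathrm{L}^{p_1})]_\theta$ requires care when a $q_i$ equals $+\infty$.
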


\begin{proposition}[ {\cite[Proposition~2.15]{Gaudin2022}} ]\label{prop:SobolevMultiplier} For all $p\in(1,+\infty)$, $q\in[1,+\infty]$, for all $s\in (-1+\frac{1}{p},\frac{1}{p})$, for all $u\in\dot{\mathrm{H}}^{s,p}(\mathbb{R}^n)$ (resp. $\dot{\mathrm{B}}^{s}_{p,q}(\mathbb{R}^{n})$),
\begin{align*}
    \lVert \mathbbm{1}_{\mathbb{R}^n_+} u \rVert_{\dot{\mathrm{H}}^{s,p}(\mathbb{R}^{n})} \lesssim_{s,p,n} \lVert u \rVert_{\dot{\mathrm{H}}^{s,p}(\mathbb{R}^{n})} \text{ }\text{ (resp. }  \lVert \mathbbm{1}_{\mathbb{R}^n_+} u \rVert_{\dot{\mathrm{B}}^{s}_{p,q}(\mathbb{R}^{n})} \lesssim_{s,p,n} \lVert u \rVert_{\dot{\mathrm{B}}^{s}_{p,q}(\mathbb{R}^{n})} \text{ ). }
\end{align*}
The same results still holds with $({\mathrm{H}},{\mathrm{B}})$ instead of $(\dot{\mathrm{H}},\dot{\mathrm{B}})$.
\end{proposition}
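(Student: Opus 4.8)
The plan is to show that the multiplication operator $M\colon u\mapsto \mathbbm{1}_{\mathbb{R}^n_+}u$ is bounded by first proving the estimate on a single scale at positive regularity, then propagating it to the whole interval $(-1+\frac1p,\frac1p)$ by duality and interpolation, and finally transferring from homogeneous to inhomogeneous spaces. Since $M$ is a contraction on $\mathrm{L}^p(\mathbb{R}^n)=\dot{\mathrm{H}}^{0,p}(\mathbb{R}^n)$, the case $s=0$ is immediate, and the whole difficulty is to control the jump that $M$ introduces across the hyperplane $\{x_n=0\}$ at fractional smoothness. I would first treat $\dot{\mathrm{B}}^{s}_{p,p}(\mathbb{R}^n)$ for $0<s<\frac1p$, then obtain the range $-1+\frac1p<s<0$ by duality, and finally recover all $q\in[1,+\infty]$ together with the Sobolev scale by interpolation.

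For the core estimate I would use the intrinsic difference characterization, valid for $0<s<1$ and $1\le p<+\infty$,
\begin{align*}
    \lVert u\rVert_{\dot{\mathrm{B}}^{s}_{p,p}(\mathbb{R}^n)}^{p}\approx\int_{\mathbb{R}^n}\int_{\mathbb{R}^n}\frac{|u(x)-u(y)|^{p}}{|x-y|^{n+sp}}\,\mathrm{d}x\,\mathrm{d}y,
\end{align*}
and split the Gagliardo integral of $v:=Mu$ according to the half-space containing each of $x,y$. When $x$ and $y$ lie on the same side of $\{x_n=0\}$ the integrand equals $|u(x)-u(y)|^{p}/|x-y|^{n+sp}$ or vanishes, so those blocks are dominated by $\lVert u\rVert_{\dot{\mathrm{B}}^{s}_{p,p}}^{p}$. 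The only genuine contribution is the straddling block, say $x_n>0>y_n$, where $|v(x)-v(y)|=|u(x)|$; integrating the kernel first over the tangential variable $y'\in\mathbb{R}^{n-1}$ and then over $y_n<0$ yields, by scaling, the weight $x_n^{-sp}$, so that this block equals, up to a multiplicative constant,
\begin{align*}
    \int_{\mathbb{R}^n_+}|u(x)|^{p}\,x_n^{-sp}\,\mathrm{d}x.
\end{align*}
The decisive input is then the fractional Hardy inequality, which bounds this weighted integral by $\lVert u\rVert_{\dot{\mathrm{B}}^{s}_{p,p}(\mathbb{R}^n)}^{p}$ exactly when $sp<1$, that is $s<\frac1p$; this is precisely where the upper endpoint of the admissible range enters.

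To reach $-1+\frac1p<s<0$ I would exploit that $M$ is formally self-adjoint, $M^{\ast}=M$, being multiplication by a real function, together with the duality $(\dot{\mathrm{B}}^{-s}_{p',p'})'=\dot{\mathrm{B}}^{s}_{p,p}$: since $-s\in(0,\frac1{p'})$ and $p'\in(1,+\infty)$, the previous step applied on $\dot{\mathrm{B}}^{-s}_{p',p'}$ gives boundedness on $\dot{\mathrm{B}}^{s}_{p,p}$ by transposition (both spaces are complete for these exponents). Interpolating between two exponents $s_0<0<s_1$ of the admissible interval through Proposition~\ref{prop:InterpHomSpacesRn} then yields $M$ bounded on $\dot{\mathrm{B}}^{s}_{p,q}(\mathbb{R}^n)$ for every $q\in[1,+\infty]$ and every $s\in(-1+\frac1p,\frac1p)$, including $s=0$. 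The Sobolev scale is handled along the same lines: $\dot{\mathrm{H}}^{s,p}$ admits, for $0<s<1$, a square-function difference characterization (its Triebel--Lizorkin realization), and repeating the straddling-block analysis reduces boundedness on $\dot{\mathrm{H}}^{s,p}$ for $0<s<\frac1p$ to the same Hardy threshold, after which duality covers the negative range and $s=0$ is $\mathrm{L}^p$. Finally, the inhomogeneous statement follows from the homogeneous one through $\mathrm{H}^{s,p}=\mathrm{L}^p\cap\dot{\mathrm{H}}^{s,p}$ and $\mathrm{B}^{s}_{p,q}=\mathrm{L}^p\cap\dot{\mathrm{B}}^{s}_{p,q}$ for $s>0$, together with the corresponding duality for $s<0$. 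The main obstacle is the core estimate: isolating the straddling block and, above all, executing the weighted Hardy estimate with the weight $x_n^{-sp}$ in the mixed square-function/$\mathrm{L}^p$ norm underlying $\dot{\mathrm{H}}^{s,p}$, where the inner $\ell^2$ sum and the singular weight must be interchanged with care — and this is exactly what forces the constraint $s<\frac1p$.
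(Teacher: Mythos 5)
First, note that the paper never proves Proposition \ref{prop:SobolevMultiplier}: it is imported verbatim from \cite[Proposition~2.15]{Gaudin2022}, so there is no in-paper argument to compare yours against; in that reference the result essentially comes from the classical pointwise-multiplier theorem for $\mathbbm{1}_{\mathbb{R}^n_+}$ on inhomogeneous spaces together with a dilation argument exploiting $\mathbbm{1}_{\mathbb{R}^n_+}(\lambda x)=\mathbbm{1}_{\mathbb{R}^n_+}(x)$ to pass to homogeneous norms. Your skeleton for the Besov scale is sound and is the other standard route: the Gagliardo characterization of $\dot{\mathrm{B}}^{s}_{p,p}$ for $0<s<1$, the observation that only the straddling block survives and reduces to $\int_{\mathbb{R}^n_+}|u(x)|^p x_n^{-sp}\,\mathrm{d}x$, the fractional Hardy inequality at the threshold $sp<1$, then transposition for $-1+\frac1p<s<0$ and real interpolation (Proposition \ref{prop:InterpHomSpacesRn}) to recover all $q$ and $s=0$. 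Two caveats. The Hardy inequality you need is the purely homogeneous one, $\lVert x_n^{-s}u\rVert_{\mathrm{L}^p(\mathbb{R}^n_+)}\lesssim [u]_{W^{s,p}(\mathbb{R}^n)}$; the textbook (Grisvard-type) statement carries the full inhomogeneous norm on the right, and you should say explicitly that the $\mathrm{L}^p$ term is removed by applying it to $u(\lambda\cdot)$ and letting $\lambda\to+\infty$ (the naive reflect-and-average proof of the seminorm version does not close because of the constants). The duality step should also record that the transpose of multiplication on $\dot{\mathrm{B}}^{-s}_{p',p'}$ agrees with $u\mapsto\mathbbm{1}_{\mathbb{R}^n_+}u$ on the dense subspace $\eus{S}_0(\mathbb{R}^n)$, which is what gives meaning to the product at negative regularity.

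The genuine gap is the Sobolev scale. Your duality/interpolation scheme only ever produces Besov spaces as outputs (real interpolation of the $\dot{\mathrm{H}}^{s,p}$ or $\dot{\mathrm{B}}^{s}_{p,p}$ scale yields $\dot{\mathrm{B}}^{s}_{p,q}$, never $\dot{\mathrm{H}}^{s,p}$), and for $p\neq 2$ neither of $\dot{\mathrm{H}}^{s,p}$ and $\dot{\mathrm{B}}^{s}_{p,p}$ embeds into the other, so the $\dot{\mathrm{H}}^{s,p}$ case cannot be deduced from what you establish. ``Repeating the straddling-block analysis'' is not a repetition: the Triebel--Lizorkin norm is not an integral of $|u(x)-u(y)|^{p}$, and the interchange of the inner square-function sum with the singular weight that you flag as delicate is precisely the missing step. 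It can be completed, for instance via Strichartz's ball-average characterization of $\dot{\mathrm{H}}^{s,p}$ for $0<s<1$, where the straddling contribution is again $\lVert x_n^{-s}u\rVert_{\mathrm{L}^p(\mathbb{R}^n_+)}$ and one invokes the Triebel--Lizorkin form of the Hardy inequality; alternatively, a single paraproduct argument using $\mathbbm{1}_{\mathbb{R}^n_+}\in \mathrm{L}^\infty(\mathbb{R}^n)\cap\dot{\mathrm{B}}^{1/r}_{r,\infty}(\mathbb{R}^n)$ for every $r\in(1,+\infty)$ treats the $\dot{\mathrm{H}}^{s,p}$ and $\dot{\mathrm{B}}^{s}_{p,q}$ scales simultaneously and avoids the Hardy inequality altogether. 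As written, the $\dot{\mathrm{H}}^{s,p}$ half of the proposition is asserted rather than proved.
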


\subsubsection{Homogeneous Sobolev and Besov spaces on \texorpdfstring{$\mathbb{R}^n_+$}{Rn+}}\label{sec:FunctionSpacesRn+}
Let $s\in\mathbb{R}$, $p\in(1,+\infty)$, $q\in[1,+\infty]$. Then for any $\mathrm{X}\in\{ \mathrm{B}^{s}_{p,q}, \dot{\mathrm{B}}^{s}_{p,q},  \mathrm{\mathrm{H}}^{s,p}, \dot{\mathrm{H}}^{s,p}\}$, and we define
\begin{align*}
    \mathrm{X}(\mathbb{R}^n_+) := \mathrm{X}(\mathbb{R}^n)_{|_{\mathbb{R}^n_+}}\text{, }
\end{align*}
with the usual quotient norm $\lVert u \rVert_{\mathrm{X}(\mathbb{R}^n_+)}:= \inf\limits_{\substack{\Tilde{u}\in \mathrm{X}(\mathbb{R}^n),\\ \tilde{u}_{|_{\mathbb{R}^n_+}}=u\,.}} \lVert \Tilde{u} \rVert_{\mathrm{X}(\mathbb{R}^n)}$. A direct consequence of the definition of those spaces is the density of $\eus{S}_0(\overline{\mathbb{R}^n_+})\subset\eus{S}(\overline{\mathbb{R}^n_+})$ in each of them, and also, and the completeness and reflexivity when their counterpart on $\mathbb{R}^n$ also are. We can also define,
\begin{align*}
    \mathrm{X}_0(\mathbb{R}^n_+) := \left\{\,u\in \mathrm{X}(\mathbb{R}^n) \,\Big{|}\, \supp u \subset \overline{\mathbb{R}^n_+} \right\}\text{, }
\end{align*}
with natural induced norm $\lVert  u \rVert_{\mathrm{X}_0(\mathbb{R}^n_+)}:= \lVert  u \rVert_{\mathrm{X}(\mathbb{R}^n)}$.

We have
\begin{itemize}
    \item \textbf{density results \cite[Proposition~2.24, Lemma~2.32]{Gaudin2022}:}\\
    for $p\in(1,+\infty)$, $q\in[1,+\infty)$, $s>-1+\frac{1}{p}$, when \eqref{AssumptionCompletenessExponents} is satisfied, 
\begin{align}
    \dot{\mathrm{H}}^{s,p}_0(\mathbb{R}^n_+)= \overline{\mathrm{C}_c^\infty(\mathbb{R}^n_+)}^{\lVert \cdot \rVert_{\dot{\mathrm{H}}^{s,p}(\mathbb{R}^n)}}\text{,}\quad\text{and}\quad \dot{\mathrm{B}}^{s}_{p,q,0}(\mathbb{R}^n_+)= \overline{\mathrm{C}_c^\infty(\mathbb{R}^n_+)}^{\lVert \cdot \rVert_{\dot{\mathrm{B}}^{s}_{p,q}(\mathbb{R}^n)}}\text{ ;}
\end{align}
    \item  \textbf{duality results, \cite[Propositions~2.27~\&~2.40]{Gaudin2022}:}\\
    for all $p\in(1,+\infty)$, $q\in(1,+\infty]$, $s>-1+\tfrac{1}{p}$, when \eqref{AssumptionCompletenessExponents} is satisfied,
\begin{align}
    (\dot{\mathrm{H}}^{s,p}(\mathbb{R}^n_+))' = &\dot{\mathrm{H}}^{-s,p'}_0(\mathbb{R}^n_+)\text{, }\, (\dot{\mathrm{B}}^{-s}_{p',q'}(\mathbb{R}^n_+))'=\dot{\mathrm{B}}^{s}_{p,q,0}(\mathbb{R}^n_+) \text{, }\\
    (\dot{\mathrm{H}}^{s,p}_0(\mathbb{R}^n_+))' = &\dot{\mathrm{H}}^{-s,p'}(\mathbb{R}^n_+)\text{, }\,(\dot{\mathrm{B}}^{-s}_{p',q',0}(\mathbb{R}^n_+))'=\dot{\mathrm{B}}^{s}_{p,q}(\mathbb{R}^n_+)\text{. }
\end{align}
    
    \item \textbf{interpolation results, \cite[Propositions~2.33~\&~2.39]{Gaudin2022}:} \\
    if $(\dot{\mathfrak{h}},\dot{\mathfrak{b}})\in\{(\dot{\mathrm{H}},\dot{\mathrm{B}}), (\dot{\mathrm{H}}_0,\dot{\mathrm{B}}_{\cdot,\cdot,0})\}$, with $(p,q,q_0,q_1)\in(1,+\infty)\times[1,+\infty]^3$ ($p,p_j\neq 1,+\infty$ is assumed, when dealing with Sobolev (Riesz potential) spaces), $\theta\in(0,1)$, $s_j,s>-1+1/p_j$, $j\in\{0,1\}$, with $s>-1+1/p$, where $s_0,s_1,s$ are three real numbers, with
\begin{align*}
    s= (1-\theta)s_0+ \theta s_1\text{, }
\end{align*}
such that \eqref{AssumptionCompletenessExponents} is satisfied. Then, one has
\begin{align}
    (\dot{\mathfrak{h}}^{s_0,p}(\mathbb{R}^n_+),\dot{\mathfrak{h}}^{s_1,p}(\mathbb{R}^n_+))_{\theta,q}= \dot{\mathfrak{b}}^{s}_{p,q}(\mathbb{R}^n_+)\text{, }&\qquad(\dot{\mathfrak{b}}^{s_0}_{p,q_0}(\mathbb{R}^n_+),\dot{\mathfrak{b}}^{s_1}_{p,q_1}(\mathbb{R}^n_+))_{\theta,q} = \dot{\mathfrak{b}}^{s}_{p,q}(\mathbb{R}^n_+)\text{.}
\end{align}
\end{itemize}

Note that, due to Proposition \ref{prop:SobolevMultiplier}, we have for free the  following equalities of homogeneous Sobolev and Besov spaces, with equivalent norms, for all $p\in(1,+\infty)$, $q\in[1,+\infty]$, $s\in(-1+\frac{1}{p},\frac{1}{p})$,
\begin{align}
    \dot{\mathrm{H}}^{s,p}(\mathbb{R}^n_+) = \dot{\mathrm{H}}^{s,p}_0(\mathbb{R}^n_+)\text{, }\, \dot{\mathrm{B}}^{s}_{p,q}(\mathbb{R}^n_+)=\dot{\mathrm{B}}^{s}_{p,q,0}(\mathbb{R}^n_+) \text{. }
\end{align}

\subsubsection{Operators on Sobolev and Besov spaces}
We introduce domains for an operator $A$ acting on Sobolev or Besov spaces, denoting
\begin{itemize}
    \item $\mathrm{D}^{s}_{p}(A)$ (resp. $\dot{\mathrm{D}}^{s}_{p}(A)$)  its domain on $\mathrm{H}^{s,p}$ (resp. $\dot{\mathrm{H}}^{s,p}$);
    \item $\mathrm{D}^{s}_{p,q}(A)$ (resp. $\dot{\mathrm{D}}^{s}_{p,q}(A)$)  its domain on $\mathrm{B}^{s}_{p,q}$ (resp. $\dot{\mathrm{B}}^{s}_{p,q}$);
    \item $\mathrm{D}_{p}(A) = \mathrm{D}^{0}_{p}(A) = \dot{\mathrm{D}}^{0}_{p}(A)$ its domain on $\mathrm{L}^p$.
\end{itemize}

Similarly, $\mathrm{N}^{s}_{p}(A)$, $\mathrm{N}^{s}_{p,q}(A)$ will stand for its nullspace on $\mathrm{H}^{s,p}$ and $\mathrm{B}^{s}_{p,q}$, and range spaces will be given respectively by $\mathrm{R}^{s}_{p}(A)$ and $\mathrm{R}^{s}_{p,q}(A)$. We replace $\mathrm{N}$ and $\mathrm{R}$ by $\dot{\mathrm{N}}$ and $\dot{\mathrm{R}}$ for their corresponding corresponding sets on homogeneous function spaces. 

If the operator $A$ has different realizations depending on various function spaces and on the considered open set, we may write its domain $\mathrm{D}(A,\Omega)$, and similarly for its nullspace $\mathrm{N}$ and range space $\mathrm{R}$. We omit the open set $\Omega$ if there is no possible confusion.


\section{Hodge Laplacians, Hodge decomposition and Hodge-Stokes operators}

This section is dedicated to the study of Hodge Laplacians, the Hodge decomposition and Hodge-Stokes operators, on Sobolev and Besov spaces on $\mathbb{R}^n$ and $\mathbb{R}^n_+$.

We will first introduce here the formalism of differential forms in the Euclidean setting. Resolvent estimates for the Hodge Laplacian and Hodge-Stokes like operators on the whole space will follow from standard Fourier and Harmonic analysis, from which we will deduce the related Hodge decomposition on $\mathbb{R}^n$ as well as the boundedness of holomorphic functional calculus for each operator.

Secondly, we are going to give all corresponding similar results for the Hodge Laplacians, the Hodge decomposition and Hodge-Stokes operators on the half-space $\mathbb{R}^n_+$. Those results are going to be built from what happens on the whole space $\mathbb{R}^n$, mimicking the behavior of Dirichlet and Neumann Laplacians on the half-space, see \cite[Section~4]{Gaudin2022}.

\subsection{Differential forms on Euclidean space, and corresponding function spaces}

Here $\Omega$ stands for a domain of $\mathbb{R}^n$ with at least, if not empty, Lipschitz boundary. The open set $\Omega$ will be specified later on to be either, the whole space $\mathbb{R}^n$ or the half-space $\mathbb{R}^n_+$. Recall briefly that $\partial\mathbb{R}^n = \emptyset$, and $\partial\mathbb{R}^n_+ = \mathbb{R}^{n-1}\times \{0\}$. We also recall that the outer normal unit at $\partial\mathbb{R}^n_+$ is ${\nu} = -\mathfrak{e}_n$, where $(\mathfrak{e}_k)_{k\in\llb 1, n\rrb}$ is the canonical basis of $\mathbb{R}^n$, identified with its dual basis denoted by $(\mathrm{d}x_{k})_{k\in\llb 1,n\rrb}$, where $\mathrm{d}x_{k}(\mathfrak{e}_j) = \mathbbm{1}_{\{k\}}(j)$, $(k,j)\in\llb 1,n\rrb^2$.

Following \cite{McintoshMonniaux2018,Monniaux2021}, we introduce the \textit{exterior derivative} $\mathrm{d}:=\nabla \wedge=\sum_{k=1}^{n} \partial_{x_k} \mathfrak{e}_{k} \wedge$ and the \textit{interior derivative} (or \textit{coderivative}) $\left.{\delta}:=-\nabla\lrcorner=-\sum_{k=1}^{n} \partial_{x_k} \mathfrak{e}_{k}\right\lrcorner$ acting on differential forms on a domain $\Omega \subset \mathbb{R}^{n}$, \textit{i.e.} acting on functions defined on $\Omega$ which take value the complexified exterior algebra $\Lambda=\Lambda^{0} \oplus \Lambda^{1} \oplus \cdots \oplus \Lambda^{n}$ of $\mathbb{R}^{n}$. We allow us a slight abuse of notation: here we will not distinguish vectors of $\mathbb{R}^n$, vector fields, and $1$-differential forms.

We also recall that for $k\in \llb 0,n\rrb$, $u \in \Lambda^k$ can be uniquely determined by $(u_{I})_{I\in\mathcal{I}^k_n}\in\mathbb{C}^{\binom{n}{k}}$ such that
\begin{align*}
    u = \sum_{I\in\mathcal{I}^k_n} u_{I} \,\mathrm{d}x_{I} \text{, }
\end{align*}
where $\mathcal{I}^k_n=\{ (\ell_j)_{j\in\llb 1,k\rrb}\in\llb 1,n\rrb^k\,|\, \ell_j< \ell_{j+1}\}$, with $\lvert\mathcal{I}^k_n\rvert = {\binom{n}{k}}$, and $u_{I}$ and $\mathrm{~d}x_{I}$ stands respectively for $u_{\ell_1 \ell_2\ldots \ell_k}$ and $\mathrm{~d}x_{\ell_1}\wedge\mathrm{~d}x_{\ell_2}\wedge\ldots\wedge\mathrm{~d}x_{\ell_k}$ whenever $I=(\ell_j)_{j\in\llb 1,k\rrb}$.

One may also notice that such representation of $k$-differential forms with increasing index is possible due to symmetry properties (\textit{i.e.} $\mathrm{~d}x_{\ell}\wedge\mathrm{~d}x_{k} = - \mathrm{~d}x_{k}\wedge\mathrm{~d}x_{\ell}$ for all $k,\ell\in\llb 1,n\rrb$).

In particular, remark that $\Lambda^{0}\simeq \mathbb{C}$, the space of complex scalars, and more generally $\Lambda^k\simeq \mathbb{C}^{{\binom{n}{k}}}$, so that $\Lambda \simeq \mathbb{C}^{2^n}$. We also set $\Lambda^{\ell}=\{0\}$ if $\ell<0$ or $\ell>n$.

On the exterior algebra $\Lambda$, the basic operations are
\begin{enumerate}[label=($\roman*$)]
    \item the exterior product $\wedge: \Lambda^{k} \times \Lambda^{\ell} \rightarrow \Lambda^{k+\ell}$,
    \item the interior product $\lrcorner: \Lambda^{k} \times \Lambda^{\ell} \rightarrow \Lambda^{\ell-k}$,
    \item the Hodge star operator $\star: \Lambda^{\ell} \rightarrow \Lambda^{n-\ell}$,
    \item the inner product $\langle\cdot, \cdot\rangle: \Lambda^{\ell} \times \Lambda^{\ell} \rightarrow \mathbb{C}$.
\end{enumerate}
If $a \in \Lambda^{1}, u \in \Lambda^{\ell}$ and $v \in \Lambda^{\ell+1}$, then
\begin{align*}
\langle a \wedge u, v\rangle=\langle u, a\lrcorner v\rangle.
\end{align*}
For more details, we refer to, e.g., \cite[Section~2]{AxelssonMcIntosh2004} and  \cite[Section~3]{CostabelMcIntosh2010}, noting that both these papers contain some historical background (and being careful that ${\delta}$ has the opposite sign in \cite{AxelssonMcIntosh2004}). One may also consult \cite{DoCarmo1994} for an introduction from the euclidean setting point of view, and \cite[Section~1-3]{Jost2011} for basic and usual properties in the more general Riemannian setting \footnote{Notice that the Riemannian setting presented by Jost deals with compact manifold but a lot of computations remain true in their full generality, due to local behavior of each operation (Hodge star operator, exterior and interior products etc.)}. We recall the relation between $\mathrm{d}$ and ${\delta}$ via the Hodge star operator:
\begin{align*}
    \star {\delta} u=(-1)^{\ell} \mathrm{d}(\star u) \quad\text{ and }\quad \star \mathrm{d} u=(-1)^{\ell-1} {\delta}(\star u) \quad\text{ for an }\ell\text{-form }u.
\end{align*}

In dimension $n=3$, this gives (see \cite{CostabelMcIntosh2010}) for a vector $a \in \mathbb{R}^{3}$ identified with a $1$-form
\begin{itemize}
    \item   $u$ scalar, interpreted as 0 -form: $a \wedge u=u a$, $a\lrcorner u=0$;
    \item   $u$ scalar, interpreted as 3 -form: $a \wedge u=0$, $a\lrcorner u=u a$;
    \item   $u$ vector, interpreted as 1 -form: $a \wedge u=a \times u$, $a\lrcorner u=a \cdot u$;
    \item   $u$ vector, interpreted as 2 -form: $a \wedge u=a \cdot u$, $a\lrcorner u=-a \times u$.
\end{itemize}

From now and until the end of the present paper, if $p\in(1,+\infty)$, $q\in[1,+\infty]$, $s\in\mathbb{R}$, $k\in\llb 0,n \rrb$ and $\mathrm{X}^s\in\{\mathrm{\mathrm{H}}^{s,p},\, \dot{\mathrm{H}}^{s,p},\, \mathrm{B}^{s}_{p,q},\, \dot{\mathrm{B}}^{s}_{p,q}\}$, then $\mathrm{X}^s(\Omega,\Lambda^k)$ stands for $k$-differential forms whose coefficients lies in $\mathrm{X}^s(\Omega)$, \textit{i.e.} $\mathrm{X}^s(\Omega,\Lambda^k)\simeq \mathrm{X}^s(\Omega,\mathbb{C}^{\binom{n}{k}})$. One may also consider similarly $\mathrm{X}^s_0(\Omega,\Lambda^k)$.

Operators $\mathrm{d}$ and ${\delta}$ are differential operators such that $\mathrm{d}^2 = \mathrm{d} \circ \mathrm{d} = 0$ and ${\delta}^2 = {\delta} \circ {\delta} = 0$, and each of them are bounded seen as linear operators $\mathrm{X}^s(\Omega,\Lambda)\longrightarrow \mathrm{X}^{s-1}(\Omega,\Lambda)$.

We recall the following integration by parts formula, for all $u,v\in \eus{S}({\overline{\Omega}},\Lambda)$,
\begin{align}
    \int_{\Omega} \langle \mathrm{d}u, v\rangle = \int_{\Omega} \langle u, {\delta} v\rangle + \int_{\partial\Omega} \langle u, \nu\lrcorner v\rangle \mathrm{~d}\sigma\text{, }\label{eq:IntbyParts1}\\
    \int_{\Omega} \langle {\delta} u, v\rangle = \int_{\Omega} \langle  u, \mathrm{d} v\rangle + \int_{\partial\Omega} \langle u, \nu\wedge v\rangle \mathrm{~d}\sigma\text{, }\label{eq:IntbyParts2}
\end{align}
which are true since we are in the Euclidean setting and where $\nu$ is the outward normal identified as a $1$-form. More generally, for all $T\in \eus{D}'(\Omega,\Lambda^k)\simeq \eus{D}'(\Omega,\mathbb{C}^{\binom{n}{k}})$, $k\in\llb 0,n\rrb$, we define
    \begin{align*}
        \big\langle \mathrm{d}T, \phi \big\rangle_{\Omega} &:= \big\langle T, {\delta}\phi \big\rangle_{\Omega} \text{ for all }\phi \in \mathrm{C}_c^\infty(\Omega,\Lambda^{k+1})\text{, }\\
        \big\langle {\delta}T, \psi \big\rangle_{\Omega} &:= \big\langle T,\mathrm{d}\psi \big\rangle_{\Omega} \text{ for all }\psi \in \mathrm{C}_c^\infty(\Omega,\Lambda^{k-1})\text{. }
\end{align*}

In particular, one may see those operators as unbounded ones and introduce their respective domains on $\mathrm{L}^p(\Omega, \Lambda^{k})$, $k\in\llb 0,n\rrb$, denoted by $\mathrm{D}_p(\mathrm{d},\Lambda^k)$ and $\mathrm{D}_p({\delta},\Lambda^k)$ defined as
\begin{center}
    \resizebox{\textwidth}{!}{
$\mathrm{D}_{p}(\mathrm{d},\Lambda^k):=\left\{u \in \mathrm{L}^p(\Omega,\Lambda^k) \,\left|\, \mathrm{d} u \in \mathrm{L}^p(\Omega,\Lambda^{k+1})\right.\right\} \text { and } \mathrm{D}_{p}({\delta},\Lambda^k):=\left\{u \in \mathrm{L}^p(\Omega, \Lambda^k)\,\left|\, {\delta} u \in \mathrm{L}^p(\Omega, \Lambda^{k-1})\right.\right\}\text{. }$}
\end{center}
We can introduce their corresponding counterparts on homogeneous Sobolev spaces scales, $\dot{\mathrm{D}}^{s}_{p}(\mathrm{d},\Lambda^k)$ on $\dot{\mathrm{H}}^{s,p}$, the same goes for inhomogeneous Sobolev spaces ${\mathrm{D}}^{s}_{p}(\mathrm{d},\Lambda^k)$ on ${\mathrm{H}}^{s,p}$. The same goes with the interior derivative ${\delta}$ instead of $\mathrm{d}$. One may proceed in a similar fashion considering their domains on inhomogeneous and homogeneous Besov spaces.

As an exact sequence of (densely defined but not necessarily closed) unbounded operators, we get:
\begin{center}
    \resizebox{\textwidth}{!}{
$\begin{array}{ccccccccccccccc}
     \mathrm{d}&:& \mathrm{X}^s(\Omega, \Lambda^0) & \overset{}{\longrightarrow} & \mathrm{X}^s(\Omega,\Lambda^1) &\overset{}{\longrightarrow} & \mathrm{X}^s(\Omega,\Lambda^2) & \overset{}{\longrightarrow} & \ldots &\overset{}{\longrightarrow} & \mathrm{X}^s(\Omega,\Lambda^{n-1})& \overset{}{\longrightarrow} & \mathrm{X}^s(\Omega,\Lambda^{n}) & {\longrightarrow} & 0\\
     0 & \longleftarrow & \mathrm{X}^s(\Omega, \Lambda^0) & \overset{}{\longleftarrow} & \mathrm{X}^s(\Omega,\Lambda^1) &\overset{}{\longleftarrow} & \mathrm{X}^s(\Omega,\Lambda^2) & \overset{}{\longleftarrow} & \ldots &\overset{}{\longleftarrow} & \mathrm{X}^s(\Omega,\Lambda^{n-1})& \overset{}{\longleftarrow} & \mathrm{X}^s(\Omega,\Lambda^{n}) & : &{\delta}\text{. }
\end{array}$}
\end{center}
In dimension $n=3$, one can specialized, by the mean of the identification $\Lambda^k\simeq \mathbb{C}^{\binom{3}{k}}$, as
\begin{align*}
\begin{array}{ccccccccccc}
     \mathrm{d}&:& \mathrm{X}^s(\Omega, \mathbb{C}) & \overset{\nabla}{\longrightarrow} & \mathrm{X}^s(\Omega,\mathbb{C}^3) &\overset{\curl}{\longrightarrow} & \mathrm{X}^s(\Omega,\mathbb{C}^3) & \overset{\div}{\longrightarrow} & \mathrm{X}^s(\Omega,\mathbb{C}) & {\longrightarrow} & 0\\
     0 & \longleftarrow & \mathrm{X}^s(\Omega, \mathbb{C}) & \overset{-\div}{\longleftarrow} & \mathrm{X}^s(\Omega,\mathbb{C}^3) &\overset{\curl}{\longleftarrow} & \mathrm{X}^s(\Omega,\mathbb{C}^3)& \overset{-\nabla}{\longleftarrow} & \mathrm{X}^s(\Omega,\mathbb{C}) & : &{\delta}\text{. }
\end{array}
\end{align*}

Thus for arbitrary dimension $n$, the operator $\mathrm{d}$ restricted to its action on $\mathrm{X}^s(\Omega,\Lambda^1)$, with value in $\mathrm{X}^{s-1}(\Omega,\Lambda^2)$, and $\delta$ restricted to its action on $\mathrm{X}^s(\Omega,\Lambda^{n-1})$, with value in $\mathrm{X}^{s-1}(\Omega,\Lambda^{n-2})$, are fair consistent generalizations of the $\curl$ operator on $\mathbb{R}^3$. Since in dimension $n$ higher than $4$, $n-1\neq 2$, we also have to distinguish their dual operators: the operator $\mathrm{d}$ restricted to its action on $\mathrm{X}^s(\Omega,\Lambda^{n-2})$ and the operator $\delta$ restricted to its action on $\mathrm{X}^s(\Omega,\Lambda^{2})$ which are fair consistent generalizations of the dual operator $\prescript{t}{}{\curl}$ (usually fully identified with the $\curl$) on $\mathbb{R}^3$.

We can use \eqref{eq:IntbyParts1} and \eqref{eq:IntbyParts2} to consider adjoints of $\mathrm{d}$ and ${\delta}$ in the sense of maximal adjoint operators in the Hilbert space $\mathrm{L}^2(\Omega,\Lambda)$, so that we will see later, e.g. Lemma \ref{lem:closdenessNdualityDerivativesL2Rn+}, that they have the following exact description of their domains
\begin{align*}
    \mathrm{D}_{2}(\mathrm{d}^\ast,\Lambda^k) = \{\, u\in \mathrm{D}_{2}({\delta},\Lambda^k)\, \,|\, \nu\lrcorner u_{|_{\partial\Omega}} =0\, \} \quad\text{ and }\quad
    \mathrm{D}_{2}({\delta}^\ast,\Lambda^k) = \{\, u\in \mathrm{D}_{2}(\mathrm{d},\Lambda^k)\, \,|\, \nu\wedge u_{|_{\partial\Omega}} =0\, \}\text{. }
\end{align*}
One can also see those adjoints operators through the following $\mathrm{L}^2$-closures of unbounded operators,
\begin{align*}
    (\mathrm{D}_{2}(\mathrm{d}^\ast,\Lambda^k),\mathrm{d}^\ast) = \overline{(\mathrm{C}_c^\infty(\Omega,\Lambda^k),{\delta})}\quad\text{ and }\quad
    (\mathrm{D}_{2}({\delta}^\ast,\Lambda^k),{\delta}^\ast) = \overline{(\mathrm{C}_c^\infty(\Omega,\Lambda^k),\mathrm{d})} \text{. }
\end{align*}

\begin{definition} \begin{enumerate}[label=($\roman*$)]
    \item The \textbf{Hodge-Dirac operator} on $\Omega$ with \textbf{normal boundary conditions} is defined as
    \begin{align*}
        D_\mathfrak{n} := \delta^\ast + \delta \text{.}
    \end{align*}
    Its square denoted by $-\Delta_{\mathcal{H},\mathfrak{n}}:= D_\mathfrak{n}^2 = \delta^\ast\delta + \delta\delta^\ast$, is called the (negative) \textbf{Hodge Laplacian} with \textbf{relative boundary conditions} (also called \textbf{generalised Dirichlet boundary conditions})
    \begin{align*}
        \nu\wedge u _{|_{\partial \Omega}} = 0 \text{, and } \nu\wedge \delta u _{|_{\partial \Omega}} = 0\text{. }
    \end{align*}
    The restriction to scalar functions $u\,:\,\Omega\longrightarrow\Lambda^0$ gives $-\Delta_{\mathcal{H},\mathfrak{n}}u = \delta\delta^\ast u = -\Delta_{\mathcal{D}}u$, where $-\Delta_{\mathcal{D}}$ is the Dirichlet Laplacian.
    \item The \textbf{Hodge-Dirac operator} on $\Omega$ with \textbf{tangential boundary conditions} is defined as
    \begin{align*}
        D_\mathfrak{t} := \mathrm{d}+\mathrm{d}^\ast \text{.}
    \end{align*}
    Its square denoted by $-\Delta_{\mathcal{H},\mathfrak{t}}:= D_\mathfrak{t}^2 = \mathrm{d}\mathrm{d}^\ast + \mathrm{d}^\ast\mathrm{d}$, is called the (negative) \textbf{Hodge Laplacian} with \textbf{absolute boundary conditions} (also called \textbf{generalised Neumann boundary conditions})
    \begin{align*}
        \nu\iprod u _{|_{\partial \Omega}} = 0 \text{, and } \nu\iprod \mathrm{d} u _{|_{\partial \Omega}} = 0\text{. }
    \end{align*}
    The restriction to scalar functions $u\,:\,\Omega\longrightarrow\Lambda^0$ gives $-\Delta_{\mathcal{H},\mathfrak{t}}u = \mathrm{d}^\ast\mathrm{d} u = -\Delta_{\mathcal{N}}u$,  where $-\Delta_{\mathcal{N}}$ is the Neumann Laplacian.
\end{enumerate}
\end{definition}

\begin{notations}When it does not matter $(\mathfrak{d}, D_{\cdot},-\Delta_\mathcal{H})$ will stand either for $(\delta, D_\mathfrak{n},-\Delta_{\mathcal{H},\mathfrak{n}})$ or $(\mathrm{d}, D_\mathfrak{t},-\Delta_{\mathcal{H},\mathfrak{t}})$, just writing
\begin{align*}
        -\Delta_\mathcal{H} = D_{\cdot}^2 = \mathfrak{d}\mathfrak{d}^\ast+\mathfrak{d}^\ast\mathfrak{d}\text{. }
\end{align*}
\end{notations}

\begin{remark}\label{rmk:HodgeLaplacianBC}Let's make two independent remarks:
\begin{itemize}
    \item We recall here that, if $\Omega\subset \mathbb{R}^3$ is an open set with, say at least, Lipschitz boundary, one has formally for $u$ with value in $\Lambda^1\simeq \mathbb{C}^3$ or $\Lambda^2\simeq \mathbb{C}^3$,
\begin{align*}
    -\Delta_{\mathcal{H},\mathfrak{n}}u =-\Delta_{\mathcal{H},\mathfrak{t}} u = \curl \curl u - \nabla \div u
\end{align*}
with either one of the following couple of boundary conditions
\begin{align*}
    [ u\cdot\nu_{|_{\partial\Omega}} = 0 \text{, } \nu\times\curl u_{|_{\partial\Omega}} = 0]&\text{ or }[ u\times\nu_{|_{\partial\Omega}} = 0 \text{, } (\div u)\nu_{|_{\partial\Omega}} = 0] \text{. }
\end{align*}
    \item In the case of $\Omega=\mathbb{R}^n$, notice that no boundary value comes in, hence $\mathrm{d}^\ast = \delta$, $\delta^\ast=\mathrm{d}$, so that
\begin{align*}
    D_\cdot=D_\mathfrak{n}=D_\mathfrak{t} = (\mathrm{d}+\delta)\qquad\text{ and }\qquad -\Delta_{\mathcal{H}}=-\Delta_{\mathcal{H},\mathfrak{n}}=-\Delta_{\mathcal{H},\mathfrak{t}}= (\mathrm{d}+\delta)^2= \mathrm{d}\delta+\delta\mathrm{d}\text{. }
\end{align*}
\end{itemize}
\end{remark}

\begin{definition} \begin{enumerate}[label=($\roman*$)]
    \item The (bounded) orthogonal projector defined on $\mathrm{L}^2(\Omega,\Lambda^k)$ onto $\mathrm{N}_2(\mathrm{d}^\ast,\Lambda^k)$ is denoted by $\mathbb{P}$ and called the \textbf{generalized Helmholtz-Leray} (or \textbf{Leray}) \textbf{projector}.
    \item The (bounded) orthogonal projector defined on $\mathrm{L}^2(\Omega,\Lambda^k)$ onto $\mathrm{N}_2(\delta,\Lambda^k)$ is denoted by $\mathbb{Q}$.
    \item For $p\in(1,+\infty)$, $s\in\mathbb{R}$ and $k\in\llb 0,n\rrb$, we say that $\mathrm{H}^{s,p}(\Omega,\Lambda^k)$ admits a \textbf{Hodge decomposition} if $(\mathrm{D}_p^s(\mathrm{d},\Lambda^k),\mathrm{d})$, $(\mathrm{D}_p^s(\delta,\Lambda^k),\delta)$ and their respective adjoints are closable and
    \begin{align*}\tag{$\mathfrak{H}_p^s$}\label{HodgeDecompHsp}
       \mathrm{H}^{s,p}(\Omega,\Lambda^k) &= \mathrm{N}_p^s(\mathfrak{d},\Lambda^k)\oplus \overline{\mathrm{R}_p^s(\mathfrak{d}^\ast,\Lambda^k)}\text{, }\\
        &=   \overline{\mathrm{R}_p^s(\mathfrak{d},\Lambda^k)} \oplus {\mathrm{N}_p^s(\mathfrak{d}^\ast,\Lambda^k)} \text{, }
\end{align*}
holds in the topological sense. We keep the same definition of the Hodge decomposition on other function spaces replacing $({\mathrm{H}}^{s,p},{\mathrm{D}}_p^s,{\mathrm{R}}_p^s,{\mathrm{N}}_p^s)$ by either $(\dot{\mathrm{H}}^{s,p},\dot{\mathrm{D}}_p^s,\dot{\mathrm{R}}_p^s,\dot{\mathrm{N}}_p^s)$, $({\mathrm{B}}^{s}_{p,q},{\mathrm{D}}_{p,q}^s,{\mathrm{R}}_{p,q}^s,{\mathrm{N}}_{p,q}^s)$, or even by $(\dot{\mathrm{B}}^{s}_{p,q},\dot{\mathrm{D}}_{p,q}^s,\dot{\mathrm{R}}_{p,q}^s,\dot{\mathrm{N}}_{p,q}^s)$, with $q\in[1,+\infty]$.
\end{enumerate}
\end{definition}
One can notice that in the case of vector fields (identified with $1$-forms, \textit{i.e.} $\mathrm{L}^2(\Omega,\Lambda^1)\simeq \mathrm{L}^2(\Omega,\mathbb{C}^n)$), we can identify $\mathbb{P}$ as the usual Helmholtz-Leray projector on divergence free vector fields with null tangential trace at the boundary
\begin{align*}
    \mathbb{P}\,:\, \mathrm{L}^2(\Omega,\mathbb{C}^n) \longrightarrow \mathrm{L}^2_{\sigma}(\Omega)=\{\, u\in \mathrm{L}^2(\Omega,\mathbb{C}^n)\,{|}\, \div u = 0\text{, } u\cdot\nu_{|_{\partial\Omega}} =0\,\}\text{. }
\end{align*}
It gives the following classical orthogonal, topological, Hodge decomposition, see \cite[Chapter~2,~Section~2.5]{SohrBook2001},
\begin{align*}
    \mathrm{L}^2(\Omega,\mathbb{C}^n)= \mathrm{L}^2_{\sigma}(\Omega)\stackrel{\perp}{\oplus}\nabla \dot{\mathrm{H}}^1(\Omega,\mathbb{C})\text{, }
\end{align*}
for any sufficiently nice domains $\Omega$, say for instance with uniformly Lipschitz boundary, see \cite[Lemma~2.5.3]{SohrBook2001}.

Before investigating the Hodge decomposition and the functional analytic properties of the Hodge Laplacian on differential forms on function spaces in $\mathbb{R}^n_+$, we want to know a bit more about the whole space case. In next subsection devoted to the whole space, we gather well known facts and results which lack explicit references in the literature to the best of author's knowledge.

\subsection{The case of the whole space}

On the whole space $\mathbb{R}^n$ the action of the Laplacian and the Hodge decomposition for vector fields is well known in the literature on usual spaces as Lebesgue spaces $\mathrm{L}^p(\mathbb{R}^n,\mathbb{C}^n)$, $p\in(1,+\infty)$, and so is the case of inhomogeneous and homogeneous Sobolev and Besov spaces. Our main goal here is to extend and summarize those results with the formalism of differential forms.

To do so, we introduce an extension of the Fourier transform to differential forms whose coefficients lies in the space of complex valued Schwartz functions $\eus{S}(\mathbb{R}^n,\mathbb{C})$, or in the space of tempered distribution $\eus{S}'(\mathbb{R}^n,\mathbb{C})$.
\begin{itemize}[label=$\bullet$]
    \item For all $u\in \mathrm{L}^1(\mathbb{R}^n,\Lambda^k)\simeq \mathrm{L}^1(\mathbb{R}^n,\mathbb{C}^{\binom{n}{k}})$, $k\in\llb 0,n\rrb$, we define
    \begin{align*}
        \eus{F}u:=\sum\limits_{I\in\mathcal{I}^k_n} \eus{F}u_{I} \,\mathrm{d}\xi_I \in \mathrm{C}^0_{0}(\mathbb{R}^n,\Lambda^k)\text{. }
    \end{align*}
    Hence, as in the scalar valued case, the Fourier transform $\eus{F}$ induces a topological automorphism of $\eus{S}(\mathbb{R}^n,\Lambda^k)\simeq \eus{S}(\mathbb{R}^n,\mathbb{C}^{\binom{n}{k}})$.
    \item For $k\in\llb 0,n\rrb$, we write $\eus{S}'(\mathbb{R}^n,\Lambda^k):= (\eus{S}(\mathbb{R}^n,\Lambda^k))'\simeq \eus{S}'(\mathbb{R}^n,\mathbb{C}^{\binom{n}{k}})$. Similarly, the Fourier transform $\eus{F}$ is an automorphism of $\eus{S}'(\mathbb{R}^n,\Lambda^k)$.
    \item For all $T\in \eus{S}'(\mathbb{R}^n,\Lambda^k)$, $k\in\llb 0,n\rrb$, we define
    \begin{align*}
        \big\langle \mathrm{d}T, \phi \big\rangle_{\mathbb{R}^n} &:= \big\langle T, {\delta}\phi \big\rangle_{\mathbb{R}^n} \text{ for all }\phi \in \eus{S}(\mathbb{R}^{n},\Lambda^{k+1})\text{, }\\
        \big\langle {\delta}T, \psi \big\rangle_{\mathbb{R}^n} &:= \big\langle T,\mathrm{d}\psi \big\rangle_{\mathbb{R}^n} \text{ for all }\psi \in \eus{S}(\mathbb{R}^{n},\Lambda^{k-1})\text{. }
    \end{align*}
\end{itemize}

The following lemma is straightforward and fundamental for our analysis.

\begin{lemma}\label{lem:FundamentalLemmaDiffFormsRn}For all $u\in\eus{S}(\mathbb{R}^n,\Lambda^k)$, $k\in\llb 0,n\rrb$, for all $\xi\in\mathbb{R}^n$,
\begin{align*}
        \eus{F}[\mathrm{d}u](\xi)=i\xi\wedge \eus{F}u(\xi) \quad\text{ and }\quad \eus{F}[\delta u](\xi)=-i\xi\iprod \eus{F}u(\xi) \text{.}
\end{align*}
\end{lemma}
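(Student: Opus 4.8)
The plan is to reduce both identities to the scalar case, exploiting the fact that $\mathrm{d}$ and $\delta$ are built from partial derivatives followed by a constant-coefficient algebraic operation on the form part, whereas the Fourier transform on differential forms (as defined above) acts coefficient-wise on the scalar components $u_I$. Since the exterior and interior products with the \emph{fixed} basis covectors $\mathrm{d}x_j$ do not involve the space variable, they commute with $\eus{F}$, and the only analytic input needed is the elementary scalar identity $\eus{F}[\partial_{x_j}f](\xi)=i\xi_j\,\eus{F}f(\xi)$ valid for $f\in\eus{S}(\mathbb{R}^n,\mathbb{C})$.

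Concretely, I would first write $u=\sum_{I\in\mathcal{I}^k_n}u_I\,\mathrm{d}x_I$ with $u_I\in\eus{S}(\mathbb{R}^n,\mathbb{C})$, and use the definition $\mathrm{d}=\sum_{j=1}^{n}\partial_{x_j}\,\mathfrak{e}_j\wedge$ (with $\mathfrak{e}_j$ identified with $\mathrm{d}x_j$) to get
\begin{align*}
    \mathrm{d}u = \sum_{j=1}^{n}\sum_{I\in\mathcal{I}^k_n}(\partial_{x_j}u_I)\,\mathrm{d}x_j\wedge\mathrm{d}x_I \text{.}
\end{align*}
Applying $\eus{F}$ componentwise, pulling the constant covectors $\mathrm{d}x_j$ out of the transform, and invoking the scalar symbol identity gives
\begin{align*}
    \eus{F}[\mathrm{d}u](\xi)
    = \sum_{j=1}^{n}\sum_{I\in\mathcal{I}^k_n} i\xi_j\,\eus{F}u_I(\xi)\,\mathrm{d}x_j\wedge\mathrm{d}x_I
    = \Big(\sum_{j=1}^{n}i\xi_j\,\mathrm{d}x_j\Big)\wedge\sum_{I\in\mathcal{I}^k_n}\eus{F}u_I(\xi)\,\mathrm{d}x_I \text{.}
\end{align*}
Recognizing $\sum_{j}\xi_j\,\mathrm{d}x_j$ as the $1$-form associated with $\xi$ and the second factor as $\eus{F}u(\xi)$, this is exactly $i\xi\wedge\eus{F}u(\xi)$, as claimed.

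The second identity follows by the identical argument with $\delta=-\sum_{j=1}^{n}\partial_{x_j}\,\mathfrak{e}_j\iprod$ in place of $\mathrm{d}$: the interior product $\mathfrak{e}_j\iprod$ is again a constant algebraic operation that passes through $\eus{F}$, and the global minus sign in the definition of $\delta$ produces the prefactor $-i$, yielding $\eus{F}[\delta u](\xi)=-i\xi\iprod\eus{F}u(\xi)$. I do not expect any genuine obstacle here; the statement is ``straightforward'' precisely because the form-valued Fourier transform decouples into scalar transforms on the coefficients, and the only bookkeeping is to verify that $\wedge$ and $\iprod$ against the fixed basis $(\mathrm{d}x_j)_{j}$ commute with $\eus{F}$ while each $\partial_{x_j}$ contributes the symbol $i\xi_j$. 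The mild point of care is purely notational, namely keeping the identification of $\xi\in\mathbb{R}^n$ with its associated $1$-form consistent throughout.
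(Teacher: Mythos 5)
Your proof is correct and is exactly the standard coefficient-wise computation the paper has in mind: the paper gives no proof at all, dismissing the lemma as "straightforward," and your argument (Fourier transform acting on the scalar coefficients, $\partial_{x_j}\mapsto i\xi_j$, constant wedge/interior products commuting with $\eus{F}$, minus sign from the definition of $\delta$) supplies precisely the missing details with the correct signs for the paper's convention $\eus{F}f(\xi)=\int f(x)e^{-ix\cdot\xi}\,\mathrm{d}x$.
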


\begin{remark}This is somewhat consistent, when $n=3$, with formulas like
\begin{align*}
    \eus{F}[\curl u](\xi) = i\xi\times\eus{F}u(\xi)\,\text{ and }\,\eus{F}[\div u](\xi) = i\xi\cdot\eus{F}u(\xi)\text{, } u\in\eus{S}(\mathbb{R}^3,\mathbb{C}^3)\text{, } \xi \in\mathbb{R}^3\text{. }
\end{align*}
\end{remark}

From there, the identity for all differential forms $u$ of degree $k$, and all vector $v\in\mathbb{R}^n$, 
\begin{align*}
    v\wedge(v\iprod u) + v\iprod(v\wedge u) = \lvert v\rvert^2 \, u\text{, }
\end{align*}
with the use of Lemma \ref{lem:FundamentalLemmaDiffFormsRn} yields with Remark \ref{rmk:HodgeLaplacianBC} that, for all $u\in\eus{S}'(\mathbb{R}^n,\Lambda^k)$, $k\in\llb 0,n\rrb$,
\begin{align*}
    \eus{F}[-\Delta_{\mathcal{H}} u](\xi)=\eus{F}[(\delta\mathrm{d} + \mathrm{d}\delta) u](\xi) = i\xi\wedge(-i\xi\iprod u) + -i\xi\iprod(i\xi\wedge u) = \lvert \xi\rvert^2 \cdot \eus{F}u(\xi) = \eus{F}[-\Delta u](\xi)\text{. }
\end{align*}
Hence, the Hodge Laplacian on the whole space is nothing but the scalar Laplacian applied separately to each component of a differential form so that its properties are carried over by the scalar Laplacian. We state then a very well known result adapted to our setting.

\begin{theorem}\label{thm:MetaThm1LaplacianRn}Let $p,\Tilde{p}\in(1,+\infty)$, $q,\Tilde{q}\in[1,+\infty]$, $s,\alpha\in\mathbb{R}$, and $k\in\llb 0,n\rrb$. The Hodge Laplacian is an injective operator on $\eus{S}'_h(\mathbb{R}^n,\Lambda^k)$, and satisfies the following properties
\begin{enumerate}[label=($\roman*$)]
    \item For $f\in \eus{S}'_h(\mathbb{R}^n,\Lambda^k)$, consider the problem
    \begin{align*}
         - \Delta u = f\quad\text{ in } \mathbb{R}^n\text{.}
    \end{align*}
        \begin{enumerate}
        \item If $f\in [\dot{\mathrm{H}}^{s,p}\cap \dot{\mathrm{H}}^{\alpha,\tilde{p}}](\mathbb{R}^n,\Lambda^k)$, and $(\mathcal{C}_{s+2,p})$ is satisfied, then there exists a unique solution $u\in [\dot{\mathrm{H}}^{s+2,p}\cap \dot{\mathrm{H}}^{\alpha+2,\tilde{p}}](\mathbb{R}^n,\Lambda^k)$ with estimates,
        \begin{align*}
            \lVert \mathrm{d}\delta u\rVert_{\dot{\mathrm{H}}^{s,p}(\mathbb{R}^n)} + \lVert \delta\mathrm{d} u\rVert_{\dot{\mathrm{H}}^{s,p}(\mathbb{R}^n)} \lesssim_{p,n,s} \lVert \nabla^2 u\rVert_{\dot{\mathrm{H}}^{s,p}(\mathbb{R}^n)} &\lesssim_{p,n,s} \lVert f\rVert_{\dot{\mathrm{H}}^{s,p}(\mathbb{R}^n)}\text{, }\\
            \lVert \mathrm{d}\delta u\rVert_{\dot{\mathrm{H}}^{\alpha,\tilde{p}}(\mathbb{R}^n)} + \lVert \delta\mathrm{d} u\rVert_{\dot{\mathrm{H}}^{\alpha,\tilde{p}}(\mathbb{R}^n)} \lesssim_{\tilde{p},n,\alpha} \lVert \nabla^2 u\rVert_{\dot{\mathrm{H}}^{\alpha,\tilde{p}}(\mathbb{R}^n)} &\lesssim_{\tilde{p},n,\alpha} \lVert f\rVert_{\dot{\mathrm{H}}^{\alpha,\tilde{p}}(\mathbb{R}^n)} \text{. }
        \end{align*}
        In particular, $-\Delta\,:\,[\dot{\mathrm{H}}^{s+2,p}\cap \dot{\mathrm{H}}^{\alpha+2,\tilde{p}}](\mathbb{R}^n,\Lambda^k)\longrightarrow [\dot{\mathrm{H}}^{s,p}\cap \dot{\mathrm{H}}^{\alpha,\tilde{p}}](\mathbb{R}^n,\Lambda^k)$ is an isomorphism of Banach spaces.
        
        \item The result still holds if we replace $( \dot{\mathrm{H}}^{s,p},\dot{\mathrm{H}}^{\alpha,\tilde{p}},\dot{\mathrm{H}}^{s+2,p},\dot{\mathrm{H}}^{\alpha+2,\tilde{p}})$ by $(\dot{\mathrm{B}}^{s}_{p,q}, \dot{\mathrm{B}}^{\alpha}_{\tilde{p},\tilde{q}},\dot{\mathrm{B}}^{s+2}_{p,q}, \dot{\mathrm{B}}^{\alpha+2}_{\tilde{p},\tilde{q}})$.
        \end{enumerate}
    \item For $\mu\in[0,\pi)$, $\lambda\in\Sigma_\mu$, $f\in \eus{S}'(\mathbb{R}^n,\Lambda^k)$, consider the problem
    \begin{align*}
        \lambda u - \Delta u = f\quad\text{ in } \mathbb{R}^n\text{.}
    \end{align*}
        \begin{enumerate}
        \item If $f\in \dot{\mathrm{H}}^{s,p}(\mathbb{R}^n,\Lambda^k)$, and $(\mathcal{C}_{s,p})$ is satisfied, then above resolvent problem admits a unique solution $u\in [\dot{\mathrm{H}}^{s,p}\cap \dot{\mathrm{H}}^{s+2,p}](\mathbb{R}^n,\Lambda^k)$ with estimates,
        \begin{align*}
            \lvert\lambda\rvert\lVert  u\rVert_{\dot{\mathrm{H}}^{s,p}(\mathbb{R}^n)}+\lvert\lambda\rvert^\frac{1}{2}\lVert \nabla u\rVert_{\dot{\mathrm{H}}^{s,p}(\mathbb{R}^n)}+\lVert \nabla^2 u\rVert_{\dot{\mathrm{H}}^{s,p}(\mathbb{R}^n)} &\lesssim_{p,n,s,\mu} \lVert f\rVert_{\dot{\mathrm{H}}^{s,p}(\mathbb{R}^n)} \text{, }\\
            \lvert\lambda\rvert^\frac{1}{2}\lVert (\mathrm{d}+\delta) u\rVert_{\dot{\mathrm{H}}^{s,p}(\mathbb{R}^n)}+\lVert \mathrm{d}\delta u\rVert_{\dot{\mathrm{H}}^{s,p}(\mathbb{R}^n)} + \lVert \delta\mathrm{d} u\rVert_{\dot{\mathrm{H}}^{s,p}(\mathbb{R}^n)} &\lesssim_{p,n,s,\mu} \lVert f\rVert_{\dot{\mathrm{H}}^{s,p}(\mathbb{R}^n)} \text{. }
        \end{align*}
        In particular, $\lambda \mathrm{I}-\Delta\,:\,[\dot{\mathrm{H}}^{s,p}\cap \dot{\mathrm{H}}^{s+2,{p}}](\mathbb{R}^n,\Lambda^k)\longrightarrow \dot{\mathrm{H}}^{s,p}(\mathbb{R}^n,\Lambda^k)$ is an isomorphism of Banach spaces.
        
        Furthermore, the result still holds replacing $(\dot{\mathrm{H}}^{s,p},\dot{\mathrm{H}}^{s,p}\cap \dot{\mathrm{H}}^{s+2,{p}})$ by $({\mathrm{H}}^{s,p},{\mathrm{H}}^{s+2,p})$ with out any restriction on $(s,p)$.
        
        \item If \eqref{AssumptionCompletenessExponents} is satisfied, then above result still holds replacing $(\dot{\mathrm{H}}^{s,p},\dot{\mathrm{H}}^{s,p}\cap \dot{\mathrm{H}}^{s+2,{p}})$ by $(\dot{\mathrm{B}}^{s}_{p,q},\dot{\mathrm{B}}^{s}_{p,q}\cap \dot{\mathrm{B}}^{s+2}_{p,q})$, or even by $({\mathrm{B}}^{s}_{p,q},{\mathrm{B}}^{s+2}_{p,q})$ with out any restriction on $(s,p,q)$.
        \end{enumerate}
        \item For any $\mu\in(0,\pi)$, the operator $-\Delta$ admits a bounded (or $\mathrm{\mathbf{H}}^\infty(\Sigma_\mu)$-)holormophic functional calculus on function spaces: $[\dot{\mathrm{H}}^{s,p}\cap \dot{\mathrm{H}}^{\alpha,\tilde{p}}](\mathbb{R}^n,\Lambda^k)$, $(\mathcal{C}_{s,p})$ being is satisfied, $\dot{\mathrm{B}}^{s}_{p,q}(\mathbb{R}^n,\Lambda^k)$, \eqref{AssumptionCompletenessExponents} being satisfied, and on both ${\mathrm{H}}^{s,p}(\mathbb{R}^n,\Lambda^k)$ and ${\mathrm{B}}^{s}_{p,q}(\mathbb{R}^n,\Lambda^k)$ without any restriction on $(s,p,q)$.
\end{enumerate}
\end{theorem}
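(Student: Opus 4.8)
The plan is to reduce every assertion to a Fourier-multiplier statement about the scalar Laplacian, exploiting the identity $\eus{F}[-\Delta_\mathcal{H} u](\xi) = |\xi|^2\,\eus{F}u(\xi)$ established just above, which shows that the Hodge Laplacian acts as $-\Delta$ on each coefficient $u_I$. Thus it suffices to prove the corresponding scalar statements and read them off componentwise. Injectivity on $\eus{S}'_h(\mathbb{R}^n,\Lambda^k)$ is immediate: a form in the kernel has each coefficient a tempered distribution whose Fourier transform is supported at the origin, hence a polynomial, and $\eus{S}'_h$ contains no nonzero polynomial.

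For $(i)$, I would solve $-\Delta u = f$ explicitly by the multiplier $|\xi|^{-2}$, i.e.\ $u=(-\Delta)^{-1}f$, which by the very definition of the homogeneous spaces via $(-\Delta)^{s/2}$ acting on dyadic blocks is an isometric-type isomorphism $\dot{\mathrm{H}}^{s,p}\to\dot{\mathrm{H}}^{s+2,p}$ (and likewise for Besov), once $(\mathcal{C}_{s+2,p})$ guarantees the target is a genuine normed space. The estimates for $\mathrm{d}\delta u$, $\delta\mathrm{d} u$ and $\nabla^2 u$ then come from noting that $\mathrm{d}\delta(-\Delta)^{-1}$, $\delta\mathrm{d}(-\Delta)^{-1}$ and $\partial_j\partial_k(-\Delta)^{-1}$ all carry symbols that are homogeneous of degree $0$ and smooth on $\mathbb{R}^n\setminus\{0\}$ (respectively $\tfrac{\xi\wedge(\xi\iprod\,\cdot)}{|\xi|^2}$, $\tfrac{\xi\iprod(\xi\wedge\,\cdot)}{|\xi|^2}$ and $-\tfrac{\xi_j\xi_k}{|\xi|^2}$, by Lemma \ref{lem:FundamentalLemmaDiffFormsRn}), hence are Mikhlin--H\"ormander multipliers bounded on $\dot{\mathrm{H}}^{s,p}$ and on $\dot{\mathrm{B}}^{s}_{p,q}$; the two-exponent intersection $[\dot{\mathrm{H}}^{s,p}\cap\dot{\mathrm{H}}^{\alpha,\tilde p}]$ just applies the same multiplier on both scales simultaneously, and uniqueness is inherited from injectivity on $\eus{S}'_h$.

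For $(ii)$, the governing symbol is $m_\lambda(\xi)=(\lambda+|\xi|^2)^{-1}$. The crux is the elementary but essential estimate that, for $\lambda\in\Sigma_\mu$ with $\mu<\pi$, the sum $\lambda+|\xi|^2$ stays in the convex cone $\Sigma_\mu$, so that $|\lambda+|\xi|^2|\gtrsim_\mu |\lambda|+|\xi|^2$; from this, $\lambda m_\lambda$, $|\lambda|^{1/2}\xi_j m_\lambda$, $\xi_j\xi_k m_\lambda$ and the $|\lambda|^{1/2}(\mathrm{d}+\delta)$-type symbols (of size $\sim|\lambda|^{1/2}|\xi|$) all obey Mikhlin bounds $|\partial^\alpha_\xi(\cdot)|\lesssim_{\mu,\alpha}|\xi|^{-|\alpha|}$ \emph{uniformly in} $\lambda$. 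Feeding these into the $\mathrm{L}^p$-, Sobolev- and Besov-multiplier theorems yields the stated resolvent estimates with constants depending only on $\mu$ (and $p,q,s,n$); on the inhomogeneous scale these symbols are smooth and bounded including at $\xi=0$, so no completeness restriction is needed there, whereas on homogeneous spaces $(\mathcal{C}_{s,p})$ (resp.\ \eqref{AssumptionCompletenessExponents}) is invoked precisely to make the spaces normed and complete.

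Part $(iii)$ is then almost a corollary: for $f\in\mathbf{H}^\infty(\Sigma_\mu)$ the operator $f(-\Delta)$ has symbol $\xi\mapsto f(|\xi|^2)$, and Cauchy's estimates on a slightly smaller sector give $|f^{(k)}(z)|\lesssim\lVert f\rVert_\infty|z|^{-k}$, whence $|\partial^\alpha_\xi f(|\xi|^2)|\lesssim_\alpha\lVert f\rVert_\infty|\xi|^{-|\alpha|}$; Mikhlin--H\"ormander again produces $\lVert f(-\Delta)\rVert_{\mathrm{X}\to\mathrm{X}}\lesssim\lVert f\rVert_\infty$ on each space, with any angle $\mu\in(0,\pi)$ admissible since $|\xi|^2\in[0,+\infty)\subset\overline{\Sigma}_\mu$. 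I expect the only genuine subtlety to be bookkeeping rather than analysis: ensuring each multiplier operator is well-defined and range-preserving on the realization $\eus{S}'_h$, that the intersection formulation is consistent because a single Fourier multiplier serves both scales, and that the completeness conditions are inserted exactly where a homogeneous space must be Banach for the isomorphism and functional-calculus claims to make sense.
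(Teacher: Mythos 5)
Your proposal is correct and follows essentially the same route as the paper: the paper likewise reduces everything to the scalar Laplacian acting componentwise via $\eus{F}[-\Delta_{\mathcal{H}}u](\xi)=|\xi|^2\eus{F}u(\xi)$, proves injectivity by exactly your support-at-the-origin/no-polynomials-in-$\eus{S}'_h$ argument, and then disposes of (i), (ii), (iii) by citing \cite{DanchinMucha2015}, \cite{ArendtBattyHieberNeubranker2011} and \cite{bookHaase2006}, whose proofs are the Mikhlin--H\"ormander multiplier estimates you write out explicitly. The only nuance worth recording is that $(\mathcal{C}_{s+2,p})$ is invoked not merely so that the target is a normed space (it always is) but so that the low-frequency part of $\sum_j\eus{F}^{-1}|\xi|^{-2}\eus{F}\dot{\Delta}_jf$ actually converges in $\eus{S}'_h$ and the resulting space is complete, which is the content of the cited lemma.
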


\begin{proof} {\textbf{Step 1 :}} the scalar Laplacian is injective on $\eus{S}'_h(\mathbb{R}^n,\mathbb{C})$. For $f\in\eus{S}'(\mathbb{R}^n,\mathbb{C})$, let $u$, $v\in\eus{S}'_h(\mathbb{R}^n,\mathbb{C})$, such that
\begin{align*}
    \forall \phi\in\eus{S}(\mathbb{R}^n,\mathbb{C}),\,\, \big\langle u, -\Delta\phi \big\rangle_{\mathbb{R}^n} = \big\langle f,\phi \big\rangle_{\mathbb{R}^n} = \big\langle v, -\Delta\phi \big\rangle_{\mathbb{R}^n}\text{.}
\end{align*}
Therefore, it follows that $w:=u-v\in\eus{S}'_h(\mathbb{R}^n,\mathbb{C})$ satisfies
\begin{align*}
    \forall \phi\in\eus{S}(\mathbb{R}^n,\mathbb{C}),\,\, \big\langle w, -\Delta\phi \big\rangle_{\mathbb{R}^n} = 0\text{.}
\end{align*}
Hence, one may apply the Fourier transform, to obtain
\begin{align*}
    \forall \phi\in\eus{S}(\mathbb{R}^n,\mathbb{C}),\,\, \big\langle \eus{F}w, \left\lvert\cdot\right\rvert^2\eus{F}^{-1}\phi\big\rangle_{\mathbb{R}^n} = 0\text{,}
\end{align*}
so in particular, for test function in the  form $\phi:=\eus{F}\left[\frac{\psi}{\left\lvert\cdot\right\rvert^2}\right]$, with $\psi\in \mathrm{C}^\infty_c(\mathbb{R}^n\setminus\{0\},\mathbb{C})$ (notice that one can see that $\eus{F}\left[\frac{\psi}{\left\lvert\cdot\right\rvert^2}\right]\in\eus{S}(\mathbb{R}^n,\mathbb{C})$), we deduce
\begin{align*}
    \forall \psi\in \mathrm{C}_c^\infty(\mathbb{R}^n\setminus\{0\},\mathbb{C}),\,\, \big\langle \eus{F}w, \psi \big\rangle_{\mathbb{R}^n} = 0\text{.}
\end{align*}
Following the proof of \cite[Lemma~3.6]{DanchinHieberMuchaTolk2020}, we conclude that $w=0$ (since $w\in\eus{S}'(\mathbb{R}^n,\mathbb{C})$ and $\eus{S}'_h(\mathbb{R}^n,\mathbb{C})$ does not contain any polynomial), so $u=v$ in $\eus{S}'(\mathbb{R}^n,\mathbb{C})$. 

{\textbf{Step 2 :}} For the point \textit{(i)}, it suffices to follow \cite[Lemma~3.1.1]{DanchinMucha2015}.

For the point \textit{(ii)}, see \cite[Example~3.7.6,~Theorem~3.7.11]{ArendtBattyHieberNeubranker2011}.

For the point \textit{(iii)}, the result on $\mathrm{L}^p(\mathbb{R}^n,\mathbb{C})$ is a consequence of a more general one which is \cite[Proposition~8.3.4]{bookHaase2006}.
\end{proof}

Similarly, thanks again to standard Fourier analysis we can introduce appropriate differential form-valued version of Riesz transforms for the Hodge Laplacian. Their boundedness on appropriate function spaces are again carried over by their scalar analogue and a direct consequence will be an explicit formula for our generalized Leray projector $\mathbb{P}$ on $\mathbb{R}^n$.

To do so, we notice that one can write associated Fourier symbols, thanks to Lemma \ref{lem:FundamentalLemmaDiffFormsRn}, to obtain
\begin{align*}
    \mathrm{d}(-\Delta)^{-\frac{1}{2}} = \sum_{k=1}^n R_k \mathfrak{e}_k\wedge \quad\text{ and }\quad \delta(-\Delta)^{-\frac{1}{2}} = -\sum_{k=1}^n R_k \mathfrak{e}_k\iprod\text{ }
\end{align*}
where for $k\in\llb 1,n\rrb$, $R_k$ is the $k$-th Riesz transform on $\mathbb{R}^n$  given by the Fourier symbol $\xi \mapsto \frac{i\xi_k}{\lvert\xi\rvert}$, which is well known to be bounded on $\mathrm{L}^p(\mathbb{R}^n,\mathbb{C})$, $1<p<+\infty$, see \cite[Chapter~2,~Theorem~1~\&~Chapter~3,~Section~1]{Stein1970}. Therefore, the next proposition follows naturally.

\begin{proposition}\label{prop:RiesTransformHodgeRn}Let $p,\Tilde{p}\in(1,+\infty)$, $q,\Tilde{q}\in[1,+\infty]$, $s,\alpha\in\mathbb{R}$, and let $k\in\llb 0,n\rrb$. The operators
\begin{align*}
    \mathrm{d}(-\Delta)^{-\frac{1}{2}}\text{, } \delta(-\Delta)^{-\frac{1}{2}} \text{, }  \mathrm{d}\delta(-\Delta)^{-1} \text{ and } \delta\mathrm{d}(-\Delta)^{-1}
\end{align*}
are all well defined bounded linear operators,
\begin{itemize}
    \item On $[\dot{\mathrm{H}}^{s,p}\cap \dot{\mathrm{H}}^{\alpha,\tilde{p}}](\mathbb{R}^n,\Lambda)$, and on $[\dot{\mathrm{B}}^{s}_{p,q}\cap \dot{\mathrm{B}}^{\alpha}_{\tilde{p},\tilde{q}}](\mathbb{R}^n,\Lambda)$, \eqref{AssumptionCompletenessExponents} being satisfied, with decoupled estimates,
    \begin{align*}
        \lVert T u \rVert_{\mathrm{X}^s(\mathbb{R}^n)} \lesssim_{p,n,s} \lVert u \rVert_{\mathrm{X}^s(\mathbb{R}^n)} \text{ and } \lVert T u \rVert_{\mathrm{X}^\alpha(\mathbb{R}^n)} \lesssim_{p,n,\alpha} \lVert u \rVert_{\mathrm{X}^\alpha(\mathbb{R}^n)}\text{, }\, u\in [\mathrm{X}^s\cap \mathrm{X}^\alpha](\mathbb{R}^n,\Lambda^k)\text{, }
    \end{align*}
    where the operator $T$ is any of above operators, and $(\mathrm{X}^s, \mathrm{X}^\alpha) \in\{ (\dot{\mathrm{H}}^{s,p},\dot{\mathrm{H}}^{\alpha,\tilde{p}}), (\dot{\mathrm{B}}^{s}_{p,q},\dot{\mathrm{B}}^{\alpha}_{\tilde{p},\tilde{q}})\}$.
    \item On ${\mathrm{H}}^{s,p}(\mathbb{R}^n,\Lambda)$, and on ${\mathrm{B}}^{s}_{p,q}(\mathbb{R}^n,\Lambda)$, without any restriction on $(s,p,q)$.
\end{itemize}
Moreover the following identity holds
\begin{align*}
    (\mathrm{d}(-\Delta)^{-\frac{1}{2}}+ \delta(-\Delta)^{-\frac{1}{2}})^2 = \mathrm{I}\text{.}
\end{align*}
\end{proposition}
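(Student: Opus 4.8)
The plan is to reduce every operator to finitely many scalar Riesz transforms composed with constant algebraic maps on the exterior algebra, and then to transfer the $\mathrm{L}^p$-boundedness of the Riesz transforms to the whole scale of Sobolev and Besov spaces. By the formulas preceding the statement, $\mathrm{d}(-\Delta)^{-\frac{1}{2}} = \sum_{k=1}^n R_k\, \mathfrak{e}_k\wedge$ and $\delta(-\Delta)^{-\frac{1}{2}} = -\sum_{k=1}^n R_k\, \mathfrak{e}_k\iprod$; each $\mathfrak{e}_k\wedge$ and $\mathfrak{e}_k\iprod$ is a constant-coefficient linear map on the finite-dimensional algebra $\Lambda$, acting on the coefficients of a form, hence bounded on $\mathrm{X}^s(\mathbb{R}^n,\Lambda)$ for every scale $\mathrm{X}^s$ with norm depending only on $n$ and $k$. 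It therefore suffices to prove that each scalar Riesz transform $R_k$ is bounded on $\dot{\mathrm{H}}^{s,p}$ and $\dot{\mathrm{B}}^{s}_{p,q}$ in the homogeneous case, and on $\mathrm{H}^{s,p}$ and $\mathrm{B}^{s}_{p,q}$ in the inhomogeneous case.

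First I would record that $R_k$, being a Fourier multiplier, commutes with $(-\Delta)^{\frac{s}{2}}$, with $(\mathrm{I}-\Delta)^{\frac{s}{2}}$, and with each Littlewood--Paley projector $\dot{\Delta}_j$ and $\Delta_j$. For homogeneous Sobolev spaces this yields at once, from the $\mathrm{L}^p$-boundedness of $R_k$,
\begin{align*}
    \lVert R_k u \rVert_{\dot{\mathrm{H}}^{s,p}(\mathbb{R}^n)} = \lVert R_k (-\Delta)^{\frac{s}{2}} u \rVert_{\mathrm{L}^p(\mathbb{R}^n)} \lesssim_{p,n} \lVert (-\Delta)^{\frac{s}{2}} u \rVert_{\mathrm{L}^p(\mathbb{R}^n)} = \lVert u \rVert_{\dot{\mathrm{H}}^{s,p}(\mathbb{R}^n)}\text{.}
\end{align*}
The Besov case follows identically by applying the uniform $\mathrm{L}^p$-bound block by block, $\lVert R_k \dot{\Delta}_j u\rVert_{\mathrm{L}^p} = \lVert \dot{\Delta}_j R_k u\rVert_{\mathrm{L}^p}\lesssim \lVert \dot{\Delta}_j u\rVert_{\mathrm{L}^p}$, and then taking the $\ell^q_s$-norm over $j$; the inhomogeneous scales are treated the same way using only the uniform block-wise $\mathrm{L}^p$-bound, which is why no restriction on $(s,p,q)$ is needed there. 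The decoupled estimates are then immediate, since applying the $\mathrm{X}^s$- and $\mathrm{X}^\alpha$-bounds separately to the same $u\in[\mathrm{X}^s\cap\mathrm{X}^\alpha](\mathbb{R}^n,\Lambda^k)$ produces the two claimed inequalities.

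For the two remaining operators I would use commutativity of Fourier multipliers to write them as products of the operators just treated, namely $\mathrm{d}\delta(-\Delta)^{-1} = [\mathrm{d}(-\Delta)^{-\frac{1}{2}}]\,[\delta(-\Delta)^{-\frac{1}{2}}]$ and $\delta\mathrm{d}(-\Delta)^{-1} = [\delta(-\Delta)^{-\frac{1}{2}}]\,[\mathrm{d}(-\Delta)^{-\frac{1}{2}}]$, which are then bounded on each of the spaces considered, again with decoupled estimates. Finally, for the identity, set $T := \mathrm{d}(-\Delta)^{-\frac{1}{2}} + \delta(-\Delta)^{-\frac{1}{2}}$ and expand $T^2$ on the dense subspace $\eus{S}_0(\mathbb{R}^n,\Lambda^k)$, where all symbolic manipulations are unambiguous. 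Since $\mathrm{d}^2=0$ and $\delta^2=0$, the squared terms vanish and
\begin{align*}
    T^2 = (\mathrm{d}\delta + \delta\mathrm{d})(-\Delta)^{-1} = (-\Delta_{\mathcal{H}})(-\Delta)^{-1} = \mathrm{I}\text{,}
\end{align*}
using Remark \ref{rmk:HodgeLaplacianBC} (that $-\Delta_{\mathcal{H}}=-\Delta$ on $\mathbb{R}^n$) and the injectivity of $-\Delta$ on $\eus{S}'_h$ from Theorem \ref{thm:MetaThm1LaplacianRn}; the identity then extends to the whole space by density and the boundedness obtained above. The norm estimates hold for all admissible indices, the role of \eqref{AssumptionCompletenessExponents} in the homogeneous case being only to ensure that the spaces are complete, so that these are genuinely bounded operators between Banach spaces. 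The one point requiring care is the transfer of $\mathrm{L}^p$-boundedness of $R_k$ to the homogeneous scales, which rests on the fact that $R_k$ preserves $\eus{S}'_h(\mathbb{R}^n)$ and commutes with the homogeneous Littlewood--Paley projectors; given the cited scalar result this is routine bookkeeping rather than a substantial obstacle.
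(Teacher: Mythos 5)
Your proposal is correct and follows exactly the route the paper intends: the paper gives no separate proof, stating only that the proposition ``follows naturally'' from the decomposition $\mathrm{d}(-\Delta)^{-\frac{1}{2}} = \sum_{k} R_k\, \mathfrak{e}_k\wedge$, $\delta(-\Delta)^{-\frac{1}{2}} = -\sum_{k} R_k\, \mathfrak{e}_k\iprod$ and the $\mathrm{L}^p$-boundedness of the scalar Riesz transforms, which is precisely the reduction you carry out. Your write-up merely supplies the routine transfer to the Sobolev and Besov scales via commutation with $(-\Delta)^{\frac{s}{2}}$ and the Littlewood--Paley blocks, together with the symbol computation for $T^2=\mathrm{I}$, all of which is consistent with the paper's definitions.
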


\begin{theorem}\label{thm:HodgeDecompRn}Let $p\in(1,+\infty)$, $q\in[1,+\infty]$, $s\in\mathbb{R}$, and let $k\in\llb 0,n\rrb$. The following hold
\begin{enumerate}
    \item  The following equality is true, whenever $(\mathcal{C}_{s,p})$ is satisfied,
    \begin{align*}
        \dot{\mathrm{N}}^{s}_{p}(\mathrm{d},\mathbb{R}^n,\Lambda^k) = \overline{\dot{\mathrm{R}}^{s}_{p}(\mathrm{d},\mathbb{R}^n,\Lambda^k)}^{\lVert\cdot\rVert_{\dot{\mathrm{H}}^{s,p}(\mathbb{R}^n)}}\text{, }
    \end{align*}
    and still holds replacing $\mathrm{d}$ by $\delta$.
    
    \item The (generalized) Helmholtz-Leray projector $\mathbb{P}\,:\, \dot{\mathrm{H}}^{s,p}(\mathbb{R}^n,\Lambda^k)\longrightarrow \dot{\mathrm{N}}^{s}_{p}(\delta,\mathbb{R}^n,\Lambda^k)$ is well defined and bounded when $(\mathcal{C}_{s,p})$ is satisfied. Moreover the following identity is true
    \begin{align*}
        \mathbb{P} = \mathrm{I} - \mathrm{d}(-\Delta)^{-1}\delta \text{.}
    \end{align*}

    \item The following Hodge decomposition holds whenever $(\mathcal{C}_{s,p})$ is satisfied,
    \begin{align*}
        \dot{\mathrm{H}}^{s,p}(\mathbb{R}^n,\Lambda^k) = \dot{\mathrm{N}}^{s}_{p}(\delta,\mathbb{R}^n,\Lambda^k) \oplus \dot{\mathrm{N}}^{s}_{p}(\mathrm{d},\mathbb{R}^n,\Lambda^k)\text{. }
    \end{align*}
\end{enumerate}
Everything above still holds replacing $(\dot{\mathrm{H}}^{s,p},\dot{\mathrm{N}}^{s}_{p},\dot{\mathrm{R}}^{s}_{p})$ by either $(\dot{\mathrm{B}}^{s}_{p,q},\dot{\mathrm{N}}^{s}_{p,q},\dot{\mathrm{R}}^{s}_{p,q})$, \eqref{AssumptionCompletenessExponents} being satisfied, $({\mathrm{H}}^{s,p},{\mathrm{N}}^{s}_{p},{\mathrm{R}}^{s}_{p})$ or $({\mathrm{B}}^{s}_{p,q},{\mathrm{N}}^{s}_{p,q},{\mathrm{R}}^{s}_{p,q})$ without any restriction on $(s,p,q)$.\\
In case of Besov spaces with $q=+\infty$, the density result of point \textit{(i)} only holds in the weak${}^\ast$ sense.
\end{theorem}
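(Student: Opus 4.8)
The plan is to manufacture the two spectral projectors directly from the zeroth-order operators supplied by Proposition \ref{prop:RiesTransformHodgeRn} and to read off all three assertions from their algebra. Set
\begin{align*}
    \mathbb{P} := \delta\mathrm{d}(-\Delta)^{-1} \quad\text{and}\quad \mathbb{Q} := \mathrm{d}\delta(-\Delta)^{-1},
\end{align*}
both bounded on every space in the statement by Proposition \ref{prop:RiesTransformHodgeRn} (on the homogeneous scale this is exactly where $(\mathcal{C}_{s,p})$, resp. \eqref{AssumptionCompletenessExponents}, enters). Since $\mathrm{d}$ and $\delta$ commute with the Fourier multiplier $(-\Delta)^{-1}$ by Lemma \ref{lem:FundamentalLemmaDiffFormsRn}, and since $-\Delta=\mathrm{d}\delta+\delta\mathrm{d}$ is injective on $\eus{S}'_h$ by Theorem \ref{thm:MetaThm1LaplacianRn}, I would first record the algebraic identities $\mathbb{P}+\mathbb{Q}=(\delta\mathrm{d}+\mathrm{d}\delta)(-\Delta)^{-1}=\mathrm{I}$ together with $\mathbb{P}\mathbb{Q}=\mathbb{Q}\mathbb{P}=0$, the latter because $\mathrm{d}^2=\delta^2=0$. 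Consequently $\mathbb{P}=\mathbb{P}(\mathbb{P}+\mathbb{Q})=\mathbb{P}^2$ and likewise for $\mathbb{Q}$, so both are bounded projectors; moreover $\mathrm{d}(-\Delta)^{-1}\delta=\mathrm{d}\delta(-\Delta)^{-1}=\mathbb{Q}$, whence $\mathbb{P}=\mathrm{I}-\mathrm{d}(-\Delta)^{-1}\delta$, the formula in point \textit{(ii)}.

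Next I would identify the ranges. From $\delta\mathbb{P}=\delta^2\mathrm{d}(-\Delta)^{-1}=0$ one gets $\mathrm{R}(\mathbb{P})\subseteq\dot{\mathrm{N}}^s_p(\delta)$, while $\delta u=0$ forces $\mathbb{Q}u=\mathrm{d}(-\Delta)^{-1}\delta u=0$, i.e. $\mathbb{P}u=u$; hence $\mathrm{R}(\mathbb{P})=\dot{\mathrm{N}}^s_p(\delta)$ and $\mathbb{P}$ is precisely the bounded projection onto this closed subspace, which establishes point \textit{(ii)}. The symmetric argument gives $\mathrm{R}(\mathbb{Q})=\dot{\mathrm{N}}^s_p(\mathrm{d})$. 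The topological splitting $\dot{\mathrm{H}}^{s,p}(\mathbb{R}^n,\Lambda^k)=\mathrm{R}(\mathbb{P})\oplus\mathrm{R}(\mathbb{Q})=\dot{\mathrm{N}}^s_p(\delta)\oplus\dot{\mathrm{N}}^s_p(\mathrm{d})$ of point \textit{(iii)} then follows from $\mathbb{P}+\mathbb{Q}=\mathrm{I}$ and $\mathbb{P}\mathbb{Q}=\mathbb{Q}\mathbb{P}=0$, the intersection being trivial since $u\in\mathrm{R}(\mathbb{P})\cap\mathrm{R}(\mathbb{Q})$ yields $u=\mathbb{P}u=\mathbb{P}\mathbb{Q}u=0$.

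For point \textit{(i)} the inclusion $\overline{\dot{\mathrm{R}}^s_p(\mathrm{d})}\subseteq\dot{\mathrm{N}}^s_p(\mathrm{d})$ is immediate from $\mathrm{d}^2=0$ and closedness of the kernel of the bounded map $\mathrm{d}\colon\dot{\mathrm{H}}^{s,p}\to\dot{\mathrm{H}}^{s-1,p}$. For the converse, take $u\in\dot{\mathrm{N}}^s_p(\mathrm{d})$; then $\mathbb{Q}u=u$, so setting $w:=\delta(-\Delta)^{-1}u=(-\Delta)^{-1/2}\bigl[\delta(-\Delta)^{-1/2}u\bigr]$ and using that $\delta(-\Delta)^{-1/2}$ is bounded on $\dot{\mathrm{H}}^{s,p}$ (Proposition \ref{prop:RiesTransformHodgeRn}) while $(-\Delta)^{-1/2}$ maps $\dot{\mathrm{H}}^{s,p}$ isometrically into $\dot{\mathrm{H}}^{s+1,p}$, I obtain $w\in\dot{\mathrm{H}}^{s+1,p}(\mathbb{R}^n,\Lambda^{k-1})$ with $\mathrm{d}w=\mathbb{Q}u=u$. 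The key subtlety is that $w$ a priori lives one derivative better, in $\dot{\mathrm{H}}^{s+1,p}$ rather than in $\dot{\mathrm{H}}^{s,p}$, so $u$ is not literally in the range but only in its closure: approximating $w$ by $(w_j)\subset\eus{S}_0(\mathbb{R}^n,\Lambda^{k-1})$ in $\dot{\mathrm{H}}^{s+1,p}$-norm and applying the bounded map $\mathrm{d}\colon\dot{\mathrm{H}}^{s+1,p}\to\dot{\mathrm{H}}^{s,p}$ produces $\mathrm{d}w_j\in\dot{\mathrm{R}}^s_p(\mathrm{d})$ with $\mathrm{d}w_j\to u$ in $\dot{\mathrm{H}}^{s,p}$, whence $u\in\overline{\dot{\mathrm{R}}^s_p(\mathrm{d})}$; the case of $\delta$ is symmetric.

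I expect the delicate point to be exactly this final density step on the homogeneous scale: one is forced to place the potential $w$ in the space one order higher and to invoke $\eus{S}_0$ as a dense class, which is what makes the closure genuinely necessary and is precisely where the $q=+\infty$ obstruction surfaces — there $\eus{S}_0$ is only weak$^\ast$-dense in $\dot{\mathrm{B}}^{s}_{p,\infty}$, so point \textit{(i)} degrades to weak$^\ast$ density. The Besov and inhomogeneous statements follow by running the identical argument with the corresponding clauses of Proposition \ref{prop:RiesTransformHodgeRn} and Theorem \ref{thm:MetaThm1LaplacianRn}, the completeness hypothesis \eqref{AssumptionCompletenessExponents} being needed only to secure boundedness of the Riesz-type operators on the homogeneous spaces.
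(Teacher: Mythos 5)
Your argument is correct and rests on the same pillar as the paper's proof, namely the boundedness of the zeroth-order multipliers $\delta\mathrm{d}(-\Delta)^{-1}$ and $\mathrm{d}\delta(-\Delta)^{-1}$ from Proposition \ref{prop:RiesTransformHodgeRn}, but the execution genuinely differs in two places. Where you derive the projector structure purely algebraically ($\mathbb{P}+\mathbb{Q}=\mathrm{I}$ and $\mathbb{P}\mathbb{Q}=\mathbb{Q}\mathbb{P}=0$ from the nilpotency of $\mathrm{d}$ and $\delta$), the paper verifies $\mathbb{P}\dot{\mathrm{H}}^{s,p}\subset\dot{\mathrm{N}}^{s}_{p}(\delta)$ by pairing $\mathbb{P}u$ against $\mathrm{d}v$ for $v\in\eus{S}(\mathbb{R}^n)$ after regularizing with the resolvent $(\lambda\mathrm{I}-\Delta)^{-1}$ and letting $\lambda\to0_+$; and for point \textit{(i)} the paper writes $u=[\mathrm{I}-\mathbb{P}]u=\lim_{\lambda\to0_+}\mathrm{d}(\lambda\mathrm{I}-\Delta)^{-1}\delta u$, whose approximants are manifestly exact, whereas you exhibit an explicit potential $w=\delta(-\Delta)^{-1}u\in\dot{\mathrm{H}}^{s+1,p}$ with $\mathrm{d}w=u$ and approximate $w$ by $\eus{S}_0$. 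Your route yields the slightly stronger information that every element of $\dot{\mathrm{N}}^{s}_{p}(\mathrm{d})$ admits a potential one derivative smoother; the paper's resolvent route avoids the density of $\eus{S}_0$ in the higher-order space and, more importantly, is the version that transfers to the half-space in Theorem \ref{thm:HodgeDecompL2Rn+Full}, where no global Fourier calculus is available. Two points to tighten, neither of which is a genuine gap (the paper glosses the analogous steps equally briefly): first, the theorem's $\mathbb{P}$ is by definition the $\mathrm{L}^2$-orthogonal projector onto $\mathrm{N}_2(\delta)$, so you should add one line identifying $\delta\mathrm{d}(-\Delta)^{-1}$ with it on $\mathrm{L}^2$ --- it is a bounded projection with range $\mathrm{N}_2(\delta)$ and kernel $\overline{\mathrm{R}_2(\mathrm{d})}=\mathrm{N}_2(\delta)^{\perp}$, hence orthogonal; second, commutations such as $\mathrm{d}\delta(-\Delta)^{-1}u=\mathrm{d}(-\Delta)^{-1}\delta u$ for general $u$ (used to get $\mathbb{Q}u=0$ from $\delta u=0$) should be justified on $\eus{S}_0$ and extended by density together with the boundedness of $\mathrm{d}(-\Delta)^{-1}\colon\dot{\mathrm{H}}^{s-1,p}\to\dot{\mathrm{H}}^{s,p}$.
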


\begin{remark}On $\Lambda^1$-valued functions identified with vector fields one recover the usual well known formula, \textit{i.e.}
\begin{align*}
    \mathbb{P}=\mathrm{I}+ \nabla (-\Delta)^{-1}\div \text{.}
\end{align*}
\end{remark}

\begin{proof} {\textbf{Step 1 :}} The orthogonal projector $\mathbb{P}$ is originally defined only as an operator
\begin{align*}
    \mathbb{P}\,:\, \mathrm{L}^2(\mathbb{R}^n,\Lambda^k)\longrightarrow\mathrm{N}_2(\delta,\mathbb{R}^n,\Lambda^k)\text{. }
\end{align*}
We claim that $\mathbb{P}$ is equal to the operator formally given by 
\begin{align*}
    \tilde{\mathbb{P}}:=\mathrm{I} - \mathrm{d}(-\Delta)^{-1}\delta \text{. }
\end{align*}
The proof is standard, and works as in the case of vector fields, and then is left to the reader.

{\textbf{Step 2 :}} Previous step and Proposition \ref{prop:RiesTransformHodgeRn} give that $\mathbb{P}$ is bounded $\dot{\mathrm{H}}^{s,p}(\mathbb{R}^n,\Lambda^k)$.

For $u\in \dot{\mathrm{H}}^{s,p}(\mathbb{R}^n,\Lambda^k)$, $v\in\eus{S}(\mathbb{R}^n)$, we regularize with the resolvent, to compute
\begin{align*}
    \big\langle {\mathbb{P}} u, \mathrm{d}v \big\rangle_{\mathbb{R}^n} &= \lim_{ \lambda \rightarrow 0_+}  \big\langle u,\mathrm{d}v \big\rangle_{\mathbb{R}^n} - \big\langle \mathrm{d}(\lambda \mathrm{I}-\Delta)^{-1}\delta u,\mathrm{d}v \big\rangle_{\mathbb{R}^n}\\
    &= \lim_{ \lambda \rightarrow 0_+}  \big\langle u,\mathrm{d}v \big\rangle_{\mathbb{R}^n} - \big\langle u, (\lambda \mathrm{I}-\Delta)^{-1}\mathrm{d}\delta\mathrm{d}v \big\rangle_{\mathbb{R}^n}\\
    &= \lim_{ \lambda \rightarrow 0_+}  \big\langle u,\mathrm{d}v \big\rangle_{\mathbb{R}^n} - \big\langle u, (\lambda \mathrm{I}-\Delta)^{-1}(-\Delta)\mathrm{d}v \big\rangle_{\mathbb{R}^n}\\
    &= \big\langle u,\mathrm{d}v \big\rangle_{\mathbb{R}^n} - \big\langle u,\mathrm{d}v \big\rangle_{\mathbb{R}^n} =0 \text{. }
\end{align*}
Hence $\mathbb{P}\dot{\mathrm{H}}^{s,p}(\mathbb{R}^n,\Lambda^k)\subset \dot{\mathrm{N}}^{s}_{p}(\delta,\mathbb{R}^n,\Lambda^k)$, and we even have $\mathbb{P}_{|_{\dot{\mathrm{N}}^{s}_{p}(\delta,\mathbb{R}^n,\Lambda^k)}}= \mathrm{I}$, so that
\begin{align*}
    \mathbb{P}\dot{\mathrm{H}}^{s,p}(\mathbb{R}^n,\Lambda^k)= \dot{\mathrm{N}}^{s}_{p}(\delta,\mathbb{R}^n,\Lambda^k)\text{. }
\end{align*}
Similarly, $[\mathrm{I}-\mathbb{P}]\dot{\mathrm{H}}^{s,p}(\mathbb{R}^n,\Lambda^k) \subset \dot{\mathrm{N}}^{s}_{p}(\mathrm{d},\mathbb{R}^n,\Lambda^k)$, and $[\mathrm{I}-\mathbb{P}]_{|_{\dot{\mathrm{N}}^{s}_{p}(\mathrm{d},\mathbb{R}^n,\Lambda^k)}}= \mathrm{I}$, which comes from
\begin{align*}
    [\mathrm{I}-\mathbb{P}]u = \lim_{ \lambda \rightarrow 0_+} \mathrm{d}(\lambda \mathrm{I}-\Delta)^{-1} \delta u \text{, }\, u\in \dot{\mathrm{H}}^{s,p}(\mathbb{R}^n,\Lambda^k)\text{. }
\end{align*}
This also gives $\dot{\mathrm{N}}^{s}_{p}(\mathrm{d},\mathbb{R}^n,\Lambda^k) = \overline{\dot{\mathrm{R}}^{s}_{p}(\mathrm{d},\mathbb{R}^n,\Lambda^k)}^{\lVert\cdot\rVert_{\dot{\mathrm{H}}^{s,p}(\mathbb{R}^n)}}$.

The proof is straightforward the same for other function spaces.
\end{proof}

\subsection{The case of the half-space \texorpdfstring{$\mathbb{R}^n_+$}{Rn+}}

\subsubsection{\texorpdfstring{$\mathrm{L}^2$}{L2}-theory for Hodge Laplacians and the Hodge decomposition}

The following lemma is fundamental for the analysis of the $\mathrm{L}^2$ theory of the Hodge Laplacian when one has an explicit access to the boundary, and moreover several proofs presented here do not depend on the open set $\Omega$, here $\Omega=\mathbb{R}^n_+$, and remain valid as long as integration by parts formulas \eqref{eq:IntbyParts1} and \eqref{eq:IntbyParts2} and partial traces results for vector fields are available.

\begin{lemma}\label{lem:closdenessNdualityDerivativesL2Rn+}Let $k\in\llb 0,n\rrb$. We set
\begin{align*}
    \mathrm{D}_2(\underline{\delta},\mathbb{R}^n_+,\Lambda^{k}) &:=\{\,u\in \mathrm{D}_2(\delta,\mathbb{R}^n_+,\Lambda^{k})\,|\, \nu\iprod u_{|_{\partial\mathbb{R}^n_+}}=0\,\}\text{, } \\
    \mathrm{D}_2(\underline{\mathrm{d}},\mathbb{R}^n_+,\Lambda^{k}) &:= \{\,u\in \mathrm{D}_2(\mathrm{d},\mathbb{R}^n_+,\Lambda^{k})\,|\, \nu\wedge u_{|_{\partial\mathbb{R}^n_+}}=0\,\}\text{. }
\end{align*}
The operator $(\mathrm{D}_2(\mathrm{d},\mathbb{R}^n_+,\Lambda^{k}),\mathrm{d})$ is an unbounded densely defined closed operator, with adjoint
\begin{align}
    (\mathrm{D}_2(\mathrm{d}^\ast,\mathbb{R}^n_+,\Lambda^{k+1}),\mathrm{d}^\ast)=(\mathrm{D}_2(\underline{\delta},\mathbb{R}^n_+,\Lambda^{k+1}),\delta)\text{. }\label{eq:L2dualofextderiv.}
\end{align}
Similarly, $(\mathrm{D}_2(\delta,\mathbb{R}^n_+,\Lambda^{k}),\delta)$ is an unbounded densely defined closed operator, with adjoint
\begin{align}
    (\mathrm{D}_2(\delta^\ast,\mathbb{R}^n_+,\Lambda^{k-1}),\delta^\ast)=(\mathrm{D}_2(\underline{\mathrm{d}},\mathbb{R}^n_+,\Lambda^{k-1}),\mathrm{d})\text{. }\label{eq:L2dualofintderiv.}
\end{align}
\end{lemma}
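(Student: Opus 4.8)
The plan is to establish the four assertions in turn — dense definition, closedness, and the two adjoint identities — the genuine content residing in the identification of the adjoints. First I would dispatch density and closedness. Since $\mathrm{C}_c^\infty(\mathbb{R}^n_+,\Lambda^k)\subset\mathrm{D}_2(\mathrm{d},\mathbb{R}^n_+,\Lambda^k)$ is dense in $\mathrm{L}^2(\mathbb{R}^n_+,\Lambda^k)$, the operator $\mathrm{d}$ is densely defined, and likewise $\delta$. Closedness follows directly from the distributional definition $\langle\mathrm{d}u,\phi\rangle_{\mathbb{R}^n_+}=\langle u,\delta\phi\rangle_{\mathbb{R}^n_+}$: if $u_j\to u$ and $\mathrm{d}u_j\to v$ in $\mathrm{L}^2$, passing to the limit against any $\phi\in\mathrm{C}_c^\infty(\mathbb{R}^n_+,\Lambda^{k+1})$ yields $\mathrm{d}u=v$ distributionally, so $u\in\mathrm{D}_2(\mathrm{d},\mathbb{R}^n_+,\Lambda^k)$ with $\mathrm{d}u=v$; the identical reasoning closes $\delta$.

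Next I would prove \eqref{eq:L2dualofextderiv.} by the two inclusions. For $\supseteq$, take $v\in\mathrm{D}_2(\underline{\delta},\mathbb{R}^n_+,\Lambda^{k+1})$; once \eqref{eq:IntbyParts1} is extended to $u\in\mathrm{D}_2(\mathrm{d},\mathbb{R}^n_+,\Lambda^k)$ and this $v$, it reads $\langle\mathrm{d}u,v\rangle_{\mathbb{R}^n_+}=\langle u,\delta v\rangle_{\mathbb{R}^n_+}+\int_{\partial\mathbb{R}^n_+}\langle u,\nu\iprod v\rangle\,\mathrm{d}\sigma$, and the boundary term vanishes thanks to $\nu\iprod v_{|_{\partial\mathbb{R}^n_+}}=0$, so $v\in\mathrm{D}_2(\mathrm{d}^\ast,\mathbb{R}^n_+,\Lambda^{k+1})$ with $\mathrm{d}^\ast v=\delta v$. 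For $\subseteq$, let $v\in\mathrm{D}_2(\mathrm{d}^\ast,\mathbb{R}^n_+,\Lambda^{k+1})$ with $\mathrm{d}^\ast v=w$; testing $\langle\mathrm{d}u,v\rangle_{\mathbb{R}^n_+}=\langle u,w\rangle_{\mathbb{R}^n_+}$ against $u\in\mathrm{C}_c^\infty(\mathbb{R}^n_+,\Lambda^k)$, where no boundary term arises, gives $\delta v=w$ distributionally, hence $v\in\mathrm{D}_2(\delta,\mathbb{R}^n_+,\Lambda^{k+1})$; feeding this back into \eqref{eq:IntbyParts1} for arbitrary $u\in\mathrm{D}_2(\mathrm{d},\mathbb{R}^n_+,\Lambda^k)$ leaves $\int_{\partial\mathbb{R}^n_+}\langle u,\nu\iprod v\rangle\,\mathrm{d}\sigma=0$ for all such $u$, forcing $\nu\iprod v_{|_{\partial\mathbb{R}^n_+}}=0$. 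The identity \eqref{eq:L2dualofintderiv.} then follows by the symmetric argument with $\mathrm{d}$, $\delta$ and \eqref{eq:IntbyParts1}, \eqref{eq:IntbyParts2} interchanged, or alternatively by transporting the first identity through the Hodge-star relations linking $\mathrm{d}$ with $\delta$ and $\nu\wedge$ with $\nu\iprod$.

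The hard part will be that \eqref{eq:IntbyParts1}–\eqref{eq:IntbyParts2} are stated only for $u,v\in\eus{S}(\overline{\mathbb{R}^n_+},\Lambda)$, whereas elements of $\mathrm{D}_2(\mathrm{d},\mathbb{R}^n_+,\Lambda^k)$ and $\mathrm{D}_2(\delta,\mathbb{R}^n_+,\Lambda^{k+1})$ carry no \emph{a priori} boundary regularity, so the normal trace $\nu\iprod v_{|_{\partial\mathbb{R}^n_+}}$ must first be assigned a rigorous meaning. This is where the partial trace theory of Appendix \ref{Append:TracesofFunctions} enters: for $v\in\mathrm{D}_2(\delta,\mathbb{R}^n_+,\Lambda^{k+1})$ the functional $u\mapsto\langle\mathrm{d}u,v\rangle_{\mathbb{R}^n_+}-\langle u,\delta v\rangle_{\mathbb{R}^n_+}$ extends to a bounded functional on the relevant fractional boundary space, and this extension \emph{is} the definition of $\nu\iprod v_{|_{\partial\mathbb{R}^n_+}}$ as an element of a negative-order trace space; the extended formula \eqref{eq:IntbyParts1} then holds by density of $\eus{S}(\overline{\mathbb{R}^n_+},\Lambda)$ in the graph norm. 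The concluding step of the $\subseteq$ inclusion, deducing $\nu\iprod v_{|_{\partial\mathbb{R}^n_+}}=0$ from the vanishing of the boundary pairing against all admissible $u$, rests on the surjectivity of the tangential trace $u\mapsto u_{|_{\partial\mathbb{R}^n_+}}$ onto that space, again furnished by the Appendix.
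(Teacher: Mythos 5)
Your proposal is correct and follows essentially the same route as the paper: dense definition and closedness are dispatched by distributional arguments, and both adjoint identities are obtained by the two-inclusion argument, testing first against compactly supported smooth forms to identify the distributional action and then invoking the generalized partial-trace/integration-by-parts machinery of Appendix \ref{Append:TracesofFunctions} (Theorem \ref{thm:TracesDifferentialformsInhomSpaces}) to handle the boundary term and force the vanishing of $\nu\iprod v_{|_{\partial\mathbb{R}^n_+}}$. Your identification of the weak trace as the bounded extension of the boundary functional, together with density of $\eus{S}(\overline{\mathbb{R}^n_+},\Lambda)$ in the graph norm, is exactly the mechanism the paper relies on.
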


\begin{proof} Closedness and the fact that both are densely defined is straightforward. We just prove the duality identity \eqref{eq:L2dualofextderiv.}, the proof of \eqref{eq:L2dualofintderiv.} is similar. Let $u\in \mathrm{D}_2(\underline{\delta},\mathbb{R}^n_+,\Lambda^{k})$, then for all $v\in\eus{S}_0(\overline{\mathbb{R}^n_+},\Lambda^k)$, we can use Theorem \ref{thm:TracesDifferentialformsInhomSpaces}, to obtain that
\begin{align*}
    \big\langle v, \delta u \big\rangle_{\mathbb{R}^n_+} = \big\langle \mathrm{d}v, u \big\rangle_{\mathbb{R}^n_+} \text{. }
\end{align*}
Thus, by Cauchy-Schwarz inequality
\begin{align*}
    \big\lvert\big\langle \mathrm{d} v,  u \big\rangle_{\mathbb{R}^n_+}\big\rvert \leqslant \lVert \delta u\rVert_{\mathrm{L}^2(\mathbb{R}^n_+)} \lVert v\rVert_{\mathrm{L}^2(\mathbb{R}^n_+)}\text{. }
\end{align*}
Hence, $v\mapsto \big\langle \mathrm{d} v,  u \big\rangle_{\mathbb{R}^n_+}$ extends uniquely as a bounded linear functional on $\mathrm{L}^2(\mathbb{R}^n_+,\Lambda^{k})$, so that necessarily $(\mathrm{D}_2(\underline{\delta},\mathbb{R}^n_+,\Lambda^{k}),\delta)\subset (\mathrm{D}_2(\mathrm{d}^\ast,\mathbb{R}^n_+,\Lambda^{k+1}),\mathrm{d}^\ast)$.

For the reverse inclusion, let $u\in \mathrm{D}_2(\mathrm{d}^\ast,\mathbb{R}^n_+,\Lambda^{k+1})$, for all $v\in \mathrm{D}_2(\mathrm{d},\mathbb{R}^n_+,\Lambda^{k})$, we have
\begin{align*}
    \big\langle v, \mathrm{d}^{\ast} u \big\rangle_{\mathbb{R}^n_+} = \big\langle \mathrm{d}v, u \big\rangle_{\mathbb{R}^n_+} \text{. }
\end{align*}
In particular, for $v\in\mathrm{C}_c^\infty(\mathbb{R}^n_+,\Lambda^k)$, it yields that
\begin{align*}
    \big\langle v, \mathrm{d}^{\ast} u \big\rangle_{\mathbb{R}^n_+} = \big\langle \mathrm{d}v, u \big\rangle_{\mathbb{R}^n_+} = \big\langle v, \delta u \big\rangle_{\mathbb{R}^n_+} \text{. }
\end{align*}
Hence, $\mathrm{d}^{\ast} u = \delta u$ in $\eus{D}'(\mathbb{R}^n_+,\Lambda^{k})$, then in $\mathrm{L}^2(\mathbb{R}^n_+,\Lambda^{k})$, so that for all $v\in \mathrm{D}_2(\mathrm{d},\mathbb{R}^n_+,\Lambda^{k})$,
\begin{align*}
    \big\langle v, \delta u \big\rangle_{\mathbb{R}^n_+} = \big\langle \mathrm{d}v, u \big\rangle_{\mathbb{R}^n_+} \text{. }
\end{align*}
From above equality, we apply Theorem \ref{thm:TracesDifferentialformsInhomSpaces} to check $\nu\iprod u_{|_{\partial\mathbb{R}^n_+}}=0$ and deduce
\begin{align*}
    (\mathrm{D}_2(\mathrm{d}^\ast,\mathbb{R}^n_+,\Lambda^{k+1}),\mathrm{d}^\ast)\subset(\mathrm{D}_2(\underline{\delta},\mathbb{R}^n_+,\Lambda^{k+1}),\delta)\text{, }
\end{align*}
the proof being therefore complete.
\end{proof}

In particular, since $(\mathrm{D}_2(\mathrm{d}^\ast,\mathbb{R}^n_+,\Lambda^{k}),\mathrm{d}^\ast)$ and $(\mathrm{D}_2(\delta,\mathbb{R}^n_+,\Lambda^{k}),\delta)$ are closed operators, both
\begin{align*}
    \mathrm{N}_2(\delta,\mathbb{R}^n_+,\Lambda^{k}) \,\text{ and }\,\mathrm{N}_2(\mathrm{d}^\ast,\mathbb{R}^n_+,\Lambda^{k})
\end{align*}
are closed subspaces of $\mathrm{L}^2(\mathbb{R}^n_+,\Lambda^{k})$. Thus the following orthogonal projection are well defined and bounded
\begin{align*}
\begin{array}{cll}
     &\mathbb{P}\,:\,\mathrm{L}^2(\mathbb{R}^n_+,\Lambda^{k})\longrightarrow \mathrm{N}_2(\mathrm{d}^\ast,\mathbb{R}^n_+,\Lambda^{k}) \,\text{, }\,& [\mathrm{I}-\mathbb{P}]\,:\,\mathrm{L}^2(\mathbb{R}^n_+,\Lambda^{k})\longrightarrow \overline{\mathrm{R}_2(\mathrm{d},\mathbb{R}^n_+,\Lambda^{k})}\text{, }\\
    &\mathbb{Q}\,:\,\mathrm{L}^2(\mathbb{R}^n_+,\Lambda^{k})\longrightarrow \mathrm{N}_2(\delta,\mathbb{R}^n_+,\Lambda^{k}) \,\text{, }\,& [\mathrm{I}-\mathbb{Q}]\,:\,\mathrm{L}^2(\mathbb{R}^n_+,\Lambda^{k})\longrightarrow \overline{\mathrm{R}_2(\delta^\ast,\mathbb{R}^n_+,\Lambda^{k})}\text{, }
\end{array}
\end{align*}
which induce topological Hodge decomposition
\begin{align*}\tag{$\mathfrak{H}_2$}\label{HodgeDecompL2}
       \mathrm{L}^{2}(\mathbb{R}^n_+,\Lambda^k) &= \overline{\mathrm{R}_2(\mathrm{d},\mathbb{R}^n_+,\Lambda^k)}\overset{\perp}{\oplus} \mathrm{N}_2(\mathrm{d}^\ast,\mathbb{R}^n_+,\Lambda^k) \text{, }\\
        &=   \overline{\mathrm{R}_2(\delta^\ast,\mathbb{R}^n_+,\Lambda^k)} \overset{\perp}{\oplus} {\mathrm{N}_2(\delta,\mathbb{R}^n_+,\Lambda^k)} \text{. }
\end{align*}

\begin{lemma}For $k\in\llb0,n\rrb$, the following Hodge-Dirac operators
\begin{align*}
    (\mathrm{D}_2(D_{\mathfrak{t}},\mathbb{R}^n_+,\Lambda^k),D_{\mathfrak{t}})&= (\mathrm{D}_2({\mathrm{d}},\mathbb{R}^n_+,\Lambda^k)\cap\mathrm{D}_2({\mathrm{d}^\ast},\mathbb{R}^n_+,\Lambda^k),\, \mathrm{d}+\mathrm{d}^\ast) \text{, }\\
    (\mathrm{D}_2(D_{\mathfrak{n}},\mathbb{R}^n_+,\Lambda^k),D_{\mathfrak{n}})&= (\mathrm{D}_2({\delta^\ast},\mathbb{R}^n_+,\Lambda^k)\cap\mathrm{D}_2({\delta},\mathbb{R}^n_+,\Lambda^k),\, \delta^\ast+\delta)\text{, }
\end{align*}
are both densely defined closed operators on $\mathrm{L}^2(\mathbb{R}^n_+,\Lambda^k)$.
\end{lemma}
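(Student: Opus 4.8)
The plan is to establish the two asserted properties — dense definition and closedness — and to do so for $D_{\mathfrak{t}}=\mathrm{d}+\mathrm{d}^\ast$ only, since the argument for $D_{\mathfrak{n}}=\delta^\ast+\delta$ is word-for-word the same after exchanging the roles of $\mathrm{d}$ and $\delta$ through the two adjoint identities \eqref{eq:L2dualofextderiv.} and \eqref{eq:L2dualofintderiv.} of Lemma \ref{lem:closdenessNdualityDerivativesL2Rn+}. Everything rests on that lemma, which already tells us that $\mathrm{d}$ and $\delta$ are closed and densely defined and that $\mathrm{d}^\ast=\delta$ on $\mathrm{D}_2(\underline{\delta},\mathbb{R}^n_+,\Lambda^k)$.

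For the dense definition I would simply check that $\mathrm{C}_c^\infty(\mathbb{R}^n_+,\Lambda^k)\subset \mathrm{D}_2(\mathrm{d},\mathbb{R}^n_+,\Lambda^k)\cap\mathrm{D}_2(\mathrm{d}^\ast,\mathbb{R}^n_+,\Lambda^k)$. Indeed, for such a $u$ both $\mathrm{d}u$ and $\delta u$ are again smooth and compactly supported in the open half-space, hence in $\mathrm{L}^2$; and the boundary condition $\nu\iprod u_{|_{\partial\mathbb{R}^n_+}}=0$ defining $\mathrm{D}_2(\mathrm{d}^\ast,\mathbb{R}^n_+,\Lambda^k)=\mathrm{D}_2(\underline{\delta},\mathbb{R}^n_+,\Lambda^k)$ holds trivially, $u$ vanishing near $\partial\mathbb{R}^n_+$. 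Since $\mathrm{C}_c^\infty(\mathbb{R}^n_+,\Lambda^k)$ is dense in $\mathrm{L}^2(\mathbb{R}^n_+,\Lambda^k)$, density of the domain follows.

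For closedness the one point that is not purely formal — and what I expect to be the crux — is that the graph norm of $D_{\mathfrak{t}}$ coincides with the intersection graph norm $u\mapsto(\lVert u\rVert^2+\lVert \mathrm{d}u\rVert^2+\lVert \mathrm{d}^\ast u\rVert^2)^{1/2}$. This is exactly where the grading is used: for $u$ of pure degree $k$ one has $\mathrm{d}u\in\mathrm{L}^2(\mathbb{R}^n_+,\Lambda^{k+1})$ while $\mathrm{d}^\ast u=\delta u\in\mathrm{L}^2(\mathbb{R}^n_+,\Lambda^{k-1})$, and these lie in orthogonal summands of $\mathrm{L}^2(\mathbb{R}^n_+,\Lambda)$, so that $\lVert D_{\mathfrak{t}}u\rVert^2=\lVert \mathrm{d}u\rVert^2+\lVert \mathrm{d}^\ast u\rVert^2$ with no cross term. (Were the statement read on the full algebra rather than on a single degree, the mixed cross terms would still vanish, since $\langle \mathrm{d}u_{j-1},\mathrm{d}^\ast u_{j+1}\rangle=\langle \mathrm{d}\mathrm{d}u_{j-1},u_{j+1}\rangle=0$ by $\mathrm{d}^2=0$.)

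Finally I would conclude by invoking that $\mathrm{d}$ and $\mathrm{d}^\ast$ are both closed on $\mathrm{L}^2(\mathbb{R}^n_+,\Lambda)$ — this is Lemma \ref{lem:closdenessNdualityDerivativesL2Rn+}, $\mathrm{d}^\ast$ being closed as the adjoint of the densely defined $\mathrm{d}$ — so that $\mathrm{D}_2(\mathrm{d},\mathbb{R}^n_+,\Lambda^k)\cap\mathrm{D}_2(\mathrm{d}^\ast,\mathbb{R}^n_+,\Lambda^k)$ is complete for the intersection graph norm, being the intersection of two spaces each complete for its own graph norm and both continuously embedded in the Hausdorff space $\mathrm{L}^2(\mathbb{R}^n_+,\Lambda)$. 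By the norm identity of the previous paragraph this is precisely the graph norm of $D_{\mathfrak{t}}$, whence $D_{\mathfrak{t}}$ is closed. I anticipate no real obstacle; the only spot demanding care is the orthogonality of the two ranges, which is the structural reason — rather than any analytic estimate — that rescues closedness, the sum of two closed operators being in general not closed.
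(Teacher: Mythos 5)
Your proof is correct, and it reaches the conclusion by a slightly different mechanism than the paper. The paper's proof takes a convergent sequence $u_j\to u$ with $D_{\mathfrak{t}}u_j\to v$, splits the \emph{limit} $v=v_0+v_1$ using the orthogonal $\mathrm{L}^2$ Hodge decomposition $\overline{\mathrm{R}_2(\mathrm{d})}\overset{\perp}{\oplus}\mathrm{N}_2(\mathrm{d}^\ast)$ established just before the lemma, and uses continuity of the associated orthogonal projectors (together with $\mathrm{R}_2(\mathrm{d}^\ast)\subset\mathrm{N}_2(\mathrm{d}^\ast)$, from $(\mathrm{d}^\ast)^2=0$) to conclude that $\mathrm{d}u_j\to v_0$ and $\mathrm{d}^\ast u_j\to v_1$ separately, after which closedness of $\mathrm{d}$ and $\mathrm{d}^\ast$ identifies the limits. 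You instead decouple the \emph{sequence} directly via the degree grading: for $u$ of pure degree $k$, $\mathrm{d}u$ and $\mathrm{d}^\ast u$ live in the orthogonal summands $\Lambda^{k+1}$ and $\Lambda^{k-1}$, so $\lVert D_{\mathfrak{t}}u\rVert^2=\lVert\mathrm{d}u\rVert^2+\lVert\mathrm{d}^\ast u\rVert^2$, and the graph norm of $D_{\mathfrak{t}}$ is the intersection graph norm of the two closed operators $\mathrm{d}$ and $\mathrm{d}^\ast$ — whose common domain is then complete, hence $D_{\mathfrak{t}}$ is closed. The two arguments rest on the same structural fact (an orthogonality that prevents cancellation between $\mathrm{d}u_j$ and $\mathrm{d}^\ast u_j$, which is what rescues the sum of two closed operators), but yours is the more elementary instance of it: it needs only Lemma \ref{lem:closdenessNdualityDerivativesL2Rn+} and the grading, not the Hodge decomposition or the projectors $\mathbb{P}$, $\mathrm{I}-\mathbb{P}$. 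You also verify the dense definition explicitly via $\mathrm{C}_c^\infty(\mathbb{R}^n_+,\Lambda^k)\subset\mathrm{D}_2(\mathrm{d})\cap\mathrm{D}_2(\mathrm{d}^\ast)$, a point the paper asserts without proof; this inclusion is indeed consistent with the paper's earlier identification $(\mathrm{D}_{2}(\mathrm{d}^\ast,\Lambda^k),\mathrm{d}^\ast)=\overline{(\mathrm{C}_c^\infty(\Omega,\Lambda^k),\delta)}$. One small remark: your closing appeal to the grading would need the cross-term computation you sketch in parentheses only if the operator were considered on the full algebra $\Lambda$; for a fixed degree $k$ the orthogonality is immediate, so the main line of your argument is airtight as written.
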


\begin{proof}Let $(u_j)_{j\in\mathbb{N}}\subset \mathrm{D}_2({\mathrm{d}},\mathbb{R}^n_+,\Lambda^k)\cap\mathrm{D}_2({\mathrm{d}^\ast},\mathbb{R}^n_+,\Lambda^k)$, and $(u,v)\in\mathrm{L}^2(\mathbb{R}^n_+, \Lambda^k)\times\mathrm{L}^2(\mathbb{R}^n_+, \Lambda)$ such that it satisfies
\begin{align*}
    u_j \xrightarrow[j\rightarrow +\infty]{}  u \quad\text{ and }\quad D_\mathfrak{t} u_j \xrightarrow[j\rightarrow +\infty]{}  v \quad\text{ in } \mathrm{L}^2(\mathbb{R}^n_+) \text{. }
\end{align*}
By the Hodge decomposition, there exists a unique couple $({v}_{0},{v}_{1})\in \overline{\mathrm{R}_2({\mathrm{d}},\mathbb{R}^n_+,\Lambda)}\times \mathrm{N}_2({\mathrm{d}^\ast},\mathbb{R}^n_+,\Lambda)$ such that $v={v}_{0}+{v}_{1}$.
Since $u_j$ goes to $u$ in $\mathrm{L}^2(\mathbb{R}^n_+,\Lambda^k)$, by continuity of involved projectors, and uniqueness of decomposition, it follows that
\begin{align*}
    \mathrm{d} u_j \xrightarrow[j\rightarrow +\infty]{} v_0 \quad\text{ and }\quad \mathrm{d}^\ast u_j \xrightarrow[j\rightarrow +\infty]{} v_1 \quad\text{ in } \mathrm{L}^2(\mathbb{R}^n_+) \text{. }
\end{align*}
But $(u_j)_{j\in\mathbb{N}}$ converge to $u$ in $\mathrm{L}^2(\mathbb{R}^n_+,\Lambda^k)$, so in particular in distributional sense, thus necessarily $(v_0,v_1)=(\mathrm{d} u,\mathrm{d}^\ast u)$ and $v=D_\mathfrak{t} u$, \textit{ i.e. } $(\mathrm{D}_2(D_{\mathfrak{t}},\mathbb{R}^n_+,\Lambda^k),D_{\mathfrak{t}})$ is closed on $\mathrm{L}^2(\mathbb{R}^n_+,\Lambda^k)$. The proof ends here since one can reproduce all above arguments for $(\mathrm{D}_2(D_{\mathfrak{n}},\mathbb{R}^n_+,\Lambda^k),D_{\mathfrak{n}})$.
\end{proof}

\begin{proposition}\label{prop:HodgeDiracL2Rn+}The Hodge-Dirac operator $(\mathrm{D}_2(D_\cdot,\mathbb{R}^n_+,\Lambda),D_\cdot)$ is an injective self-adjoint $0$-bisectorial operator on $\mathrm{L}^2(\mathbb{R}^n_+,\Lambda)$ so that it satisfies the following bound, for all $\theta\in (0,\frac{\pi}{2})$,
\begin{align}\label{eq:resolvEstDiracL2}
    \forall \mu\in\mathbb{C}\setminus \overline{S}_\theta\text{, } \left\lVert \mu(\mu\mathrm{I}+ D_\cdot)^{-1}\right\rVert_{\mathrm{L}^2(\mathbb{R}^n_+)\rightarrow \mathrm{L}^2(\mathbb{R}^n_+)}\leqslant \frac{1}{\sin(\theta)} \text{.}
\end{align}
Moreover, it admits a bounded ($\mathrm{\mathbf{H}}^\infty(S_\theta)$-)functional calculus on $\mathrm{L}^2(\mathbb{R}^n_+,\Lambda)$ with bound $1$, \textit{i.e.}, for all $f\in\mathrm{\mathbf{H}}^\infty(S_\theta)$, $u\in\mathrm{L}^2(\mathbb{R}^n_+,\Lambda)$,
\begin{align}
    \left\lVert f(D_\cdot)u\right\rVert_{\mathrm{L}^2(\mathbb{R}^n_+)}\leqslant \left\lVert f\right\rVert_{\mathrm{L}^\infty(S_\theta)}\left\lVert u\right\rVert_{\mathrm{L}^2(\mathbb{R}^n_+)}\text{. }
\end{align}
\end{proposition}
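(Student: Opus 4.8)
The plan is to establish the four asserted properties — self-adjointness, injectivity, $0$-bisectoriality together with the resolvent bound \eqref{eq:resolvEstDiracL2}, and the bounded $\mathrm{\mathbf{H}}^\infty(S_\theta)$-calculus of norm $1$ — in that order, deducing the last two from the first two by means of the spectral theorem for self-adjoint operators.

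First I would show that $D_\cdot$ is self-adjoint. By Lemma \ref{lem:closdenessNdualityDerivativesL2Rn+}, $\mathrm{d}^\ast$ is precisely the Hilbert-space adjoint of the closed densely defined operator $\mathrm{d}$ (and $\delta^\ast$ that of $\delta$), and all of $\mathrm{d},\mathrm{d}^\ast,\delta,\delta^\ast$ are nilpotent. For $u,v$ in the common domain $\mathrm{D}_2(D_\mathfrak{t},\mathbb{R}^n_+,\Lambda)=\mathrm{D}_2(\mathrm{d},\mathbb{R}^n_+,\Lambda)\cap\mathrm{D}_2(\mathrm{d}^\ast,\mathbb{R}^n_+,\Lambda)$, the adjoint relations give $\langle D_\mathfrak{t} u,v\rangle_{\mathbb{R}^n_+}=\langle u,D_\mathfrak{t} v\rangle_{\mathbb{R}^n_+}$, so $D_\mathfrak{t}$ is symmetric; the genuine content is the standard fact that $\Gamma+\Gamma^\ast$ is self-adjoint on $\mathrm{D}(\Gamma)\cap\mathrm{D}(\Gamma^\ast)$ whenever $\Gamma$ is closed, densely defined and nilpotent, see e.g. \cite{AxelssonMcIntosh2004,McintoshMonniaux2018}. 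The same reasoning applies verbatim to $D_\mathfrak{n}=\delta^\ast+\delta$. In particular $\sigma(D_\cdot)\subset\mathbb{R}=\overline{S}_0$, which is the spectral inclusion required for $0$-bisectoriality.

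Next, the resolvent bound follows from self-adjointness alone. The spectral theorem yields $\lVert(\mu\mathrm{I}+D_\cdot)^{-1}\rVert_{\mathrm{L}^2(\mathbb{R}^n_+)\rightarrow\mathrm{L}^2(\mathbb{R}^n_+)}\leqslant 1/\mathrm{dist}(-\mu,\sigma(D_\cdot))\leqslant 1/|\mathrm{Im}\,\mu|$ for every $\mu\in\mathbb{C}\setminus\mathbb{R}$. Fixing $\theta\in(0,\tfrac{\pi}{2})$ and $\mu\in\mathbb{C}\setminus\overline{S}_\theta$, the argument of $\mu$ lies in $(\theta,\pi-\theta)\cup(-(\pi-\theta),-\theta)$, so $|\mathrm{Im}\,\mu|=|\mu|\,|\sin(\arg\mu)|\geqslant|\mu|\sin\theta$; hence $\lVert\mu(\mu\mathrm{I}+D_\cdot)^{-1}\rVert\leqslant|\mu|/|\mathrm{Im}\,\mu|\leqslant 1/\sin\theta$, which is exactly \eqref{eq:resolvEstDiracL2} and confirms $0$-bisectoriality.

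I expect injectivity to be the main obstacle, as it is the only genuinely geometric input. By the orthogonal decomposition \eqref{HodgeDecompL2} the subspaces $\overline{\mathrm{R}_2(\mathrm{d},\mathbb{R}^n_+,\Lambda)}$ and $\mathrm{N}_2(\mathrm{d}^\ast,\mathbb{R}^n_+,\Lambda)\supseteq\mathrm{R}_2(\mathrm{d}^\ast,\mathbb{R}^n_+,\Lambda)$ are orthogonal, so $D_\mathfrak{t} u=\mathrm{d} u+\mathrm{d}^\ast u=0$ forces $\mathrm{d} u=0$ and $\mathrm{d}^\ast u=0$ separately, whence $\mathrm{N}_2(D_\mathfrak{t},\mathbb{R}^n_+,\Lambda)=\mathrm{N}_2(\mathrm{d},\mathbb{R}^n_+,\Lambda)\cap\mathrm{N}_2(\mathrm{d}^\ast,\mathbb{R}^n_+,\Lambda)$, the space of $\mathrm{L}^2$ harmonic fields with absolute boundary conditions. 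I would show this space is trivial: such a $u$ satisfies $-\Delta_{\mathcal{H},\mathfrak{t}}u=D_\mathfrak{t}^2u=0$, so each coefficient $u_I$ is a scalar harmonic $\mathrm{L}^2$-function on $\mathbb{R}^n_+$; splitting $u$ into its tangential part and its $\mathrm{d}x_n$-part and reading off the boundary conditions carried by $\nu\iprod u_{|_{\partial\mathbb{R}^n_+}}=0$ together with $\mathrm{d} u=\delta u=0$ shows each coefficient satisfies either a Dirichlet or a Neumann condition at $\partial\mathbb{R}^n_+$. An even/odd reflection across $\{x_n=0\}$ then extends each $u_I$ to a harmonic $\mathrm{L}^2$-function on $\mathbb{R}^n$, which must vanish as its Fourier transform is supported at the origin; equivalently this is the injectivity of the Dirichlet and Neumann Laplacians on $\mathrm{L}^2(\mathbb{R}^n_+)$, via the reflection technique of \cite[Section~4]{Gaudin2022}. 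The case $D_\mathfrak{n}$ is identical with the roles of tangential and normal parts exchanged. Finally, self-adjointness and injectivity deliver the calculus of norm $1$ directly: writing $D_\cdot=\int_{\mathbb{R}}\lambda\,\mathrm{d}E(\lambda)$, injectivity gives $E(\{0\})=0$, so for $f\in\mathrm{\mathbf{H}}^\infty(S_\theta)$ the operator $f(D_\cdot)=\int_{\mathbb{R}\setminus\{0\}}f\,\mathrm{d}E$ is well defined with $\lVert f(D_\cdot)\rVert\leqslant\sup_{\sigma(D_\cdot)\setminus\{0\}}|f|\leqslant\lVert f\rVert_{\mathrm{L}^\infty(S_\theta)}$, since $\sigma(D_\cdot)\setminus\{0\}\subset S_\theta$; a routine verification that this Borel calculus agrees on $\mathrm{D}_2(D_\cdot,\mathbb{R}^n_+,\Lambda)\cap\mathrm{R}_2(D_\cdot,\mathbb{R}^n_+,\Lambda)$ with the Dunford integral \eqref{eq:DunfordintegralFuncCalc} identifies it with the $\mathrm{\mathbf{H}}^\infty(S_\theta)$-calculus, yielding the claimed bound $1$.
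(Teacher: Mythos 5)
Your proposal is correct and follows the same route as the paper, which simply records that the resolvent bound is "usual" for a self-adjoint operator (citing Haase) and that the bounded $\mathrm{\mathbf{H}}^\infty(S_\theta)$-calculus with constant $1$ follows from M${}^{\text{c}}$Intosh's theory; your spectral-theorem derivations of \eqref{eq:resolvEstDiracL2} via $\lvert\mathrm{Im}\,\mu\rvert\geqslant\lvert\mu\rvert\sin\theta$ and of the calculus via $E(\{0\})=0$ are exactly what those citations encapsulate. The one place you go beyond the paper's proof is injectivity, which the paper asserts but does not argue at this point (it is recovered later through $\mathrm{N}(D_\cdot)=\mathrm{N}(D_\cdot^2)=\mathrm{N}(\Delta_{\mathcal H})$ and the reflection identity of Theorem \ref{thm:HodgeLapL2Rn+}); your argument via the orthogonality in \eqref{HodgeDecompL2} reducing to $\mathrm{L}^2$ harmonic fields, killed by even/odd reflection to $\mathbb{R}^n$, is sound and correctly isolates this as the only step that genuinely uses $\Omega=\mathbb{R}^n_+$.
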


\begin{remark}Proposition \ref{prop:HodgeDiracL2Rn+} does not depend on the fact $\Omega = \mathbb{R}^n_+$. See \cite[Section~2]{McintoshMonniaux2018} where the same result is stated for bounded (even weak-)Lipschitz domains.
\end{remark}

\begin{proof}The resolvent bound \eqref{eq:resolvEstDiracL2} is usual since $(\mathrm{D}_2(D_\cdot),D_\cdot)$ is self-adjoint by construction, see \cite[Proposition~C.4.2]{bookHaase2006}.
The fact that it admits a bounded holomorphic functional calculus follows from \cite[Section~10]{McIntosh1986}.
\end{proof}

For $k\in\llb 0,n\rrb$, the Hodge Laplacian $(\mathrm{D}(\Delta_\mathcal{H},\mathbb{R}^n_+,\Lambda^k),-\Delta_\mathcal{H})$ can be realized on $\mathrm{L}^2(\mathbb{R}^n_+,\Lambda^k)$ by the mean of densely defined, symmetric, accretive, continuous, closed, sesquilinear forms on $\mathrm{L}^2(\mathbb{R}^n_+,\Lambda^k)$, for
\begin{align}\label{eq:sesqlinformLaplacianHodge}
    \mathfrak{a}_\mathcal{H}\,:\, \mathrm{D}_2(\mathfrak{a}_\mathcal{H},\Lambda^k)^2\ni(u,v) \longmapsto \int_{\mathbb{R}^n_+}\langle \mathrm{d} u, \overline{\mathrm{d} v}\rangle + \int_{\mathbb{R}^n_+}\langle \delta u, \overline{\delta v}\rangle
\end{align}
with $\mathrm{D}_2(\mathfrak{a}_{\mathcal{H},\mathfrak{t}},\Lambda^k)= \mathrm{D}_2(\mathrm{d},\Lambda^k)\cap\mathrm{D}_2(\mathrm{d}^\ast,\Lambda^k)$, $\mathrm{D}_2(\mathfrak{a}_{\mathcal{H},\mathfrak{n}})= \mathrm{D}_2(\delta^\ast,\Lambda^k)\cap\mathrm{D}_2(\delta,\Lambda^k)$, so that it is easy to see that both are closed, densely defined, non-negative self-adjoint operators on $\mathrm{L}^2(\mathbb{R}^n_+,\Lambda^k)$. See \cite[Chapter~1]{bookOuhabaz2005} for more details about realization of operators via sesquilinear forms on a Hilbert space. The next theorem is a standard consequence.

\begin{theorem}\label{thm:HodgeLapL2Rn+}Let $k\in\llb 0,n\rrb$. The operator $(\mathrm{D}_2(\Delta_\mathcal{H},\mathbb{R}^n_+,\Lambda^k),-\Delta_\mathcal{H})$ is an injective non-negative self-adjoint and $0$-sectorial operator on $\mathrm{L}^2(\mathbb{R}^n_+,\Lambda^k)$, which admits bounded (or $\mathrm{\mathbf{H}}^\infty(\Sigma_\theta)$-) holomorphic functional calculus for all $\theta\in(0,\pi)$.

Moreover, the following hold
\begin{enumerate}
    \item $\mathrm{D}_2(\Delta_\mathcal{H},\mathbb{R}^n_+,\Lambda^k)$ is a closed subspace of $\mathrm{H}^2(\mathbb{R}^n_+,\Lambda)$;
    \item Provided $\mu\in [0,\pi)$, for $\lambda\in\Sigma_\mu$, $f\in\mathrm{L}^2(\mathbb{R}^n_+,\Lambda^k)$, then $u:=(\lambda\mathrm{I}-\Delta_\mathcal{H})^{-1}f $ satisfies
    \begin{align}
        \lvert\lambda\rvert\lVert  u\rVert_{{\mathrm{L}}^{2}(\mathbb{R}^n_+)}+\lvert\lambda\rvert^\frac{1}{2}\lVert D_{\cdot} u\rVert_{{\mathrm{L}}^{2}(\mathbb{R}^n_+)}+\lVert \Delta u\rVert_{{\mathrm{L}}^{2}(\mathbb{R}^n_+)} &\lesssim_{\mu} \lVert f\rVert_{{\mathrm{L}}^{2}(\mathbb{R}^n_+)} \text{ ; }
         \label{eq:ResolvEstHodgeLapL2Rn+} \\ \lvert\lambda\rvert\lVert u\rVert_{{\mathrm{L}}^{2}(\mathbb{R}^n_+)}+\lvert\lambda\rvert^\frac{1}{2}\lVert \nabla u\rVert_{{\mathrm{L}}^{2}(\mathbb{R}^n_+)}+\lVert \nabla^2 u\rVert_{{\mathrm{L}}^{2}(\mathbb{R}^n_+)} &\lesssim_{n,\mu} \lVert f\rVert_{{\mathrm{L}}^{2}(\mathbb{R}^n_+)} \label{eq:H2ResolvEstHodgeLapL2Rn+} \text{ ; }
    \end{align}
    \item The following resolvent identity holds for all $\mu\in [0,\pi)$, $\lambda\in\Sigma_\mu$, $f\in\mathrm{L}^2(\mathbb{R}^n_+,\Lambda^k)$,
    \begin{align*}
        \mathrm{E}_\mathcal{H}(\lambda\mathrm{I}-\Delta_\mathcal{H})^{-1}f = (\lambda\mathrm{I}-\Delta)^{-1}\mathrm{E}_\mathcal{H} f\text{. }
    \end{align*}
\end{enumerate}
\end{theorem}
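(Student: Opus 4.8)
The plan is to extract self-adjointness, sectoriality and the functional calculus from abstract Hilbert-space theory, and to defer the genuinely analytic content — the $\mathrm{H}^2$-regularity — to the whole-space result Theorem \ref{thm:MetaThm1LaplacianRn} through a reflection argument. The sesquilinear form $\mathfrak{a}_\mathcal{H}$ from \eqref{eq:sesqlinformLaplacianHodge} is densely defined, symmetric, non-negative, continuous and closed on $\mathrm{L}^2(\mathbb{R}^n_+,\Lambda^k)$, so the first representation theorem (see \cite[Chapter~1]{bookOuhabaz2005}) yields the associated non-negative self-adjoint operator $-\Delta_\mathcal{H}$. Every non-negative self-adjoint operator on a Hilbert space is $0$-sectorial and admits a bounded $\mathrm{\mathbf{H}}^\infty(\Sigma_\theta)$-holomorphic functional calculus for all $\theta\in(0,\pi)$, by the spectral theorem (compare \cite[Proposition~C.4.2]{bookHaase2006}); this settles all but injectivity. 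For the latter I would use the factorization $-\Delta_\mathcal{H}=D_\cdot^2$: since $D_\cdot$ is injective and self-adjoint by Proposition \ref{prop:HodgeDiracL2Rn+}, one has $\mathrm{N}_2(\Delta_\mathcal{H})=\mathrm{N}_2(D_\cdot^2)=\mathrm{N}_2(D_\cdot)=\{0\}$.

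The first family of resolvent estimates \eqref{eq:ResolvEstHodgeLapL2Rn+} is then a direct consequence of the functional calculus. With $u=(\lambda\mathrm{I}+D_\cdot^2)^{-1}f$, the bounds on $|\lambda|\lVert u\rVert_{\mathrm{L}^2}$, on $|\lambda|^{1/2}\lVert D_\cdot u\rVert_{\mathrm{L}^2}$, and on $\lVert \Delta u\rVert_{\mathrm{L}^2}=\lVert D_\cdot^2 u\rVert_{\mathrm{L}^2}$ reduce to the uniform boundedness of the scalar symbols $\lambda(\lambda+t^2)^{-1}$, $\lambda^{1/2}t(\lambda+t^2)^{-1}$ and $t^2(\lambda+t^2)^{-1}$ for $t\in[0,+\infty)$ and $\lambda\in\Sigma_\mu$, which is elementary.

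The substantive point — and the expected main obstacle — is the passage from $\lVert\Delta u\rVert_{\mathrm{L}^2}$ to the full Hessian $\lVert\nabla^2 u\rVert_{\mathrm{L}^2}$, that is property \textit{(1)} and the second resolvent estimate \eqref{eq:H2ResolvEstHodgeLapL2Rn+}. My plan is to establish property \textit{(3)} first and then read off the $\mathrm{H}^2$-bound from the whole-space theory. I would decompose a $k$-form into its tangential part and its part carrying $\mathrm{d}x_n$, so that the absolute boundary conditions $\nu\iprod u_{|_{\partial\mathbb{R}^n_+}}=0$ and $\nu\iprod\mathrm{d}u_{|_{\partial\mathbb{R}^n_+}}=0$ (respectively the relative ones) translate into a homogeneous Neumann condition on the tangential coefficients and a homogeneous Dirichlet condition on the normal ones; these are precisely the conditions under which even, respectively odd, reflection across $\{x_n=0\}$ commutes with the scalar Laplacian. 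Taking $\mathrm{E}_\mathcal{H}$ to be this component-wise reflection and setting $u=(\lambda\mathrm{I}-\Delta_\mathcal{H})^{-1}f$, I would check that $\mathrm{E}_\mathcal{H}u$ solves $(\lambda\mathrm{I}-\Delta)\mathrm{E}_\mathcal{H}u=\mathrm{E}_\mathcal{H}f$ on the whole of $\mathbb{R}^n$: the reflection is chosen so that both the coefficients and their normal derivatives match across the hyperplane and no distributional layer term survives, the relevant partial traces vanishing by Lemma \ref{lem:closdenessNdualityDerivativesL2Rn+} together with the trace results of the Appendix. Uniqueness in Theorem \ref{thm:MetaThm1LaplacianRn}\textit{(ii)} then forces $\mathrm{E}_\mathcal{H}u=(\lambda\mathrm{I}-\Delta)^{-1}\mathrm{E}_\mathcal{H}f$, which is exactly property \textit{(3)}.

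Since $(\lambda\mathrm{I}-\Delta)^{-1}$ maps $\mathrm{L}^2(\mathbb{R}^n,\Lambda^k)$ into $\mathrm{H}^2(\mathbb{R}^n,\Lambda^k)$ with the estimate of Theorem \ref{thm:MetaThm1LaplacianRn}, restricting $\mathrm{E}_\mathcal{H}u$ back to $\mathbb{R}^n_+$ immediately gives $u\in\mathrm{H}^2(\mathbb{R}^n_+,\Lambda^k)$ and the remaining bound \eqref{eq:H2ResolvEstHodgeLapL2Rn+}; the closedness of $\mathrm{D}_2(\Delta_\mathcal{H},\mathbb{R}^n_+,\Lambda^k)$ as a subspace of $\mathrm{H}^2(\mathbb{R}^n_+,\Lambda^k)$ then follows from this estimate together with the closedness of $-\Delta_\mathcal{H}$. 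The part I expect to require the most care is verifying that the component-wise reflection genuinely annihilates every boundary term, simultaneously for all degrees $k$ and for both the absolute and relative boundary conditions; this is the technical heart on which properties \textit{(1)}--\textit{(3)} all rest.
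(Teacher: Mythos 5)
Your proposal is correct and follows essentially the same route as the paper: self-adjointness, sectoriality and the $\mathrm{\mathbf{H}}^\infty$-calculus come from the form realization and the identity $-\Delta_\mathcal{H}=D_\cdot^2$, the estimate \eqref{eq:ResolvEstHodgeLapL2Rn+} from the functional calculus, and the $\mathrm{H}^2$-regularity from the component-wise even/odd reflection $\mathrm{E}_\mathcal{H}$ combined with Theorem \ref{thm:MetaThm1LaplacianRn}. The only difference is the direction of the reflection argument for point \textit{(iii)}: the paper restricts the whole-space solution $(\lambda\mathrm{I}-\Delta)^{-1}\mathrm{E}_\mathcal{H}f$ and checks it lands in $\mathrm{D}_2(\Delta_\mathcal{H})$, then invokes uniqueness of the half-space resolvent, whereas you extend the half-space solution and invoke whole-space uniqueness; your version requires the commutation $-\Delta\,\mathrm{E}_\mathcal{H}u=\mathrm{E}_\mathcal{H}(-\Delta_\mathcal{H}u)$, which is most safely obtained by applying Lemma \ref{lem:ExtOpRn+DiffFormL2} twice (to $u$ and then to $\mathrm{d}u$, $\mathrm{d}^\ast u$) rather than by matching normal derivatives pointwise, since a priori $u$ is not yet known to be in $\mathrm{H}^2$.
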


\begin{remark}In above Theorem \ref{thm:HodgeLapL2Rn+}, points \textit{(i)} and \textit{(iii)}, as well as the estimate \eqref{eq:H2ResolvEstHodgeLapL2Rn+} of point \textit{(ii)} are the only points that relies on the fact that the considered openset is $\Omega=\mathbb{R}^n_+$, but mainly the point \textit{(iii)} is used to deduce previous ones. 
The beginning of the statement, as well as \eqref{eq:ResolvEstHodgeLapL2Rn+}, does not rely on any particular structure, and remains true on any open set $\Omega$. 

Every other results below, in the present subsection about $\mathrm{L}^2$-theory of Hodge Laplacians and the Hodge decomposition, remain true on general domains $\Omega$ as long as one can show that Hodge Laplacians are injective.
\end{remark}

Before proving Theorem \ref{thm:HodgeLapL2Rn+}, following \cite[Section~4]{Gaudin2022} for $\mathcal{J}\in\{\mathcal{D},\mathcal{N}\}$, we introduce the following extension operator defined for any measurable function $u$ on $\mathbb{R}^n_+$, for almost every $x=(x',x_n)\in\mathbb{R}^{n}$:
\begin{align*}
    \mathrm{E}_\mathcal{D}u (x',x_n) &:= \begin{cases}
  u(x',x_n)\,&\text{, if } (x',x_n)\in\mathbb{R}^{n-1}\times\mathbb{R}_+\text{, }\\    
  -u(x',-x_n)\,&\text{, if } (x',x_n)\in\mathbb{R}^{n-1}\times\mathbb{R}_-^*\text{ ; }
\end{cases}\\
    \mathrm{E}_\mathcal{N}u (x',x_n) &:= \begin{cases}
  u(x',x_n)\,&\text{, if } (x',x_n)\in\mathbb{R}^{n-1}\times\mathbb{R}_+\text{, }\\    
  u(x',-x_n)\,&\text{, if } (x',x_n)\in\mathbb{R}^{n-1}\times\mathbb{R}_-^* \text{. }
\end{cases}
\end{align*}
Now, we precise the definition of extension operators $\mathrm{E}_{\mathcal{H},\mathfrak{j}}$, $\mathfrak{j}\in\{\mathfrak{t},\mathfrak{n}\}$, on measurable functions $u\,:\,\mathbb{R}^n_+\longrightarrow \Lambda^k$, provided $k\in\llb 0,n\rrb$, $I\in\mathcal{I}^{k}_{n}$,
\begin{align}\label{eq:HodgeReflexExtOp}
    (\mathrm{E}_{\mathcal{H},\mathfrak{t}}u)_{I} := \begin{cases}
  \mathrm{E}_{\mathcal{D}}u_{I}\,&\text{, if } n\in I\text{, }\\    
  \mathrm{E}_{\mathcal{N}}u_{I}\,&\text{, if } n\notin I\text{ ; } 
    \end{cases}\, \text{ and }\, (\mathrm{E}_{\mathcal{H},\mathfrak{n}}u)_{I} := \begin{cases}
  \mathrm{E}_{\mathcal{N}}u_{I}\,&\text{, if } n\in I\text{, }\\    
  \mathrm{E}_{\mathcal{D}}u_{I}\,&\text{, if } n\notin I\text{ ; } 
    \end{cases} \text{. }
\end{align}
For $u\,:\,\mathbb{R}^n_+\longrightarrow \Lambda^k$, we also set
\begin{align*}
    \tilde{u}_{\mathfrak{j}} := [\mathrm{E}_{\mathcal{H},\mathfrak{j}} u]_{|_{\mathbb{R}^n_{-}}}\text{. }
\end{align*}

By construction, for $\mathfrak{j}\in\{\mathfrak{t},\mathfrak{n}\}$, $s\in (-1+1/p,1/p)$, $p\in(1,+\infty)$, the Proposition \ref{prop:SobolevMultiplier} leads to boundedness of
\begin{align}
    \mathrm{E}_{\mathcal{H},\mathfrak{j}}\,:\, \dot{\mathrm{H}}^{s,p}(\mathbb{R}^n_+,\Lambda)\longrightarrow \dot{\mathrm{H}}^{s,p}(\mathbb{R}^n,\Lambda)  \text{. }
\end{align}
The same result holds replacing $\dot{\mathrm{H}}^{s,p}$ by either ${\mathrm{H}}^{s,p}$, ${\mathrm{B}}^{s}_{p,q}$, or even by $\dot{\mathrm{B}}^{s}_{p,q}$, $q\in[1,+\infty]$.

\begin{lemma}\label{lem:ExtOpRn+DiffFormL2}For all $u\in\mathrm{D}_{2}(\mathrm{d},\mathbb{R}^n_+,\Lambda^{k})$ (resp. $\mathrm{D}_{2}({\mathrm{d}}^\ast,\mathbb{R}^n_+,\Lambda^{k})$) we have
\begin{align*}
     \mathrm{E}_{\mathcal{H},\mathfrak{t}} u \in  \mathrm{D}_{2}(\mathrm{d},\mathbb{R}^n,\Lambda^{k}) \quad\text{(resp. } \mathrm{D}_{2}(\delta,\mathbb{R}^n,\Lambda^{k})\text{ )}
\end{align*}
with formulas
\begin{align*}
    \mathrm{d} \mathrm{E}_{\mathcal{H},\mathfrak{t}} u =  \mathrm{E}_{\mathcal{H},\mathfrak{t}} \mathrm{d} u \qquad \text{(resp. } \delta \mathrm{E}_{\mathcal{H},\mathfrak{t}} u =  \mathrm{E}_{\mathcal{H},\mathfrak{t}} \mathrm{d}^\ast u \text{).}
\end{align*}
\end{lemma}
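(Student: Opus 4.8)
The plan is to reduce everything to the elementary behaviour of the scalar Dirichlet and Neumann reflections $\mathrm{E}_{\mathcal{D}}$, $\mathrm{E}_{\mathcal{N}}$ under differentiation, and then to exploit the way $\mathrm{E}_{\mathcal{H},\mathfrak{t}}$ is rigged in \eqref{eq:HodgeReflexExtOp} so that the normal derivative only ever meets the ``right'' kind of reflection. First I would recall from \cite[Section~4]{Gaudin2022} the distributional identities for a scalar function $w$ on $\mathbb{R}^n_+$: for a tangential index $j\in\llb 1,n-1\rrb$ one has $\partial_{x_j}\mathrm{E}_{\mathcal{D}}w = \mathrm{E}_{\mathcal{D}}\partial_{x_j}w$ and $\partial_{x_j}\mathrm{E}_{\mathcal{N}}w = \mathrm{E}_{\mathcal{N}}\partial_{x_j}w$, while in the normal direction $\partial_{x_n}\mathrm{E}_{\mathcal{N}}w = \mathrm{E}_{\mathcal{D}}\partial_{x_n}w$ carries no singular part, whereas $\partial_{x_n}\mathrm{E}_{\mathcal{D}}w = \mathrm{E}_{\mathcal{N}}\partial_{x_n}w$ plus a term supported on $\{x_n=0\}$ proportional to the trace $w_{|_{\partial\mathbb{R}^n_+}}$. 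All of these hold in $\eus{D}'(\mathbb{R}^n)$.

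Next I would write $u=\sum_I u_I\,\mathrm{d}x_I$ and compute $\mathrm{d}\mathrm{E}_{\mathcal{H},\mathfrak{t}}u = \sum_{k,I}\partial_{x_k}[(\mathrm{E}_{\mathcal{H},\mathfrak{t}}u)_I]\,\mathrm{d}x_k\wedge\mathrm{d}x_I$ componentwise. The crucial combinatorial remark is that $\mathrm{d}x_n\wedge\mathrm{d}x_I\neq 0$ forces $n\notin I$, so the normal derivative $\partial_{x_n}$ is applied only to components $u_I$ with $n\notin I$; by \eqref{eq:HodgeReflexExtOp} these carry the even reflection $\mathrm{E}_{\mathcal{N}}$, for which $\partial_{x_n}$ produces no singular part, and all remaining derivatives are tangential and commute with both reflections. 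Tracking how $n$ enters the output multi-index $\{k\}\cup I$, one checks that the resulting component is extended by $\mathrm{E}_{\mathcal{D}}$ exactly when the target index contains $n$ and by $\mathrm{E}_{\mathcal{N}}$ otherwise, which is precisely \eqref{eq:HodgeReflexExtOp} applied to $\mathrm{d}u$. Hence $\mathrm{d}\mathrm{E}_{\mathcal{H},\mathfrak{t}}u=\mathrm{E}_{\mathcal{H},\mathfrak{t}}\mathrm{d}u$ in $\eus{D}'(\mathbb{R}^n)$, and since $\mathrm{d}u\in\mathrm{L}^2(\mathbb{R}^n_+,\Lambda^{k+1})$ and $\mathrm{E}_{\mathcal{H},\mathfrak{t}}$ maps $\mathrm{L}^2(\mathbb{R}^n_+)$ boundedly into $\mathrm{L}^2(\mathbb{R}^n)$, the right-hand side lies in $\mathrm{L}^2$, giving $\mathrm{E}_{\mathcal{H},\mathfrak{t}}u\in\mathrm{D}_2(\mathrm{d},\mathbb{R}^n,\Lambda^{k})$. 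Notice no boundary condition on $u$ is used, consistent with the absence of one in $\mathrm{D}_2(\mathrm{d})$.

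For the coderivative, the same computation for $\delta\mathrm{E}_{\mathcal{H},\mathfrak{t}}u=-\sum_{k,I}\partial_{x_k}[(\mathrm{E}_{\mathcal{H},\mathfrak{t}}u)_I]\,\mathfrak{e}_k\iprod\mathrm{d}x_I$ now has $\mathfrak{e}_n\iprod\mathrm{d}x_I\neq 0$ forcing $n\in I$, so the normal derivative lands exactly on the components carrying the odd reflection $\mathrm{E}_{\mathcal{D}}$. Each such term contributes, besides $\mathrm{E}_{\mathcal{N}}\partial_{x_n}u_I$, a boundary distribution proportional to $(u_I)_{|_{\partial\mathbb{R}^n_+}}$ for every $I$ with $n\in I$. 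By Lemma \ref{lem:closdenessNdualityDerivativesL2Rn+}, $u\in\mathrm{D}_2(\mathrm{d}^\ast,\mathbb{R}^n_+,\Lambda^{k})=\mathrm{D}_2(\underline{\delta},\mathbb{R}^n_+,\Lambda^{k})$ means exactly $\nu\iprod u_{|_{\partial\mathbb{R}^n_+}}=0$, and since $\nu=-\mathfrak{e}_n$ this is equivalent to the vanishing of the trace of $u_I$ for every $I$ containing $n$. Thus all the boundary distributions cancel, the tangential terms match the extension types as before, and $\delta\mathrm{E}_{\mathcal{H},\mathfrak{t}}u=\mathrm{E}_{\mathcal{H},\mathfrak{t}}\delta u=\mathrm{E}_{\mathcal{H},\mathfrak{t}}\mathrm{d}^\ast u$ in $\eus{D}'(\mathbb{R}^n)$, lying in $\mathrm{L}^2(\mathbb{R}^n,\Lambda^{k-1})$.

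The main obstacle is the rigorous handling of that boundary distribution: for $u$ merely in $\mathrm{D}_2(\delta)$ the individual traces $(u_I)_{|_{\partial\mathbb{R}^n_+}}$ need not exist classically, so the normal-derivative formula for $\mathrm{E}_{\mathcal{D}}$ cannot be read off pointwise. The clean way is to phrase the whole argument weakly: test $\mathrm{E}_{\mathcal{H},\mathfrak{t}}u$ against $\phi\in\mathrm{C}_c^\infty(\mathbb{R}^n,\Lambda^{\bullet})$, split the integral over $\mathbb{R}^n_+$ and $\mathbb{R}^n_-$, reflect the lower half-space contribution back via $x_n\mapsto -x_n$ (which sends $\mathrm{d}x_n\mapsto-\mathrm{d}x_n$), and apply the integration by parts formulas \eqref{eq:IntbyParts1} and \eqref{eq:IntbyParts2} on each half. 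The construction \eqref{eq:HodgeReflexExtOp} is exactly what makes the two boundary integrals cancel for $\mathrm{d}$, and makes them combine into the pairing $\langle \nu\iprod u,\cdot\rangle_{\partial\mathbb{R}^n_+}$ for $\delta$, which vanishes by the weak normal trace condition defining $\mathrm{D}_2(\mathrm{d}^\ast)$ as characterised in Lemma \ref{lem:closdenessNdualityDerivativesL2Rn+} through Theorem \ref{thm:TracesDifferentialformsInhomSpaces}.
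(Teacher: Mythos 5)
Your proposal is correct, and the argument you settle on in the final paragraph — testing $\mathrm{E}_{\mathcal{H},\mathfrak{t}}u$ against smooth forms, splitting the pairing over $\mathbb{R}^n_+$ and $\mathbb{R}^n_-$, integrating by parts on each half via \eqref{eq:IntbyParts1}--\eqref{eq:IntbyParts2}, and observing that $(\mathfrak{e}_n)\wedge\tilde{u}_{\mathfrak{t}}(\cdot,0)=(\mathfrak{e}_n)\wedge u(\cdot,0)$ makes the boundary terms cancel for $\mathrm{d}$ while $(\mathfrak{e}_n)\iprod\tilde{u}_{\mathfrak{t}}(\cdot,0)=-(\mathfrak{e}_n)\iprod u(\cdot,0)=0$ kills them for $\delta$ — is exactly the paper's proof. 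The preceding componentwise computation is sound motivation, and you correctly identified why it cannot be run literally at the $\mathrm{L}^2$ level, so nothing is missing.
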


\begin{proof}Let $u\in\mathrm{D}_{2}(\mathrm{d},\mathbb{R}^n_+,\Lambda^{k})$, for $v\in\eus{S}(\mathbb{R}^n,\Lambda^{k+1})$,
\begin{align*}
    \big\langle \mathrm{E}_{\mathcal{H},\mathfrak{t}} u, \delta v\big\rangle_{\mathbb{R}^n} &=  \big\langle u, \delta v\big\rangle_{\mathbb{R}^n_+} + \big\langle \tilde{u}_{\mathfrak{t}}, \delta v\big\rangle_{\mathbb{R}^n_{-}} \\
    &= \big\langle \mathrm{d} u, v\big\rangle_{\mathbb{R}^n_+} + \big\langle (-\mathfrak{e}_n)\wedge u, v\big\rangle_{\partial\mathbb{R}^n_+} + \big\langle \widetilde{\mathrm{d}u}_{\mathfrak{t}},  v\big\rangle_{\mathbb{R}^n_{-}} + \big\langle (\mathfrak{e}_n)\wedge \tilde{u}_{\mathfrak{t}}, v\big\rangle_{\partial\mathbb{R}^n_-}\\
    &= \big\langle \mathrm{d} u, v\big\rangle_{\mathbb{R}^n_+} + \big\langle \widetilde{\mathrm{d}u}_{\mathfrak{t}},  v\big\rangle_{\mathbb{R}^n_{-}} \\
    &= \big\langle \mathrm{E}_{\mathcal{H},\mathfrak{t}} \mathrm{d} u, v\big\rangle_{\mathbb{R}^n} \text{.}
\end{align*}
Which holds, since $(\mathfrak{e}_n)\wedge \tilde{u}_{\mathfrak{t}}(\cdot,0)=(\mathfrak{e}_n)\wedge {u}(\cdot,0)$.

Now, if $u\in\mathrm{D}_{2}(\mathrm{d}^\ast,\mathbb{R}^n_+,\Lambda^{k})$, for $v\in\eus{S}(\mathbb{R}^n,\Lambda^{k-1})$,
\begin{align*}
    \big\langle \mathrm{E}_{\mathcal{H},\mathfrak{t}} u, \mathrm{d} v\big\rangle_{\mathbb{R}^n} &=  \big\langle u, \mathrm{d} v\big\rangle_{\mathbb{R}^n_+} + \big\langle \tilde{u}_{\mathfrak{t}}, \mathrm{d} v\big\rangle_{\mathbb{R}^n_{-}} \\
    &= \big\langle \delta u, v\big\rangle_{\mathbb{R}^n_+} + \big\langle (-\mathfrak{e}_n)\iprod u, v\big\rangle_{\partial\mathbb{R}^n_+} + \big\langle \widetilde{\delta u}_{\mathfrak{t}},  v\big\rangle_{\mathbb{R}^n_{-}} + \big\langle (\mathfrak{e}_n)\iprod \tilde{u}_{\mathfrak{t}}, v\big\rangle_{\partial\mathbb{R}^n_-}\\
    &= \big\langle \delta u, v\big\rangle_{\mathbb{R}^n_+} + \big\langle \widetilde{\delta u}_{\mathfrak{t}},  v\big\rangle_{\mathbb{R}^n_{-}} \\
    &= \big\langle \mathrm{E}_{\mathcal{H},\mathfrak{t}} \mathrm{d}^\ast u, v\big\rangle_{\mathbb{R}^n} \text{.}
\end{align*}
Which holds, since $(\mathfrak{e}_n)\iprod \tilde{u}_{\mathfrak{t}}(\cdot,0)=-(\mathfrak{e}_n)\iprod {u}(\cdot,0)=0$.
\end{proof}

\begin{proof}[of Theorem \ref{thm:HodgeLapL2Rn+}] By the realization of Hodge Laplacian by the mean of the sesquilinear form \eqref{eq:sesqlinformLaplacianHodge}, we have $(\mathrm{D}_2(\Delta_\mathcal{H},\mathbb{R}^n_+,\Lambda^k),-\Delta_\mathcal{H}) = (\mathrm{D}_2(D_\cdot^2,\mathbb{R}^n_+,\Lambda^k),D_\cdot^2)$. Thus, as the square of a self-adjoint $0$-bisectorial operator, the Hodge Laplacian is a non-negative self-adjoint $0$-sectorial operator on $\mathrm{L}^2(\mathbb{R}^n_+,\Lambda^k)$, and it also admits bounded holomorphic functional calculus, see \cite[Theorem~3.2.20]{EgertPhDThesis2015}. In particular, \eqref{eq:ResolvEstHodgeLapL2Rn+} in point \textit{(ii)} holds.

For now, we only consider the case $(\mathrm{D}_2(\Delta_{\mathcal{H},\mathfrak{t}},\mathbb{R}^n_+,\Lambda^k),-\Delta_{\mathcal{H},\mathfrak{t}})$, the proof could be achieved in similar fashion for $(\mathrm{D}_2(\Delta_{\mathcal{H},\mathfrak{n}},\mathbb{R}^n_+,\Lambda^k),-\Delta_{\mathcal{H},\mathfrak{n}})$.

For $\lambda\in\Sigma_{\mu}$, $\mu\in(0,\pi)$, $f\in\mathrm{L}^2(\mathbb{R}^n_+,\Lambda^k)$, we set $U:=(\lambda \mathrm{I}-\Delta)^{-1}\mathrm{E}_{\mathcal{H}}f\in \mathrm{H}^2(\mathbb{R}^n,\Lambda^k)$. By construction, as in the proof of Theorem \ref{thm:HodgeLapL2Rn+} for $I\in\mathcal{I}^k_n$ we have
\begin{align*}
    {U_I}_{|_{\partial\mathbb{R}^n_+}} &= 0, \text{ provided } n\in I\text{, }\\
    {\partial_{x_n}U_I}_{|_{\partial\mathbb{R}^n_+}} &= 0, \text{ provided } n\notin I\text{. }
\end{align*}
Therefore, we obtain first,
\begin{align*}
    \nu\iprod u_{|_{\partial\mathbb{R}^n_+}}=-\mathfrak{e}_n\iprod u_{|_{\partial\mathbb{R}^n_+}} &= (-1)^k\sum_{1\leqslant \ell_1<\ldots<\ell_{k-1}< n} u_{\ell_1 \ell_2\ldots \ell_{k-1} n}(\cdot,0)\, \mathrm{d}x_{\ell_1}\wedge\ldots\wedge\mathrm{d}x_{\ell_{k-1}}\\
     &= (-1)^k\sum_{I'\in\mathcal{I}^{k-1}_{n-1}} u_{I', n}(\cdot,0)\,\mathrm{d}x_{I'} = 0\text{. }
\end{align*}
Similarly, we get that
\begin{align*}
    \nu\iprod \mathrm{d}u_{|_{\partial\mathbb{R}^n_+}}=-\mathfrak{e}_n\iprod \mathrm{d}u_{|_{\partial\mathbb{R}^n_+}} &= -\sum_{j=1}^{n-1}\sum_{1\leqslant \ell_1<\ldots<\ell_{k-1}< n} \partial_{x_j}u_{\ell_1 \ell_2\ldots \ell_{k-1} n}(\cdot,0)\, \mathrm{d}x_{j}\wedge\mathrm{d}x_{\ell_1}\wedge\ldots\wedge\mathrm{d}x_{\ell_{k-1}}\\
    &\quad -\sum_{1\leqslant \ell_1<\ldots<\ell_{k-1}<\ell_{k}<n} \partial_{x_n}u_{\ell_1 \ell_2\ldots \ell_{k} }(\cdot,0)\,\mathrm{d}x_{\ell_1}\wedge\ldots\wedge\mathrm{d}x_{\ell_{k}}\\
     &= 0\text{. }
\end{align*}
From above calculations, we deduce that $u:=U_{|_{\mathbb{R}^n_+}}$ is such that $u\in\mathrm{H}^2(\mathbb{R}^n_+,\Lambda^k)\cap\mathrm{D}_2(\Delta_{\mathcal{H},\mathfrak{t}},\mathbb{R}^n_+,\Lambda^k)$, and 
\begin{align*}
    \lambda u -\Delta u = f \,\text{ in }\, \mathbb{R}^n_+ \text{.}
\end{align*}
Hence, by uniqueness $(\lambda \mathrm{I}-\Delta_{\mathcal{H}})^{-1}f=[(\lambda \mathrm{I}-\Delta)^{-1}\mathrm{E}_{\mathcal{H}}f]_{\mathbb{R}^n_+}$. One may conclude following the lines of the proof of \cite[Proposition~4.1]{Gaudin2022}, using Lemma \ref{lem:ExtOpRn+DiffFormL2}.
\end{proof}

\begin{lemma}\label{lem:commutrelationL2HodgeRn+}Provided $k\in\llb 0,n\rrb$, $\mu\in(0,\pi)$, $\lambda\in\Sigma_\mu$, the following commutation identities hold,
\begin{enumerate}
    \item $\mathbb{P}(\lambda \mathrm{I} - \Delta_{\mathcal{H},\mathfrak{t}})^{-1}f = (\lambda \mathrm{I} - \Delta_{\mathcal{H},\mathfrak{t}})^{-1}\mathbb{P}f$,\qquad for all $f\in\mathrm{L}^2(\mathbb{R}^n_+,\Lambda^k)$ ;
    \item $\mathrm{d}(\lambda \mathrm{I} - \Delta_{\mathcal{H},\mathfrak{t}})^{-1}f = (\lambda \mathrm{I} - \Delta_{\mathcal{H},\mathfrak{t}})^{-1}\mathrm{d}f$,\qquad for all $f\in\mathrm{D}_2(\mathrm{d},\mathbb{R}^n_+,\Lambda^k)$ ;
    \item $\mathrm{d}^\ast(\lambda \mathrm{I} - \Delta_{\mathcal{H},\mathfrak{t}})^{-1}f = (\lambda \mathrm{I} - \Delta_{\mathcal{H},\mathfrak{t}})^{-1}\mathrm{d}^\ast f$,\qquad for all $f\in\mathrm{D}_2(\mathrm{d}^\ast,\mathbb{R}^n_+,\Lambda^k)$.
\end{enumerate}
Every above identities still hold replacing $(\mathfrak{t},\mathbb{P},\mathrm{d},\mathrm{d}^\ast)$ by $(\mathfrak{n},\mathbb{Q},\delta^\ast,\delta)$.
\end{lemma}

\begin{remark}Lemma \ref{lem:commutrelationL2HodgeRn+} does not depend on the domain $\Omega=\mathbb{R}^n_+$ since its proof only relies on the use of the sesquilinear form associated with the Hodge Laplacian.
\end{remark}

\begin{proof} {\textbf{Step 0 :}} For all $u,v\in\mathrm{D}_2(D_\mathfrak{t},\mathbb{R}^n_+,\Lambda)$,
\begin{align*}
    \mathfrak{a}_{\mathcal{H},\mathfrak{t}}(\mathbb{P}u,v) &= \big\langle \mathrm{d} \mathbb{P}u, \mathrm{d}v \big\rangle_{\mathbb{R}^n_+} + \big\langle \mathrm{d}^\ast \mathbb{P}u, \mathrm{d}^\ast v \big\rangle_{\mathbb{R}^n_+}\\
    &= \big\langle \mathrm{d} u, \mathrm{d}v \big\rangle_{\mathbb{R}^n_+}\\
    &= \big\langle \mathrm{d} u, \mathrm{d}\mathbb{P}v \big\rangle_{\mathbb{R}^n_+}\\
    &= \big\langle \mathrm{d} u, \mathrm{d}\mathbb{P}v \big\rangle_{\mathbb{R}^n_+} + \big\langle \mathrm{d}^\ast u, \mathrm{d}^\ast \mathbb{P}v \big\rangle_{\mathbb{R}^n_+}\\
    &=\mathfrak{a}_{\mathcal{H},\mathfrak{t}}(u,\mathbb{P}v)\text{. }
\end{align*}

{\textbf{Step 1 :}} Let $\mu\in(0,\pi)$, $\lambda\in\Sigma_\mu$, and $f\in\mathrm{L}^2(\mathbb{R}^n_+,\Lambda^k)$, we set $u:=(\lambda \mathrm{I}-\Delta_{\mathcal{H},\mathfrak{t}})^{-1}f$, then for all $v\in\mathrm{D}_2(D_\mathfrak{t},\mathbb{R}^n_+,\Lambda)$,
\begin{align*}
    \lambda \big\langle \mathbb{P}u,v\big\rangle_{\mathbb{R}^n_+} +\mathfrak{a}_{\mathcal{H},\mathfrak{t}}(\mathbb{P}u,v) &= \lambda \big\langle u,\mathbb{P}v\big\rangle_{\mathbb{R}^n_+} +\mathfrak{a}_{\mathcal{H},\mathfrak{t}}(u,\mathbb{P}v) \\
    &= \big\langle f,\mathbb{P}v\big\rangle_{\mathbb{R}^n_+}\\
    &= \big\langle \mathbb{P}f,v\big\rangle_{\mathbb{R}^n_+}\text{. }
\end{align*}
Hence, by uniqueness of the solution to the resolvent problem in $\mathrm{L}^2(\mathbb{R}^n_+,\Lambda)$, we deduce $\mathbb{P}u = (\lambda \mathrm{I}-\Delta_{\mathcal{H},\mathfrak{t}})^{-1}\mathbb{P}f$.

{\textbf{Step 2 :}} We use the same notations as the ones introduced in \textbf{Step 1} but we assume that $f\in\mathrm{D}_2(\mathrm{d},\mathbb{R}^n_+,\Lambda^k)$. For all $v\in\mathrm{D}_2(\Delta_{\mathcal{H},\mathfrak{t}},\mathbb{R}^n_+,\Lambda)$,
\begin{align*}
    \big\langle \mathrm{d}f,v\big\rangle_{\mathbb{R}^n_+} &= \big\langle f,\mathrm{d}^\ast v\big\rangle_{\mathbb{R}^n_+}\\
    &= \lambda \big\langle u,\mathrm{d}^\ast v\big\rangle_{\mathbb{R}^n_+} +\mathfrak{a}_{\mathcal{H},\mathfrak{t}}(u, \mathrm{d}^\ast v)\\
    &= \lambda \big\langle u,\mathrm{d}^\ast v\big\rangle_{\mathbb{R}^n_+} +\big\langle \mathrm{d}u,\mathrm{d}\mathrm{d}^\ast v\big\rangle_{\mathbb{R}^n_+} +\big\langle \mathrm{d}^\ast u,(\mathrm{d}^\ast)^2 v\big\rangle_{\mathbb{R}^n_+}\\
    &= \lambda \big\langle \mathrm{d} u, v\big\rangle_{\mathbb{R}^n_+} +\big\langle \mathrm{d}^\ast\mathrm{d}u,\mathrm{d}^\ast v\big\rangle_{\mathbb{R}^n_+}\\
    &= \lambda \big\langle \mathrm{d} u, v\big\rangle_{\mathbb{R}^n_+} +\big\langle \mathrm{d}\mathrm{d}u,\mathrm{d} v\big\rangle_{\mathbb{R}^n_+}+\big\langle \mathrm{d}^\ast\mathrm{d}u,\mathrm{d}^\ast v\big\rangle_{\mathbb{R}^n_+}\\
    &= \lambda \big\langle \mathrm{d}u, v\big\rangle_{\mathbb{R}^n_+} +\mathfrak{a}_{\mathcal{H},\mathfrak{t}}(\mathrm{d}u, v)\text{. }
\end{align*}
Since the continuous embedding $\mathrm{D}_2(\Delta_{\mathcal{H},\mathfrak{t}},\mathbb{R}^n_+,\Lambda)\hookrightarrow \mathrm{D}_2(D_{\mathfrak{t}},\mathbb{R}^n_+,\Lambda)$ is dense, the above equality still holds for all $v\in \mathrm{D}_2(D_{\mathfrak{t}},\mathbb{R}^n_+,\Lambda)$. Hence, by uniqueness of the solution to the resolvent problem in $\mathrm{L}^2(\mathbb{R}^n_+,\Lambda)$, we obtain $\mathrm{d}u = (\lambda \mathrm{I}-\Delta_{\mathcal{H},\mathfrak{t}})^{-1}\mathrm{d}f$. The proof ends here since all remaining results can be proven similarly.
\end{proof}

\begin{lemma}\label{lem:RiesztransfHodgeL2Rn+}Let $k\in\llb 0,n\rrb$, following operators
\begin{align*}
    \mathrm{d}(-\Delta_{\mathcal{H},\mathfrak{t}})^{-\frac{1}{2}}\,&:\,\mathrm{L}^2(\mathbb{R}^{n}_+,\Lambda^k)\longrightarrow \mathrm{N}_2(\mathrm{d},\mathbb{R}^{n}_+,\Lambda^{k+1})\, ;\\
    \mathrm{d}^\ast(-\Delta_{\mathcal{H},\mathfrak{t}})^{-\frac{1}{2}}\,&:\,\mathrm{L}^2(\mathbb{R}^{n}_+,\Lambda^k)\longrightarrow \mathrm{N}_2(\mathrm{d}^\ast,\mathbb{R}^{n}_+,\Lambda^{k-1})\, ;
\end{align*}
are well defined bounded linear operators on $\mathrm{L}^2(\mathbb{R}^{n}_+,\Lambda)$, and are each-other's adjoint.

Everything still holds replacing $(\mathfrak{t},\mathrm{d},\mathrm{d}^\ast)$ by $(\mathfrak{n},\delta^\ast,\delta)$.
\end{lemma}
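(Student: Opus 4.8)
The plan is to reduce the whole statement to the functional calculus of the Hodge--Dirac operator $D_\mathfrak{t} = \mathrm{d}+\mathrm{d}^\ast$, established in Proposition \ref{prop:HodgeDiracL2Rn+}. Recall that $D_\mathfrak{t}$ is injective, self-adjoint and $0$-bisectorial on $\mathrm{L}^2(\mathbb{R}^n_+,\Lambda)$, with $D_\mathfrak{t}^2 = -\Delta_{\mathcal{H},\mathfrak{t}}$. By the spectral theorem for the self-adjoint operator $D_\mathfrak{t}$, the (\emph{a priori} unbounded) operator $(-\Delta_{\mathcal{H},\mathfrak{t}})^{-1/2} = (D_\mathfrak{t}^2)^{-1/2}$ equals $|D_\mathfrak{t}|^{-1}$, and the bounded Borel function $\sgn$ yields a bounded self-adjoint operator $\sgn(D_\mathfrak{t})$ with $\lVert \sgn(D_\mathfrak{t})\rVert_{\mathrm{L}^2\rightarrow\mathrm{L}^2}\leqslant 1$ which, on the dense subspace $\mathrm{R}((-\Delta_{\mathcal{H},\mathfrak{t}})^{1/2}) = \mathrm{D}((-\Delta_{\mathcal{H},\mathfrak{t}})^{-1/2})$, satisfies
\[
    \sgn(D_\mathfrak{t}) = D_\mathfrak{t}(-\Delta_{\mathcal{H},\mathfrak{t}})^{-\frac{1}{2}} = \mathrm{d}(-\Delta_{\mathcal{H},\mathfrak{t}})^{-\frac{1}{2}} + \mathrm{d}^\ast(-\Delta_{\mathcal{H},\mathfrak{t}})^{-\frac{1}{2}}.
\]

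First I would exploit the pointwise orthogonality coming from the grading: for $u$ valued in $\Lambda^k$, the forms $\mathrm{d}u$ and $\mathrm{d}^\ast u$ take values in $\Lambda^{k+1}$ and $\Lambda^{k-1}$ respectively, hence are orthogonal, so that $\lVert D_\mathfrak{t}u\rVert_{\mathrm{L}^2}^2 = \lVert\mathrm{d}u\rVert_{\mathrm{L}^2}^2 + \lVert\mathrm{d}^\ast u\rVert_{\mathrm{L}^2}^2$. Consequently $\mathrm{d}(-\Delta_{\mathcal{H},\mathfrak{t}})^{-1/2}$ and $\mathrm{d}^\ast(-\Delta_{\mathcal{H},\mathfrak{t}})^{-1/2}$ are exactly the degree-raising and degree-lowering graded components of $\sgn(D_\mathfrak{t})$; each is therefore bounded, by $\lVert \sgn(D_\mathfrak{t})\rVert\leqslant 1$, on the dense subspace above, and extends uniquely to a bounded operator on $\mathrm{L}^2(\mathbb{R}^n_+,\Lambda)$. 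The range statements follow from $\mathrm{d}^2 = 0$ and $(\mathrm{d}^\ast)^2 = 0$ in the distributional sense: $\mathrm{d}\,\mathrm{d}(-\Delta_{\mathcal{H},\mathfrak{t}})^{-1/2}u = 0$ and $\mathrm{d}^\ast\mathrm{d}^\ast(-\Delta_{\mathcal{H},\mathfrak{t}})^{-1/2}u = 0$, so the two ranges lie in $\mathrm{N}_2(\mathrm{d},\mathbb{R}^n_+,\Lambda^{k+1})$ and $\mathrm{N}_2(\mathrm{d}^\ast,\mathbb{R}^n_+,\Lambda^{k-1})$ respectively.

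For the adjointness I would use the self-adjointness of the bounded operator $\sgn(D_\mathfrak{t})$ together with the same grading. For $u$ valued in $\Lambda^k$ and $w$ valued in $\Lambda^{k+1}$, only the degree-raising part of $\sgn(D_\mathfrak{t})u$ pairs with $w$ and only the degree-lowering part of $\sgn(D_\mathfrak{t})w$ pairs with $u$, so that
\[
    \big\langle \mathrm{d}(-\Delta_{\mathcal{H},\mathfrak{t}})^{-\frac{1}{2}}u, w\big\rangle_{\mathbb{R}^n_+} = \big\langle \sgn(D_\mathfrak{t})u, w\big\rangle_{\mathbb{R}^n_+} = \big\langle u, \sgn(D_\mathfrak{t})w\big\rangle_{\mathbb{R}^n_+} = \big\langle u, \mathrm{d}^\ast(-\Delta_{\mathcal{H},\mathfrak{t}})^{-\frac{1}{2}}w\big\rangle_{\mathbb{R}^n_+},
\]
an identity valid for all $u,w$ by boundedness, which is precisely the claimed adjoint relation. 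Alternatively, one could derive it from the duality $\mathrm{d}^\ast = (\mathrm{d})^\ast$ of Lemma \ref{lem:closdenessNdualityDerivativesL2Rn+}, the self-adjointness of $(-\Delta_{\mathcal{H},\mathfrak{t}})^{-1/2}$, and the commutation of $\mathrm{d}^\ast$ with the resolvent from Lemma \ref{lem:commutrelationL2HodgeRn+}.

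The main obstacle is the careful handling of the functional calculus and of domains: the functions $\sgn$ and $x \mapsto |x|^{-1}$ lie outside the decaying class used in the $\mathrm{\mathbf{H}}^\infty$ calculus of Proposition \ref{prop:HodgeDiracL2Rn+}, so one must invoke the full Borel functional calculus afforded by the self-adjointness of $D_\mathfrak{t}$ to make sense of $\sgn(D_\mathfrak{t})$ and of the unbounded $(-\Delta_{\mathcal{H},\mathfrak{t}})^{-1/2}$, and check that the displayed splitting genuinely holds on the dense domain $\mathrm{R}((-\Delta_{\mathcal{H},\mathfrak{t}})^{1/2})$ before extending by density. Once this is secured, the $(\mathfrak{n},\delta^\ast,\delta)$ case is obtained verbatim upon replacing $D_\mathfrak{t}$ by the Hodge--Dirac operator $D_\mathfrak{n} = \delta^\ast+\delta$, which enjoys the same structural properties by Proposition \ref{prop:HodgeDiracL2Rn+}.
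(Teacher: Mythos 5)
Your proposal is correct and follows essentially the same route as the paper: both arguments rest on the functional calculus of the self-adjoint, bisectorial Hodge--Dirac operator $D_\mathfrak{t}$ applied to the sign-type function $z\mapsto z/\sqrt{\lambda+z^2}$, split $D_\mathfrak{t}(\lambda\mathrm{I}-\Delta_{\mathcal{H},\mathfrak{t}})^{-1/2}$ into its two components (you via the degree grading, the paper via the projector $\mathbb{P}$), and deduce the adjoint relation from self-adjointness. The only cosmetic difference is that the paper keeps the regularization $\lambda>0$ and passes to the limit $\lambda\to 0$ using Lemma \ref{lem:commutrelationL2HodgeRn+}, whereas you work directly at $\lambda=0$ through the Borel calculus; your worry that $\sgn$ escapes the $\mathrm{\mathbf{H}}^\infty(S_\theta)$ class is unfounded, since $\sgn$ restricted to the open bisector is bounded holomorphic and Proposition \ref{prop:HodgeDiracL2Rn+} applies to it with bound $1$.
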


\begin{proof} We  prove the $\mathrm{L}^2(\mathbb{R}^n_+,\Lambda)$-boundedness of $\mathrm{d}(-\Delta_{\mathcal{H},\mathfrak{t}})^{-\frac{1}{2}}$ and compute its adjoint.

We use bounded holomorphic functional calculus of $D_\mathfrak{t}$ on $\mathrm{L}^2(\mathbb{R}^n_+,\Lambda)$ provided by Proposition~\ref{prop:HodgeDiracL2Rn+}. By the mean of $z\mapsto \frac{z}{\sqrt{\lambda +z^2}}$, and the boundedness of $\mathbb{P}$, we have, for all $\lambda\geqslant0$, and all  $f\in\mathrm{L}^2(\mathbb{R}^n_+,\Lambda^k)$
\begin{align*}
    \lVert \mathrm{d}(\lambda\mathrm{I}-\Delta_{\mathcal{H},\mathfrak{t}})^{-\frac{1}{2}} f \rVert_{\mathrm{L}^2(\mathbb{R}^n_+)} &\leqslant \lVert f \rVert_{\mathrm{L}^2(\mathbb{R}^n_+)} \text{ ; }\\
    \lVert \mathrm{d}^\ast(\lambda\mathrm{I}-\Delta_{\mathcal{H},\mathfrak{t}})^{-\frac{1}{2}} f \rVert_{\mathrm{L}^2(\mathbb{R}^n_+)} &\leqslant \lVert f \rVert_{\mathrm{L}^2(\mathbb{R}^n_+)} \text{. }
\end{align*}

The adjoint of $\mathrm{d}(\lambda\mathrm{I}-\Delta_{\mathcal{H},\mathfrak{t}})^{-\frac{1}{2}}$ provided $\lambda>0$, is $(\lambda\mathrm{I}-\Delta_{\mathcal{H},\mathfrak{t}})^{-\frac{1}{2}}\mathrm{d}^\ast=\mathrm{d}^\ast(\lambda\mathrm{I}-\Delta_{\mathcal{H},\mathfrak{t}})^{-\frac{1}{2}}$ up to a dense subset of $\mathrm{L}^2(\mathbb{R}^n_+,\Lambda)$ (here $\mathrm{D}_2(\mathrm{d}^\ast,\mathbb{R}^n_+,\Lambda)$). Thanks to Lemma \ref{lem:commutrelationL2HodgeRn+}, we can pass to the limit as $\lambda$ goes to $0$ in the $\mathrm{L}^2$ inner product yielding
\begin{align*}
    (\mathrm{d}(-\Delta_{\mathcal{H},\mathfrak{t}})^{-\frac{1}{2}})^\ast = \mathrm{d}^\ast(-\Delta_{\mathcal{H},\mathfrak{t}})^{-\frac{1}{2}} \text{. }
\end{align*}
\end{proof}

We can summarize
\begin{theorem}\label{thm:HodgeDecompL2Rn+Full} Let $k\in\llb 0,n\rrb$, the following assertions are true
\begin{enumerate}
    \item  The following equality holds
    \begin{align*}
        {\mathrm{N}}_{2}(\mathrm{d},\mathbb{R}^n_+,\Lambda^k) = \overline{{\mathrm{R}}_{2}(\mathrm{d},\mathbb{R}^n_+,\Lambda^k)}^{\lVert\cdot\rVert_{{\mathrm{L}}^{2}(\mathbb{R}^n_+)}}\text{, }
    \end{align*}
    and still holds replacing $\mathrm{d}$ by $\mathrm{d}^\ast$.
    \item The (generalized) Helmholtz-Leray projector $\mathbb{P}\,:\, {\mathrm{L}}^{2}(\mathbb{R}^n_+,\Lambda^k)\longrightarrow {\mathrm{N}}^{2}(\mathrm{d}^\ast,\mathbb{R}^n_+,\Lambda^k)$ satisfies the identity
    \begin{align*}
        \mathbb{P} = \mathrm{I} - \mathrm{d}(-\Delta_{\mathcal{H},\mathfrak{t}})^{-\frac{1}{2}}\mathrm{d}^\ast(-\Delta_{\mathcal{H},\mathfrak{t}})^{-\frac{1}{2}} \text{.}
    \end{align*}
    \item The following Hodge decomposition holds
    \begin{align*}
        {\mathrm{L}}^{2}(\mathbb{R}^n_+,\Lambda^k) = {\mathrm{N}}^{2}(\mathrm{d}^\ast,\mathbb{R}^n_+,\Lambda^k) \overset{\perp}{\oplus} {\mathrm{N}}^{2}(\mathrm{d},\mathbb{R}^n_+,\Lambda^k)\text{. }
    \end{align*}
\end{enumerate}
Moreover, the result remain true if we replace $(\mathfrak{t},\mathrm{d},\mathrm{d}^\ast,\mathbb{P})$ by $(\mathfrak{n},\delta^{\ast},\delta,\mathbb{Q})$.
\end{theorem}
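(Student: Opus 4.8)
The plan is to derive all three assertions from the orthogonal Hodge decomposition \eqref{HodgeDecompL2}, the injectivity of the Hodge--Dirac operator (Proposition~\ref{prop:HodgeDiracL2Rn+}), and the Riesz transform bounds and commutation relations of Lemmas~\ref{lem:RiesztransfHodgeL2Rn+} and~\ref{lem:commutrelationL2HodgeRn+}. The core of the argument mirrors the whole-space computation of Theorem~\ref{thm:HodgeDecompRn}, except that here orthogonality is built in by the Hilbert-space structure.

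For \textit{(i)}, since $\mathrm{d}^2=0$ one has $\mathrm{R}_2(\mathrm{d},\mathbb{R}^n_+,\Lambda^k)\subset \mathrm{N}_2(\mathrm{d},\mathbb{R}^n_+,\Lambda^k)$, and as the latter is closed this gives $\overline{\mathrm{R}_2(\mathrm{d},\mathbb{R}^n_+,\Lambda^k)}\subset \mathrm{N}_2(\mathrm{d},\mathbb{R}^n_+,\Lambda^k)$. For the converse, I would take $u\in\mathrm{N}_2(\mathrm{d},\mathbb{R}^n_+,\Lambda^k)$ and decompose it through \eqref{HodgeDecompL2} as $u=u_0+u_1$ with $u_0\in\overline{\mathrm{R}_2(\mathrm{d},\mathbb{R}^n_+,\Lambda^k)}$ and $u_1\in\mathrm{N}_2(\mathrm{d}^\ast,\mathbb{R}^n_+,\Lambda^k)$. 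Since $u_0\in\mathrm{N}_2(\mathrm{d},\mathbb{R}^n_+,\Lambda^k)$ by the first inclusion, $u_1=u-u_0$ lies in $\mathrm{N}_2(\mathrm{d},\mathbb{R}^n_+,\Lambda^k)\cap\mathrm{N}_2(\mathrm{d}^\ast,\mathbb{R}^n_+,\Lambda^k)=\mathrm{N}_2(D_{\mathfrak{t}},\mathbb{R}^n_+,\Lambda^k)$, the last equality holding because $\mathrm{d}u_1$ and $\mathrm{d}^\ast u_1$ live in different degrees and must vanish separately. Injectivity of $D_{\mathfrak{t}}$ then forces $u_1=0$, so $u=u_0\in\overline{\mathrm{R}_2(\mathrm{d},\mathbb{R}^n_+,\Lambda^k)}$. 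The statement for $\mathrm{d}^\ast$ follows verbatim from the second line of \eqref{HodgeDecompL2}.

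For \textit{(ii)}, write $R_{\mathrm{d}}:=\mathrm{d}(-\Delta_{\mathcal{H},\mathfrak{t}})^{-\frac{1}{2}}$ and $R_{\mathrm{d}^\ast}:=\mathrm{d}^\ast(-\Delta_{\mathcal{H},\mathfrak{t}})^{-\frac{1}{2}}$, which by Lemma~\ref{lem:RiesztransfHodgeL2Rn+} are bounded and mutually adjoint, so that $\tilde{\mathbb{P}}:=\mathrm{I}-R_{\mathrm{d}}R_{\mathrm{d}^\ast}$ is self-adjoint. Using Lemma~\ref{lem:commutrelationL2HodgeRn+} together with $\mathrm{d}^2=(\mathrm{d}^\ast)^2=0$ one obtains, on a dense subspace, $R_{\mathrm{d}}^2=R_{\mathrm{d}^\ast}^2=0$ and $R_{\mathrm{d}}R_{\mathrm{d}^\ast}+R_{\mathrm{d}^\ast}R_{\mathrm{d}}=\mathrm{I}$ (the latter being $(\mathrm{d}\mathrm{d}^\ast+\mathrm{d}^\ast\mathrm{d})(-\Delta_{\mathcal{H},\mathfrak{t}})^{-1}=\mathrm{I}$). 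Expanding $\tilde{\mathbb{P}}^2$ and substituting $R_{\mathrm{d}^\ast}R_{\mathrm{d}}=\mathrm{I}-R_{\mathrm{d}}R_{\mathrm{d}^\ast}$ gives $(R_{\mathrm{d}}R_{\mathrm{d}^\ast})^2=R_{\mathrm{d}}R_{\mathrm{d}^\ast}$ (the cross term vanishes because $R_{\mathrm{d}}^2=R_{\mathrm{d}^\ast}^2=0$), hence $\tilde{\mathbb{P}}^2=\tilde{\mathbb{P}}$. Finally the identity $\mathrm{d}^\ast\mathrm{d}\mathrm{d}^\ast=(-\Delta_{\mathcal{H},\mathfrak{t}})\mathrm{d}^\ast$ with commutation yields $\mathrm{d}^\ast\tilde{\mathbb{P}}=0$, while $\mathrm{d}^\ast u=0$ gives $\tilde{\mathbb{P}}u=u$; thus $\tilde{\mathbb{P}}$ is a self-adjoint idempotent with range exactly $\mathrm{N}_2(\mathrm{d}^\ast,\mathbb{R}^n_+,\Lambda^k)$, so $\tilde{\mathbb{P}}=\mathbb{P}$. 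Assertion \textit{(iii)} is then immediate: inserting $\overline{\mathrm{R}_2(\mathrm{d},\mathbb{R}^n_+,\Lambda^k)}=\mathrm{N}_2(\mathrm{d},\mathbb{R}^n_+,\Lambda^k)$ from \textit{(i)} into \eqref{HodgeDecompL2} gives the claimed splitting, and the $\mathfrak{n}$-versions follow line by line after exchanging $(\mathfrak{t},\mathrm{d},\mathrm{d}^\ast,\mathbb{P})$ with $(\mathfrak{n},\delta^\ast,\delta,\mathbb{Q})$. I expect the only delicate point to be the rigorous handling of the unbounded operators in \textit{(ii)}: justifying $R_{\mathrm{d}}^2=0$ and $R_{\mathrm{d}}R_{\mathrm{d}^\ast}+R_{\mathrm{d}^\ast}R_{\mathrm{d}}=\mathrm{I}$ cleanly requires passing either through the bounded $\mathrm{\mathbf{H}}^\infty$ calculus of $D_{\mathfrak{t}}$ or through a resolvent regularization and a limit $\lambda\to 0_+$, exactly as in the proof of Theorem~\ref{thm:HodgeDecompRn}.
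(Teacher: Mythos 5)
Your proposal is correct, and its treatment of points \textit{(ii)} and \textit{(iii)} follows essentially the same route as the paper: one checks that $\tilde{\mathbb{P}}=\mathrm{I}-R_{\mathrm{d}}R_{\mathrm{d}^\ast}$ is a bounded self-adjoint idempotent whose range is $\mathrm{N}_2(\mathrm{d}^\ast,\mathbb{R}^n_+,\Lambda^k)$ and on which it acts as the identity, then invokes uniqueness of the orthogonal projection; the algebra you do with $R_{\mathrm{d}}^2=R_{\mathrm{d}^\ast}^2=0$ and $R_{\mathrm{d}}R_{\mathrm{d}^\ast}+R_{\mathrm{d}^\ast}R_{\mathrm{d}}=\mathrm{I}$ is exactly what the paper carries out through the resolvent regularization $\lambda\to 0_+$, and you correctly flag that this regularization (via Lemmas \ref{lem:commutrelationL2HodgeRn+} and \ref{lem:RiesztransfHodgeL2Rn+}) is where the rigor lives. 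Where you genuinely diverge is point \textit{(i)}: the paper deduces $\mathrm{N}_2(\mathrm{d})\subset\overline{\mathrm{R}_2(\mathrm{d})}$ from the projector itself, by computing $[\mathrm{I}-\mathbb{P}]f=\lim_{\lambda\to 0_+}\mathrm{d}\mathrm{d}^\ast(\lambda\mathrm{I}-\Delta_{\mathcal{H},\mathfrak{t}})^{-1}f=f$ for $f\in\mathrm{N}_2(\mathrm{d})$ and noting each approximant lies in $\mathrm{R}_2(\mathrm{d})$, whereas you argue structurally from the a priori orthogonal splitting \eqref{HodgeDecompL2} together with the injectivity of $D_{\mathfrak{t}}$ stated in Proposition \ref{prop:HodgeDiracL2Rn+}: the component of $u$ in $\mathrm{N}_2(\mathrm{d}^\ast)$ lies in $\mathrm{N}_2(\mathrm{d})\cap\mathrm{N}_2(\mathrm{d}^\ast)\subset\mathrm{N}_2(D_{\mathfrak{t}})=\{0\}$. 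Your route is shorter, makes the role of the absence of $\mathrm{L}^2$-harmonic forms completely explicit, and decouples \textit{(i)} from \textit{(ii)}; the paper's route has the advantage of being the one that transfers verbatim to the $\dot{\mathrm{H}}^{s,p}$ and Besov scales in Theorem \ref{thm:HodgeDecompRn+}, where no orthogonality is available. One small correction: for the $\mathrm{d}^\ast$ version of \textit{(i)} the relevant splitting is not the second line of \eqref{HodgeDecompL2} (which concerns $\delta,\delta^\ast$, i.e.\ the $\mathfrak{n}$ case) but the companion decomposition $\mathrm{L}^2=\mathrm{N}_2(\mathrm{d})\overset{\perp}{\oplus}\overline{\mathrm{R}_2(\mathrm{d}^\ast)}$, which holds for the same reason (adjointness of the closed densely defined pair $(\mathrm{d},\mathrm{d}^\ast)$ from Lemma \ref{lem:closdenessNdualityDerivativesL2Rn+}); with that substitution the argument closes.
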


\begin{remark} The Theorem \ref{thm:HodgeDecompL2Rn+Full} and the whole construction of this section mainly depends on the injectivity of the Laplacian : the built is done via resolvent approximation, abstract functional calculus provided by the Hilbertian structure of $\mathrm{L}^2(\mathbb{R}^n_+,\Lambda)$ and the self-adjointness of the Laplacian. Therefore, such construction and the proof do not depend on the open set $\Omega=\mathbb{R}^n_+$.

To be more precise, the above Theorem \ref{thm:HodgeDecompL2Rn+Full} should remain true for all openset $\Omega$, say at least Lipschitz, which have no harmonic forms. In the case of a bounded domain : the theorem remains true whenever all its Betti numbers vanish.
\end{remark}

\begin{proof}{\textbf{Step 1 :}} Identity for $\mathbb{P}$.

From boundedness of above operators we deduce that the new operator $\overline{\mathbb{P}}$ defined for all $f\in\mathrm{L}^2(\mathbb{R}^n_+,\Lambda)$ by 
\begin{align*}
    \overline{\mathbb{P}}f := f-\mathrm{d}(-\Delta_{\mathcal{H},\mathfrak{t}})^{-\frac{1}{2}}\mathrm{d}^\ast(-\Delta_{\mathcal{H},\mathfrak{t}})^{-\frac{1}{2}} f \text{, }
\end{align*}
is well defined and bounded on $\mathrm{L}^2(\mathbb{R}^n_+,\Lambda)$. We are going to check that $\overline{\mathbb{P}}$ is an orthogonal projector, hence, firstly a projector.
\begin{align*}
    \overline{\mathbb{P}}^2 f &= \overline{\mathbb{P}}f - \mathrm{d}(-\Delta_{\mathcal{H},\mathfrak{t}})^{-\frac{1}{2}}\mathrm{d}^\ast(-\Delta_{\mathcal{H},\mathfrak{t}})^{-\frac{1}{2}} \overline{\mathbb{P}}f\\
    &= \overline{\mathbb{P}}f - \mathrm{d}(-\Delta_{\mathcal{H},\mathfrak{t}})^{-\frac{1}{2}}\mathrm{d}^\ast(-\Delta_{\mathcal{H},\mathfrak{t}})^{-\frac{1}{2}}  f + [\mathrm{d}(-\Delta_{\mathcal{H},\mathfrak{t}})^{-\frac{1}{2}}\mathrm{d}^\ast(-\Delta_{\mathcal{H},\mathfrak{t}})^{-\frac{1}{2}}]^2  f\\
    &= \lim_{\lambda\rightarrow 0} \left( \overline{\mathbb{P}}f - \mathrm{d}^\ast\mathrm{d}(\lambda \mathrm{I}-\Delta_{\mathcal{H},\mathfrak{t}})^{-1} f + [\mathrm{d}^\ast\mathrm{d}(\lambda \mathrm{I}-\Delta_{\mathcal{H},\mathfrak{t}})^{-1}]^2 f \right)\\
    &= \lim_{\lambda\rightarrow 0} \left( \overline{\mathbb{P}}f - \lambda \mathrm{d}^\ast\mathrm{d}(\lambda \mathrm{I}-\Delta_{\mathcal{H},\mathfrak{t}})^{-2}f \right)  \\
    &= \overline{\mathbb{P}}f\text{. }
\end{align*}
By construction and by Lemma \ref{lem:RiesztransfHodgeL2Rn+}, $\overline{\mathbb{P}}$ is self-adjoint, hence orthogonal.

For all $f\in \mathrm{D}_2(\mathrm{d}^\ast,\mathbb{R}^n_+,\Lambda)$,
\begin{align*}
    \mathrm{d}^\ast\overline{\mathbb{P}}f &= \lim_{\lambda\rightarrow 0} \left(\mathrm{d}^\ast f - \mathrm{d}^\ast \mathrm{d}(\lambda \mathrm{I}-\Delta_{\mathcal{H},\mathfrak{t}})^{-\frac{1}{2}}\mathrm{d}^\ast(\lambda \mathrm{I}-\Delta_{\mathcal{H},\mathfrak{t}})^{-\frac{1}{2}} f\right)\\
    &=\lim_{\lambda\rightarrow 0} \left( \mathrm{d}^\ast f + \Delta_{\mathcal{H},\mathfrak{t}}(\lambda \mathrm{I}-\Delta_{\mathcal{H},\mathfrak{t}})^{-1} \mathrm{d}^\ast f\right)\\
    &=\lim_{\lambda\rightarrow 0} \left(\mathrm{d}^\ast f -  \mathrm{d}^\ast f\right)=0.
\end{align*}
Thus, since the embedding $\mathrm{D}_2(\mathrm{d}^\ast,\mathbb{R}^n_+,\Lambda)\hookrightarrow \mathrm{L}^2(\mathbb{R}^n_+,\Lambda)$ is dense, it follows that 
\begin{align*}
    \mathrm{R}_2(\overline{\mathbb{P}},\mathbb{R}^n_+,\Lambda)\subset \mathrm{N}_2(\mathrm{d}^\ast,\mathbb{R}^n_+,\Lambda)\text{.}
\end{align*}

For all $f\in \mathrm{N}_2(\mathrm{d}^\ast,\mathbb{R}^n_+,\Lambda)$
\begin{align*}
    \overline{\mathbb{P}}f &= \lim_{\lambda\rightarrow 0}\left( f - \mathrm{d}(\lambda \mathrm{I}-\Delta_{\mathcal{H},\mathfrak{t}})^{-\frac{1}{2}}\mathrm{d}^\ast(\lambda \mathrm{I}-\Delta_{\mathcal{H},\mathfrak{t}})^{-\frac{1}{2}} f\right)\\
    &=\lim_{\lambda\rightarrow 0} \left(f - \mathrm{d}(\lambda \mathrm{I}-\Delta_{\mathcal{H},\mathfrak{t}})^{-1} \mathrm{d}^\ast f\right)\\
    &=\lim_{\lambda\rightarrow 0} \left(f + 0\right)=f.
\end{align*}
Hence, $\mathbb{P}_{|_{\mathrm{N}_2(\mathrm{d}^\ast,\mathbb{R}^n_+,\Lambda)}} =\mathrm{I}$.

By construction, we also have $\mathrm{R}_2(\mathrm{I}-\overline{\mathbb{P}})=\mathrm{R}_2(\mathrm{d}(-\Delta_{\mathcal{H},\mathfrak{t}})^{-\frac{1}{2}}\mathrm{d}^\ast(-\Delta_{\mathcal{H},\mathfrak{t}})^{-\frac{1}{2}})\subset \overline{\mathrm{R}_2(\mathrm{d},\mathbb{R}^n_+,\Lambda)}$ and obviously $\overline{\mathbb{P}}^\ast = \overline{\mathbb{P}}$, so that by uniqueness of the orthogonal projection on $\mathrm{N}_2(\mathrm{d}^\ast,\mathbb{R}^n_+,\Lambda)$, $\overline{\mathbb{P}}=\mathbb{P}$.

{\textbf{Step 2 :}}  We notice first that the inclusion $\overline{\mathrm{R}_2(\mathrm{d},\mathbb{R}^n_+,\Lambda^k)}^{\lVert\cdot\rVert_{\mathrm{L}^2(\mathbb{R}^n_+)}} \subset \mathrm{N}_2(\mathrm{d},\mathbb{R}^n_+,\Lambda^k)$ is true.

Now, for the reverse inclusion let $f\in \mathrm{N}_2(\mathrm{d},\mathbb{R}^n_+,\Lambda^k)$, we have 
\begin{align*}
    [\mathrm{I}-\mathbb{P}]f &= \lim_{\lambda \rightarrow 0} \mathrm{d}\mathrm{d}^\ast(\lambda \mathrm{I}-\Delta_{\mathcal{H},\mathfrak{t}})^{-1} f\\
    &= \lim_{\lambda \rightarrow 0} -\Delta_{\mathcal{H},\mathfrak{t}}(\lambda \mathrm{I}-\Delta_{\mathcal{H},\mathfrak{t}})^{-1} f\\
    &= f\text{. }
\end{align*}
By construction, for all $\lambda>0$, we have $\mathrm{d}\mathrm{d}^\ast(\lambda \mathrm{I}-\Delta_{\mathcal{H},\mathfrak{t}})^{-1} f\in \mathrm{R}_2(\mathrm{d},\mathbb{R}^n_+,\Lambda^{k-1})$, so that the reverse inclusion $\mathrm{N}_2(\mathrm{d},\mathbb{R}^n_+,\Lambda^k)\subset \overline{\mathrm{R}_2(\mathrm{d},\mathbb{R}^n_+,\Lambda^k)}^{\lVert\cdot\rVert_{\mathrm{L}^2(\mathbb{R}^n_+)}} $ holds.
\end{proof}

\subsubsection {\texorpdfstring{$\dot{\mathrm{H}}^{s,p}$}{Hsp} and \texorpdfstring{$\dot{\mathrm{B}}^{s}_{p,q}$}{Bspq}-theory for Hodge Laplacians and the Hodge decomposition}

We start this new subsection claiming about closedness of the exterior and interior derivatives with and without $0$ boundary conditions. The two following lemmas are straightforward.
\begin{lemma}Let $p\in(1,+\infty)$, $s\in(-1+1/p,1/p)$, $k\in\llb 0,n \rrb$. With the same notations as in Lemma \ref{lem:closdenessNdualityDerivativesL2Rn+}, the operators
\begin{align*}
    (\dot{\mathrm{D}}^{s}_{p}(\mathrm{d},\mathbb{R}^n_+,\Lambda^k),\mathrm{d}) \text{ and } (\dot{\mathrm{D}}^{s}_{p}(\underline{\delta},\mathbb{R}^n_+,\Lambda^k),{\delta})
\end{align*}
are densely defined closed operators on $\dot{\mathrm{H}}^{s,p}(\mathbb{R}^n_+,\Lambda)$.

Moreover, 
\begin{itemize}
    \item the result still holds replacing $(\dot{\mathrm{H}}^{s,p},\dot{\mathrm{D}}^{s}_{p})$ by either $({\mathrm{H}}^{s,p},{\mathrm{D}}^{s}_{p})$, $(\mathrm{B}^{s}_{p,q},\mathrm{D}^{s}_{p,q})$ or $(\dot{\mathrm{B}}^{s}_{p,q},\dot{\mathrm{D}}^{s}_{p,q})$, with $q\in[1,+\infty)$ ;

    \item in case of $(\mathrm{B}^{s}_{p,\infty},\mathrm{D}^{s}_{p,\infty})$ and $(\dot{\mathrm{B}}^{s}_{p,\infty},\dot{\mathrm{D}}^{s}_{p,\infty})$ above operators are only weak${}^\ast$ densely defined, strongly closed operators;
    
    \item all the above results remain true exchanging the roles of $\mathrm{d}$ and $\delta$.
\end{itemize}
\end{lemma}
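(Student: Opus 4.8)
The plan is to separate the two assertions: closedness will come from uniqueness of limits in the sense of distributions, and density from exhibiting an explicit smooth core sitting inside each domain. As a preliminary normalization I would invoke Proposition \ref{prop:SobolevMultiplier}, which for $s\in(-1+1/p,1/p)$ identifies $\dot{\mathrm{H}}^{s,p}(\mathbb{R}^n_+,\Lambda)$ with $\dot{\mathrm{H}}^{s,p}_0(\mathbb{R}^n_+,\Lambda)$; thus each element is canonically a distribution on $\mathbb{R}^n$ supported in $\overline{\mathbb{R}^n_+}$, convergence in the $\dot{\mathrm{H}}^{s,p}$-norm entails convergence in the sense of distributions on $\mathbb{R}^n_+$, and $\mathrm{d}$, $\delta$ act as the usual differential operators, bounded from $\mathrm{X}^s$ to $\mathrm{X}^{s-1}$.

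For closedness of $(\dot{\mathrm{D}}^{s}_{p}(\mathrm{d}),\mathrm{d})$, I would take $(u_j)$ in the domain with $u_j\to u$ and $\mathrm{d}u_j\to v$ in $\dot{\mathrm{H}}^{s,p}(\mathbb{R}^n_+,\Lambda)$. Since each of these convergences is in particular distributional and $\mathrm{d}$ is continuous in that topology, $\mathrm{d}u_j\to\mathrm{d}u$ distributionally; comparing with $\mathrm{d}u_j\to v$ gives $\mathrm{d}u=v$, and as $v\in\dot{\mathrm{H}}^{s,p}$ this means $u\in\dot{\mathrm{D}}^{s}_{p}(\mathrm{d})$ with $\mathrm{d}u=v$. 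The argument is verbatim for $\delta$ and for the other three scales of spaces, the only change being the ambient distribution space in which the limit is read off; this is why the endpoint density degenerating to weak${}^\ast$ density leaves closedness, a strong (norm) statement, untouched.

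The extra point for $(\dot{\mathrm{D}}^{s}_{p}(\underline{\delta}),\delta)$ is that the boundary constraint $\nu\iprod u_{|_{\partial\mathbb{R}^n_+}}=0$ must survive the limit. The step above already yields $u\in\dot{\mathrm{D}}^{s}_{p}(\delta)$ and $\delta u=v$, so what is needed is continuity of the partial trace $u\mapsto\nu\iprod u_{|_{\partial\mathbb{R}^n_+}}$ on $\dot{\mathrm{D}}^{s}_{p}(\delta)$ endowed with its graph norm; this is precisely what the appendix provides via Theorem \ref{thm:TracesDifferentialformsInhomSpaces}, and it gives $\nu\iprod u_{|_{\partial\mathbb{R}^n_+}}=\lim_j\nu\iprod u_{j|_{\partial\mathbb{R}^n_+}}=0$. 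This trace continuity is the only non-formal ingredient, and I regard making rigorous sense of and controlling this partial trace for forms lying merely in $\dot{\mathrm{D}}^{s}_{p}(\delta)$ as the main obstacle; everything else is soft functional analysis.

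Finally, for density I would produce an explicit core. For $\mathrm{d}$ (no boundary condition) the space $\eus{S}_0(\overline{\mathbb{R}^n_+},\Lambda^k)$ lies in the domain, since $\mathrm{d}$ preserves this class and hence maps into $\dot{\mathrm{H}}^{s,p}$, and it is dense in $\dot{\mathrm{H}}^{s,p}(\mathbb{R}^n_+,\Lambda)$ by the density statement recorded in Subsection \ref{sec:FunctionSpacesRn+}. For $\underline{\delta}$ I would instead take $\mathrm{C}_c^\infty(\mathbb{R}^n_+,\Lambda^k)$, whose elements are supported away from $\partial\mathbb{R}^n_+$ and so satisfy $\nu\iprod u_{|_{\partial\mathbb{R}^n_+}}=0$ trivially, and which is dense because $\dot{\mathrm{H}}^{s,p}(\mathbb{R}^n_+)=\dot{\mathrm{H}}^{s,p}_0(\mathbb{R}^n_+)=\overline{\mathrm{C}_c^\infty(\mathbb{R}^n_+)}$ in the present range. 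In the cases $\mathrm{B}^{s}_{p,\infty}$ and $\dot{\mathrm{B}}^{s}_{p,\infty}$ these cores are only weak${}^\ast$ dense, which yields the weaker density claim there, while the statements with the roles of $\mathrm{d}$ and $\delta$ exchanged follow by the symmetric argument.
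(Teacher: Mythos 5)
Your proof is correct; the paper itself omits an argument for this lemma (it is declared ``straightforward''), and what you write is exactly the intended justification, consistent with the techniques the paper uses for the neighbouring closedness statements (distributional identification of the limit, persistence of the boundary condition via graph-norm continuity of the partial trace, and the recorded density of $\eus{S}_0(\overline{\mathbb{R}^n_+})$ and $\mathrm{C}_c^\infty(\mathbb{R}^n_+)$ in the relevant spaces). One small correction: for the homogeneous scales the trace continuity you need is Theorem \ref{thm:TracesDifferentialformsHomSpaces} rather than Theorem \ref{thm:TracesDifferentialformsInhomSpaces} (the latter covers the inhomogeneous spaces); both give the required bound of $\nu\iprod u_{|_{\partial\mathbb{R}^n_+}}$ by the graph norm of $\delta$, so the argument is unaffected.
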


\begin{lemma}Let $p\in(1,+\infty)$, $s\in(-1+1/p,1/p)$, $k\in\llb 0,n \rrb$. With the same notations as in Lemma \ref{lem:closdenessNdualityDerivativesL2Rn+}, the dual operator of $(\dot{\mathrm{D}}^{s}_{p}(\mathrm{d},\mathbb{R}^n_+,\Lambda),\mathrm{d})$ on $\dot{\mathrm{H}}^{s,p}(\mathbb{R}^n_+,\Lambda)$, is
\begin{align*}
     (\dot{\mathrm{D}}^{-s}_{p'}({\mathrm{d}}^\ast,\mathbb{R}^n_+,\Lambda),{\mathrm{d}}^\ast) = (\dot{\mathrm{D}}^{-s}_{p'}(\underline{\delta},\mathbb{R}^n_+,\Lambda),{\delta}) 
\end{align*}
as an operator on $\dot{\mathrm{H}}^{-s,p'}(\mathbb{R}^n_+,\Lambda)$.

Moreover,
\begin{itemize}
    \item the result still holds replacing $(\dot{\mathrm{H}}^{s,p},\dot{\mathrm{D}}^{s}_{p},\dot{\mathrm{H}}^{-s,p'},\dot{\mathrm{D}}^{-s}_{p'})$ by $(\dot{\mathrm{B}}^{s}_{p,q},\dot{\mathrm{D}}^{s}_{p,q},\dot{\mathrm{B}}^{-s}_{p',q'},\dot{\mathrm{D}}^{-s}_{p',q'})$ with $q\in[1,+\infty)$ ;
    
    \item we may replace $(\dot{\mathrm{D}},\dot{\mathrm{H}},\dot{\mathrm{B}})$ by $(\mathrm{D},\mathrm{H},\mathrm{B})$.
    
    \item all the above results remain true exchanging the roles of $\mathrm{d}$ and $\delta$.
\end{itemize}
\end{lemma}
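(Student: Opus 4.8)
The plan is to transcribe the Hilbertian argument of Lemma~\ref{lem:closdenessNdualityDerivativesL2Rn+} into the $\dot{\mathrm{H}}^{s,p}/\dot{\mathrm{H}}^{-s,p'}$ duality. Since $s\in(-1+\tfrac1p,\tfrac1p)$ is equivalent to $-s\in(-1+\tfrac1{p'},\tfrac1{p'})$, the multiplier property in Proposition~\ref{prop:SobolevMultiplier} gives $\dot{\mathrm{H}}^{-s,p'}_0(\mathbb{R}^n_+)=\dot{\mathrm{H}}^{-s,p'}(\mathbb{R}^n_+)$, so the duality results recalled in Section~\ref{sec:FunctionSpacesRn+} identify $(\dot{\mathrm{H}}^{s,p}(\mathbb{R}^n_+,\Lambda))'$ with $\dot{\mathrm{H}}^{-s,p'}(\mathbb{R}^n_+,\Lambda)$ and furnish the pairing $\langle\cdot,\cdot\rangle_{\mathbb{R}^n_+}$ in which the adjoint $\mathrm{d}^\ast$ is to be computed. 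I treat $\mathrm{d}$ only; exchanging the roles of \eqref{eq:IntbyParts1} and \eqref{eq:IntbyParts2} handles $\delta$, while the Besov and inhomogeneous variants follow verbatim from the corresponding duality brackets, the restriction $q\in[1,+\infty)$ guaranteeing both the density of test forms and that $\dot{\mathrm{B}}^{-s}_{p',q'}$ is the genuine predual.

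For the inclusion $(\dot{\mathrm{D}}^{-s}_{p'}(\underline{\delta},\Lambda),\delta)\subset(\dot{\mathrm{D}}^{-s}_{p'}(\mathrm{d}^\ast,\Lambda),\mathrm{d}^\ast)$, fix $u\in\dot{\mathrm{D}}^{-s}_{p'}(\underline{\delta},\Lambda)$. For $v\in\eus{S}_0(\overline{\mathbb{R}^n_+},\Lambda)$ the integration by parts formula \eqref{eq:IntbyParts1}, combined with $\nu\iprod u_{|_{\partial\mathbb{R}^n_+}}=0$ read through the partial-trace results of Appendix~\ref{Append:TracesofFunctions}, gives
\begin{align*}
\big\langle \mathrm{d}v, u\big\rangle_{\mathbb{R}^n_+} = \big\langle v, \delta u\big\rangle_{\mathbb{R}^n_+} + \big\langle v, \nu\iprod u\big\rangle_{\partial\mathbb{R}^n_+} = \big\langle v, \delta u\big\rangle_{\mathbb{R}^n_+}.
\end{align*}
Bounding the right-hand side by $\lVert \delta u\rVert_{\dot{\mathrm{H}}^{-s,p'}(\mathbb{R}^n_+)}\lVert v\rVert_{\dot{\mathrm{H}}^{s,p}(\mathbb{R}^n_+)}$ and invoking the density of $\eus{S}_0(\overline{\mathbb{R}^n_+},\Lambda)$ as a core for $\mathrm{d}$ shows that $v\mapsto\langle\mathrm{d}v,u\rangle_{\mathbb{R}^n_+}$ extends to a bounded functional on $\dot{\mathrm{H}}^{s,p}(\mathbb{R}^n_+,\Lambda)$, whence $u\in\dot{\mathrm{D}}^{-s}_{p'}(\mathrm{d}^\ast,\Lambda)$ with $\mathrm{d}^\ast u=\delta u$.

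For the converse, let $u\in\dot{\mathrm{D}}^{-s}_{p'}(\mathrm{d}^\ast,\Lambda)$, so that $\langle\mathrm{d}v,u\rangle_{\mathbb{R}^n_+}=\langle v,\mathrm{d}^\ast u\rangle_{\mathbb{R}^n_+}$ for every $v$ in the domain of $\mathrm{d}$. Testing first against $v\in\mathrm{C}_c^\infty(\mathbb{R}^n_+,\Lambda)$ and using the distributional definition of $\delta$ yields $\mathrm{d}^\ast u=\delta u$ in $\eus{D}'(\mathbb{R}^n_+,\Lambda)$, so $\delta u\in\dot{\mathrm{H}}^{-s,p'}(\mathbb{R}^n_+,\Lambda)$ and $u\in\dot{\mathrm{D}}^{-s}_{p'}(\delta,\Lambda)$. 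Substituting this back into \eqref{eq:IntbyParts1} for arbitrary $v\in\eus{S}_0(\overline{\mathbb{R}^n_+},\Lambda)$ cancels the interior terms and leaves $\langle v,\nu\iprod u\rangle_{\partial\mathbb{R}^n_+}=0$; since the partial traces of such $v$ are dense in the relevant boundary space, the trace theorem of the appendix forces $\nu\iprod u_{|_{\partial\mathbb{R}^n_+}}=0$, i.e. $u\in\dot{\mathrm{D}}^{-s}_{p'}(\underline{\delta},\Lambda)$.

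I expect the only real obstacle to be at the boundary: one must know that for $u\in\dot{\mathrm{H}}^{-s,p'}$ with $\delta u\in\dot{\mathrm{H}}^{-s,p'}$ the partial trace $\nu\iprod u_{|_{\partial\mathbb{R}^n_+}}$ is meaningful, that \eqref{eq:IntbyParts1} persists in this fractional homogeneous regime, and that vanishing of the boundary pairing against all admissible $v$ genuinely characterizes $\nu\iprod u_{|_{\partial\mathbb{R}^n_+}}=0$. These are precisely the weak--strong correspondence of partial traces and the density of traces proved in Appendix~\ref{Append:TracesofFunctions}, and they are exactly what pins down the admissible range $s\in(-1+\tfrac1p,\tfrac1p)$; the algebraic identity $\mathrm{d}^\ast u=\delta u$ on the interior, by contrast, is a routine consequence of the distributional definitions.
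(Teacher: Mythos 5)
Your proposal is correct and is essentially the argument the paper intends: the paper states this lemma without proof (calling it ``straightforward''), precisely because it is the verbatim transcription of the $\mathrm{L}^2$ duality proof of Lemma \ref{lem:closdenessNdualityDerivativesL2Rn+} into the $\dot{\mathrm{H}}^{s,p}$--$\dot{\mathrm{H}}^{-s,p'}$ pairing, with the homogeneous trace theorem of Appendix \ref{Append:TracesofFunctions} replacing the inhomogeneous one. Your two inclusions (integration by parts plus the duality bound for one direction; testing against $\mathrm{C}_c^\infty$ to identify $\mathrm{d}^\ast u=\delta u$ distributionally and then the weak--strong trace correspondence to recover $\nu\iprod u_{|_{\partial\mathbb{R}^n_+}}=0$ for the other) match that template exactly.
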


\begin{remark} Notice that talking about $\mathrm{D}^{s}_{p}(\underline{\mathrm{d}},\mathbb{R}^n_+,\Lambda^k)$ in above lemmas, with respect to notation introduced in Lemma \ref{lem:closdenessNdualityDerivativesL2Rn+}, in particular the involved $0$-boundary condition, actually makes sense, thanks to Theorem \ref{thm:TracesDifferentialformsHomSpaces}.
\end{remark}

Before we start our investigation of Hodge Laplacians and the Hodge decomposition, we need to show the closedness of Hodge-Dirac operators. In order to verify such a property, the next result will be of paramount importance, to reproduce the behavior obtained in the $\mathrm{L}^2$ setting on other scales of function spaces. We mention that many results presented here will strongly depends on the fact that the considered openset is $\mathbb{R}^n_+$ (mainly Lemma \ref{lem:ExtOpRn+DiffFormHspBspq}, and point (ii) of Theorem \ref{thm:MetaThm1HodgeLaplacianRn+} which are widely used to construct other results of the present section).

The proof of the next lemma is identical to the one of Lemma \ref{lem:ExtOpRn+DiffFormL2}.

\begin{lemma}\label{lem:ExtOpRn+DiffFormHspBspq}Let $p\in(1,+\infty)$, $s\in(-1+1/p,1/p)$, $k\in\llb 0,n \rrb$. For all $u\in\dot{\mathrm{D}}^{s}_{p}(\mathrm{d},\mathbb{R}^n_+,\Lambda^{k})$ (resp. $\dot{\mathrm{D}}^{s}_{p}({\mathrm{d}}^\ast,\mathbb{R}^n_+,\Lambda^{k})$) we have
\begin{align*}
     \mathrm{E}_{\mathcal{H},\mathfrak{t}} u \in  \dot{\mathrm{D}}^{s}_{p}(\mathrm{d},\mathbb{R}^n,\Lambda^{k}) \quad\text{(resp. } \dot{\mathrm{D}}^{s}_{p}(\delta,\mathbb{R}^n,\Lambda^{k})\text{ )}
\end{align*}
with formulas
\begin{align*}
    \mathrm{d} \mathrm{E}_{\mathcal{H},\mathfrak{t}} u =  \mathrm{E}_{\mathcal{H},\mathfrak{t}} \mathrm{d} u \qquad \text{(resp. } \delta \mathrm{E}_{\mathcal{H},\mathfrak{t}} u =  \mathrm{E}_{\mathcal{H},\mathfrak{t}} \mathrm{d}^\ast u \text{).}
\end{align*}

Moreover,
\begin{itemize}
    \item the result still holds replacing $\dot{\mathrm{D}}^{s}_{p}$ by $\dot{\mathrm{D}}^{s}_{p,q}$ with $q\in[1,+\infty]$ ;
    
    \item we may replace $\dot{\mathrm{D}}$ by $\mathrm{D}$.
    
    \item all the above results remain true exchanging the roles of $\mathrm{d}$ and $\delta$, and replacing $\mathfrak{t}$ by $\mathfrak{n}$.
\end{itemize}
\end{lemma}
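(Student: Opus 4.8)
The plan is to reproduce, essentially verbatim, the distributional computation of Lemma \ref{lem:ExtOpRn+DiffFormL2}, replacing the $\mathrm{L}^2$ inner product by the duality pairing against Schwartz test forms and justifying the boundary terms through the trace machinery of the appendix rather than through classical $\mathrm{L}^2$ traces. First I would record that the hypothesis $s\in(-1+1/p,1/p)$ is exactly the range in which Proposition \ref{prop:SobolevMultiplier} makes $\mathrm{E}_{\mathcal{H},\mathfrak{t}}\colon\dot{\mathrm{H}}^{s,p}(\mathbb{R}^n_+,\Lambda)\to\dot{\mathrm{H}}^{s,p}(\mathbb{R}^n,\Lambda)$ bounded. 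Since $u\in\dot{\mathrm{D}}^{s}_{p}(\mathrm{d},\mathbb{R}^n_+,\Lambda^k)$ means that $u$ and $\mathrm{d}u$ lie in $\dot{\mathrm{H}}^{s,p}(\mathbb{R}^n_+)$ at the \emph{same} regularity level, both $\mathrm{E}_{\mathcal{H},\mathfrak{t}}u$ and $\mathrm{E}_{\mathcal{H},\mathfrak{t}}\mathrm{d}u$ are well-defined elements of $\dot{\mathrm{H}}^{s,p}(\mathbb{R}^n)$, so it suffices to prove the commutation identity $\mathrm{d}\,\mathrm{E}_{\mathcal{H},\mathfrak{t}}u=\mathrm{E}_{\mathcal{H},\mathfrak{t}}\mathrm{d}u$ in $\eus{S}'(\mathbb{R}^n,\Lambda^{k+1})$: this alone places $\mathrm{E}_{\mathcal{H},\mathfrak{t}}u$ in $\dot{\mathrm{D}}^{s}_{p}(\mathrm{d},\mathbb{R}^n,\Lambda^k)$.

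For the computation itself, I would test against $v\in\eus{S}(\mathbb{R}^n,\Lambda^{k+1})$, split $\big\langle \mathrm{E}_{\mathcal{H},\mathfrak{t}}u,\delta v\big\rangle_{\mathbb{R}^n}$ into the integrals over $\mathbb{R}^n_+$ and $\mathbb{R}^n_-$, and on each half apply the integration-by-parts formula \eqref{eq:IntbyParts1}. Just as in Lemma \ref{lem:ExtOpRn+DiffFormL2}, the two interior contributions reassemble into $\big\langle \mathrm{E}_{\mathcal{H},\mathfrak{t}}\mathrm{d}u, v\big\rangle_{\mathbb{R}^n}$, while the two boundary contributions, carried by the outward normals $-\mathfrak{e}_n$ and $+\mathfrak{e}_n$, cancel because the parity built into \eqref{eq:HodgeReflexExtOp} forces $\mathfrak{e}_n\wedge\tilde{u}_{\mathfrak{t}}(\cdot,0)=\mathfrak{e}_n\wedge u(\cdot,0)$, the components surviving $\mathfrak{e}_n\wedge$ being precisely those indexed by $I$ with $n\notin I$, which are extended evenly. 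The only genuinely new point relative to the Hilbertian case is that these boundary pairings and the integration by parts must remain licit for forms of regularity $s$ that may be negative; this is supplied by Theorem \ref{thm:TracesDifferentialformsHomSpaces}, which yields the weak-strong correspondence of partial traces on $\dot{\mathrm{H}}^{s,p}(\mathbb{R}^n_+)$ in exactly the range $s\in(-1+1/p,1/p)$. A clean way to sidestep the trace subtleties is to first establish the identity on the dense subclass $\eus{S}_0(\overline{\mathbb{R}^n_+},\Lambda^k)$, where every boundary integral is classical, and then pass to the limit using boundedness of $\mathrm{E}_{\mathcal{H},\mathfrak{t}}$ together with the closedness of $(\dot{\mathrm{D}}^{s}_{p}(\mathrm{d}),\mathrm{d})$.

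The $\mathrm{d}^\ast$ statement is obtained in the same fashion, testing against $\mathrm{d}v$ and using \eqref{eq:IntbyParts2}, the surviving boundary term now being governed by $\mathfrak{e}_n\iprod\tilde{u}_{\mathfrak{t}}(\cdot,0)=-\mathfrak{e}_n\iprod u(\cdot,0)$, which vanishes because the relevant components carry the odd (Dirichlet) extension. The Besov cases $\dot{\mathrm{B}}^{s}_{p,q}$, $q\in[1,+\infty]$, and the inhomogeneous $\mathrm{D}$ cases follow without modification, as both $\mathrm{E}_{\mathcal{H},\mathfrak{t}}$ and the trace theory hold verbatim there; the roles of $\mathrm{d}$ and $\delta$, and of $\mathfrak{t}$ and $\mathfrak{n}$, are exchanged simply by swapping the even/odd parities in \eqref{eq:HodgeReflexExtOp}. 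The hard part is therefore not the algebra, which is local and identical to the $\mathrm{L}^2$ case, but the justification of the boundary pairing in the low-regularity regime, and this is exactly what the appendix trace results are designed to furnish.
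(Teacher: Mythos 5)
Your proposal is correct and follows the same route as the paper, which simply declares the proof identical to that of Lemma \ref{lem:ExtOpRn+DiffFormL2}: split the pairing against a Schwartz test form over the two half-spaces, integrate by parts, and use the parity of \eqref{eq:HodgeReflexExtOp} (together with the boundary condition encoded in $\dot{\mathrm{D}}^{s}_{p}(\mathrm{d}^\ast)$ for the coderivative case) to cancel the boundary terms. Your additional care in justifying the low-regularity boundary pairings via Theorem \ref{thm:TracesDifferentialformsHomSpaces} or via density in $\eus{S}_0(\overline{\mathbb{R}^n_+},\Lambda^k)$ is exactly the content the paper leaves implicit in the word ``identical''.
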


\begin{proposition}Let $p\in(1,+\infty)$, $s\in(-1+1/p,1/p)$, $k\in\llb 0,n \rrb$. The Hodge-Dirac operator
\begin{align*}
    (\dot{\mathrm{D}}^{s}_{p}(D_\mathfrak{t},\mathbb{R}^n_+,\Lambda^k),D_\mathfrak{t})=(\dot{\mathrm{D}}^{s}_{p}(\mathrm{d},\mathbb{R}^n_+,\Lambda^k)\cap \dot{\mathrm{D}}^{s}_{p}(\mathrm{d}^\ast,\mathbb{R}^n_+,\Lambda^k),\mathrm{d}+\mathrm{d}^\ast)
\end{align*}
is a densely defined closed operator $\dot{\mathrm{H}}^{s,p}(\mathbb{R}^n_+,\Lambda)$.

Moreover,
\begin{itemize}
    \item the result still holds replacing $(\dot{\mathrm{H}}^{s,p},\dot{\mathrm{D}}^{s}_{p})$ by either $({\mathrm{H}}^{s,p},{\mathrm{D}}^{s}_{p})$, $(\mathrm{B}^{s}_{p,q},\mathrm{D}^{s}_{p,q})$ or $(\dot{\mathrm{B}}^{s}_{p,q},\dot{\mathrm{D}}^{s}_{p,q})$, with $q\in[1,+\infty)$ ;

    \item in case of $(\mathrm{B}^{s}_{p,\infty},\mathrm{D}^{s}_{p,\infty})$ and $(\dot{\mathrm{B}}^{s}_{p,\infty},\dot{\mathrm{D}}^{s}_{p,\infty})$ above Hodge-Dirac operator is only weak${}^\ast$ densely defined, and strongly closed;
    
    \item all above results remain true replacing $(\mathfrak{t},\mathrm{d})$ by $(\mathfrak{n},\delta)$.
\end{itemize}
\end{proposition}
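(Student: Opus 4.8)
The plan is to reduce everything to the whole space through the reflection extension $\mathrm{E}_{\mathcal{H},\mathfrak{t}}$, where the Hodge decomposition is already available (Theorem~\ref{thm:HodgeDecompRn}), and then to feed the outcome into the closedness of $\mathrm{d}$ and $\mathrm{d}^\ast$ that has just been established on the half-space. For the \emph{dense definition}, I would first observe that $\mathrm{C}_c^\infty(\mathbb{R}^n_+,\Lambda)\subset\dot{\mathrm{D}}^{s}_{p}(\mathrm{d},\mathbb{R}^n_+,\Lambda)\cap\dot{\mathrm{D}}^{s}_{p}(\mathrm{d}^\ast,\mathbb{R}^n_+,\Lambda)$: a form $\phi$ compactly supported in the open half-space has $\mathrm{d}\phi,\delta\phi\in\mathrm{C}_c^\infty$ and vanishes near $\partial\mathbb{R}^n_+$, so in particular $\nu\iprod\phi_{|_{\partial\mathbb{R}^n_+}}=0$, which places it in $\dot{\mathrm{D}}^{s}_{p}(\underline{\delta})=\dot{\mathrm{D}}^{s}_{p}(\mathrm{d}^\ast)$. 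Since $s\in(-1+1/p,1/p)$ gives $\dot{\mathrm{H}}^{s,p}(\mathbb{R}^n_+)=\dot{\mathrm{H}}^{s,p}_0(\mathbb{R}^n_+)=\overline{\mathrm{C}_c^\infty(\mathbb{R}^n_+)}$, the domain is dense (only weak${}^\ast$ dense in the $q=+\infty$ Besov case).

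For \emph{closedness}, let $(u_j)\subset\dot{\mathrm{D}}^{s}_{p}(D_\mathfrak{t})$ with $u_j\to u$ and $D_\mathfrak{t}u_j\to v$ in $\dot{\mathrm{H}}^{s,p}(\mathbb{R}^n_+,\Lambda)$. Applying the bounded extension $\mathrm{E}_{\mathcal{H},\mathfrak{t}}$ and Lemma~\ref{lem:ExtOpRn+DiffFormHspBspq}, the forms $U_j:=\mathrm{E}_{\mathcal{H},\mathfrak{t}}u_j$ satisfy $U_j\to\mathrm{E}_{\mathcal{H},\mathfrak{t}}u$ and $(\mathrm{d}+\delta)U_j=\mathrm{E}_{\mathcal{H},\mathfrak{t}}D_\mathfrak{t}u_j\to W:=\mathrm{E}_{\mathcal{H},\mathfrak{t}}v$ in $\dot{\mathrm{H}}^{s,p}(\mathbb{R}^n,\Lambda)$. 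The crux is to split this whole-space limit into its $\mathrm{d}$ and $\delta$ parts. Since $\mathrm{d}^2=\delta^2=0$, one has $\mathrm{d} U_j\in\mathrm{R}(\mathrm{d})\subset\dot{\mathrm{N}}^{s}_{p}(\mathrm{d},\mathbb{R}^n,\Lambda)$ and $\delta U_j\in\mathrm{R}(\delta)\subset\dot{\mathrm{N}}^{s}_{p}(\delta,\mathbb{R}^n,\Lambda)$, while by Theorem~\ref{thm:HodgeDecompRn} the Leray projector $\mathbb{P}$ is the bounded projection onto $\dot{\mathrm{N}}^{s}_{p}(\delta)$ along $\dot{\mathrm{N}}^{s}_{p}(\mathrm{d})$. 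Hence $\mathbb{P}[(\mathrm{d}+\delta)U_j]=\delta U_j$ and $[\mathrm{I}-\mathbb{P}][(\mathrm{d}+\delta)U_j]=\mathrm{d} U_j$, so the boundedness of $\mathbb{P}$ yields the separate convergences $\delta U_j\to\mathbb{P}W$ and $\mathrm{d} U_j\to[\mathrm{I}-\mathbb{P}]W$ in $\dot{\mathrm{H}}^{s,p}(\mathbb{R}^n,\Lambda)$.

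To descend, I would restrict to $\mathbb{R}^n_+$ (a contraction for the quotient norm) and use $\mathrm{d} U_j=\mathrm{E}_{\mathcal{H},\mathfrak{t}}\mathrm{d} u_j$, $\delta U_j=\mathrm{E}_{\mathcal{H},\mathfrak{t}}\mathrm{d}^\ast u_j$ together with $(\mathrm{E}_{\mathcal{H},\mathfrak{t}}w)_{|_{\mathbb{R}^n_+}}=w$, which gives $\mathrm{d} u_j\to([\mathrm{I}-\mathbb{P}]W)_{|_{\mathbb{R}^n_+}}$ and $\mathrm{d}^\ast u_j\to(\mathbb{P}W)_{|_{\mathbb{R}^n_+}}$ in $\dot{\mathrm{H}}^{s,p}(\mathbb{R}^n_+,\Lambda)$. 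The closedness of $(\dot{\mathrm{D}}^{s}_{p}(\mathrm{d}),\mathrm{d})$ and $(\dot{\mathrm{D}}^{s}_{p}(\underline{\delta}),\delta)=(\dot{\mathrm{D}}^{s}_{p}(\mathrm{d}^\ast),\mathrm{d}^\ast)$ proved earlier in this subsection then forces $u\in\dot{\mathrm{D}}^{s}_{p}(\mathrm{d})\cap\dot{\mathrm{D}}^{s}_{p}(\mathrm{d}^\ast)$, with $\mathrm{d} u$ and $\mathrm{d}^\ast u$ equal to these two limits, whence $D_\mathfrak{t}u=\mathrm{d} u+\mathrm{d}^\ast u=v$. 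This proves $u\in\dot{\mathrm{D}}^{s}_{p}(D_\mathfrak{t})$ and $D_\mathfrak{t}u=v$, i.e. closedness.

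The main obstacle is exactly the separation step: one cannot split $\mathrm{d}+\mathrm{d}^\ast$ directly on $\mathbb{R}^n_+$, because the Hodge decomposition there is not yet available — it is in fact the goal toward which the whole subsection is building. This is why the detour through $\mathbb{R}^n$ and the reflection $\mathrm{E}_{\mathcal{H},\mathfrak{t}}$, which is the only place where the geometry of the half-space enters, is essential. The remaining function-space scales follow verbatim, since the extension boundedness (via Proposition~\ref{prop:SobolevMultiplier}), Lemma~\ref{lem:ExtOpRn+DiffFormHspBspq}, Theorem~\ref{thm:HodgeDecompRn} and the half-space closedness all hold for $\mathrm{H}^{s,p}$, $\mathrm{B}^{s}_{p,q}$ and $\dot{\mathrm{B}}^{s}_{p,q}$; in the $q=+\infty$ Besov case the argument still gives strong closedness (the projectors and extension being strongly bounded), while dense definition is replaced by weak${}^\ast$ dense definition. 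Finally, the $(\mathfrak{n},\delta)$ statement is identical upon replacing $(\mathrm{E}_{\mathcal{H},\mathfrak{t}},\mathbb{P})$ by $(\mathrm{E}_{\mathcal{H},\mathfrak{n}},\mathbb{Q})$.
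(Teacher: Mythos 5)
Your proof is correct and follows essentially the same route as the paper: extend by $\mathrm{E}_{\mathcal{H},\mathfrak{t}}$, split the whole-space limit of $(\mathrm{d}+\delta)U_j$ using the Leray projector from Theorem~\ref{thm:HodgeDecompRn}, and restrict back to $\mathbb{R}^n_+$. The only (harmless) variations are that you make the density claim explicit via $\mathrm{C}_c^\infty(\mathbb{R}^n_+,\Lambda)\subset\dot{\mathrm{D}}^{s}_{p}(\mathrm{d})\cap\dot{\mathrm{D}}^{s}_{p}(\mathrm{d}^\ast)$ and identify the limits through the already-proved closedness of $\mathrm{d}$ and $\mathrm{d}^\ast$ on the half-space, whereas the paper identifies them by distributional convergence and recovers the boundary condition from trace continuity (Theorem~\ref{thm:TracesDifferentialformsInhomSpaces}).
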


\begin{proof}Let $(u_j)_{j\in\mathbb{N}}\subset \dot{\mathrm{D}}^{s}_{p}(\mathrm{d},\mathbb{R}^n_+,\Lambda^k)\cap \dot{\mathrm{D}}^{s}_{p}(\mathrm{d}^\ast,\mathbb{R}^n_+,\Lambda^k)$, and $(u,v)\in\dot{\mathrm{H}}^{s,p}(\mathbb{R}^n_+,\Lambda^k)\times\dot{\mathrm{H}}^{s,p}(\mathbb{R}^n_+, \Lambda)$ satisfying
\begin{align*}
    u_j \xrightarrow[j\rightarrow +\infty]{} u \quad\text{ and }\quad D_\mathfrak{t} u_j \xrightarrow[j\rightarrow +\infty]{} v \text{ in }  \dot{\mathrm{H}}^{s,p}(\mathbb{R}^n_+)\text{.} 
\end{align*}
We set for all $j\in\mathbb{N}$, $U_j:=\mathrm{E}_{\mathcal{H},\mathfrak{t}}u_j$, $U:=\mathrm{E}_{\mathcal{H},\mathfrak{t}}u$. By Lemma \ref{lem:ExtOpRn+DiffFormHspBspq}, we have for all $j\in\mathbb{Z}$ $U_j\in\dot{\mathrm{D}}^{s}_{p}(\mathrm{d},\mathbb{R}^n,\Lambda^k)\cap \dot{\mathrm{D}}^{s}_{p}(\delta,\mathbb{R}^n,\Lambda^k)$
\begin{align*}
    D U_j = \mathrm{E}_{\mathcal{H},\mathfrak{t}}D_\mathfrak{t} u_j\text{. }
\end{align*}
We also have,
\begin{align*}
    D U_j \xrightarrow[j\rightarrow +\infty]{}  V:=\mathrm{E}_{\mathcal{H},\mathfrak{t}}v\text{ in }  \dot{\mathrm{H}}^{s,p}(\mathbb{R}^n)\text{.} 
\end{align*}
By the Hodge decomposition on $\mathbb{R}^n$, check Theorem \ref{thm:HodgeDecompRn}, there exists a unique couple $({V}_{0},{V}_{1})\in \overline{\dot{\mathrm{R}}^{s}_{p}({\mathrm{d}},\mathbb{R}^n,\Lambda)}\times \dot{\mathrm{N}}^{s}_{p}(\delta,\mathbb{R}^n,\Lambda)$ such that $V={V}_{0}+{V}_{1}$.
Since $U_j$ goes to $U$ in $\dot{\mathrm{H}}^{s,p}(\mathbb{R}^n,\Lambda^k)$, by continuity of involved projectors, and uniqueness of decomposition, it follows that
\begin{align*}
    \mathrm{d} U_j \xrightarrow[j\rightarrow +\infty]{}  V_0  \quad\text{ and }\quad \delta U_j \xrightarrow[j\rightarrow +\infty]{} V_1   \text{ in }  \dot{\mathrm{H}}^{s,p}(\mathbb{R}^n)\text{.} 
\end{align*}
In particular, if we set $v_\ell:= {V_\ell}_{|_{\mathbb{R}^n_+}}$ for $\ell\in\{0,1\}$, we necessarily have by restriction
\begin{align*}
    \mathrm{d} u_j \xrightarrow[j\rightarrow +\infty]{}  v_0  \quad\text{ and }\quad \delta u_j \xrightarrow[j\rightarrow +\infty]{} v_1 \text{ in }  \dot{\mathrm{H}}^{s,p}(\mathbb{R}^n_+)\text{.} 
\end{align*}
But $(u_j)_{j\in\mathbb{N}}$ converge to $u$ in $\dot{\mathrm{H}}^{s,p}(\mathbb{R}^n_+,\Lambda^k)$, so in particular in distributional sense. Thus necessarily $(v_0,v_1)=(\mathrm{d} u,\delta u)$ and $v=D u$. By continuity of trace provided by Theorem \ref{thm:TracesDifferentialformsInhomSpaces}, we also have $\nu\iprod u_{|_{\partial\mathbb{R}^n_+}} =0$, \textit{ i.e. } $(\dot{\mathrm{D}}^{s}_p(D_{\mathfrak{t}},\mathbb{R}^n_+,\Lambda^k),D_{\mathfrak{t}})$ is a closed operator on $\dot{\mathrm{H}}^{s,p}(\mathbb{R}^n_+,\Lambda^k)$. The proof ends here since one can reproduce all above arguments for $(\dot{\mathrm{D}}^{s}_p(D_{\mathfrak{n}},\mathbb{R}^n_+,\Lambda^k),D_{\mathfrak{n}})$, and also for all other kind of function spaces.
\end{proof}

The next result about closedness of Hodge Laplacian admits a similar proof

\begin{proposition}\label{prop:ExtOpHodgeLapRn+}Let $p\in(1,+\infty)$, $s\in(-1+1/p,1/p)$, $k\in\llb 0,n \rrb$. The Hodge Laplacian
\begin{align*}
    (\dot{\mathrm{D}}^{s}_{p}(\Delta_{\mathcal{H},\mathfrak{t}},\mathbb{R}^n_+,\Lambda^k),-\Delta_{\mathcal{H},\mathfrak{t}})=( \dot{\mathrm{D}}^{s}_{p}(D_\mathfrak{t}^2,\mathbb{R}^n_+,\Lambda^k),D_\mathfrak{t}^2)
\end{align*}
is a densely defined closed injective operator on $\dot{\mathrm{H}}^{s,p}(\mathbb{R}^n_+,\Lambda)$. For all $u\in \dot{\mathrm{D}}^{s}_{p}(\Delta_{\mathcal{H},\mathfrak{t}},\mathbb{R}^n_+,\Lambda^k)$, the following formula holds
\begin{align*}
    -\Delta \mathrm{E}_{\mathcal{H},\mathfrak{t}} u =  \mathrm{E}_{\mathcal{H},\mathfrak{t}}[ -\Delta_{\mathcal{H},\mathfrak{t}} u]\text{. }
\end{align*}

Moreover,
\begin{itemize}
    \item the result still holds replacing $(\dot{\mathrm{H}}^{s,p},\dot{\mathrm{D}}^{s}_{p})$ by either $({\mathrm{H}}^{s,p},{\mathrm{D}}^{s}_{p})$, $(\mathrm{B}^{s}_{p,q},\mathrm{D}^{s}_{p,q})$ or $(\dot{\mathrm{B}}^{s}_{p,q},\dot{\mathrm{D}}^{s}_{p,q})$, with $q\in[1,+\infty)$ ;

    \item in case of $(\mathrm{B}^{s}_{p,\infty},\mathrm{D}^{s}_{p,\infty})$ and $(\dot{\mathrm{B}}^{s}_{p,\infty},\dot{\mathrm{D}}^{s}_{p,\infty})$ the Hodge Laplacian is only weak${}^\ast$ densely defined, and strongly closed;
    
    \item all above results remain true replacing $\mathfrak{t}$ by $\mathfrak{n}$.
\end{itemize}
\end{proposition}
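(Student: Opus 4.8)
The plan is to push every property through the reflection operator $\mathrm{E}_{\mathcal{H},\mathfrak{t}}$ down to the whole-space Laplacian of Theorem \ref{thm:MetaThm1LaplacianRn}, in the same spirit as the preceding proof for $D_\mathfrak{t}$. The first task is the intertwining identity. If $u\in\dot{\mathrm{D}}^{s}_{p}(\Delta_{\mathcal{H},\mathfrak{t}},\mathbb{R}^n_+,\Lambda^k)=\dot{\mathrm{D}}^{s}_{p}(D_\mathfrak{t}^2)$, then decomposing $D_\mathfrak{t}^2 u=\mathrm{d}^\ast\mathrm{d} u+\mathrm{d}\mathrm{d}^\ast u$ by degree shows $\mathrm{d} u\in\dot{\mathrm{D}}^{s}_{p}(\mathrm{d}^\ast)$ and $\mathrm{d}^\ast u\in\dot{\mathrm{D}}^{s}_{p}(\mathrm{d})$, so Lemma \ref{lem:ExtOpRn+DiffFormHspBspq} applied to $u$, to $\mathrm{d} u$, and to $\mathrm{d}^\ast u$, together with $-\Delta=\mathrm{d}\delta+\delta\mathrm{d}$ on $\mathbb{R}^n$ (Remark \ref{rmk:HodgeLaplacianBC}), yields
\[
-\Delta\,\mathrm{E}_{\mathcal{H},\mathfrak{t}}u=(\mathrm{d}\delta+\delta\mathrm{d})\mathrm{E}_{\mathcal{H},\mathfrak{t}}u=\mathrm{E}_{\mathcal{H},\mathfrak{t}}(\mathrm{d}\mathrm{d}^\ast+\mathrm{d}^\ast\mathrm{d})u=\mathrm{E}_{\mathcal{H},\mathfrak{t}}[-\Delta_{\mathcal{H},\mathfrak{t}}u].
\]
This is the asserted formula, and since the right-hand side lies in $\dot{\mathrm{H}}^{s,p}(\mathbb{R}^n)$ it also shows $\mathrm{E}_{\mathcal{H},\mathfrak{t}}u\in\dot{\mathrm{D}}^{s}_{p}(\Delta,\mathbb{R}^n)$.

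Injectivity and density are then cheap. If $-\Delta_{\mathcal{H},\mathfrak{t}}u=0$ the formula forces $-\Delta\mathrm{E}_{\mathcal{H},\mathfrak{t}}u=0$; as $-\Delta$ is injective on $\eus{S}'_h(\mathbb{R}^n,\Lambda^k)$ by Theorem \ref{thm:MetaThm1LaplacianRn} we get $\mathrm{E}_{\mathcal{H},\mathfrak{t}}u=0$, hence $u=(\mathrm{E}_{\mathcal{H},\mathfrak{t}}u)_{|_{\mathbb{R}^n_+}}=0$. For the domain I would observe that $\mathrm{C}_c^\infty(\mathbb{R}^n_+,\Lambda^k)$ sits inside $\dot{\mathrm{D}}^{s}_{p}(\Delta_{\mathcal{H},\mathfrak{t}})$ (such forms vanish near $\partial\mathbb{R}^n_+$, hence satisfy all boundary conditions and have all iterated $\mathrm{d},\mathrm{d}^\ast$-images smooth and compactly supported), and that it is dense in $\dot{\mathrm{H}}^{s,p}(\mathbb{R}^n_+,\Lambda^k)=\dot{\mathrm{H}}^{s,p}_0(\mathbb{R}^n_+,\Lambda^k)$ for $s\in(-1+1/p,1/p)$ by the density results recalled in Subsection \ref{sec:FunctionSpacesRn+}.

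The heart of the matter is closedness. Given $(u_j)_j$ with $u_j\to u$ and $-\Delta_{\mathcal{H},\mathfrak{t}}u_j\to v$ in $\dot{\mathrm{H}}^{s,p}(\mathbb{R}^n_+)$, boundedness of $\mathrm{E}_{\mathcal{H},\mathfrak{t}}$ (Proposition \ref{prop:SobolevMultiplier}) and the formula give $\mathrm{E}_{\mathcal{H},\mathfrak{t}}u_j\to\mathrm{E}_{\mathcal{H},\mathfrak{t}}u$ and $-\Delta\mathrm{E}_{\mathcal{H},\mathfrak{t}}u_j\to\mathrm{E}_{\mathcal{H},\mathfrak{t}}v$ in $\dot{\mathrm{H}}^{s,p}(\mathbb{R}^n)$. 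The second-order a priori estimate of Theorem \ref{thm:MetaThm1LaplacianRn} then makes $(\nabla^2\mathrm{E}_{\mathcal{H},\mathfrak{t}}u_j)_j$ Cauchy in $\dot{\mathrm{H}}^{s,p}(\mathbb{R}^n)$, which is complete since $(\mathcal{C}_{s,p})$ holds for $s<1/p$; identifying the limit in the sense of distributions yields $\mathrm{E}_{\mathcal{H},\mathfrak{t}}u_j\to\mathrm{E}_{\mathcal{H},\mathfrak{t}}u$ in $\dot{\mathrm{H}}^{s+2,p}(\mathbb{R}^n)$ and $-\Delta\mathrm{E}_{\mathcal{H},\mathfrak{t}}u=\mathrm{E}_{\mathcal{H},\mathfrak{t}}v$, so that after restriction $u\in\dot{\mathrm{H}}^{s+2,p}(\mathbb{R}^n_+)$ and $-\Delta u=v$. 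It remains to recover the boundary conditions: one has $u_j\to u$ in $\dot{\mathrm{H}}^{s+2,p}(\mathbb{R}^n_+)$ and $\mathrm{d} u_j\to\mathrm{d} u$ in $\dot{\mathrm{H}}^{s+1,p}(\mathbb{R}^n_+)$, and since $s+1,s+2>1/p$ the partial traces are defined and continuous by Theorems \ref{thm:TracesDifferentialformsInhomSpaces} and \ref{thm:TracesDifferentialformsHomSpaces}; passing $\nu\iprod u_j=0$ and $\nu\iprod\mathrm{d} u_j=0$ on $\partial\mathbb{R}^n_+$ to the limit places $u$ in $\dot{\mathrm{D}}^{s}_{p}(\Delta_{\mathcal{H},\mathfrak{t}})$ with $-\Delta_{\mathcal{H},\mathfrak{t}}u=v$.

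The step I expect to be most delicate is precisely this last trace recovery: it is what pins down the admissible range $s\in(-1+1/p,1/p)$, since one needs simultaneously the boundedness of $\mathrm{E}_{\mathcal{H},\mathfrak{t}}$ and the meaningfulness and continuity of both partial traces $\nu\iprod(\cdot)$ and $\nu\iprod\mathrm{d}(\cdot)$ on the boundary, the latter being exactly the content of the appendix. The remaining cases are routine variations: $\dot{\mathrm{B}}^{s}_{p,q}$ with $q\in[1,+\infty)$ and the inhomogeneous spaces follow verbatim with the Besov analogues of the multiplier and trace inputs, while for $q=+\infty$ the subspace $\mathrm{C}_c^\infty$ is only weak${}^\ast$ dense, giving the stated weak${}^\ast$ density of the domain; the normal variant is obtained by exchanging $(\mathfrak{t},\mathrm{d},\mathrm{d}^\ast)$ with $(\mathfrak{n},\delta^\ast,\delta)$ throughout, using the corresponding half of Lemma \ref{lem:ExtOpRn+DiffFormHspBspq}.
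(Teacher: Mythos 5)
Your proposal is correct, and it follows the same overall architecture as the paper: the paper dispatches this proposition with the remark that it ``admits a similar proof'' to the closedness proof for the Hodge--Dirac operator, i.e.\ extend by $\mathrm{E}_{\mathcal{H},\mathfrak{t}}$ via Lemma \ref{lem:ExtOpRn+DiffFormHspBspq}, transfer everything to the whole-space Laplacian, restrict, and recover the boundary conditions by continuity of the partial traces. Where you genuinely diverge is in the mechanism for the closedness step: the paper's template identifies the limit of $(D_\mathfrak{t}U_j)_j$ by splitting it with the whole-space Hodge decomposition of Theorem \ref{thm:HodgeDecompRn} and using continuity of the projectors, whereas you invoke the second-order a priori estimate $\lVert\nabla^2 w\rVert_{\dot{\mathrm{H}}^{s,p}(\mathbb{R}^n)}\lesssim\lVert\Delta w\rVert_{\dot{\mathrm{H}}^{s,p}(\mathbb{R}^n)}$ from Theorem \ref{thm:MetaThm1LaplacianRn} to get Cauchyness of $(\mathrm{E}_{\mathcal{H},\mathfrak{t}}u_j)_j$ in $[\dot{\mathrm{H}}^{s,p}\cap\dot{\mathrm{H}}^{s+2,p}](\mathbb{R}^n)$. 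For the Laplacian (as opposed to the Dirac operator) your route is arguably the more natural one, since the second boundary condition $\nu\iprod\mathrm{d}u_{|_{\partial\mathbb{R}^n_+}}=0$ requires exactly this gain of two derivatives before any trace theorem applies. Two small points of precision: (1) the traces you need are supplied by point \textit{(iii)} of Theorem \ref{thm:TracesDifferentialformsHomSpaces}, which is stated on intersection spaces $[\dot{\mathrm{B}}^{s}_{p,q}\cap\dot{\mathrm{B}}^{s+1}_{p,q}]$ (resp.\ $[\dot{\mathrm{H}}^{s,p}\cap\dot{\mathrm{H}}^{s+1,p}]$) with $s\in(-1+1/p,1/p)$, not on a single space $\dot{\mathrm{H}}^{\sigma,p}$ with $\sigma>1/p$; your convergence in $\dot{\mathrm{H}}^{s,p}\cap\dot{\mathrm{H}}^{s+2,p}$ does give, by interpolation, convergence of $u_j$ and of $\mathrm{d}u_j$ in $\dot{\mathrm{H}}^{s,p}\cap\dot{\mathrm{H}}^{s+1,p}$, so the argument closes, but the justification should be phrased through the intersection norm; (2) the separation ``by degree'' that isolates $\mathrm{d}u\in\dot{\mathrm{D}}^{s}_{p}(\mathrm{d}^\ast)$ and $\mathrm{d}^\ast u\in\dot{\mathrm{D}}^{s}_{p}(\mathrm{d})$ applies to $D_\mathfrak{t}u=\mathrm{d}u+\mathrm{d}^\ast u$ (degrees $k+1$ and $k-1$), not to $D_\mathfrak{t}^2u$ whose two summands both have degree $k$. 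Neither issue affects the validity of the proof.
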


From there, the whole context has been established in order to be able to claim the next theorem.

\begin{theorem}\label{thm:MetaThm1HodgeLaplacianRn+}Let $p\in(1,+\infty)$, $q\in[1,+\infty]$, $s\in (-1+1/p,1/p)$, and $k\in\llb 0,n\rrb$.
\begin{enumerate}[label=($\roman*$)]
    \item For $\mu\in[0,\pi)$, $\lambda\in\Sigma_\mu$, if $f\in \dot{\mathrm{H}}^{s,p}(\mathbb{R}^n_+,\Lambda^k)$ then the following resolvent problem
    \begin{align*}
        \lambda u - \Delta_{\mathcal{H}} u = f\quad\text{ in } \mathbb{R}^n_+\text{, }
    \end{align*}
         admits a unique solution $u\in \dot{\mathrm{D}}^{s}_{p}(\Delta_{\mathcal{H}},\mathbb{R}^n_+,\Lambda^k) \subset[\dot{\mathrm{H}}^{s,p}\cap \dot{\mathrm{H}}^{s+2,p}](\mathbb{R}^n_+,\Lambda^k)$ with estimates,
        \begin{align*}
            \lvert\lambda\rvert\lVert  u\rVert_{\dot{\mathrm{H}}^{s,p}(\mathbb{R}^n_+)}+\lvert\lambda\rvert^\frac{1}{2}\lVert \nabla u\rVert_{\dot{\mathrm{H}}^{s,p}(\mathbb{R}^n_+)}+\lVert \nabla^2 u\rVert_{\dot{\mathrm{H}}^{s,p}(\mathbb{R}^n_+)} &\lesssim_{p,n,s,\mu} \lVert f\rVert_{\dot{\mathrm{H}}^{s,p}(\mathbb{R}^n_+)} \text{, }\\
            \lvert\lambda\rvert^\frac{1}{2}\lVert (\mathrm{d}+\delta) u\rVert_{\dot{\mathrm{H}}^{s,p}(\mathbb{R}^n_+)}+\lVert \mathrm{d}\delta u\rVert_{\dot{\mathrm{H}}^{s,p}(\mathbb{R}^n_+)} + \lVert \delta\mathrm{d} u\rVert_{\dot{\mathrm{H}}^{s,p}(\mathbb{R}^n_+)} &\lesssim_{p,n,s,\mu} \lVert f\rVert_{\dot{\mathrm{H}}^{s,p}(\mathbb{R}^n_+)} \text{. }
        \end{align*}
        In particular, $\lambda \mathrm{I}-\Delta_{\mathcal{H}}\,:\,\dot{\mathrm{D}}^{s}_{p}(\Delta_{\mathcal{H}},\mathbb{R}^n_+,\Lambda^k)\longrightarrow \dot{\mathrm{H}}^{s,p}(\mathbb{R}^n_+,\Lambda^k)$ is an isomorphism of Banach spaces.
        
        Furthermore, the result still holds replacing $(\dot{\mathrm{H}}^{s,p},\dot{\mathrm{H}}^{s,p}\cap \dot{\mathrm{H}}^{s+2,{p}},\dot{\mathrm{D}}^{s}_{p})$ by $({\mathrm{H}}^{s,p},{\mathrm{H}}^{s+2,p},{\mathrm{D}}^{s}_{p})$, $(\dot{\mathrm{B}}^{s}_{p,q},\dot{\mathrm{B}}^{s}_{p,q}\cap \dot{\mathrm{B}}^{s+2}_{p,q},\dot{\mathrm{D}}^{s}_{p,q})$, or even by $({\mathrm{B}}^{s}_{p,q},{\mathrm{B}}^{s+2}_{p,q},{\mathrm{D}}^{s}_{p,q})$.
        
        \item For any $\mu\in(0,\pi)$, the operator $-\Delta_{\mathcal{H}}$ admits a bounded (or $\mathrm{\mathbf{H}}^\infty(\Sigma_\mu)$-) holomorphic functional calculus on function spaces: $\dot{\mathrm{H}}^{s,p}(\mathbb{R}^n_+,\Lambda^k)$,  $\dot{\mathrm{B}}^{s}_{p,q}(\mathbb{R}^n_+,\Lambda^k)$, ${\mathrm{H}}^{s,p}(\mathbb{R}^n_+,\Lambda^k)$ and ${\mathrm{B}}^{s}_{p,q}(\mathbb{R}^n_+,\Lambda^k)$.
        
        Moreover, the following resolvent identity holds on any previously mentioned function spaces,
        \begin{align*}
            \mathrm{E}_{\mathcal{H}}(\lambda \mathrm{I} - \Delta_{\mathcal{H}})^{-1} = (\lambda \mathrm{I} - \Delta)^{-1}\mathrm{E}_{\mathcal{H}}\text{. }
        \end{align*}
\end{enumerate}
\end{theorem}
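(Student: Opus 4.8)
The plan is to transfer the whole-space results of Theorem~\ref{thm:MetaThm1LaplacianRn} to the half-space by means of the reflection extension operator $\mathrm{E}_{\mathcal{H}}$, exactly as was done for the $\mathrm{L}^2$ realization in Theorem~\ref{thm:HodgeLapL2Rn+}. The three ingredients that make this work have already been assembled: for $s\in(-1+1/p,1/p)$ the operator $\mathrm{E}_{\mathcal{H}}$ is bounded from $\dot{\mathrm{H}}^{s,p}(\mathbb{R}^n_+,\Lambda)$ into $\dot{\mathrm{H}}^{s,p}(\mathbb{R}^n,\Lambda)$ (and on every other scale) by Proposition~\ref{prop:SobolevMultiplier}; it intertwines the derivatives, $\mathrm{d}\,\mathrm{E}_{\mathcal{H},\mathfrak{t}}=\mathrm{E}_{\mathcal{H},\mathfrak{t}}\,\mathrm{d}$ and $\delta\,\mathrm{E}_{\mathcal{H},\mathfrak{t}}=\mathrm{E}_{\mathcal{H},\mathfrak{t}}\,\mathrm{d}^\ast$ by Lemma~\ref{lem:ExtOpRn+DiffFormHspBspq}; and consequently it intertwines the Laplacians, $-\Delta\,\mathrm{E}_{\mathcal{H},\mathfrak{t}}=\mathrm{E}_{\mathcal{H},\mathfrak{t}}(-\Delta_{\mathcal{H},\mathfrak{t}})$, with $-\Delta_{\mathcal{H}}$ closed and injective on each space by Proposition~\ref{prop:ExtOpHodgeLapRn+}. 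Since restriction to $\mathbb{R}^n_+$ has norm at most $1$ (it is the very map defining the quotient norm), every bound obtained on $\mathbb{R}^n$ descends to one on $\mathbb{R}^n_+$, while each application of $\mathrm{E}_{\mathcal{H}}$ costs only a fixed constant.

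For point~\textit{(i)}, given $f\in\dot{\mathrm{H}}^{s,p}(\mathbb{R}^n_+,\Lambda^k)$ and $\lambda\in\Sigma_\mu$, I would set $U:=(\lambda\mathrm{I}-\Delta)^{-1}\mathrm{E}_{\mathcal{H}}f$, which exists, is unique, and obeys all the resolvent estimates of Theorem~\ref{thm:MetaThm1LaplacianRn}~\textit{(ii)} on $\mathbb{R}^n$. The crucial step is to show that $U$ lies in the range of $\mathrm{E}_{\mathcal{H}}$, i.e. that $U=\mathrm{E}_{\mathcal{H}}u$ with $u:=U_{|_{\mathbb{R}^n_+}}$: this holds because $\mathrm{E}_{\mathcal{H}}f$ carries the definite reflection parity (componentwise even or odd in $x_n$) characterising the range of $\mathrm{E}_{\mathcal{H}}$, the resolvent $(\lambda\mathrm{I}-\Delta)^{-1}$ commutes with this reflection symmetry, and the range of $\mathrm{E}_{\mathcal{H}}$ is closed. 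Then Proposition~\ref{prop:ExtOpHodgeLapRn+} places $u$ in $\dot{\mathrm{D}}^{s}_{p}(\Delta_{\mathcal{H}},\mathbb{R}^n_+,\Lambda^k)$, the parity of $U$ forces the boundary conditions $\nu\iprod u_{|_{\partial\mathbb{R}^n_+}}=0$ and $\nu\iprod\mathrm{d}u_{|_{\partial\mathbb{R}^n_+}}=0$ (reading off the odd and even components exactly as in the trace computation in the proof of Theorem~\ref{thm:HodgeLapL2Rn+}), and $\lambda u-\Delta_{\mathcal{H}}u=f$. The estimates follow by restricting the whole-space ones after noting $\mathrm{d}U=\mathrm{E}_{\mathcal{H}}(\mathrm{d}u)$, $\delta U=\mathrm{E}_{\mathcal{H}}(\mathrm{d}^\ast u)$, and $\lVert\mathrm{E}_{\mathcal{H}}f\rVert_{\dot{\mathrm{H}}^{s,p}(\mathbb{R}^n)}\lesssim\lVert f\rVert_{\dot{\mathrm{H}}^{s,p}(\mathbb{R}^n_+)}$. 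Uniqueness is immediate: if $\lambda u-\Delta_{\mathcal{H}}u=0$ then $\mathrm{E}_{\mathcal{H}}u$ solves $\lambda\,\mathrm{E}_{\mathcal{H}}u-\Delta\,\mathrm{E}_{\mathcal{H}}u=0$ on $\mathbb{R}^n$, whence $\mathrm{E}_{\mathcal{H}}u=0$ and $u=0$ by whole-space injectivity. The same construction yields the resolvent identity $\mathrm{E}_{\mathcal{H}}(\lambda\mathrm{I}-\Delta_{\mathcal{H}})^{-1}=(\lambda\mathrm{I}-\Delta)^{-1}\mathrm{E}_{\mathcal{H}}$, since $\mathrm{E}_{\mathcal{H}}u=U$. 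The very same lines cover the $\mathrm{H}^{s,p}$, $\dot{\mathrm{B}}^{s}_{p,q}$ and $\mathrm{B}^{s}_{p,q}$ scales and the $\mathfrak{n}$-variant.

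For point~\textit{(ii)}, sectoriality of $-\Delta_{\mathcal{H}}$ on each space is already contained in the resolvent bounds of~\textit{(i)}. To obtain the bounded $\mathrm{\mathbf{H}}^\infty(\Sigma_\mu)$-calculus I would fix $\theta\in(0,\mu)$ and $f\in\mathrm{\mathbf{H}}^\infty(\Sigma_\theta)$ and use the Dunford integral \eqref{eq:DunfordintegralFuncCalc} for $f(-\Delta_{\mathcal{H}})$ on the dense set $\dot{\mathrm{D}}^{s}_{p}(\Delta_{\mathcal{H}})\cap\dot{\mathrm{R}}^{s}_{p}(\Delta_{\mathcal{H}})$. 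Pulling the bounded operator $\mathrm{E}_{\mathcal{H}}$ inside the convergent integral and applying the resolvent identity of~\textit{(i)} at each point $z$ of the contour gives the intertwining $\mathrm{E}_{\mathcal{H}}\,f(-\Delta_{\mathcal{H}})=f(-\Delta)\,\mathrm{E}_{\mathcal{H}}$. Hence, using that restriction has norm at most $1$, the boundedness of $\mathrm{E}_{\mathcal{H}}$, and the whole-space calculus bound of Theorem~\ref{thm:MetaThm1LaplacianRn}~\textit{(iii)},
\begin{align*}
    \lVert f(-\Delta_{\mathcal{H}})u\rVert_{\dot{\mathrm{H}}^{s,p}(\mathbb{R}^n_+)}\leqslant\lVert f(-\Delta)\mathrm{E}_{\mathcal{H}}u\rVert_{\dot{\mathrm{H}}^{s,p}(\mathbb{R}^n)}\lesssim\lVert f\rVert_{\mathrm{L}^\infty}\lVert\mathrm{E}_{\mathcal{H}}u\rVert_{\dot{\mathrm{H}}^{s,p}(\mathbb{R}^n)}\lesssim\lVert f\rVert_{\mathrm{L}^\infty}\lVert u\rVert_{\dot{\mathrm{H}}^{s,p}(\mathbb{R}^n_+)},
\end{align*}
and a density argument propagates the bound; the resolvent identity is the one already recorded in~\textit{(i)}. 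The main obstacle --- indeed the only place where $\Omega=\mathbb{R}^n_+$ genuinely enters --- is the identification $U=\mathrm{E}_{\mathcal{H}}(U_{|_{\mathbb{R}^n_+}})$ together with the attendant recovery of the homogeneous boundary conditions. This rests on the reflection structure of $\mathrm{E}_{\mathcal{H}}$ and on interpreting the partial traces $\nu\iprod u_{|_{\partial\mathbb{R}^n_+}}$ and $\nu\iprod\mathrm{d}u_{|_{\partial\mathbb{R}^n_+}}$ correctly within the homogeneous scale, which is precisely where the weak--strong trace correspondence of the Appendix (Theorem~\ref{thm:TracesDifferentialformsHomSpaces}) is needed, since for $s\neq 0$ the boundary values are not classical. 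Everything else is a transference argument uniform in the chosen function-space scale.
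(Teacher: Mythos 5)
Your proposal is correct and follows essentially the same route as the paper: transference from the whole space via the reflection operator $\mathrm{E}_{\mathcal{H}}$, using its boundedness, the intertwining relations of Lemma~\ref{lem:ExtOpRn+DiffFormHspBspq} and Proposition~\ref{prop:ExtOpHodgeLapRn+}, and the resolvent identity to carry over estimates and the $\mathrm{\mathbf{H}}^\infty$-calculus. The only (harmless) variation is that you recover the boundary conditions directly from the componentwise parity of $U=(\lambda\mathrm{I}-\Delta)^{-1}\mathrm{E}_{\mathcal{H}}f$, whereas the paper argues by density of $[\mathrm{L}^2\cap\dot{\mathrm{H}}^{s,p}](\mathbb{R}^n_+)$ together with the $\mathrm{L}^2$-theory of Theorem~\ref{thm:HodgeLapL2Rn+} and continuity of the traces of Theorem~\ref{thm:TracesDifferentialformsHomSpaces}; both are legitimate. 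One small point to add: for Besov spaces with $q=+\infty$ your closing density argument is unavailable, and the paper instead obtains that case by real interpolation from the $q<+\infty$ cases.
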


\begin{proof}For $\mu\in[0,\pi)$, $\lambda\in\Sigma_\mu$, if $f\in \dot{\mathrm{H}}^{s,p}(\mathbb{R}^n_+,\Lambda^k)$, we have 
\begin{align*}
    (\lambda \mathrm{I}-\Delta)^{-1}\mathrm{E}_{\mathcal{H}}f \in [\dot{\mathrm{H}}^{s,p}\cap\dot{\mathrm{H}}^{s+2,p}](\mathbb{R}^n,\Lambda^k).
\end{align*}
Thus $u:= [(\lambda \mathrm{I}-\Delta)^{-1}\mathrm{E}_{\mathcal{H}}f]_{|_{\mathbb{R}^n_+}}\in [\dot{\mathrm{H}}^{s,p}\cap\dot{\mathrm{H}}^{s+2,p}](\mathbb{R}^n_+,\Lambda^k)$, satisfies
\begin{align*}
    \lambda u -\Delta u =f \,\text{ in }\, \mathbb{R}^n_+ \text{, }
\end{align*}
with estimates
\begin{align*}
        \lvert\lambda\rvert\lVert  u\rVert_{\dot{\mathrm{H}}^{s,p}(\mathbb{R}^n_+)}+\lvert\lambda\rvert^\frac{1}{2}\lVert \nabla u\rVert_{\dot{\mathrm{H}}^{s,p}(\mathbb{R}^n_+)}+\lVert \nabla^2 u\rVert_{\dot{\mathrm{H}}^{s,p}(\mathbb{R}^n_+)} &\lesssim_{p,n,s,\mu} \lVert f\rVert_{\dot{\mathrm{H}}^{s,p}(\mathbb{R}^n_+)} \text{, }\\
        \lvert\lambda\rvert^\frac{1}{2}\lVert (\mathrm{d}+\delta) u\rVert_{\dot{\mathrm{H}}^{s,p}(\mathbb{R}^n_+)}+\lVert \mathrm{d}\delta u\rVert_{\dot{\mathrm{H}}^{s,p}(\mathbb{R}^n_+)} + \lVert \delta\mathrm{d} u\rVert_{\dot{\mathrm{H}}^{s,p}(\mathbb{R}^n_+)} &\lesssim_{p,n,s,\mu} \lVert f\rVert_{\dot{\mathrm{H}}^{s,p}(\mathbb{R}^n_+)} \text{. }
\end{align*}
By density of $[\mathrm{L}^2\cap\dot{\mathrm{H}}^{s,p}](\mathbb{R}^n_+,\Lambda^k)$ in $\dot{\mathrm{H}}^{s,p}(\mathbb{R}^n_+,\Lambda^k)$ one may use Theorem \ref{thm:HodgeLapL2Rn+} and continuity of traces provided by Theorem \ref{thm:TracesDifferentialformsHomSpaces} to show that necessarily $\nu\iprod u_{|_{\partial\mathbb{R}^n_+}}=0$ and $\nu\iprod \mathrm{d} u_{|_{\partial\mathbb{R}^n_+}}=0$ (or resp. $\nu\wedge u_{|_{\partial\mathbb{R}^n_+}}=0$ and $\nu\wedge \delta u_{|_{\partial\mathbb{R}^n_+}}=0$). Hence $u\in \dot{\mathrm{D}}^{s}_{p}(\Delta_{\mathcal{H}},\mathbb{R}^n_+,\Lambda^k)$.

Now assume $v\in \dot{\mathrm{D}}^{s}_{p}(\Delta_{\mathcal{H}},\mathbb{R}^n_+,\Lambda^k)$ satisfies
\begin{align*}
    \lambda v -\Delta_\mathcal{H} v =f \,\text{ in }\, \mathbb{R}^n_+\text{.}
\end{align*}
We apply Proposition \ref{prop:ExtOpHodgeLapRn+} to claim that $V:=\mathrm{E}_\mathcal{H}v$ must satisfy
\begin{align*}
    \lambda V -\Delta V = \mathrm{E}_{\mathcal{H}}f \,\text{ in }\, \mathbb{R}^n\text{.}
\end{align*}
Thus, necessarily $\mathrm{E}_{\mathcal{H}}(\lambda \mathrm{I} - \Delta_{\mathcal{H}})^{-1}f = (\lambda \mathrm{I} - \Delta)^{-1}\mathrm{E}_{\mathcal{H}}f$.

This resolvent identity leads to the construction of bounded ($\mathrm{\mathbf{H}}^\infty(\Sigma_\mu)$-)holomorphic functional calculus, given by the following identity for all $\Psi\in \mathrm{\mathbf{H}}^{\infty}(\Sigma_\mu)$, $\mu\in(0,\pi)$:
\begin{align*}
    \mathrm{E}_\mathcal{H}\Psi(-\Delta_{\mathcal{H}}) = \Psi(-\Delta)\mathrm{E}_\mathcal{H} \text{.}
\end{align*}
The result for homogeneous Besov spaces $\dot{\mathrm{B}}^{s}_{p,q}$, $q<+\infty$, and other similar inhomogeneous function spaces may be achieved in a similar manner. The case of inhomogeneous and homogeneous Besov spaces with $q=+\infty$ follows from real interpolation.
\end{proof}

The goal for now is to prove the Hodge decomposition. The idea is to prove that the representation formula of $\mathbb{P}$ (resp. $\mathbb{Q}$) proved in Lemma \ref{lem:RiesztransfHodgeL2Rn+} still makes sense on $\dot{\mathrm{H}}^{s,p}(\mathbb{R}^n_+,\Lambda)$, $\dot{\mathrm{B}}^{s}_{p,q}(\mathbb{R}^n_+,\Lambda)$, and their inhomogeneous counterparts. To do so we adapt Lemma \ref{lem:commutrelationL2HodgeRn+} in the present setting.

\begin{lemma}\label{lem:commutrelationHspBspqHodgeRn+}Let $p\in(1,+\infty)$, $s\in(-1+1/p,1/p)$, $\mu\in[0,\pi)$, $\lambda\in\Sigma_\mu$, $t\geqslant0$,  $k\in\llb 0,n\rrb$. The following commutation identities hold,
\begin{enumerate}
    \item $\mathrm{d}(\lambda \mathrm{I} - \Delta_{\mathcal{H},\mathfrak{t}})^{-1}f = (\lambda \mathrm{I} - \Delta_{\mathcal{H},\mathfrak{t}})^{-1}\mathrm{d}f$,\qquad for all $f\in\dot{\mathrm{D}}^s_{p}(\mathrm{d},\mathbb{R}^n_+,\Lambda^k)$ ;
    \item $\mathrm{d}^\ast(\lambda \mathrm{I} - \Delta_{\mathcal{H},\mathfrak{t}})^{-1}f = (\lambda \mathrm{I} - \Delta_{\mathcal{H},\mathfrak{t}})^{-1}\mathrm{d}^\ast f$,\qquad for all $f\in\dot{\mathrm{D}}^s_{p}(\mathrm{d}^\ast,\mathbb{R}^n_+,\Lambda^k)$.
    \item $\mathrm{d}e^{t\Delta_{\mathcal{H},\mathfrak{t}}}f = e^{t\Delta_{\mathcal{H},\mathfrak{t}}}\mathrm{d}f$,\qquad for all $f\in\dot{\mathrm{D}}^s_{p}(\mathrm{d},\mathbb{R}^n_+,\Lambda^k)$ ;
    \item $\mathrm{d}^\ast e^{t\Delta_{\mathcal{H},\mathfrak{t}}}f = e^{t\Delta_{\mathcal{H},\mathfrak{t}}}\mathrm{d}^\ast f$,\qquad for all $f\in\dot{\mathrm{D}}^s_{p}(\mathrm{d}^\ast,\mathbb{R}^n_+,\Lambda^k)$.
\end{enumerate}
Every above identities still hold replacing $(\mathfrak{t},\mathrm{d},\mathrm{d}^\ast)$ by $(\mathfrak{n},\delta^\ast,\delta)$, and $\dot{\mathrm{D}}^s_{p}$ by either ${\mathrm{D}}^s_{p}$, ${\mathrm{D}}^s_{p,q}$ or even by $\dot{\mathrm{D}}^s_{p,q}$, with $q\in[1,+\infty]$.
\end{lemma}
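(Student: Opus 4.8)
The plan is to reduce every identity to the corresponding, and trivial, commutation on the whole space $\mathbb{R}^n$, where the Hodge Laplacian is just the componentwise scalar Laplacian and $\mathrm{d}$, $\delta$ are Fourier multipliers commuting with every function of $-\Delta$. The bridge between $\mathbb{R}^n_+$ and $\mathbb{R}^n$ is the reflection extension $\mathrm{E}_{\mathcal{H},\mathfrak{t}}$, for which two intertwinings are already available: the derivative intertwining of Lemma \ref{lem:ExtOpRn+DiffFormHspBspq}, namely $\mathrm{d}\,\mathrm{E}_{\mathcal{H},\mathfrak{t}} = \mathrm{E}_{\mathcal{H},\mathfrak{t}}\,\mathrm{d}$ and $\delta\,\mathrm{E}_{\mathcal{H},\mathfrak{t}} = \mathrm{E}_{\mathcal{H},\mathfrak{t}}\,\mathrm{d}^\ast$, and the resolvent intertwining $\mathrm{E}_{\mathcal{H}}(\lambda\mathrm{I}-\Delta_{\mathcal{H}})^{-1} = (\lambda\mathrm{I}-\Delta)^{-1}\mathrm{E}_{\mathcal{H}}$ of Theorem \ref{thm:MetaThm1HodgeLaplacianRn+}. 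Since $\mathrm{E}_{\mathcal{H},\mathfrak{t}}$ is injective (its composition with restriction to $\mathbb{R}^n_+$ is the identity), it suffices to prove each identity after applying $\mathrm{E}_{\mathcal{H},\mathfrak{t}}$.

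For point (1), fix $f\in\dot{\mathrm{D}}^s_{p}(\mathrm{d},\mathbb{R}^n_+,\Lambda^k)$. First I would note that $(\lambda\mathrm{I}-\Delta_{\mathcal{H},\mathfrak{t}})^{-1}f$ lies in $\dot{\mathrm{D}}^s_{p}(\Delta_{\mathcal{H},\mathfrak{t}})\subset \dot{\mathrm{D}}^s_{p}(D_\mathfrak{t}) \subset \dot{\mathrm{D}}^s_{p}(\mathrm{d})$, so that Lemma \ref{lem:ExtOpRn+DiffFormHspBspq} applies to it. The computation is then the chain
\begin{align*}
    \mathrm{E}_{\mathcal{H},\mathfrak{t}}\,\mathrm{d}(\lambda\mathrm{I}-\Delta_{\mathcal{H},\mathfrak{t}})^{-1}f &= \mathrm{d}\,\mathrm{E}_{\mathcal{H},\mathfrak{t}}(\lambda\mathrm{I}-\Delta_{\mathcal{H},\mathfrak{t}})^{-1}f = \mathrm{d}\,(\lambda\mathrm{I}-\Delta)^{-1}\mathrm{E}_{\mathcal{H},\mathfrak{t}}f\\
    &= (\lambda\mathrm{I}-\Delta)^{-1}\mathrm{d}\,\mathrm{E}_{\mathcal{H},\mathfrak{t}}f = (\lambda\mathrm{I}-\Delta)^{-1}\mathrm{E}_{\mathcal{H},\mathfrak{t}}\,\mathrm{d}f = \mathrm{E}_{\mathcal{H},\mathfrak{t}}(\lambda\mathrm{I}-\Delta_{\mathcal{H},\mathfrak{t}})^{-1}\mathrm{d}f\text{,}
\end{align*}
in which the middle equality is the whole-space commutation and the other four alternate between the two intertwinings; injectivity of $\mathrm{E}_{\mathcal{H},\mathfrak{t}}$ then yields the claim. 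Point (2) follows verbatim, replacing $\mathrm{d}\,\mathrm{E}_{\mathcal{H},\mathfrak{t}} = \mathrm{E}_{\mathcal{H},\mathfrak{t}}\,\mathrm{d}$ by $\delta\,\mathrm{E}_{\mathcal{H},\mathfrak{t}} = \mathrm{E}_{\mathcal{H},\mathfrak{t}}\,\mathrm{d}^\ast$ and using that $\delta$ commutes with $(\lambda\mathrm{I}-\Delta)^{-1}$ on $\mathbb{R}^n$.

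For points (3) and (4) I would first upgrade the resolvent intertwining to the heat semigroup. As $-\Delta_{\mathcal{H},\mathfrak{t}}$ is $0$-sectorial with bounded holomorphic functional calculus (Theorem \ref{thm:MetaThm1HodgeLaplacianRn+}), the function $z\mapsto e^{-tz}$ belongs to $\mathrm{\mathbf{H}}^\infty(\Sigma_\mu)$ for $\mu\in(0,\tfrac{\pi}{2})$, so the functional-calculus intertwining $\mathrm{E}_{\mathcal{H}}\Psi(-\Delta_{\mathcal{H}}) = \Psi(-\Delta)\mathrm{E}_{\mathcal{H}}$ specializes to $\mathrm{E}_{\mathcal{H},\mathfrak{t}}\,e^{t\Delta_{\mathcal{H},\mathfrak{t}}} = e^{t\Delta}\mathrm{E}_{\mathcal{H},\mathfrak{t}}$ (alternatively, write $e^{t\Delta_{\mathcal{H},\mathfrak{t}}}$ as a Cauchy integral of the resolvent and invoke the resolvent identity). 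With this in hand, the exact same chain as above, with $(\lambda\mathrm{I}-\Delta_{\mathcal{H},\mathfrak{t}})^{-1}$ replaced by $e^{t\Delta_{\mathcal{H},\mathfrak{t}}}$ and using that $\mathrm{d}$, $\delta$ commute with $e^{t\Delta}$ on $\mathbb{R}^n$, proves (3) and (4). The $\mathfrak{n}$ case, and the other scales ${\mathrm{H}}^{s,p}$, ${\mathrm{B}}^s_{p,q}$, $\dot{\mathrm{B}}^s_{p,q}$ with $q\in[1,+\infty]$, are handled by the identical argument, since Lemma \ref{lem:ExtOpRn+DiffFormHspBspq} and Theorem \ref{thm:MetaThm1HodgeLaplacianRn+} hold on all of them.

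The only genuinely delicate point is the bookkeeping of domain memberships making Lemma \ref{lem:ExtOpRn+DiffFormHspBspq} legitimately applicable at each occurrence — namely that the resolvent (or semigroup) image lands in $\dot{\mathrm{D}}^s_p(\mathrm{d})$ (resp. $\dot{\mathrm{D}}^s_p(\mathrm{d}^\ast)$) and that $\mathrm{d}f$ (resp. $\mathrm{d}^\ast f$) lies in the ground space on which the whole-space resolvent acts — together with the clean extraction of the semigroup intertwining for (3)–(4). The algebraic heart of the argument is otherwise the trivial commutation on $\mathbb{R}^n$, so I expect no serious obstruction beyond this routine verification.
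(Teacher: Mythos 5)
Your proof is correct, but it takes a genuinely different route from the paper's. The paper argues intrinsically on the half-space: setting $u=(\lambda\mathrm{I}-\Delta_{\mathcal{H},\mathfrak{t}})^{-1}f$, it uses $\mathrm{d}^2=0$ and the algebraic identity $-\Delta_{\mathcal{H},\mathfrak{t}}=\mathrm{d}\mathrm{d}^\ast+\mathrm{d}^\ast\mathrm{d}$ to show that $\mathrm{d}u$ solves the resolvent problem with datum $\mathrm{d}f$, and concludes by uniqueness of that solution (Theorem \ref{thm:MetaThm1HodgeLaplacianRn+}); the semigroup identities are then deduced from the resolvent ones. You instead conjugate by the reflection extension $\mathrm{E}_{\mathcal{H},\mathfrak{t}}$ and reduce to the trivial Fourier-multiplier commutation on $\mathbb{R}^n$, alternating the two intertwinings of Lemma \ref{lem:ExtOpRn+DiffFormHspBspq} and Theorem \ref{thm:MetaThm1HodgeLaplacianRn+}. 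Your route buys cleaner domain bookkeeping: the only memberships you need are $f,(\lambda\mathrm{I}-\Delta_{\mathcal{H},\mathfrak{t}})^{-1}f\in\dot{\mathrm{D}}^s_p(\mathrm{d})$, whereas the paper's uniqueness step implicitly requires checking that $\mathrm{d}u$ carries the boundary conditions placing it in $\dot{\mathrm{D}}^s_p(\Delta_{\mathcal{H},\mathfrak{t}},\Lambda^{k+1})$ (which does hold, via $\nu\iprod\mathrm{d}u_{|_{\partial\mathbb{R}^n_+}}=0$ and $\mathrm{d}^2=0$, but is left tacit). The trade-off is that your argument is wedded to the flat geometry of $\mathbb{R}^n_+$ through the extension operator, while the paper's is domain-independent — consistent with its remark that the $\mathrm{L}^2$ analogue (Lemma \ref{lem:commutrelationL2HodgeRn+}) survives on general open sets. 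Both proofs are complete and of comparable length.
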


\begin{proof} Let $f\in\dot{\mathrm{D}}^s_{p}(\mathrm{d},\mathbb{R}^n_+,\Lambda^k)\subset \dot{\mathrm{H}}^{s,p}(\mathbb{R}^n_+,\Lambda^k)$, then by Theorem \ref{thm:MetaThm1HodgeLaplacianRn+} there exists a unique $u\in \dot{\mathrm{D}}^s_{p}(\Delta_{\mathcal{H},\mathfrak{t}},\mathbb{R}^n_+,\Lambda^k)\subset \dot{\mathrm{D}}^s_{p}(D_{\mathfrak{t}},\mathbb{R}^n_+,\Lambda^k)\subset \dot{\mathrm{D}}^s_{p}(\mathrm{d},\mathbb{R}^n_+,\Lambda^k)$ such that
\begin{align*}
    \lambda u -\Delta_{\mathcal{H},\mathfrak{t}} u =f \text{. }
\end{align*}
Since $u,f \in \dot{\mathrm{D}}^s_{p}(\mathrm{d},\mathbb{R}^n_+,\Lambda^k)$, we deduce that $\Delta_{\mathcal{H},\mathfrak{t}}u \in \dot{\mathrm{D}}^s_{p}(\mathrm{d},\mathbb{R}^n_+,\Lambda^k)$ and we use $\mathrm{d}^2=0$ to deduce
\begin{align*}
    \lambda \mathrm{d}u -\Delta_{\mathcal{H},\mathfrak{t}} \mathrm{d}u =\mathrm{d}f \text{. }
\end{align*}
But, we have $\mathrm{d}u \in \dot{\mathrm{H}}^{s,p}(\mathbb{R}^n_+,\Lambda^{k+1})$ solution of
\begin{align*}
    \lambda v -\Delta_{\mathcal{H},\mathfrak{t}} v = \mathrm{d}f \text{. }
\end{align*}
Thus, uniqueness of the solution yields $\mathrm{d}u = (\lambda \mathrm{I} - \Delta_{\mathcal{H},\mathfrak{t}})^{-1}\mathrm{d}f$. If it holds for resolvents, then it holds for semigroups.
\end{proof}

\begin{proposition}\label{prop:RieszTransformHodgeHspBspqRn+} Let $p\in(1,+\infty)$, $s\in(-1+1/p,1/p)$,  $k\in\llb 0,n\rrb$. For any $\lambda\geqslant 0$, following operators are well defined and uniformly bounded with respect to $\lambda$
\begin{align*}
    \mathrm{d}(\lambda \mathrm{I}-\Delta_{\mathcal{H},\mathfrak{t}})^{-\frac{1}{2}}\,&:\,\dot{\mathrm{H}}^{s,p}(\mathbb{R}^{n}_+,\Lambda^k)\longrightarrow \dot{\mathrm{N}}^{s}_{p}(\mathrm{d},\mathbb{R}^{n}_+,\Lambda^{k+1})\, ;\\
    \mathrm{d}^\ast(\lambda \mathrm{I}-\Delta_{\mathcal{H},\mathfrak{t}})^{-\frac{1}{2}}\,&:\,\dot{\mathrm{H}}^{s,p}(\mathbb{R}^{n}_+,\Lambda^k)\longrightarrow \dot{\mathrm{N}}^{s}_{p}(\mathrm{d}^\ast,\mathbb{R}^{n}_+,\Lambda^{k-1})\,.
\end{align*}
Moreover, following identities also hold for all $\lambda>0$
\begin{itemize}
    \item $\mathrm{d}(\lambda \mathrm{I}-\Delta_{\mathcal{H},\mathfrak{t}})^{-\frac{1}{2}}f =  (\lambda \mathrm{I}-\Delta_{\mathcal{H},\mathfrak{t}})^{-\frac{1}{2}} \mathrm{d} f$ \qquad for all $f\in\dot{\mathrm{D}}^s_{p}(\mathrm{d},\mathbb{R}^n_+,\Lambda^k)$ ;
    \item $\mathrm{d}^\ast(\lambda \mathrm{I}-\Delta_{\mathcal{H},\mathfrak{t}})^{-\frac{1}{2}}f =  (\lambda \mathrm{I}-\Delta_{\mathcal{H},\mathfrak{t}})^{-\frac{1}{2}} \mathrm{d}^\ast f$ \qquad for all $f\in\dot{\mathrm{D}}^s_{p}(\mathrm{d}^\ast,\mathbb{R}^n_+,\Lambda^k)$.
\end{itemize}
Everything still holds replacing $(\mathfrak{t},\mathrm{d},\mathrm{d}^\ast)$ by $(\mathfrak{n},\delta^\ast,\delta)$, and replacing $(\dot{\mathrm{H}}^{s,p},\dot{\mathrm{N}}^{s}_{p})$ by either $(\dot{\mathrm{B}}^{s}_{p,q},\dot{\mathrm{N}}^{s}_{p,q})$, $({\mathrm{H}}^{s,p},{\mathrm{N}}^{s}_{p})$ or even by $({\mathrm{B}}^{s}_{p,q},{\mathrm{N}}^{s}_{p,q})$ with $q\in[1,+\infty]$.
\end{proposition}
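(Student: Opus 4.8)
The plan is to transfer boundedness from the whole space $\mathbb{R}^n$, where the analogous operators are controlled by Proposition \ref{prop:RiesTransformHodgeRn}, to the half-space by means of the extension operator $\mathrm{E}_{\mathcal{H},\mathfrak{t}}$ and the intertwining identity $\mathrm{E}_{\mathcal{H}}\Psi(-\Delta_{\mathcal{H}}) = \Psi(-\Delta)\mathrm{E}_{\mathcal{H}}$ furnished by Theorem \ref{thm:MetaThm1HodgeLaplacianRn+}. Applying this with $\Psi = \Psi_\lambda : z\mapsto (\lambda+z)^{-1/2}$ gives, for $u\in\dot{\mathrm{H}}^{s,p}(\mathbb{R}^n_+,\Lambda^k)$ and $v:=(\lambda\mathrm{I}-\Delta_{\mathcal{H},\mathfrak{t}})^{-\frac12}u$, the identity $\mathrm{E}_{\mathcal{H},\mathfrak{t}}v = (\lambda\mathrm{I}-\Delta)^{-\frac12}\mathrm{E}_{\mathcal{H},\mathfrak{t}}u$ on $\mathbb{R}^n$, which reduces the whole question to a uniform-in-$\lambda$ estimate for $\mathrm{d}(\lambda\mathrm{I}-\Delta)^{-\frac12}$ and $\delta(\lambda\mathrm{I}-\Delta)^{-\frac12}$ on the homogeneous scale of the full space.

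First I would prove that whole-space bound. Writing $\mathrm{d}(\lambda\mathrm{I}-\Delta)^{-\frac12} = [\mathrm{d}(-\Delta)^{-\frac12}]\,[(-\Delta)^{\frac12}(\lambda\mathrm{I}-\Delta)^{-\frac12}]$, the first factor is bounded by Proposition \ref{prop:RiesTransformHodgeRn}, while the second factor is $\psi_\lambda(-\Delta)$ with $\psi_\lambda(z)=(z/(\lambda+z))^{\frac12}$. Since $-\Delta$ is $0$-sectorial, only the values of $\psi_\lambda$ on a narrow sector $\Sigma_\mu$, $\mu<\tfrac\pi2$, are relevant, and there $\lVert\psi_\lambda\rVert_{\mathrm{L}^\infty(\Sigma_\mu)}\leqslant 1$ uniformly in $\lambda\geqslant 0$; hence the bounded holomorphic functional calculus of Theorem \ref{thm:MetaThm1LaplacianRn}\textit{(iii)} yields a uniform bound, and the same factorization handles $\delta(\lambda\mathrm{I}-\Delta)^{-\frac12}$. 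Consequently $\mathrm{d}\mathrm{E}_{\mathcal{H},\mathfrak{t}}v = \mathrm{d}(\lambda\mathrm{I}-\Delta)^{-\frac12}\mathrm{E}_{\mathcal{H},\mathfrak{t}}u$ lies in $\dot{\mathrm{H}}^{s,p}(\mathbb{R}^n,\Lambda^{k+1})$, so $\mathrm{E}_{\mathcal{H},\mathfrak{t}}v\in\dot{\mathrm{D}}^s_p(\mathrm{d},\mathbb{R}^n)$; restricting to $\mathbb{R}^n_+$ and using that the restriction map is norm-nonincreasing together with the boundedness of $\mathrm{E}_{\mathcal{H},\mathfrak{t}}$ gives $v\in\dot{\mathrm{D}}^s_p(\mathrm{d},\mathbb{R}^n_+)$ with $\lVert\mathrm{d}v\rVert_{\dot{\mathrm{H}}^{s,p}(\mathbb{R}^n_+)}\lesssim\lVert u\rVert_{\dot{\mathrm{H}}^{s,p}(\mathbb{R}^n_+)}$ uniformly in $\lambda$, and likewise for $\mathrm{d}^\ast$ via the $\delta$-intertwining of Lemma \ref{lem:ExtOpRn+DiffFormHspBspq}.

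To identify the ranges — and in particular to secure the boundary condition $\nu\iprod(\cdot)_{|_{\partial\mathbb{R}^n_+}}=0$ carried by $\mathrm{d}^\ast=\underline{\delta}$ — I would argue by density. On the dense subspace $[\mathrm{L}^2\cap\dot{\mathrm{H}}^{s,p}](\mathbb{R}^n_+,\Lambda^k)$ the operators coincide with their $\mathrm{L}^2$ realizations, whose ranges lie in $\mathrm{N}_2(\mathrm{d},\mathbb{R}^n_+,\Lambda^{k+1})$, resp. $\mathrm{N}_2(\mathrm{d}^\ast,\mathbb{R}^n_+,\Lambda^{k-1})$, by the $\lambda\geqslant 0$ version of Lemma \ref{lem:RiesztransfHodgeL2Rn+}; since $\dot{\mathrm{N}}^s_p(\mathrm{d})$ and $\dot{\mathrm{N}}^s_p(\mathrm{d}^\ast)$ are closed in $\dot{\mathrm{H}}^{s,p}(\mathbb{R}^n_+,\Lambda)$ and the operators are uniformly bounded, passing to the limit places the ranges in the asserted kernels. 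The two commutation identities then follow from the subordination formula $(\lambda\mathrm{I}-\Delta_{\mathcal{H},\mathfrak{t}})^{-\frac12} = \tfrac{1}{\pi}\int_0^{\infty} t^{-\frac12}\,((t+\lambda)\mathrm{I}-\Delta_{\mathcal{H},\mathfrak{t}})^{-1}\,\mathrm{d}t$: each resolvent commutes with $\mathrm{d}$ (resp. $\mathrm{d}^\ast$) on $\dot{\mathrm{D}}^s_p(\mathrm{d})$ (resp. $\dot{\mathrm{D}}^s_p(\mathrm{d}^\ast)$) by Lemma \ref{lem:commutrelationHspBspqHodgeRn+}, and the closedness of these operators lets one pull $\mathrm{d}$ (resp. $\mathrm{d}^\ast$) through the integral.

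The main obstacle is the endpoint $\lambda=0$, where $(-\Delta_{\mathcal{H},\mathfrak{t}})^{-\frac12}$ is genuinely unbounded and only its composition with $\mathrm{d}$ or $\mathrm{d}^\ast$ is bounded; the factorization of the second paragraph must then be read on $\dot{\mathrm{R}}^s_p$, where negative powers are well defined, or the operator at $\lambda=0$ must be obtained as the strong limit of the uniformly bounded operators at $\lambda>0$. A secondary delicate point is the bookkeeping of boundary conditions: a direct verification that $v=(\lambda\mathrm{I}-\Delta_{\mathcal{H},\mathfrak{t}})^{-\frac12}u$ lies in $\dot{\mathrm{D}}^s_p(\mathrm{d}^\ast,\mathbb{R}^n_+)$ rests on the parity of $(\lambda\mathrm{I}-\Delta)^{-\frac12}$ with respect to the reflection defining $\mathrm{E}_{\mathcal{H},\mathfrak{t}}$, which is precisely what makes $\mathrm{E}_{\mathcal{H},\mathfrak{t}}v$ again a $\mathfrak{t}$-extension — the $\mathrm{L}^2$-density argument above is the cleanest way to bypass checking this by hand. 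Finally, the passage to $\dot{\mathrm{B}}^s_{p,q}$ and to the inhomogeneous scales, and the $(\mathfrak{n},\delta^\ast,\delta)$ case, are routine, since every ingredient invoked (Proposition \ref{prop:RiesTransformHodgeRn}, Theorem \ref{thm:MetaThm1HodgeLaplacianRn+}, and Lemmas \ref{lem:ExtOpRn+DiffFormHspBspq} and \ref{lem:commutrelationHspBspqHodgeRn+}) is already stated on all these spaces.
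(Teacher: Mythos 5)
Your proposal is correct and follows essentially the same route as the paper: both reduce to the whole space via the extension operator $\mathrm{E}_{\mathcal{H},\mathfrak{t}}$ and the intertwining of functional calculi from Theorem \ref{thm:MetaThm1HodgeLaplacianRn+}, obtain the uniform bound from the whole-space Riesz transforms, conclude by density of the range of $\Delta_{\mathcal{H},\mathfrak{t}}$, and derive the commutation identities from Lemma \ref{lem:commutrelationHspBspqHodgeRn+} together with a subordination formula (the paper uses the heat-semigroup integral $\frac{1}{\sqrt{\pi}}\int_0^{\infty}e^{-\tau\lambda}e^{\tau\Delta_{\mathcal{H},\mathfrak{t}}}\,\frac{\mathrm{d}\tau}{\sqrt{\tau}}$ where you use the resolvent one, and it asserts the whole-space gradient bound directly where you factor it through $\mathrm{d}(-\Delta)^{-1/2}$ and a uniformly bounded $\mathrm{\mathbf{H}}^\infty$ function). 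Your explicit $\mathrm{L}^2$-density argument for identifying the ranges, and your flagged treatment of the endpoint $\lambda=0$, are sound and only make explicit what the paper leaves implicit.
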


\begin{proof} For $\lambda\geqslant 0$, we introduce the representation formula,
\begin{align}\label{eq:repformulanegSqrt}
    (\lambda \mathrm{I}-\Delta_{\mathcal{H},\mathfrak{t}})^{-\frac{1}{2}} f = \frac{1}{\sqrt{\pi}}\int_{0}^{+\infty} e^{-\tau\lambda}e^{\tau\Delta_{\mathcal{H},\mathfrak{t}}}f \frac{\mathrm{d}\tau}{\sqrt{\tau}} \text{. }
\end{align}
This representation formula makes sense thanks to holomorphic functional calculus, and the integral is absolutely convergent for every for $f\in \dot{\mathrm{R}}^s_{p}(\Delta_{\mathcal{H},\mathfrak{t}},\mathbb{R}^n_+,\Lambda^k)$.

We use the definition of function spaces by restriction and the bounded holomorphic functional calculus, with the identity provided by point \textit{(ii)} of Theorem \ref{thm:MetaThm1HodgeLaplacianRn+}, \textit{i.e.} $\mathrm{E}_{\mathcal{H},\mathfrak{t}}e^{\tau\Delta_{\mathcal{H},\mathfrak{t}}} = e^{\tau\Delta}\mathrm{E}_{\mathcal{H},\mathfrak{t}}$, to obtain
\begin{align*}
    \lVert \mathrm{d}(\lambda \mathrm{I}-\Delta_{\mathcal{H},\mathfrak{t}})^{-\frac{1}{2}} f\rVert_{\dot{\mathrm{H}}^{s,p}(\mathbb{R}^{n}_+)} + \lVert \mathrm{d}^\ast(\lambda \mathrm{I}-\Delta_{\mathcal{H},\mathfrak{t}})^{-\frac{1}{2}} f\rVert_{\dot{\mathrm{H}}^{s,p}(\mathbb{R}^{n}_+)} &\lesssim_{k,n} \lVert \nabla (\lambda \mathrm{I}-\Delta)^{-\frac{1}{2}} \mathrm{E}_{\mathcal{H},\mathfrak{t}} f\rVert_{\dot{\mathrm{H}}^{s,p}(\mathbb{R}^{n})}\\
    &\lesssim_{n,k,s,p} \lVert  f\rVert_{\dot{\mathrm{H}}^{s,p}(\mathbb{R}^{n}_+)}\text{. }
\end{align*}
Therefore, the boundedness follows by density of $\dot{\mathrm{R}}^s_{p}(\Delta_{\mathcal{H},\mathfrak{t}},\mathbb{R}^n_+,\Lambda^k)$ in $\dot{\mathrm{H}}^{s,p}(\mathbb{R}^n_+,\Lambda^k)$.
Commutations relations when $\lambda>0$ follow from Lemma \ref{lem:commutrelationHspBspqHodgeRn+} and the representation formula \eqref{eq:repformulanegSqrt}.
The boundedness on the Besov scale follows from real interpolation.
\end{proof}

According to more convenient and usual notations with respect to the field of partial differential equations we set new symbols.
\begin{notations}\label{def:newNotationsSoleinodalSpaces} We introduce the following notations
\begin{align*}
    {\mathrm{H}}^{s,p}_{{\mathfrak{t}},\sigma} :={\mathrm{N}}^{s}_{p}(\mathrm{d}^\ast) \text{, } {\mathrm{H}}^{s,p}_{\gamma}:={\mathrm{N}}^{s}_{p}(\mathrm{d}) &\text{ and } {\mathrm{H}}^{s,p}_{\sigma} :={\mathrm{N}}^{s}_{p}(\delta) \text{, } {\mathrm{H}}^{s,p}_{{\mathfrak{n}},\gamma}:={\mathrm{N}}^{s}_{p}(\delta^\ast)\text{; }\\
    {\mathrm{B}}^{s,\sigma}_{p,q,{\mathfrak{t}}} :={\mathrm{N}}^{s}_{p,q}(\mathrm{d}^\ast) \text{, } {\mathrm{B}}^{s,\gamma}_{p,q} :={\mathrm{N}}^{s}_{p,q}(\mathrm{d}) &\text{ and }
    {\mathrm{B}}^{s,\sigma}_{p,q} :={\mathrm{N}}^{s}_{p}(\delta) \text{, } {\mathrm{B}}^{s,\gamma}_{p,q,{\mathfrak{n}}} :={\mathrm{N}}^{s}_{p,q}(\delta^\ast)\text{. }
\end{align*}
\end{notations}

Then we are able to obtain the following result.
\begin{theorem}\label{thm:HodgeDecompRn+}Let $p\in(1,+\infty)$, $q\in[1,+\infty]$, $s\in(-1+1/p,1/p)$, and let $k\in\llb 0,n\rrb$. It holds that
\begin{enumerate}
    \item  The following equality holds with equivalence of norms,
    \begin{align*}
        \dot{\mathrm{N}}^{s}_{p}(\mathrm{d},\mathbb{R}^n_+,\Lambda^k) = \overline{\dot{\mathrm{R}}^{s}_{p}(\mathrm{d},\mathbb{R}^n_+,\Lambda^k)}^{\lVert\cdot\rVert_{\dot{\mathrm{H}}^{s,p}(\mathbb{R}^n_+)}}\text{, }
    \end{align*}
    and still holds replacing $\mathrm{d}$ by $\mathrm{d}^\ast$.
    
    \item The (generalized) Helmholtz-Leray projector $\mathbb{P}\,:\, \dot{\mathrm{H}}^{s,p}(\mathbb{R}^n_+,\Lambda^k)\longrightarrow \dot{\mathrm{H}}^{s,p}_{{\mathfrak{t}},\sigma}(\mathbb{R}^n_+,\Lambda^k)$ is well defined and bounded. Moreover the following identity is true
    \begin{align*}
        \mathbb{P} = \mathrm{I} - \mathrm{d}(-\Delta_{\mathcal{H},\mathfrak{t}})^{-\frac{1}{2}}\mathrm{d}^\ast(-\Delta_{\mathcal{H},\mathfrak{t}})^{-\frac{1}{2}} \text{.}
    \end{align*}

    \item The following Hodge decomposition holds
    \begin{align*}
        \dot{\mathrm{H}}^{s,p}(\mathbb{R}^n_+,\Lambda^k) = \dot{\mathrm{H}}^{s,p}_{{\mathfrak{t}},\sigma}(\mathbb{R}^n_+,\Lambda^k) \oplus \dot{\mathrm{H}}^{s,p}_{\gamma}(\mathbb{R}^n_+,\Lambda^k)\text{. }
    \end{align*}
\end{enumerate}
Moreover, the result remains true if we replace
\begin{itemize}
    \item $\dot{\mathrm{H}}^{s,p}$ by $\dot{\mathrm{B}}^{s}_{p,q}$ ;
    \item  $(\dot{\mathrm{H}},\dot{\mathrm{B}})$ by $({\mathrm{H}},{\mathrm{B}})$ ;
    \item $(\mathfrak{t},\mathrm{d},\mathrm{d}^\ast,\mathbb{P},\sigma,\gamma)$ by $(\mathfrak{n},\delta^\ast,\delta,\mathbb{Q},\gamma,\sigma)$.
\end{itemize}
In case of Besov spaces with $q=+\infty$, the density result of point \textit{(i)} only holds in the weak${}^\ast$ sense.
\end{theorem}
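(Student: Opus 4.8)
The plan is to transpose to the scales $\dot{\mathrm{H}}^{s,p}$ and $\dot{\mathrm{B}}^{s}_{p,q}$ the argument already carried out on $\mathrm{L}^2(\mathbb{R}^n_+,\Lambda^k)$ in Theorem~\ref{thm:HodgeDecompL2Rn+Full}, the only structural change being that Hilbertian orthogonality is no longer available and must be replaced by the mere boundedness of the candidate projector supplied by Proposition~\ref{prop:RieszTransformHodgeHspBspqRn+}. Accordingly, I would set
\begin{align*}
    \mathbb{P} := \mathrm{I} - \mathrm{d}(-\Delta_{\mathcal{H},\mathfrak{t}})^{-\frac{1}{2}}\mathrm{d}^\ast(-\Delta_{\mathcal{H},\mathfrak{t}})^{-\frac{1}{2}},
\end{align*}
which is bounded on $\dot{\mathrm{H}}^{s,p}(\mathbb{R}^n_+,\Lambda^k)$ by that proposition, its two Riesz-type factors mapping respectively into $\dot{\mathrm{N}}^{s}_{p}(\mathrm{d})$ and $\dot{\mathrm{N}}^{s}_{p}(\mathrm{d}^\ast)$. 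Everything then reduces to showing that $\mathbb{P}$ is a bounded idempotent whose range and kernel are the two summands in point~(iii).

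First I would prove idempotency and identify the range and kernel. Using the commutation relations of Lemma~\ref{lem:commutrelationHspBspqHodgeRn+} together with the resolvent regularization $\mathrm{d}(\lambda\mathrm{I}-\Delta_{\mathcal{H},\mathfrak{t}})^{-\frac{1}{2}}\mathrm{d}^\ast(\lambda\mathrm{I}-\Delta_{\mathcal{H},\mathfrak{t}})^{-\frac{1}{2}} = \mathrm{d}\mathrm{d}^\ast(\lambda\mathrm{I}-\Delta_{\mathcal{H},\mathfrak{t}})^{-1}$, the identities $(\mathrm{I}-\mathbb{P})^2 = \mathrm{I}-\mathbb{P}$, $\mathrm{d}^\ast\mathbb{P} = 0$ and $\mathbb{P}_{|\dot{\mathrm{N}}^{s}_{p}(\mathrm{d}^\ast)} = \mathrm{I}$ reduce, on the dense subspace $\dot{\mathrm{R}}^{s}_{p}(\Delta_{\mathcal{H},\mathfrak{t}})$ where the representation formula \eqref{eq:repformulanegSqrt} converges absolutely, to manipulations of the nilpotency $\mathrm{d}^2=(\mathrm{d}^\ast)^2=0$ and the factorization $-\Delta_{\mathcal{H},\mathfrak{t}} = \mathrm{d}\mathrm{d}^\ast + \mathrm{d}^\ast\mathrm{d}$, exactly as in the $\mathrm{L}^2$ computation, followed by the passage to the limit $\lambda\to 0_+$. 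Extending by boundedness and density, $\mathbb{P}$ becomes a bounded projection with range $\dot{\mathrm{N}}^{s}_{p}(\mathrm{d}^\ast)=\dot{\mathrm{H}}^{s,p}_{\mathfrak{t},\sigma}$, while $\mathrm{I}-\mathbb{P}$ has range contained in $\dot{\mathrm{N}}^{s}_{p}(\mathrm{d})=\dot{\mathrm{H}}^{s,p}_\gamma$ and restricts to the identity there; this yields at once the topological Hodge decomposition of point~(iii).

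For point~(i) I would argue as in Step~2 of the proof of Theorem~\ref{thm:HodgeDecompL2Rn+Full}: the inclusion $\overline{\dot{\mathrm{R}}^{s}_{p}(\mathrm{d})}\subset\dot{\mathrm{N}}^{s}_{p}(\mathrm{d})$ is immediate from closedness of $\mathrm{d}$ and $\mathrm{d}^2=0$, whereas for $f\in\dot{\mathrm{N}}^{s}_{p}(\mathrm{d})$ the commutation relations give $(\mathrm{I}-\mathbb{P})f = \lim_{\lambda\to0_+}\mathrm{d}\mathrm{d}^\ast(\lambda\mathrm{I}-\Delta_{\mathcal{H},\mathfrak{t}})^{-1}f = \lim_{\lambda\to0_+}(-\Delta_{\mathcal{H},\mathfrak{t}})(\lambda\mathrm{I}-\Delta_{\mathcal{H},\mathfrak{t}})^{-1}f = f$, exhibiting $f$ as a limit of elements of $\dot{\mathrm{R}}^{s}_{p}(\mathrm{d})$. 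The passage to the scale of Besov spaces, to the inhomogeneous spaces, and to the variant $(\mathfrak{n},\delta^\ast,\delta,\mathbb{Q})$ is then obtained either by real interpolation between the Sobolev endpoints or by repeating the argument verbatim, the only adjustment being the weak$^\ast$ formulation of the density statement when $q=+\infty$.

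The main obstacle I anticipate is not the algebra, which is dictated by the $\mathrm{L}^2$ case, but the analytic justification of the limits $\lambda\to0_+$ in the absence of self-adjointness and, for $q\in\{1,+\infty\}$, of reflexivity. Specifically, one must secure the strong convergence $(-\Delta_{\mathcal{H},\mathfrak{t}})(\lambda\mathrm{I}-\Delta_{\mathcal{H},\mathfrak{t}})^{-1}f\to f$, which rests on the injectivity of the Hodge Laplacian together with the density of its range; these follow from the bounded $\mathrm{\mathbf{H}}^\infty$-calculus and the resolvent isomorphism of Theorem~\ref{thm:MetaThm1HodgeLaplacianRn+}, via the standard approximation properties of injective sectorial operators, but the endpoint cases require care and are most cleanly handled by interpolation, which is precisely why the $q=+\infty$ density in point~(i) is only weak$^\ast$.
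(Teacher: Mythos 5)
Your proposal is correct and follows essentially the same route as the paper: the paper's proof consists precisely of the remark that one reproduces the argument of Theorem \ref{thm:HodgeDecompL2Rn+Full} verbatim, with the Hilbertian orthogonality replaced by the boundedness of the Riesz-type operators from Proposition \ref{prop:RieszTransformHodgeHspBspqRn+}, the idempotency and range/kernel identification carried out on a dense subspace via resolvent regularization and the commutation relations, and the Besov and $q=+\infty$ cases handled by real interpolation. Your additional remarks on justifying the limits $\lambda\to 0_+$ via the injectivity and dense range coming from Theorem \ref{thm:MetaThm1HodgeLaplacianRn+} correctly identify the only analytic point that the $\mathrm{L}^2$ argument does not automatically supply.
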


\begin{proof}One may reproduce the proofs of Theorem \ref{thm:HodgeDecompL2Rn+Full}, thanks to above Proposition \ref{prop:RieszTransformHodgeHspBspqRn+}.
\end{proof}

The following corollary is a direct consequence of the given expression for the Helmholtz-Leray projection in Theorem \ref{thm:HodgeDecompRn+}.

\begin{corollary}\label{cor:commutrelationHodgeProjectorHspBspqRn+}Let $p\in(1,+\infty)$, $s\in(-1+1/p,1/p)$, $\mu\in[0,\pi)$, $\lambda\in\Sigma_\mu$,  $k\in\llb 0,n\rrb$. The following commutation identities hold for all $f\in\dot{\mathrm{H}}^{s,p}(\mathbb{R}^n_+,\Lambda^k)$, for all $t\geqslant 0$,
\begin{align*}
    (\lambda \mathrm{I}-\Delta_{\mathcal{H},\mathfrak{t}})^{-1} \mathbb{P} f &= \mathbb{P}(\lambda \mathrm{I}-\Delta_{\mathcal{H},\mathfrak{t}})^{-1}  f \text{, }\\
    e^{t\Delta_{\mathcal{H},\mathfrak{t}}} \mathbb{P} f &= \mathbb{P}e^{t\Delta_{\mathcal{H},\mathfrak{t}}}  f \text{. }
\end{align*}
Above identity still holds replacing $(\mathfrak{t},\mathbb{P})$ by $(\mathfrak{n},\mathbb{Q})$, and $\dot{\mathrm{H}}^{s,p}$ by either ${\mathrm{H}}^{s,p}$, ${\mathrm{B}}^s_{p,q}$ or even by $\dot{\mathrm{B}}^s_{p,q}$, with $q\in[1,+\infty]$.
\end{corollary}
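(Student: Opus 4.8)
The plan is to substitute the representation $\mathbb{P} = \mathrm{I} - \mathrm{d}(-\Delta_{\mathcal{H},\mathfrak{t}})^{-\frac{1}{2}}\mathrm{d}^\ast(-\Delta_{\mathcal{H},\mathfrak{t}})^{-\frac{1}{2}}$ from Theorem \ref{thm:HodgeDecompRn+} into the two desired identities and to commute the resolvent (then the semigroup) past each of the three factors $\mathrm{d}$, $(-\Delta_{\mathcal{H},\mathfrak{t}})^{-\frac{1}{2}}$ and $\mathrm{d}^\ast$ one at a time. Writing $A:=-\Delta_{\mathcal{H},\mathfrak{t}}$ and $R_\lambda := (\lambda\mathrm{I}-\Delta_{\mathcal{H},\mathfrak{t}})^{-1}$, the difference $\mathbb{P}R_\lambda f - R_\lambda\mathbb{P}f$ collapses, once the two identity terms cancel, to $R_\lambda\,\mathrm{d}A^{-\frac12}\mathrm{d}^\ast A^{-\frac12}f - \mathrm{d}A^{-\frac12}\mathrm{d}^\ast A^{-\frac12}R_\lambda f$, so the whole statement reduces to showing that the bounded operator $\mathrm{d}A^{-\frac12}\mathrm{d}^\ast A^{-\frac12}$ commutes with $R_\lambda$.

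First I would isolate the three commutations to be invoked: $R_\lambda$ commutes with $A^{-\frac12}$, since both lie in the bounded holomorphic functional calculus of $A$ furnished by Theorem \ref{thm:MetaThm1HodgeLaplacianRn+}; $\mathrm{d}R_\lambda g = R_\lambda\mathrm{d}g$ for $g\in\dot{\mathrm{D}}^s_p(\mathrm{d},\mathbb{R}^n_+,\Lambda^k)$, which is item (i) of Lemma \ref{lem:commutrelationHspBspqHodgeRn+}; and $\mathrm{d}^\ast R_\lambda g = R_\lambda\mathrm{d}^\ast g$ for $g\in\dot{\mathrm{D}}^s_p(\mathrm{d}^\ast,\mathbb{R}^n_+,\Lambda^k)$, item (ii) of the same lemma. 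Chaining these, for $f$ in a suitable dense subspace one computes $\mathrm{d}A^{-\frac12}\mathrm{d}^\ast A^{-\frac12}R_\lambda f = \mathrm{d}A^{-\frac12}\mathrm{d}^\ast R_\lambda A^{-\frac12}f = \mathrm{d}A^{-\frac12}R_\lambda\mathrm{d}^\ast A^{-\frac12}f = \mathrm{d}R_\lambda A^{-\frac12}\mathrm{d}^\ast A^{-\frac12}f = R_\lambda\,\mathrm{d}A^{-\frac12}\mathrm{d}^\ast A^{-\frac12}f$, which is exactly the required commutation. The boundedness of $\mathrm{d}A^{-\frac12}$ and $\mathrm{d}^\ast A^{-\frac12}$ on $\dot{\mathrm{H}}^{s,p}(\mathbb{R}^n_+,\Lambda^k)$ is precisely Proposition \ref{prop:RieszTransformHodgeHspBspqRn+}, so once the identity holds on a dense subspace it extends to the whole space by density and continuity, $R_\lambda$ and $\mathbb{P}$ being bounded. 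For the semigroup identity I would repeat the same chain verbatim, now using items (iii)–(iv) of Lemma \ref{lem:commutrelationHspBspqHodgeRn+} together with the fact that $e^{t\Delta_{\mathcal{H},\mathfrak{t}}}$ also commutes with $A^{-\frac12}$ by the functional calculus; the passage to $\dot{\mathrm{B}}^{s}_{p,q}$, ${\mathrm{H}}^{s,p}$, ${\mathrm{B}}^{s}_{p,q}$ and to $(\mathfrak{n},\mathbb{Q})$ is immediate since every ingredient is already stated in those generalities.

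The only genuine care needed, and the place where I expect to spend attention, is the bookkeeping of domains in the intermediate steps: applying items (i) and (ii) of Lemma \ref{lem:commutrelationHspBspqHodgeRn+} requires the argument to lie in $\dot{\mathrm{D}}^s_p(\mathrm{d},\mathbb{R}^n_+,\Lambda^k)$, respectively $\dot{\mathrm{D}}^s_p(\mathrm{d}^\ast,\mathbb{R}^n_+,\Lambda^k)$, while the bare power $A^{-\frac12}$ is only densely defined on the homogeneous realization. I would therefore run the computation first on the dense subspace $\dot{\mathrm{R}}^s_p(\Delta_{\mathcal{H},\mathfrak{t}},\mathbb{R}^n_+,\Lambda^k)$ used in Proposition \ref{prop:RieszTransformHodgeHspBspqRn+}, where $A^{-\frac12}f$, $\mathrm{d}^\ast A^{-\frac12}f$ and $A^{-\frac12}\mathrm{d}^\ast A^{-\frac12}f$ all land in the domains demanded at each line, and only then close by the boundedness of $\mathbb{P}$, $R_\lambda$ and $e^{t\Delta_{\mathcal{H},\mathfrak{t}}}$. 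No new estimate is required, so the statement is indeed a direct consequence of the representation formula in Theorem \ref{thm:HodgeDecompRn+} and the commutation relations established just before it.
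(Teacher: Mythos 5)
Your proposal is correct and follows exactly the route the paper intends: the corollary is stated there as a direct consequence of the representation $\mathbb{P}=\mathrm{I}-\mathrm{d}(-\Delta_{\mathcal{H},\mathfrak{t}})^{-\frac{1}{2}}\mathrm{d}^\ast(-\Delta_{\mathcal{H},\mathfrak{t}})^{-\frac{1}{2}}$ from Theorem \ref{thm:HodgeDecompRn+} combined with the commutation relations of Lemma \ref{lem:commutrelationHspBspqHodgeRn+} and Proposition \ref{prop:RieszTransformHodgeHspBspqRn+}, and you have simply written out that chain, with the density/domain bookkeeping handled appropriately on $\dot{\mathrm{R}}^s_p(\Delta_{\mathcal{H},\mathfrak{t}},\mathbb{R}^n_+,\Lambda^k)$ before closing by boundedness.
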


\subsubsection{Hodge-Stokes and Hodge-Maxwell operators}

The present subsection is about discussing properties of Hodge-Stokes and Hodge-Maxwell operators. First, one can define \textbf{\textit{Hodge-Stokes operator}}'s domain, for all $p\in(1,+\infty)$, $s\in(-1+1/p,1/p)$, $k\in\llb 0,n\rrb$, by
\begin{align}\label{eq:domainHodgeStokesAbsBC}
    \dot{\mathrm{D}}^{s}_{p}(\mathbb{A}_{\mathcal{H},\mathfrak{t}},\mathbb{R}^n_+,\Lambda^k):= \dot{\mathrm{H}}^{s,p}_{\mathfrak{t},\sigma}(\mathbb{R}^n_+,\Lambda^k)\cap\dot{\mathrm{D}}^{s}_{p}(\Delta_{\mathcal{H},\mathfrak{t}},\mathbb{R}^n_+,\Lambda^k)\text{, } 
\end{align}
and for all $u\in \dot{\mathrm{D}}^{s}_{p}(\mathbb{A}_{\mathcal{H},\mathfrak{t}},\mathbb{R}^n_+,\Lambda^k)$
\begin{align}\label{eq:defHodgeStokesAbsBC}
    \mathbb{A}_{\mathcal{H},\mathfrak{t}} u := \mathrm{d}^\ast\mathrm{d} u =  -\mathbb{P}\Delta u = -\Delta \mathbb{P} u =-\Delta u \text{. }
\end{align}
Above operator $\mathbb{A}_{\mathcal{H},\mathfrak{t}}$ is called the \textbf{\textit{Hodge-Stokes operator}} with \textbf{\textit{absolute boundary conditions}} which is a closed densely defined operator on $\dot{\mathrm{H}}^{s,p}_{{\mathfrak{t}},\sigma}(\mathbb{R}^n_+,\Lambda^k)$.

Similarly one can treat the case of \textbf{\textit{Hodge-Maxwell operators}},
\begin{align}\label{eq:domainHodgeMaxwellPerfectConducBC}
    \dot{\mathrm{D}}^{s}_{p}(\mathbb{M}_{\mathcal{H},\mathfrak{t}},\mathbb{R}^n_+,\Lambda^k):= \dot{\mathrm{H}}^{s,p}_{\gamma}(\mathbb{R}^n_+,\Lambda^k)\cap\dot{\mathrm{D}}^{s}_{p}(\Delta_{\mathcal{H},\mathfrak{t}},\mathbb{R}^n_+,\Lambda^k)\text{, } 
\end{align}
and for all $u\in \dot{\mathrm{D}}^{s}_{p}(\mathbb{M}_{\mathcal{H},\mathfrak{t}},\mathbb{R}^n_+,\Lambda^k)$
\begin{align}\label{eq:defHodgeMaxwellPerfectConducBC}
    \mathbb{M}_{\mathcal{H},\mathfrak{t}} u := \mathrm{d}\mathrm{d}^\ast u =  -[\mathrm{I}-\mathbb{P}]\Delta u = -\Delta [\mathrm{I}-\mathbb{P}] u =-\Delta u \text{. }
\end{align}
The operator $\mathbb{M}_{\mathcal{H},\mathfrak{t}}$ defined as above is called the \textbf{\textit{Hodge-Maxwell operator}} with \textbf{\textit{perfectly conductive wall boundary conditions}} which is a closed densely defined operator on $\dot{\mathrm{H}}^{s,p}_{\gamma}(\mathbb{R}^n_+,\Lambda^k)$.

Similarly, one may replace $(\mathfrak{t},\mathrm{d},\mathbb{P})$ by $(\mathfrak{n},\delta,\mathbb{Q})$, respectively in, \eqref{eq:domainHodgeStokesAbsBC} and \eqref{eq:defHodgeStokesAbsBC}, and in \eqref{eq:domainHodgeMaxwellPerfectConducBC} and \eqref{eq:defHodgeMaxwellPerfectConducBC}. This leads to the construction of
\begin{align}
    (\dot{\mathrm{D}}^{s}_{p}(\mathbb{A}_{\mathcal{H},\mathfrak{n}},\mathbb{R}^n_+,\Lambda^k), \mathbb{A}_{\mathcal{H},\mathfrak{n}})\quad \text{and}\quad\text(\dot{\mathrm{D}}^{s}_{p}(\mathbb{M}_{\mathcal{H},\mathfrak{n}},\mathbb{R}^n_+,\Lambda^k), \mathbb{M}_{\mathcal{H},\mathfrak{n}})
\end{align}
called respectively the \textbf{\textit{Hodge-Stokes operator}} with \textbf{\textit{relative boundary conditions}} and  the \textbf{\textit{Hodge-Maxwell operator}} with \textbf{\textit{relative boundary conditions}} which are both closed densely defined operator on $\dot{\mathrm{H}}^{s,p}_{\sigma}(\mathbb{R}^n_+,\Lambda^k)$ and $\dot{\mathrm{H}}^{s,p}_{\mathfrak{n},\gamma}(\mathbb{R}^n_+,\Lambda^k)$, respectively.

Those Hodge-Stokes and Hodge-Maxwell operators still have sense on other scale of function spaces replacing $(\dot{\mathrm{H}}^{s,p},\dot{\mathrm{D}}^{s}_{p})$ by $(\dot{\mathrm{B}}^{s}_{p,q},\dot{\mathrm{D}}^{s}_{p,q})$, then $(\dot{\mathrm{H}},\dot{\mathrm{B}},\dot{\mathrm{D}})$ by $({\mathrm{H}},{\mathrm{B}},{\mathrm{D}})$.

Notice again the exception of Besov spaces, homogeneous and inhomogeneous, where the domains of any Hodge-Stokes and Hodge-Maxwell operators are only weak${}^\ast$ dense in the case $q=+\infty$.

With the above definitions, Theorem \ref{thm:MetaThm1HodgeLaplacianRn+}, Corollary \ref{cor:commutrelationHodgeProjectorHspBspqRn+} and Theorem \ref{thm:HodgeDecompRn+}, we obtain for free the next theorem.
\begin{theorem}\label{thm:HinftyFuncCalcHodgeStokesMaxwell} Let $p\in(1,+\infty)$, $s\in(-1+1/p,1/p)$. For all $\mu\in (0,\pi)$, the operator $\mathbb{A}_{\mathcal{H},\mathfrak{t}}$ (resp. $\mathbb{M}_{\mathcal{H},\mathfrak{t}}$) admits a bounded (or $\mathrm{\mathbf{H}}^{\infty}(\Sigma_\mu)$-)holomorphic functional calculus on $\dot{\mathrm{H}}^{s,p}_{{\mathfrak{t}},\sigma}(\mathbb{R}^n_+,\Lambda)$ (resp. $\dot{\mathrm{H}}^{s,p}_{\gamma}(\mathbb{R}^n_+,\Lambda)$ ).\\
Moreover, the result remains true if we replace
\begin{itemize}
    \item $\dot{\mathrm{H}}^{s,p}$ by $\dot{\mathrm{B}}^{s}_{p,q}$, $q\in[1,+\infty]$ ;
    \item  $(\dot{\mathrm{H}},\dot{\mathrm{B}})$ by $({\mathrm{H}},{\mathrm{B}})$ ;
    \item $(\mathfrak{t},\sigma,\gamma,\mathbb{A},\mathbb{M})$ by $(\mathfrak{n},\gamma,\sigma,\mathbb{M},\mathbb{A})$.
\end{itemize}
\end{theorem}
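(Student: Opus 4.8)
The plan is to obtain $\mathbb{A}_{\mathcal{H},\mathfrak{t}}$ and $\mathbb{M}_{\mathcal{H},\mathfrak{t}}$ as the parts of the full Hodge Laplacian $-\Delta_{\mathcal{H},\mathfrak{t}}$ in the two summands of the Hodge decomposition, and to transfer the bounded holomorphic functional calculus of $-\Delta_{\mathcal{H},\mathfrak{t}}$ from Theorem~\ref{thm:MetaThm1HodgeLaplacianRn+} to each summand. The whole point is that a bounded $\mathrm{\mathbf{H}}^{\infty}$ calculus restricts to any closed subspace that is invariant under the resolvent, so essentially no new analysis is required once the commutation of $\mathbb{P}$ (resp. $\mathrm{I}-\mathbb{P}$) with the resolvent is in hand. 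This is why the statement is announced to follow ``for free''.

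First I would fix $\mu\in(0,\pi)$ and recall from point \textit{(ii)} of Theorem~\ref{thm:MetaThm1HodgeLaplacianRn+} that $-\Delta_{\mathcal{H},\mathfrak{t}}$ is an injective $0$-sectorial operator on the full space $\dot{\mathrm{H}}^{s,p}(\mathbb{R}^n_+,\Lambda)$ enjoying a bounded $\mathrm{\mathbf{H}}^{\infty}(\Sigma_\mu)$ calculus, with bound $K_\theta$ on $\mathrm{\mathbf{H}}^{\infty}(\Sigma_\theta)$ for each $\theta\in(0,\mu)$. By Theorem~\ref{thm:HodgeDecompRn+} the bounded projector $\mathbb{P}$ yields the topological splitting $\dot{\mathrm{H}}^{s,p}(\mathbb{R}^n_+,\Lambda)=\dot{\mathrm{H}}^{s,p}_{\mathfrak{t},\sigma}(\mathbb{R}^n_+,\Lambda)\oplus\dot{\mathrm{H}}^{s,p}_{\gamma}(\mathbb{R}^n_+,\Lambda)$, while Corollary~\ref{cor:commutrelationHodgeProjectorHspBspqRn+} shows that $\mathbb{P}$ commutes with $(\lambda\mathrm{I}-\Delta_{\mathcal{H},\mathfrak{t}})^{-1}$ for every $\lambda\in\Sigma_\mu$. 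Consequently each summand is invariant under the resolvent, and by the definition~\eqref{eq:domainHodgeStokesAbsBC} of its domain the part of $-\Delta_{\mathcal{H},\mathfrak{t}}$ in $\dot{\mathrm{H}}^{s,p}_{\mathfrak{t},\sigma}$ is exactly $\mathbb{A}_{\mathcal{H},\mathfrak{t}}$, with $(\lambda\mathrm{I}-\mathbb{A}_{\mathcal{H},\mathfrak{t}})^{-1}=(\lambda\mathrm{I}-\Delta_{\mathcal{H},\mathfrak{t}})^{-1}\bigr|_{\dot{\mathrm{H}}^{s,p}_{\mathfrak{t},\sigma}}$. Since the norm on the summand is induced from the ambient space, the resolvent estimates of Theorem~\ref{thm:MetaThm1HodgeLaplacianRn+} pass verbatim, so $\mathbb{A}_{\mathcal{H},\mathfrak{t}}$ is injective $0$-sectorial on $\dot{\mathrm{H}}^{s,p}_{\mathfrak{t},\sigma}$.

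For the calculus itself I would take $f\in\mathrm{\mathbf{H}}^{\infty}(\Sigma_\theta)$ and $x\in\dot{\mathrm{D}}^{s}_{p}(\mathbb{A}_{\mathcal{H},\mathfrak{t}})\cap\dot{\mathrm{R}}^{s}_{p}(\mathbb{A}_{\mathcal{H},\mathfrak{t}})$, and evaluate the Dunford integral~\eqref{eq:DunfordintegralFuncCalc} for $-\Delta_{\mathcal{H},\mathfrak{t}}$ at $x$: because the resolvents on the contour $\partial\Sigma_\theta$ leave $\dot{\mathrm{H}}^{s,p}_{\mathfrak{t},\sigma}$ invariant, the integral stays in this summand, so $f(-\Delta_{\mathcal{H},\mathfrak{t}})x\in\dot{\mathrm{H}}^{s,p}_{\mathfrak{t},\sigma}$ and equals $f(\mathbb{A}_{\mathcal{H},\mathfrak{t}})x$. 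The ambient bound then gives $\lVert f(\mathbb{A}_{\mathcal{H},\mathfrak{t}})x\rVert\leqslant K_\theta\lVert f\rVert_{\mathrm{L}^{\infty}}\lVert x\rVert$, and density of $\dot{\mathrm{D}}^{s}_{p}(\mathbb{A}_{\mathcal{H},\mathfrak{t}})\cap\dot{\mathrm{R}}^{s}_{p}(\mathbb{A}_{\mathcal{H},\mathfrak{t}})$ in $\dot{\mathrm{H}}^{s,p}_{\mathfrak{t},\sigma}$ extends the estimate, which is precisely the asserted bounded $\mathrm{\mathbf{H}}^{\infty}(\Sigma_\mu)$ calculus. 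The Hodge-Maxwell statement is obtained verbatim with $\mathbb{P}$ replaced by $\mathrm{I}-\mathbb{P}$ and $\dot{\mathrm{H}}^{s,p}_{\mathfrak{t},\sigma}$ by $\dot{\mathrm{H}}^{s,p}_{\gamma}$.

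Finally, the Besov and inhomogeneous scales, as well as the $\mathfrak{n}$-variants, follow by the same restriction argument, invoking the corresponding cases already recorded in Theorems~\ref{thm:MetaThm1HodgeLaplacianRn+} and~\ref{thm:HodgeDecompRn+} and in Corollary~\ref{cor:commutrelationHodgeProjectorHspBspqRn+}; the endpoint $q=+\infty$ may instead be reached by real interpolation between the $q<+\infty$ cases. I expect no serious obstacle: the only point demanding care is the identification of the part of $-\Delta_{\mathcal{H},\mathfrak{t}}$ in the invariant summand with the operator of~\eqref{eq:domainHodgeStokesAbsBC} and its injectivity there, and both are already guaranteed by the topological direct sum in Theorem~\ref{thm:HodgeDecompRn+} together with the injectivity of the full Hodge Laplacian.
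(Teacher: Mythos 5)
Your argument is correct and is exactly the route the paper takes: the paper's "proof" is the one-line remark that the theorem follows "for free" from Theorem~\ref{thm:MetaThm1HodgeLaplacianRn+}, Corollary~\ref{cor:commutrelationHodgeProjectorHspBspqRn+} and Theorem~\ref{thm:HodgeDecompRn+}, and what you have written is precisely the standard restriction-to-an-invariant-complemented-subspace argument that this remark leaves implicit. No gaps.
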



\section{\texorpdfstring{$\mathrm{L}^q$}{Lq}-maximal regularities with global-in-time estimates} \label{Sect:HomInterpDomainsandHomMaxReg}

In order to motivate the results in next Sections \ref{sec:homogeneousinterp} and \ref{sec:maxRegMaxHodgeStokes}, we provide short reminders about recent advances for maximal regularity in the Sobolev framework provided by \cite[Chapter~2]{DanchinHieberMuchaTolk2020} and \cite{Gaudin2023}. We are going to follow the presentation from \cite[Section~2]{Gaudin2023}.

First, let us consider $(\mathrm{D}(A),A)$ a densely defined closed operator on a Banach space $X$. It is known, see \cite[Theorem~3.7.11]{ArendtBattyHieberNeubranker2011}, that the two following assertions are equivalent:\textit{
\begin{enumerate}
    \item $A$ is $\omega$-sectorial on $X$, with $\omega\in [0,\tfrac{\pi}{2})$;
    \item $-A$ generates a bounded holomorphic $\mathrm{C}_0$-semigroup on $X$, denoted by $(e^{-tA})_{t\geqslant0}$.
\end{enumerate}}

Thus, provided that $A$ is $\omega$-sectorial on $X$ for some $\omega\in [0,\tfrac{\pi}{2})$, for $T\in(0,+\infty]$, we look at the following abstract Cauchy problem, 
\begin{align}\tag{ACP}\label{ACP}
    \left\{\begin{array}{rl}
            \partial_t u(t) +Au(t)  =& f(t) \,\text{, } 0<t<T \text{, }\\
            u(0) =& u_0\text{. }
    \end{array}
    \right.\text{, }
\end{align}
where $f\in \mathrm{L}^1_{\mathrm{loc}}((0,T),X)$, $u_0 \in Y$, $Y$ being some normed vector space depending on $X$ and $\mathrm{D}(A)$.

We want to look at global-in-time maximal regularity results for \eqref{ACP}. To obtain estimates that are uniform in time, we require the involved function spaces to be \textbf{homogeneous}. This keypoint was captured in the work of Danchin, Hieber, Mucha and Tolksdorf \cite[Chapter~2]{DanchinHieberMuchaTolk2020} to build an \textbf{homogeneous} version of the Da~Prato-Grivard theorem for injective sectorial operators under some additional assumptions on $A$. We are going to present briefly their construction to motivate the next section.

Before that,  we introduce two quantities for $v\in X + \mathrm{D}(A)$,

\begin{align*}
    \lVert v\rVert_{\mathring{\eus{D}}_{A}(\theta,q)} := \left( \int_{0}^{+\infty} (t^{1-\theta}\lVert Ae^{-tA} v \rVert_{X})^q \frac{\mathrm{d}t}{t}\right)^\frac{1}{q}\text{, and }  \lVert v\rVert_{{\eus{D}}_{A}(\theta,q)} := \lVert v \rVert_X + \lVert v\rVert_{\mathring{\eus{D}}_{A}(\theta,q)}\text{, }
\end{align*}
where $\theta \in(0,1)$, $q\in[1,+\infty]$. This leads to the construction of the vector space
\begin{align*}
    {\eus{D}}_{A}(\theta,q) := \{ v\in X\,|\, \lVert v\rVert_{\mathring{\eus{D}}_{A}(\theta,q)}<+\infty \} \text{.}
\end{align*}
The vector space ${\eus{D}}_{A}(\theta,q)$ is known to be a Banach space under the norm $\lVert \cdot\rVert_{{\eus{D}}_{A}(\theta,q)}$ and moreover it satisfies the following equality with equivalence of norms
\begin{align}\label{eq:InterpolationXDomainAIntro}
    {\eus{D}}_{A}(\theta,q) = (X,\mathrm{D}(A))_{\theta,q}\text{, }
\end{align}
see \cite[Theorem~6.2.9]{bookHaase2006}. If moreover, $0\in \rho(A)$ it has been proved, \cite[Corollary~6.5.5]{bookHaase2006}, that $\lVert \cdot\rVert_{\mathring{\eus{D}}_{A}(\theta,q)}$ and $\lVert \cdot\rVert_{{\eus{D}}_{A}(\theta,q)}$ are two equivalent norms on ${\eus{D}}_{A}(\theta,q)$. So we restrict ourselves to the case of injective operators.

\begin{assumption}\label{asmpt:homogeneousdomaindef} The operator $(\mathrm{D}(A),A)$ is injective on $X$, and there exists a normed vector space $(Y, \left\lVert \cdot \right\rVert_{Y})$, such that for all $x\in \mathrm{D}(A)$,
\begin{align}
        \left\lVert Ax\right\rVert_{X} \sim \left\lVert x \right\rVert_{Y} \text{. }
\end{align}
\end{assumption}

The idea is to construct an homogeneous version of $A$ denoted $\mathring{A}$, defining first its domain
\begin{align*}
    \mathrm{D}(\mathring{A}) := \{\, y\in Y\, |\, \exists (x_n)_{n\in\mathbb{N}}\subset \mathrm{D}(A),\, \left\lVert y-x_n\right\rVert_Y\underset{n\rightarrow +\infty}{\longrightarrow} 0   \,\}\text{. }
\end{align*}
Then, for all $y\in \mathrm{D}(\mathring{A})$,
\begin{align*}
    \mathring{A}y := \lim_{n\rightarrow +\infty} Ax_n \text{. }
\end{align*}
Constructed this way, the operator $\mathring{A}$ is then injective on $\mathrm{D}(\mathring{A})$. We notice that $\mathrm{D}(\mathring{A})$ is a normed vector space, but not necessarily complete. We also need the existence of a Hausdorff topological vector space $Z$, such that $X,Y\subset Z$, and to consider the following assumption 
\begin{assumption}\label{asmpt:homogeneousdomainintersect} The operator $(\mathrm{D}(A),A)$ and the normed vector space $Y$ are such that
\begin{align}
        X\cap\mathrm{D}(\mathring{A}) = \mathrm{D}({A}) \text{. }
\end{align}
\end{assumption}
As a consequence of all above assumptions, we can extend naturally, see \cite[Remark~2.7]{DanchinHieberMuchaTolk2020}, $(e^{-tA})_{t\geqslant0}$ to a $\mathrm{C}_0$-semigroup,
\begin{align*}
        e^{-tA}\,:\, X+\mathrm{D}(\mathring{A}) \longrightarrow X+\mathrm{D}(\mathring{A})\text{ , } t\geqslant 0 \text{, }
\end{align*}
so that, one can fully make sense of the following vector space,
\begin{align*}
    \mathring{\eus{D}}_{A}(\theta,q) := \{ v\in X+\mathrm{D}(\mathring{A})\,|\, \lVert v\rVert_{\mathring{\eus{D}}_{A}(\theta,q)}<+\infty \} \text{.}
\end{align*}
Similarly to what happens for ${\eus{D}}_{A}(\theta,q)$ in \eqref{eq:InterpolationXDomainAIntro}, it has been proved in \cite[Proposition~2.12]{DanchinHieberMuchaTolk2020}, that the following equality holds with equivalence of norms,
\begin{align}
    \mathring{\eus{D}}_{A}(\theta,q) = (X,\mathrm{D}(\mathring{A}))_{\theta,q}\text{, }
\end{align}
but the lack of completeness of $\mathrm{D}(\mathring{A})$ implies that $\mathring{\eus{D}}_{A}(\theta,q)$ is not necessarily complete. This has consequences on how to consider the forcing term $f$ in \eqref{ACP}, choosing $f\in \mathrm{L}^q((0,T),{\eus{D}}_{A}(\theta,q))$ instead of $f\in \mathrm{L}^q((0,T),\mathring{\eus{D}}_{A}(\theta,q))$ to avoid definition issues, the later choice being possible when $\mathring{\eus{D}}_{A}(\theta,q)$ is a Banach space.

\begin{theorem}[{\cite[Theorem~2.20]{DanchinHieberMuchaTolk2020}} ] \label{thm:DaPratoGrisvardHom2020} Let $\omega\in [0,\frac{\pi}{2})$, $(\mathrm{D}(A),A)$ an $\omega$-sectorial operator on a Banach space $X$ such that Assumptions \ref{asmpt:homogeneousdomaindef} and \ref{asmpt:homogeneousdomainintersect} are satisfied. Let $q\in [1,+\infty)$, $\theta\in(0,\tfrac{1}{q})$, $\theta_q := \theta +1-1/q$, and let $T\in(0,+\infty]$.

For $f\in \mathrm{L}^q((0,T),{\eus{D}}_{A}(\theta,q))$ and $u_0\in \mathring{\eus{D}}_{A}(\theta_q,q)$, the problem \eqref{ACP} admits a unique mild solution
\begin{align*}
    u\in \mathrm{C}^0_{b}([0,T],\mathring{\eus{D}}_{A}(\theta_q,q))\text{,}
\end{align*}
such that $\partial_t u$, $Au\in \mathrm{L}^q((0,T),\mathring{\eus{D}}_{A}(\theta,q))$ with estimates,
\begin{align}
    \lVert u\rVert_{\mathrm{L}^\infty((0,T),\mathring{\eus{D}}_{A}(\theta_q,q))} + \lVert (\partial_t u, Au)\rVert_{\mathrm{L}^q((0,T),\mathring{\eus{D}}_{A}(\theta,q))} \lesssim_{A} \lVert f\rVert_{\mathrm{L}^q((0,T),\mathring{\eus{D}}_{A}(\theta,q))} + \lVert u_0\rVert_{\mathring{\eus{D}}_{A}(\theta_q,q)}\text{. } \label{estimateLqMaxRegDaPratoGrisvardHom}
\end{align}
In case $q=+\infty$, we assume in addition that $u_0\in \mathrm{D}(A^2)$ and then for each $\theta\in (0,1)$,
\begin{align}
    \lVert (\partial_t u, Au)\rVert_{\mathrm{L}^\infty((0,T),\mathring{\eus{D}}_{A}(\theta,\infty))} \lesssim_{A} \lVert f\rVert_{\mathrm{L}^\infty((0,T),{\eus{D}}_{A}(\theta,\infty))} + \lVert A u_0\rVert_{\mathring{\eus{D}}_{A}(\theta,\infty)}\text{. }
\end{align}
\end{theorem}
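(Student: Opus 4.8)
The plan is to construct the solution through Duhamel's formula
\begin{align*}
    u(t) = e^{-tA}u_0 + \int_0^t e^{-(t-s)A}f(s)\,\mathrm{d}s,
\end{align*}
interpreted in $X+\mathrm{D}(\mathring{A})$ via the extension of $(e^{-tA})_{t\geqslant0}$, and to estimate its two pieces separately, using throughout the integral representation of $\lVert\cdot\rVert_{\mathring{\eus{D}}_{A}(\theta,q)}$ together with the identity $\mathring{\eus{D}}_{A}(\theta,q)=(X,\mathrm{D}(\mathring{A}))_{\theta,q}$. First I would treat the homogeneous term $t\mapsto e^{-tA}u_0$. The key structural fact is that $\mathring{\eus{D}}_{A}(\theta_q,q)$ is precisely the trace space of the maximal regularity class, so that for $u_0\in\mathring{\eus{D}}_{A}(\theta_q,q)$ one expects $t\mapsto e^{-tA}u_0\in\mathrm{C}^0_b$ with values in $\mathring{\eus{D}}_{A}(\theta_q,q)$, the trace at $t=0$ recovering $u_0$, and $\partial_t(e^{-tA}u_0)=-Ae^{-tA}u_0\in\mathrm{L}^q((0,T),\mathring{\eus{D}}_{A}(\theta,q))$. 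This is checked by inserting the definition of the norm, writing $Ae^{-\tau A}Ae^{-tA}u_0=A^2e^{-(t+\tau)A}u_0$, and bounding the resulting double integral in $(t,\tau)$ by a Hardy-type inequality, the homogeneous weight $\tau^{1-\theta}$ guaranteeing convergence of $\int_0^{+\infty}\mathrm{d}\tau/\tau$.

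The core of the argument is the inhomogeneous part with $u_0=0$. Setting $v(t):=Au(t)=\int_0^t Ae^{-(t-s)A}f(s)\,\mathrm{d}s$, I need $\lVert v\rVert_{\mathrm{L}^q((0,T),\mathring{\eus{D}}_{A}(\theta,q))}\lesssim\lVert f\rVert_{\mathrm{L}^q((0,T),\mathring{\eus{D}}_{A}(\theta,q))}$, after which $\partial_t u=f-Au$ is controlled for free. Expanding the interpolation norm and using the semigroup identity gives
\begin{align*}
    \lVert v(t)\rVert_{\mathring{\eus{D}}_{A}(\theta,q)}^q \sim \int_0^{+\infty}\tau^{(1-\theta)q}\left\lVert \int_0^t A^2 e^{-(\tau+t-s)A}f(s)\,\mathrm{d}s\right\rVert_X^q\,\frac{\mathrm{d}\tau}{\tau}.
\end{align*}
I would then factor $A^2e^{-rA}=(Ae^{-rA/2})(Ae^{-rA/2})$, apply the sectorial bound $\lVert Ae^{-\rho A}\rVert_{X\to X}\lesssim\rho^{-1}$ to one factor, and treat $\rho\mapsto\rho^{1-\theta}\lVert Ae^{-\rho A}f(s)\rVert_X$ — whose $\mathrm{L}^q(\mathrm{d}\rho/\rho)$-norm is exactly $\lVert f(s)\rVert_{\mathring{\eus{D}}_{A}(\theta,q)}$ — as the data. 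This reduces the whole estimate to a convolution (Young) inequality on the multiplicative group $(\mathbb{R}_+,\mathrm{d}\tau/\tau)$ combined with a Young inequality in the time variable; Minkowski's integral inequality then reassembles the mixed norm.

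The main obstacle is the \emph{global} nature of the estimate, i.e. constants uniform in $T$ up to $T=+\infty$. This is exactly where homogeneity is indispensable: the full integral $\int_0^{+\infty}\mathrm{d}\tau/\tau$ with the weight $\tau^{1-\theta}$ is scale-invariant, so no time-truncation constant appears, whereas the inhomogeneous norm $\lVert\cdot\rVert_X+\lVert\cdot\rVert_{\mathring{\eus{D}}_{A}(\theta,q)}$ would force blow-up as $T\to+\infty$ when $0\in\sigma(A)$. Injectivity of $A$, together with Assumptions \ref{asmpt:homogeneousdomaindef} and \ref{asmpt:homogeneousdomainintersect} and the extension of the semigroup to $X+\mathrm{D}(\mathring{A})$, is what makes $v$ and $e^{-tA}u_0$ meaningful there and the $\tau$-integrals convergent at both endpoints; uniqueness follows from injectivity of $\mathring{A}$ and the generation property. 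Finally, the endpoint $q=+\infty$ is handled separately: the hypothesis $u_0\in\mathrm{D}(A^2)$ ensures $Au_0\in\mathrm{D}(A)\subset\mathring{\eus{D}}_{A}(\theta,\infty)$, so the homogeneous term is controlled, and the inhomogeneous bound is recovered either by an $\mathrm{L}^\infty$ version of the convolution estimate or by real interpolation from the cases $q<+\infty$.
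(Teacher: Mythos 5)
Note that the paper does not prove this theorem: it is imported verbatim from \cite[Theorem~2.20]{DanchinHieberMuchaTolk2020}, so there is no in-paper argument to compare against. Your outline — Duhamel's formula in $X+\mathrm{D}(\mathring{A})$, the semigroup characterization $\lVert v\rVert_{\mathring{\eus{D}}_{A}(\theta,q)}\sim\lVert \tau\mapsto\tau^{1-\theta}Ae^{-\tau A}v\rVert_{\mathrm{L}^q_\ast}$, the factorization $A^2e^{-rA}=(Ae^{-rA/2})^2$ with Hardy/Young convolution bounds whose scale invariance yields $T$-uniform constants, and the separate treatment of $e^{-tA}u_0$ and of the endpoint $q=+\infty$ — is precisely the strategy of the cited proof and is correct in its essentials, the only thinness being that the "convolution inequality" step in practice requires splitting the $(\tau,t-s)$ region and using the two factorizations $\lVert A^2e^{-(\tau+t-s)A}f(s)\rVert_X\lesssim(\tau)^{-1}\lVert Ae^{-(t-s)A}f(s)\rVert_X$ and $\lesssim(t-s)^{-1}\lVert Ae^{-\tau A}f(s)\rVert_X$ in the respective regimes.
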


One also have the following result.
\begin{theorem}[{\cite[Theorem~4.7]{Gaudin2023}} ]\label{thm:LqMaxRegUMDHomogeneous}Let $\omega\in [0,\frac{\pi}{2})$, $(\mathrm{D}(A),A)$ an $\omega$-sectorial operator on a UMD Banach space $X$, such that it has BIP on $X$ of type $\theta_A<\frac{\pi}{2}$, and satisfies assumptions \eqref{asmpt:homogeneousdomaindef} and \eqref{asmpt:homogeneousdomainintersect}. Let $q\in(1,+\infty)$, $\alpha\in(-1+1/q,1/q)$ and $T\in(0,+\infty]$. We set $\alpha_q:= 1+\alpha-1/q$.

For $f\in\dot{\mathrm{H}}^{\alpha,q}((0,T),X)$, $u_0\in \mathring{\eus{D}}_{A}(\alpha_q,q)$, the problem \eqref{ACP} admits a unique mild solution $u\in \mathrm{C}^0_b([0,T],\mathring{\eus{D}}_{A}(\alpha_q,q))$ such that $\partial_t u$, $Au \in \dot{\mathrm{H}}^{\alpha,q}((0,T),X)$ with estimate
\begin{align}\label{eq:BoundLqMaxReghomogeneous}
    \lVert u \rVert_{\mathrm{L}^\infty([0,T],\mathring{\eus{D}}_{A}(\alpha_q,q))} \lesssim_{A,q,\alpha} \lVert (\partial_t u, Au)\rVert_{\dot{\mathrm{H}}^{\alpha,q}((0,T),X)} \lesssim_{A,q,\alpha} \lVert f\rVert_{\dot{\mathrm{H}}^{\alpha,q}((0,T),X)} + \lVert u_0\rVert_{\mathring{\eus{D}}_{A}(\alpha_q,q)}\text{. }
\end{align}
Moreover, if $u_0\in {\eus{D}}_{A}(\alpha_q,q)$ for all $\beta\in[0,1]$,
\begin{align}\label{eq:mixedDerivEstimate}
    \lVert (-\partial_t)^{\beta} A^{1-\beta} u\rVert_{\dot{\mathrm{H}}^{\alpha,q}((0,T),X)} \lesssim_{A,q,\alpha} \lVert f\rVert_{\dot{\mathrm{H}}^{\alpha,q}((0,T),X)} + \lVert u_0\rVert_{\mathring{\eus{D}}_{A}(\alpha_q,q)} \text{. }
\end{align}
\end{theorem}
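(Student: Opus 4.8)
The plan is to reduce the statement to the invertibility of the operator $\partial_t + A$ on the homogeneous, fractional-in-time Bochner space $\dot{\mathrm{H}}^{\alpha,q}(\mathbb{R}_+, X)$, and then to read off the trace estimate from the associated maximal regularity operator. First I would treat $T = +\infty$ and recover a finite (or infinite) $T$ at the end by restriction: since $\alpha \in (-1+1/q, 1/q)$ lies exactly in the range where the indicator multiplier of the half-line is bounded on the homogeneous time-Sobolev scale (the one-dimensional time-analogue of Proposition \ref{prop:SobolevMultiplier}), extension by zero and restriction are bounded, so the $T$-independent estimates on $(0,+\infty)$ descend to $(0,T)$ with a constant independent of $T$. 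This is precisely the feature that forces the homogeneous framework: completeness of $\dot{\mathrm{H}}^{\alpha,q}$ and of $\mathring{\eus{D}}_A(\cdot,q)$ is not needed for the $K$-method, and the scale-invariance of the norms is what yields global-in-time bounds.

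Next I would realize the time derivative $B := \partial_t$, closed on $\dot{\mathrm{H}}^{\alpha,q}(\mathbb{R}_+, X)$ with vanishing initial trace built in, as a sectorial operator of angle $\tfrac{\pi}{2}$ admitting bounded imaginary powers of type $\tfrac{\pi}{2}$; on a UMD space $X$ this is classical, and its homogeneous functional calculus is governed by the Fourier symbol $i\xi$. Because $A$ has BIP of type $\theta_A < \tfrac{\pi}{2}$, commutes with $B$ in the resolvent sense, and $\theta_A + \tfrac{\pi}{2} < \pi$, the Dore--Venni / Kalton--Weis machinery (see \cite{KunstmannWeis2004,bookDenkHieberPruss2003}) applies: the sum $B + A$ is closed and invertible, $\mathrm{D}(B+A) = \mathrm{D}(B) \cap \mathrm{D}(A)$, and one has $\lVert \partial_t u \rVert + \lVert A u \rVert \lesssim \lVert (\partial_t + A) u \rVert$. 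Equivalently, in the Fourier picture the operator-valued symbols $i\xi(i\xi + A)^{-1}$ and $A(i\xi + A)^{-1}$ form $\mathcal{R}$-bounded, scale-invariant families, so Weis's operator-valued homogeneous Mikhlin multiplier theorem yields boundedness of $f \mapsto (\partial_t u, A u)$ on $\dot{\mathrm{H}}^{\alpha,q}(\mathbb{R}_+, X)$ for every admissible $\alpha$. This produces the unique mild solution with $u_0 = 0$ together with the second inequality in \eqref{eq:BoundLqMaxReghomogeneous}.

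The mixed-derivative bound \eqref{eq:mixedDerivEstimate} I would obtain from the same joint calculus: for $\beta \in [0,1]$ the symbol $(i\xi)^\beta A^{1-\beta}(i\xi + A)^{-1}$ is controlled by the elementary inequality $|\lambda|^\beta \mu^{1-\beta} \lesssim |\lambda + \mu|$ valid for $\lambda$ on the imaginary axis and $\mu \in \Sigma_{\theta_A}$, so that $(-\partial_t)^\beta A^{1-\beta}(\partial_t + A)^{-1}$ is bounded and the estimate follows by interpolating the endpoints $\beta = 0, 1$ already established. For the initial-value part I would use the representation $u(t) = e^{-tA}u_0 + \int_0^t e^{-(t-s)A}f(s)\,\mathrm{d}s$, extend the semigroup to $X + \mathrm{D}(\mathring{A})$ as in \cite[Remark~2.7]{DanchinHieberMuchaTolk2020}, and invoke the trace theory of maximal regularity: whenever $\partial_t u, Au \in \dot{\mathrm{H}}^{\alpha,q}$, the curve $t \mapsto u(t)$ is bounded and continuous with values in the trace space, which the homogeneous interpolation identity $\mathring{\eus{D}}_A(\alpha_q,q) = (X, \mathrm{D}(\mathring{A}))_{\alpha_q, q}$ identifies with $\mathring{\eus{D}}_A(\alpha_q,q)$; the first inequality in \eqref{eq:BoundLqMaxReghomogeneous} and the time-continuity then follow, and for $f = 0$ the bound is exactly the semigroup characterization of $\mathring{\eus{D}}_A(\alpha_q,q)$.

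The main obstacle, and where the genuine care lies, is twofold: realizing $\partial_t$ on the homogeneous fractional-in-time scale with the correct vanishing trace at $t = 0$ so that its BIP angle is genuinely $\tfrac{\pi}{2}$ and it commutes with $A$, and carrying every density, limiting and duality argument inside the possibly non-complete spaces $\mathrm{D}(\mathring{A})$ and $\mathring{\eus{D}}_A(\alpha_q,q)$ while keeping all constants uniform in $T$. Uniqueness of the mild solution must likewise be argued within this class, using the injectivity of $A$ from Assumption \ref{asmpt:homogeneousdomaindef} together with the invertibility of $\partial_t + A$ to rule out nonzero solutions of the homogeneous problem.
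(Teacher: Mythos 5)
This theorem is not proved in the paper at all: it is imported verbatim from \cite[Theorem~4.7]{Gaudin2023}, so there is no in-paper argument to compare against. Your reconstruction is nevertheless sound and follows the route one would expect (and which the cited reference essentially takes): boundedness of the cut-off $\mathbbm{1}_{(0,T)}$ on $\dot{\mathrm{H}}^{\alpha,q}$ for $\alpha\in(-1+1/q,1/q)$ to reduce to the half-line with $T$-uniform constants, realization of $\partial_t$ as a sectorial operator with BIP of angle $\tfrac{\pi}{2}$ on the homogeneous fractional time scale over a UMD space, the Dore--Venni/Kalton--Weis sum theorem with $\theta_A+\tfrac{\pi}{2}<\pi$ for invertibility of $\partial_t+A$ and the mixed-derivative bound, and the homogeneous trace identification $\mathring{\eus{D}}_A(\alpha_q,q)=(X,\mathrm{D}(\mathring{A}))_{\alpha_q,q}$ from \cite[Proposition~2.12]{DanchinHieberMuchaTolk2020} for the $\mathrm{L}^\infty$-in-time estimate. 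The two points you flag as delicate (causal transfer of the whole-line multiplier estimate to the initial-value problem, and keeping every limiting argument inside the possibly non-complete spaces $\mathrm{D}(\mathring{A})$ and $\mathring{\eus{D}}_A(\theta,q)$) are indeed where the real work sits, and your sketch handles them at the right level; uniqueness is in fact immediate from the Duhamel representation defining a mild solution, so the appeal to injectivity of $A$ there is not needed.
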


\begin{remark} $\bullet$ In Theorem \ref{thm:LqMaxRegUMDHomogeneous}, assumptions \eqref{asmpt:homogeneousdomaindef} and \eqref{asmpt:homogeneousdomainintersect} are assumed here in order to ensure that $\mathring{\eus{D}}_{A}(\theta,q)$ is a well defined, even if not complete, normed vector space. The estimate \eqref{eq:mixedDerivEstimate} still holds for $u_0\in \mathring{\eus{D}}_{A}(1+\alpha-1/q,q)$ whenever this space is complete.

$\bullet$ If $u_0=0$, the estimate \eqref{eq:mixedDerivEstimate} remains valid if we replace the operator $(-\partial_t)^{1-\beta}$ by $(\partial_t)^{1-\beta}$.

$\bullet$ If one asks instead the initial data $u_0$ to be in the smaller, but complete, space ${\eus{D}}_{A}(\theta,q)$ then one can drop assumptions \eqref{asmpt:homogeneousdomaindef} and \eqref{asmpt:homogeneousdomainintersect}, and the estimate \eqref{eq:BoundLqMaxReghomogeneous} still holds.
\end{remark}

Before going further, we want to simplify notations. From now on, we will only consider function spaces on $\mathbb{R}^n_+$ and no longer on $\mathbb{R}^n$, so that we drop the mention of the open set in domains of operators, we all also drop the mention of degree of differential forms, except when it is necessary. Any discussion involving Dirichlet and Neumann Laplacians will always contain the implicit information that their domains are made of scalar valued functions, whereas talking about Hodge Laplacians and their derived operators will always contain the implicit information we are talking about general differential forms valued functions of any degree, unless it is explicitly stated.

A first aim of this section is about to give an explicit description of homogeneous interpolation spaces, provided $\theta\in (0,1)$, $q\in [1,+\infty]$,
\begin{align}\label{eq:hominterpspaceXDA}
    (\mathrm{X},\mathrm{D}(\mathring{A}))_{\theta,q} = \mathring{\eus{D}}_{A}(\theta,q)\text{, }
\end{align}
where $X=\dot{\mathrm{H}}^{s,p},\dot{\mathrm{B}}^{s}_{p,r}$ and $A\in\{ -\Delta_{\mathcal{H}},\mathbb{A}_{\mathcal{H}},\mathbb{M}_\mathcal{H}\}$, with $p\in(1,+\infty)$, $-1+1/p<s<1/p$, $r\in[1,+\infty)$. The main task here will be to compute \eqref{eq:hominterpspaceXDA} to compute above space for $A=-\Delta_{\mathcal{H}}$. Indeed, for the related Hodge-Stokes operator, due to the commutations relations between the Hodge Laplacian and its Helmholtz-Leray projection, see \eqref{eq:defHodgeStokesAbsBC}, \eqref{eq:defHodgeMaxwellPerfectConducBC} and Corollary~\ref{cor:commutrelationHodgeProjectorHspBspqRn+}, we should have (at least formally or up to a dense subset)
\begin{align*}
    \mathring{\eus{D}}^{s,p}_{\mathbb{A}_{\mathcal{H},\mathfrak{t}}}(\theta,q) = (\dot{\mathrm{H}}^{s,p}_{\mathfrak{t},\sigma}(\mathbb{R}^n_+),\dot{\mathrm{D}}^{s}_{p}(\mathring{\mathbb{A}}_{\mathcal{H},\mathfrak{t}}))_{\theta,q} = \mathbb{P}(\dot{\mathrm{H}}^{s,p}(\mathbb{R}^n_+),\dot{\mathrm{D}}^{s}_{p}(\mathring{\Delta}_{\mathcal{H},\mathfrak{t}}))_{\theta,q} = \mathbb{P}\mathring{\eus{D}}^{s,p}_{-{\Delta}_{\mathcal{H},\mathfrak{t}}}(\theta,q)\text{. }
\end{align*}
Obviously similar identities can be obtained with $(\mathfrak{n},\mathbb{Q})$ instead of $(\mathfrak{t},\mathbb{P})$, but also for the Hodge-Maxwell operators up to appropriate changes.

Secondly, we will aim to recover global-in-time $\mathrm{L}^q$-maximal regularity estimates for the abstract Cauchy problem \eqref{ACP}, provided $T\in(0,+\infty]$, for $A\in\{ -\Delta_{\mathcal{H}},\mathbb{A}_{\mathcal{H}},\mathbb{M}_\mathcal{H}\}$, so that we will apply Theorems~\ref{thm:DaPratoGrisvardHom2020}~and~\ref{thm:LqMaxRegUMDHomogeneous}.

\subsection{Interpolation of homogeneous \texorpdfstring{$\dot{\mathrm{H}}^{s,p}$}{Hsp}-domains of operators}\label{sec:homogeneousinterp}

We start this section claiming that one can reduce the problem to the computation of interpolation spaces
\begin{align*}
    \mathring{\eus{D}}^{s,p}_{-{\Delta}_{\mathcal{D}}}(\theta,q) \,\text{ and }\, \mathring{\eus{D}}^{s,p}_{-{\Delta}_{\mathcal{N}}}(\theta,q)\text{. }
\end{align*}
We recall here for convenience that $-{\Delta}_{\mathcal{D}}$ and $-{\Delta}_{\mathcal{N}}$ stands respectively for the (negative) Dirichlet and the Neumann Laplacian on the half-space for which a wide review of their properties in homogeneous function spaces was achieved by the author, see \cite[Section~4]{Gaudin2022}.

To see it, one may stare at Theorem \ref{thm:HodgeLapL2Rn+}, \cite[Propositions~4.3~\&~4.6]{Gaudin2022}, formula \eqref{eq:HodgeReflexExtOp}, Lemma  \ref{lem:ExtOpRn+DiffFormHspBspq} and Theorem \ref{thm:MetaThm1HodgeLaplacianRn+} until the next lemma becomes quite clear

\begin{lemma}\label{lem:Hs+2pestimateHodgeLap}Let $p\in(1,+\infty)$, $s\in(-1+1/p,1/p)$, $k\in \llb 0,n\rrb$. For all $u \in\dot{\mathrm{D}}^{s}_{p}({{\Delta}}_{\mathcal{H},\mathfrak{t}},\Lambda^k)$, we have
\begin{align*}
    {{\Delta}}_{\mathcal{H},\mathfrak{t}} u = \sum_{I\in\mathcal{I}^{k}_{n-1}} \Delta_{\mathcal{N}} u_{I}\,\mathrm{d}{x_I} + \sum_{I'\in\mathcal{I}^{k-1}_{n-1}} \Delta_{\mathcal{D}} u_{I',n}\,\mathrm{d}{x_{I'}}\wedge\mathrm{d}x_n  \text{. }
\end{align*}
We also have estimates,
\begin{align*}
    \lVert \delta \mathrm{d} u \rVert_{\dot{\mathrm{H}}^{s,p}(\mathbb{R}^n_+)} + \lVert \mathrm{d}\delta  u \rVert_{\dot{\mathrm{H}}^{s,p}(\mathbb{R}^n_+)} &\sim_{p,s,n} \sum_{I\in\mathcal{I}^{k}_{n-1}} \lVert \Delta_{\mathcal{N}} u_{I} \rVert_{\dot{\mathrm{H}}^{s,p}(\mathbb{R}^n_+)} + \sum_{I'\in\mathcal{I}^{k-1}_{n-1}} \lVert \Delta_{\mathcal{D}} u_{I',n} \rVert_{\dot{\mathrm{H}}^{s,p}(\mathbb{R}^n_+)}\\
    & \sim_{p,s,n} \lVert \nabla^2 u \rVert_{\dot{\mathrm{H}}^{s,p}(\mathbb{R}^n_+)}\\
    & \sim_{p,s,n} \lVert u \rVert_{\dot{\mathrm{H}}^{s+2,p}(\mathbb{R}^n_+)}\text{. }
\end{align*}
The result still holds replacing $(\mathfrak{t},\mathcal{N},\mathcal{D})$ by $(\mathfrak{n},\mathcal{D},\mathcal{N})$.
\end{lemma}

And for the same reasons, one has more generally,
\begin{lemma}\label{lem:FractionalSobestimateHodgeLap}Let $p\in(1,+\infty)$, $s\in(-1+1/p,1/p)$, $\alpha\in[0,2]$, such that $s+\alpha\neq 1/p,1+1/p$, $k\in \llb 0,n\rrb$. For all $u \in\dot{\mathrm{D}}^{s}_{p}((-{{\Delta}}_{\mathcal{H}})^\frac{\alpha}{2},\Lambda)$, we have
\begin{align*}
    \lVert (-{{\Delta}}_{\mathcal{H}})^\frac{\alpha}{2} u \rVert_{\dot{\mathrm{H}}^{s,p}(\mathbb{R}^n_+)} \sim_{p,s,\alpha,n} \lVert u \rVert_{\dot{\mathrm{H}}^{s+\alpha,p}(\mathbb{R}^n_+)}\sim_{p,s,\alpha,n} \lVert (-{{\Delta}}_{\mathcal{H}})^\frac{s+\alpha}{2} u \rVert_{\mathrm{L}^{p}(\mathbb{R}^n_+)}\text{. }
\end{align*}
Recalling that in particular, ${\Delta_{\mathcal{H},\mathfrak{t}}}_{|_{\Lambda^0}}=\Delta_{\mathcal{N}}$ and  ${\Delta_{\mathcal{H},\mathfrak{n}}}_{|_{\Lambda^0}}=\Delta_{\mathcal{D}}$.
\end{lemma}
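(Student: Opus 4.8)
The plan is to reduce the fractional Hodge-Laplacian estimate to the corresponding scalar estimates for the Dirichlet and Neumann Laplacians, exactly as in the proof of Lemma~\ref{lem:Hs+2pestimateHodgeLap}, and then to invoke \cite[Propositions~4.3~\&~4.6]{Gaudin2022}. The cases $\alpha=0$ and $\alpha=2$ being respectively trivial and covered by Lemma~\ref{lem:Hs+2pestimateHodgeLap}, I focus on $\alpha\in(0,2)$ and treat the tangential Hodge Laplacian $-\Delta_{\mathcal{H},\mathfrak{t}}$, the normal case being symmetric under $(\mathfrak{t},\mathcal{N},\mathcal{D})\leftrightarrow(\mathfrak{n},\mathcal{D},\mathcal{N})$.

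First I would upgrade the intertwining identity of point \textit{(ii)} of Theorem~\ref{thm:MetaThm1HodgeLaplacianRn+} from resolvents to fractional powers: applying the functional-calculus identity $\mathrm{E}_{\mathcal{H}}\Psi(-\Delta_{\mathcal{H},\mathfrak{t}})=\Psi(-\Delta)\mathrm{E}_{\mathcal{H}}$ to the holomorphic functions $z\mapsto z^{\beta/2}$ (via the integral representation already used in the proof of Proposition~\ref{prop:RieszTransformHodgeHspBspqRn+}) yields, for $\beta\in\{\alpha,s+\alpha\}$ and $u$ in the relevant domain,
\begin{align*}
    \mathrm{E}_{\mathcal{H},\mathfrak{t}}\,(-\Delta_{\mathcal{H},\mathfrak{t}})^{\frac{\beta}{2}}u = (-\Delta)^{\frac{\beta}{2}}\mathrm{E}_{\mathcal{H},\mathfrak{t}}u .
\end{align*}
Since $\mathrm{E}_{\mathcal{H},\mathfrak{t}}$ acts componentwise through $\mathrm{E}_{\mathcal{N}}$ on coefficients $u_I$ with $n\notin I$ and through $\mathrm{E}_{\mathcal{D}}$ on coefficients $u_{I',n}$ with $n\in I$, by \eqref{eq:HodgeReflexExtOp} and Lemma~\ref{lem:ExtOpRn+DiffFormHspBspq}, this identity combined with the scalar intertwinings $\mathrm{E}_{\mathcal{J}}(-\Delta_{\mathcal{J}})^{\beta/2}=(-\Delta)^{\beta/2}\mathrm{E}_{\mathcal{J}}$ for $\mathcal{J}\in\{\mathcal{D},\mathcal{N}\}$ shows that $(-\Delta_{\mathcal{H},\mathfrak{t}})^{\beta/2}$ acts componentwise as $(-\Delta_{\mathcal{N}})^{\beta/2}$ on the coefficients indexed by $I\in\mathcal{I}^{k}_{n-1}$ and as $(-\Delta_{\mathcal{D}})^{\beta/2}$ on those indexed by $I'\in\mathcal{I}^{k-1}_{n-1}$, in exact analogy with the integer-order formula of Lemma~\ref{lem:Hs+2pestimateHodgeLap}.

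With this componentwise description in hand, I would apply \cite[Propositions~4.3~\&~4.6]{Gaudin2022} to each scalar coefficient. For $\mathcal{J}\in\{\mathcal{D},\mathcal{N}\}$, $\sigma\in(-1+1/p,1/p)$ and $\beta\in[0,2]$ with $\sigma+\beta\notin\{1/p,1+1/p\}$, those propositions provide both the lifting equivalence $\lVert (-\Delta_{\mathcal{J}})^{\beta/2}v\rVert_{\dot{\mathrm{H}}^{\sigma,p}(\mathbb{R}^n_+)}\sim\lVert v\rVert_{\dot{\mathrm{H}}^{\sigma+\beta,p}(\mathbb{R}^n_+)}$ and the identification $\lVert v\rVert_{\dot{\mathrm{H}}^{\tau,p}(\mathbb{R}^n_+)}\sim\lVert (-\Delta_{\mathcal{J}})^{\tau/2}v\rVert_{\mathrm{L}^{p}(\mathbb{R}^n_+)}$ of the homogeneous Sobolev norm for admissible $\tau\notin\{1/p,1+1/p\}$. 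Taking $\sigma=s$, $\beta=\alpha$ in the first and $\tau=s+\alpha$ in the second, then summing the finitely many componentwise equivalences and using that the $\dot{\mathrm{H}}^{\sigma,p}(\mathbb{R}^n_+,\Lambda^k)$-norm of a form is comparable to the sum of the $\dot{\mathrm{H}}^{\sigma,p}(\mathbb{R}^n_+)$-norms of its coefficients, yields the full chain of equivalences; the excluded values $s+\alpha\in\{1/p,1+1/p\}$ are exactly the critical exponents of those propositions.

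The main obstacle I anticipate is the rigorous justification of the componentwise action of the fractional power $(-\Delta_{\mathcal{H},\mathfrak{t}})^{\beta/2}$: unlike the integer case, this operator is only defined through the bounded holomorphic functional calculus, so one must run the extension identity at the level of functional calculus (as in Proposition~\ref{prop:RieszTransformHodgeHspBspqRn+}) and verify that the domains match, i.e.\ that membership of $u$ in the domain of $(-\Delta_{\mathcal{H},\mathfrak{t}})^{\beta/2}$ forces each coefficient into the domain of the corresponding scalar fractional Dirichlet or Neumann Laplacian, with the boundary conditions built into $\mathrm{E}_{\mathcal{D}}$ and $\mathrm{E}_{\mathcal{N}}$. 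The boundedness of $\mathrm{E}_{\mathcal{H},\mathfrak{t}}$ in the high-regularity range $s+\alpha>1/p$ is precisely what forces the exclusion of the critical exponents, and is exactly the content delegated to \cite[Propositions~4.3~\&~4.6]{Gaudin2022}.
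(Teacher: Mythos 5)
Your proposal is correct and follows exactly the route the paper intends: the paper gives no written proof of this lemma, justifying it by ``the same reasons'' as Lemma \ref{lem:Hs+2pestimateHodgeLap}, namely the componentwise reduction to the scalar Dirichlet and Neumann Laplacians via the reflection operators \eqref{eq:HodgeReflexExtOp} and the intertwining of Theorem \ref{thm:MetaThm1HodgeLaplacianRn+}, combined with \cite[Propositions~4.3~\&~4.6]{Gaudin2022}. Your write-up simply makes explicit the upgrade of the intertwining from resolvents to fractional powers, which is the intended (and standard) functional-calculus step.
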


In general, explicit description for interpolation spaces with boundary condition may be quite tedious. We mention the work of Guidetti \cite{Guidetti1991Ell,Guidetti1991Interp}, where such investigation is done. Guidetti's results were used to make a extensive treatment of elliptic boundary value problem with general Lopatinskii-Shapiro boundary conditions in inhomogeneous Besov spaces on the half-space and on bounded domains with smooth boundary.

Thanks to Lemmas \ref{lem:Hs+2pestimateHodgeLap} and \ref{lem:FractionalSobestimateHodgeLap}, the current work will be reduced to Dirichlet and Neumann boundary conditions in the homogeneous case which is unknown to the author's knowledge yet.

For $p\in(1,+\infty)$, $q\in[1,+\infty]$, $s\in(-1+1/p,2+1/p)$, such that \eqref{AssumptionCompletenessExponents} is satisfied, we set:
\begin{align*}
    &\dot{\mathrm{B}}^{s}_{p,q,\mathcal{D}}(\mathbb{R}^n_+) := \begin{cases} \dot{\mathrm{B}}^{s}_{p,q}(\mathbb{R}^n_+) &\text{, if (} s < \frac{1}{p}\text{),}\\
    \left\{ \,u\in \dot{\mathrm{B}}^{s}_{p,q}(\mathbb{R}^n_+)\, \Big{|}\, u_{|_{\partial\mathbb{R}^n_+}}=0 \,\right\} &\text{, if (}s > \frac{1}{p}\text{) or (}s = \frac{1}{p}\text{ and }q=1\text{). }
    \end{cases}\\
    &\dot{\mathrm{B}}^{s}_{p,q,\mathcal{N}}(\mathbb{R}^n_+) := \begin{cases} \dot{\mathrm{B}}^{s}_{p,q}(\mathbb{R}^n_+) &\text{, if (} s < 1+ \frac{1}{p}\text{), }\\
    \left\{ \,u\in \dot{\mathrm{B}}^{s}_{p,q}(\mathbb{R}^n_+)\, \Big{|}\, \partial_{\nu}u_{|_{\partial\mathbb{R}^n_+}}=0\,\right\} &\text{, if (}s >1+ \frac{1}{p}\text{) or (}s = 1+\frac{1}{p}\text{ and }q=1\text{). }
    \end{cases}
\end{align*}
and similarly $(\mathcal{C}_{s,p})$ is satisfied, we also set:
\begin{align*}
    &\dot{\mathrm{H}}^{s,p}_{\mathcal{D}}(\mathbb{R}^n_+) := \begin{cases} \dot{\mathrm{H}}^{s,p}(\mathbb{R}^n_+) &\text{, if } s < \frac{1}{p}\text{, }\\
    \left\{ \,u\in \dot{\mathrm{H}}^{s,p}(\mathbb{R}^n_+)\, \Big{|}\, u_{|_{\partial\mathbb{R}^n_+}}=0 \,\right\} &\text{, if }s > \frac{1}{p}\text{. }
    \end{cases}\\
    &\dot{\mathrm{H}}^{s,p}_{\mathcal{N}}(\mathbb{R}^n_+) := \begin{cases} \dot{\mathrm{H}}^{s,p}(\mathbb{R}^n_+) &\text{, if } s < 1+ \frac{1}{p}\text{, }\\
    \left\{ \,u\in \dot{\mathrm{H}}^{s,p}(\mathbb{R}^n_+)\, \Big{|}\, \partial_{\nu}u_{|_{\partial\mathbb{R}^n_+}}=0\,\right\} &\text{, if }s >1+ \frac{1}{p}\text{. }
    \end{cases}
\end{align*}

And then, for $\mathcal{J}\in\{\mathcal{D},\mathcal{N}\}$, we introduce the following subspace
\begin{align*}
    \mathrm{Y}_{\mathcal{J}} := \bigcap_{\substack{s\in(-1+1/p,1/p) \\ p\in(1,+\infty)\\ q\in[1,+\infty]}} [{\mathrm{D}}^{s}_{p}\cap\dot{\mathrm{D}}^{s}_{p}\cap{\mathrm{D}}^{s}_{p,q}\cap\dot{\mathrm{D}}^{s}_{p,q}]({{\Delta}}_{\mathcal{J}}) \text{. }
\end{align*}

\begin{proposition}\label{prop:DensityResultHomNeumDirBesov} Let $p\in(1,+\infty)$, $q\in[1,+\infty)$, $s\in(-1+1/p,2+1/p)$, such that \eqref{AssumptionCompletenessExponents} is satisfied, we have that
\begin{align*}
    \dot{\mathrm{B}}^{s}_{p,q,\mathcal{J}}(\mathbb{R}^n_+) = \overline{\mathrm{Y}_{\mathcal{J}}}^{\lVert\cdot\rVert_{\dot{\mathrm{B}}^{s}_{p,q}(\mathbb{R}^n_+)}} \text{, }
\end{align*}
whenever
\begin{itemize}
    \item $\mathcal{J}=\mathcal{D}$, $s\neq 1/p,1+1/p$ ;
    \item $\mathcal{J}=\mathcal{N}$, $s\neq 1+1/p$.
\end{itemize}
When $q=+\infty$, we still have weak${}^\ast$ density.
\end{proposition}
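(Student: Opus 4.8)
The plan is to exploit the reflection structure already underlying the extension operators $\mathrm{E}_{\mathcal{D}}$ (odd in $x_n$) and $\mathrm{E}_{\mathcal{N}}$ (even in $x_n$), transferring the statement to a density question on $\mathbb{R}^n$, where $\eus{S}_0(\mathbb{R}^n)$ is dense in every homogeneous Besov space. The first point I would record is that the restriction to $\mathbb{R}^n_+$ of any parity–symmetric element of $\eus{S}_0(\mathbb{R}^n)$ lies in $\mathrm{Y}_{\mathcal{J}}$: if $v\in\eus{S}_0(\mathbb{R}^n)$ is even (resp.\ odd) in $x_n$, then $v$ belongs to all (in)homogeneous Sobolev and Besov spaces, the Laplacian preserves its parity, and that parity forces $\partial_\nu v_{|_{\partial\mathbb{R}^n_+}}=0$ (resp.\ $v_{|_{\partial\mathbb{R}^n_+}}=0$); hence $v_{|_{\mathbb{R}^n_+}}$ belongs to the domain of $\Delta_{\mathcal{N}}$ (resp.\ $\Delta_{\mathcal{D}}$) on every admissible scale, i.e.\ $v_{|_{\mathbb{R}^n_+}}\in\mathrm{Y}_{\mathcal{J}}$. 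Conversely, any element of $\mathrm{Y}_{\mathcal{J}}$ sits in $\dot{\mathrm{H}}^{\sigma,p}(\mathbb{R}^n_+)$ for every $\sigma<2+\tfrac1p$ (combining membership in the ground space with membership in $\dot{\mathrm{D}}^{s'}_p(\Delta_{\mathcal{J}})\subset\dot{\mathrm{H}}^{s'+2,p}$ and interpolating), hence in $\dot{\mathrm{B}}^{s}_{p,q}(\mathbb{R}^n_+)$ for $s<2+\tfrac1p$, and it satisfies the boundary condition defining $\dot{\mathrm{B}}^{s}_{p,q,\mathcal{J}}(\mathbb{R}^n_+)$; since the trace constraint cuts out a closed subspace, this gives $\overline{\mathrm{Y}_{\mathcal{J}}}\subset\dot{\mathrm{B}}^{s}_{p,q,\mathcal{J}}(\mathbb{R}^n_+)$.

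For the reverse inclusion I would fix $u\in\dot{\mathrm{B}}^{s}_{p,q,\mathcal{J}}(\mathbb{R}^n_+)$ and set $U:=\mathrm{E}_{\mathcal{J}}u$, a parity–symmetric distribution with $U_{|_{\mathbb{R}^n_+}}=u$. The key step is $U\in\dot{\mathrm{B}}^{s}_{p,q}(\mathbb{R}^n)$: for $s$ below the relevant threshold this is precisely the boundedness of $\mathrm{E}_{\mathcal{J}}$ furnished by Proposition \ref{prop:SobolevMultiplier}, while for $s$ above the threshold one uses the vanishing of the appropriate trace of $u$ to see that the odd, resp.\ even, reflection introduces no jump in the component carrying a trace. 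Next I would approximate $U$ in $\dot{\mathrm{B}}^{s}_{p,q}(\mathbb{R}^n)$ by a sequence $(v_j)_j\subset\eus{S}_0(\mathbb{R}^n)$ and symmetrise it: writing $R(x',x_n)=(x',-x_n)$ and $P_{\mathcal{N}}w=\tfrac12(w+w\circ R)$, $P_{\mathcal{D}}w=\tfrac12(w-w\circ R)$, the projector $P_{\mathcal{J}}$ is bounded on $\dot{\mathrm{B}}^{s}_{p,q}(\mathbb{R}^n)$ (composition with the orthogonal reflection $R$ being an isometry for the homogeneous Besov norm), it maps $\eus{S}_0(\mathbb{R}^n)$ into itself, and it fixes $U$. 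Hence $w_j:=P_{\mathcal{J}}v_j\to U$ in $\dot{\mathrm{B}}^{s}_{p,q}(\mathbb{R}^n)$ with each $w_j\in\eus{S}_0(\mathbb{R}^n)$ parity–symmetric, and restricting yields $(w_j)_{|_{\mathbb{R}^n_+}}\to u$ in $\dot{\mathrm{B}}^{s}_{p,q}(\mathbb{R}^n_+)$ with $(w_j)_{|_{\mathbb{R}^n_+}}\in\mathrm{Y}_{\mathcal{J}}$ by the first paragraph, which is the desired density.

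The case $q=+\infty$ I would treat identically, replacing strong density of $\eus{S}_0(\mathbb{R}^n)$ by its weak${}^\ast$ density and propagating it through the weak${}^\ast$ continuous parity projection and restriction maps, obtaining only weak${}^\ast$ density as claimed.

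The hard part will be the boundedness step $U=\mathrm{E}_{\mathcal{J}}u\in\dot{\mathrm{B}}^{s}_{p,q}(\mathbb{R}^n)$ in the super–critical regime $s>\tfrac1p$ (for $\mathcal{J}=\mathcal{D}$) or $s>1+\tfrac1p$ (for $\mathcal{J}=\mathcal{N}$), since Proposition \ref{prop:SobolevMultiplier} only covers $s\in(-1+\tfrac1p,\tfrac1p)$ and one must show that the bare reflection of a function obeying the homogeneous boundary condition still lands in $\dot{\mathrm{B}}^{s}_{p,q}(\mathbb{R}^n)$ up to $s<2+\tfrac1p$. This is exactly where the excluded values $s\in\{\tfrac1p,1+\tfrac1p\}$ (resp.\ $s=1+\tfrac1p$) intervene: they are the regularities at which the trace of $u$, resp.\ of $\nabla u$, lands in its borderline space, so that the one-sided traces matched by the parity cannot be controlled and the reflection fails to be bounded; away from these values the required compatibility across $\{x_n=0\}$ reduces precisely to the boundary condition defining $\dot{\mathrm{B}}^{s}_{p,q,\mathcal{J}}(\mathbb{R}^n_+)$, which the trace theory of the appendix makes rigorous.
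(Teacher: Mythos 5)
Your route is genuinely different from the paper's. You reflect $u$ to the whole space, approximate there by $\eus{S}_0(\mathbb{R}^n)$, symmetrise with the parity projector, and restrict. The paper never extends $u$ in the supercritical range at all: for $s\in(1+1/p,2+1/p)$ it sets $f:=-\Delta_{\mathcal{J}}u\in\dot{\mathrm{B}}^{s-2}_{p,q}(\mathbb{R}^n_+)$, approximates $f$ by $\eus{S}_0(\overline{\mathbb{R}^n_+})$, and shows $u_{\lambda,j}:=(\lambda\mathrm{I}-\Delta_{\mathcal{J}})^{-1}f_j\in\mathrm{Y}_{\mathcal{J}}$ forms a Cauchy net converging to $u$; for the Neumann middle range it uses $(\mathrm{I}-\tau\Delta_{\mathcal{J}})^{-1}u_j$; for the Dirichlet middle range it identifies $\dot{\mathrm{B}}^{s}_{p,q,\mathcal{D}}(\mathbb{R}^n_+)$ with $\dot{\mathrm{B}}^{s}_{p,q,0}(\mathbb{R}^n_+)$ and uses density of $\mathrm{C}_c^\infty(\mathbb{R}^n_+)$. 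The advantage of the paper's regularisation is that the only extension-operator input it needs is the resolvent theory already proved, which itself relies on the multiplier bound of Proposition \ref{prop:SobolevMultiplier} only in the subcritical window $s\in(-1+1/p,1/p)$.

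There is, however, a genuine gap in your argument as written: the entire reverse inclusion hinges on the claim that $U:=\mathrm{E}_{\mathcal{J}}u$ belongs to $\dot{\mathrm{B}}^{s}_{p,q}(\mathbb{R}^n)$ for supercritical $s$, and you explicitly defer this step rather than prove it. It is not a routine verification in this framework. To close it you would need to (a) show that the reflection of an equivalence class defined by restriction is a well-defined element of $\eus{S}'_h(\mathbb{R}^n)$ admitted into the paper's realization of $\dot{\mathrm{B}}^{s}_{p,q}(\mathbb{R}^n)$; (b) compute the distributional derivatives of the reflection and verify that the surface terms (a term $2\,u(\cdot,0)\otimes\delta_{\{x_n=0\}}$ in $\partial_{x_n}\mathrm{E}_{\mathcal{D}}u$, and $2\,\partial_{x_n}u(\cdot,0)\otimes\delta_{\{x_n=0\}}$ in $\partial_{x_n}^2\mathrm{E}_{\mathcal{N}}u$) vanish by the trace condition, which requires the traces of $u$ and $\partial_{x_n}u$ to be meaningful on $\dot{\mathrm{B}}^{s}_{p,q}(\mathbb{R}^n_+)$ alone and not merely on an intersection space; and (c) invoke an equivalence of the form $\lVert U\rVert_{\dot{\mathrm{B}}^{s}_{p,q}(\mathbb{R}^n)}\sim\lVert\nabla U\rVert_{\dot{\mathrm{B}}^{s-1}_{p,q}(\mathbb{R}^n)}$ (or its second-order analogue) to reduce to the subcritical multiplier bound. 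Each of these is plausible but none is available off the shelf in the paper, and (a) and (c) are precisely the realization subtleties the paper is organised to avoid. Until that lemma is supplied, your proof establishes the result only for $s<1/p$ ($\mathcal{J}=\mathcal{D}$) and $s<1+1/p$ ($\mathcal{J}=\mathcal{N}$); the remaining ingredients (parity of symmetric $\eus{S}_0$ functions forcing the boundary conditions and hence membership in $\mathrm{Y}_{\mathcal{J}}$, boundedness of the symmetrisation projector, the weak${}^\ast$ adaptation for $q=+\infty$) are sound.
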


\begin{proof} We recall that for all $\tilde{p}\in(1,+\infty)$, $\tilde{q}\in[1,+\infty)$, $\tilde{s}\in\mathbb{R}$, such that \eqref{AssumptionCompletenessExponents} is satisfied, we have that
\begin{align*}
    \dot{\mathrm{B}}^{\tilde{s}}_{\tilde{p},\tilde{q}}(\mathbb{R}^n_+) = \overline{\eus{S}_0(\overline{\mathbb{R}^n_+})}^{\lVert\cdot\rVert_{\dot{\mathrm{B}}^{\tilde{s}}_{\tilde{p},\tilde{q}}(\mathbb{R}^n_+)}} \text{. }
\end{align*}
\begin{itemize}
    \item First, assume that $s\in(1+1/p,2+1/p)$, for $u\in\dot{\mathrm{B}}^{s}_{p,q,\mathcal{J}}(\mathbb{R}^n_+)$, we set $f:=-\Delta_{\mathcal{J}}u \in\dot{\mathrm{B}}^{s-2}_{p,q}(\mathbb{R}^n_+)$. For $(f_j)_{j\in\mathbb{N}} \subset \eus{S}_0(\overline{\mathbb{R}^n_+})$ such that
\begin{align*}
    f_j \xrightarrow[j\rightarrow +\infty]{} f\text{, }\quad \text{ in } \dot{\mathrm{B}}^{s-2}_{p,q}(\mathbb{R}^n_+)\text{.}
\end{align*}
We set for all $\lambda>0$,
\begin{align*}
    u_{\lambda,j} := (\lambda\mathrm{I}-\Delta_{\mathcal{J}})^{-1}f_j \in \mathrm{Y}_{\mathcal{J}}
\end{align*}
where belonging to space $\mathrm{Y}_{\mathcal{J}}$ is a consequence of \cite[Propositions~4.4~\&~4.6]{Gaudin2022}. For $\mu,\lambda>0$, by Lemma \ref{lem:Hs+2pestimateHodgeLap}, we have
\begin{align*}
    \left\lVert u_{j,\lambda}-u_{j,\mu} \right\rVert_{\dot{\mathrm{B}}^{s}_{p,q}(\mathbb{R}^n_+)} &\lesssim_{s,p,q,n} \left\lVert -\Delta_{\mathcal{J}}(\lambda\mathrm{I}-\Delta_{\mathcal{J}})^{-1}f_j + \Delta_{\mathcal{J}}(\mu\mathrm{I}-\Delta_{\mathcal{J}})^{-1}f_j \right\rVert_{\dot{\mathrm{B}}^{s-2}_{p,q}(\mathbb{R}^n_+)}\\
    &\lesssim_{s,p,q,n} \left\lVert -\Delta_{\mathcal{J}}(\lambda\mathrm{I}-\Delta_{\mathcal{J}})^{-1}f_j -f_j\right\rVert_{\dot{\mathrm{B}}^{s-2}_{p,q}(\mathbb{R}^n_+)}\\
    &\qquad\quad +\left\lVert f_j + \Delta_{\mathcal{J}}(\mu\mathrm{I}-\Delta_{\mathcal{J}})^{-1}f_j\right\rVert_{\dot{\mathrm{B}}^{s-2}_{p,q}(\mathbb{R}^n_+)}
    \xrightarrow[\lambda,\mu\rightarrow 0]{} 0 \text{. }
\end{align*}
By uniqueness of the solution for the Neumann (resp. Dirichlet) problem provided by \cite[Propositions~4.8]{Gaudin2022} (resp. \cite[Propositions~4.6]{Gaudin2022}), $(u_{\mu,j})_{\mu>0}$ is a Cauchy net that admits a limit that must be the unique solution $u_j$. The strong continuity of $(-\Delta_\mathcal{J})^{-1}$ concludes the case of $s\in(1+1/p,2+1/p)$.

    \item For $s\in (1/p,1+1/p)$, we consider first the case $\mathcal{J}=\mathcal{N}$. Again, for $u\in\dot{\mathrm{B}}^{s}_{p,q,\mathcal{N}}(\mathbb{R}^n_+)=\dot{\mathrm{B}}^{s}_{p,q}(\mathbb{R}^n_+)$, we can introduce $(u_j)_{j\in\mathbb{N}} \subset \eus{S}_0(\overline{\mathbb{R}^n_+})$ such that
\begin{align*}
    u_j \xrightarrow[j\rightarrow +\infty]{\dot{\mathrm{B}}^{s}_{p,q}} u\text{. }
\end{align*}
We set for all $\tau>0$,
\begin{align*}
    u_{\tau,j} := (\mathrm{I}-\tau\Delta_{\mathcal{J}})^{-1}u_j \in \mathrm{Y}_{\mathcal{J}}
\end{align*}
where belonging to space $\mathrm{Y}_{\mathcal{J}}$ is a consequence of \cite[Propositions~4.3~\&~4.6]{Gaudin2022}. It is direct to see that
\begin{align*}
    u_{\tau,j}\xrightarrow[\tau\rightarrow 0]{} u_{j}\xrightarrow[j\rightarrow +\infty]{} u \quad \text{ in } \dot{\mathrm{B}}^{s}_{p,q}(\mathbb{R}^n_+)\text{.}
\end{align*}

For the case $\mathcal{J}=\mathcal{D}$, since $\dot{\mathrm{B}}^{s}_{p,q,\mathcal{D}}(\mathbb{R}^n_+)=\dot{\mathrm{B}}^{s}_{p,q,0}(\mathbb{R}^n_+)$ and $\mathrm{C}_c^\infty(\mathbb{R}^n_+) \subset \mathrm{Y}_{\mathcal{\mathcal{D}}}$, the result follows from \cite[Lemma~2.32]{Gaudin2022}.

This argument still works for $s\in(-1+1/p,1+1/p)$, when $\mathcal{J}=\mathcal{N}$.

\item Finally $s\in (-1+1/p,1/p)$, we notice that $\dot{\mathrm{B}}^{s}_{p,q,\mathcal{J}}(\mathbb{R}^n_+)=\dot{\mathrm{B}}^{s}_{p,q,0}(\mathbb{R}^n_+)=\dot{\mathrm{B}}^{s}_{p,q}(\mathbb{R}^n_+)$. Since $\mathrm{C}_c^\infty(\mathbb{R}^n_+) \subset \mathrm{Y}_{\mathcal{\mathcal{J}}}$, the result follows from \cite[Corollary~2.34]{Gaudin2022}.
\end{itemize}
\end{proof}

The next result has a similar proof, left to the reader.

\begin{proposition}\label{prop:DensityResultHomNeumÎntersecSobol} Let $p\in(1,+\infty)$, $s_0,s_1\in(1/p,2+1/p)$, $\mathcal{J}\in\{\mathcal{D},\mathcal{N}\}$ such that $(\mathcal{C}_{s_0,p})$ is satisfied, we have
\begin{align*}
    [\dot{\mathrm{H}}^{s_0,p}_{\mathcal{J}}\cap{\dot{\mathrm{H}}^{s_1,p}}](\mathbb{R}^n_+) = \overline{\mathrm{Y}_{\mathcal{J}}}^{\lVert\cdot\rVert_{[\dot{\mathrm{H}}^{s_0,p}\cap{\dot{\mathrm{H}}^{s_1,p}}](\mathbb{R}^n_+)}} \text{, }
\end{align*}
whenever $1/p<s_0,s_1<2+1/p$, except for $s=1+1/p$ when $\mathcal{J}=\mathcal{N}$.
\end{proposition}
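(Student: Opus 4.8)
The plan is to transpose the three-range argument of Proposition~\ref{prop:DensityResultHomNeumDirBesov} to the intersection setting, replacing the single Besov norm by $\lVert\cdot\rVert_{[\dot{\mathrm{H}}^{s_0,p}\cap\dot{\mathrm{H}}^{s_1,p}]}$ and invoking the second-order estimate of Lemma~\ref{lem:Hs+2pestimateHodgeLap} once at the exponent $s_0$ and once at $s_1$. The decisive structural remark is that, since $s_0,s_1\in(1/p,2+1/p)$, the shifted exponents $s_0-2,\,s_1-2$ both lie in $(1/p-2,1/p)$, hence strictly below $1/p\leqslant n/p$. Consequently the \emph{data} spaces $\dot{\mathrm{H}}^{s_0-2,p}(\mathbb{R}^n_+)$ and $\dot{\mathrm{H}}^{s_1-2,p}(\mathbb{R}^n_+)$ are complete, carry no boundary constraint (so that $\dot{\mathrm{H}}^{\cdot,p}_0=\dot{\mathrm{H}}^{\cdot,p}$ by Proposition~\ref{prop:SobolevMultiplier}), and admit $\eus{S}_0(\overline{\mathbb{R}^n_+})$ as a dense subspace. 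Performing the approximation on the side of $f:=-\Delta_{\mathcal{J}}u$ and transporting it back through the isomorphism property of $-\Delta_{\mathcal{J}}$ from \cite[Section~4]{Gaudin2022} is what avoids any completeness issue on the target side, where $(\mathcal{C}_{s_1,p})$ need not hold.

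For the range in which the boundary condition defining $\dot{\mathrm{H}}^{s_0,p}_{\mathcal{J}}$ is active at both exponents (namely $s_i>1/p$ for $\mathcal{J}=\mathcal{D}$, and $s_i>1+1/p$ for $\mathcal{J}=\mathcal{N}$), I would fix $u\in[\dot{\mathrm{H}}^{s_0,p}_{\mathcal{J}}\cap\dot{\mathrm{H}}^{s_1,p}](\mathbb{R}^n_+)$, set $f:=-\Delta_{\mathcal{J}}u\in[\dot{\mathrm{H}}^{s_0-2,p}\cap\dot{\mathrm{H}}^{s_1-2,p}](\mathbb{R}^n_+)$, and choose $(f_j)_j\subset\eus{S}_0(\overline{\mathbb{R}^n_+})$ with $f_j\to f$ in the intersection norm. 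Setting $u_{\lambda,j}:=(\lambda\mathrm{I}-\Delta_{\mathcal{J}})^{-1}f_j$, the same computation as in the first bullet of Proposition~\ref{prop:DensityResultHomNeumDirBesov} (using \cite[Proposition~3.2.2]{EgertPhDThesis2015} to let $\lambda\to0$) shows that for fixed $j$ the net is Cauchy in each of $\dot{\mathrm{H}}^{s_0,p}$ and $\dot{\mathrm{H}}^{s_1,p}$, hence in the intersection, with limit $u_j:=(-\Delta_{\mathcal{J}})^{-1}f_j\in\mathrm{Y}_{\mathcal{J}}$ (membership from the regularity of $f_j$ and \cite[Propositions~4.3,~4.6~\&~4.8]{Gaudin2022}). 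Applying Lemma~\ref{lem:Hs+2pestimateHodgeLap} at $s_0$ and $s_1$ then yields
\begin{align*}
    \lVert u_j-u\rVert_{[\dot{\mathrm{H}}^{s_0,p}\cap\dot{\mathrm{H}}^{s_1,p}](\mathbb{R}^n_+)}\lesssim_{s_0,s_1,p,n}\lVert f_j-f\rVert_{[\dot{\mathrm{H}}^{s_0-2,p}\cap\dot{\mathrm{H}}^{s_1-2,p}](\mathbb{R}^n_+)}\xrightarrow[j\rightarrow+\infty]{}0,
\end{align*}
so that $u$ is approximated in $\mathrm{Y}_{\mathcal{J}}$; measuring the error through the complete lower spaces is precisely what sidesteps the possible failure of completeness of $\dot{\mathrm{H}}^{s_1,p}$.

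In the complementary sub-critical ranges the boundary condition is inactive, $\dot{\mathrm{H}}^{s_i,p}_{\mathcal{J}}=\dot{\mathrm{H}}^{s_i,p}$, and no constraint has to be manufactured: one approximates $u$ by $\eus{S}_0(\overline{\mathbb{R}^n_+})$ in the intersection norm and mollifies by $(\mathrm{I}-\tau\Delta_{\mathcal{J}})^{-1}$ to land in $\mathrm{Y}_{\mathcal{J}}$, letting $\tau\to0$ via the strong continuity of the resolvent, while for $\mathcal{J}=\mathcal{D}$ the inclusion $\mathrm{C}_c^\infty(\mathbb{R}^n_+)\subset\mathrm{Y}_{\mathcal{D}}$ together with \cite[Lemma~2.32]{Gaudin2022} closes the argument. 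I expect the main obstacle to be the bookkeeping when $s_0$ and $s_1$ fall on opposite sides of an activation threshold, since one must then produce simultaneous convergence in a norm where the boundary functional is active and in one where it is not; this is exactly the mechanism forcing the exclusion of the critical value $1+1/p$ in the Neumann case, where the normal-trace functional $\partial_\nu(\cdot)_{|_{\partial\mathbb{R}^n_+}}$ degenerates.
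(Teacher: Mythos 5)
Your overall route coincides with the paper's: the proof there is declared ``similar to Proposition~\ref{prop:DensityResultHomNeumDirBesov}, left to the reader'', and your plan of approximating $f:=-\Delta_{\mathcal{J}}u$ in the (complete) intersection of the shifted spaces and transporting the approximation back through the resolvent net is exactly the intended adaptation. There is, however, one concrete mis-assignment that makes a step fail as written. You send the whole range $s_i>1/p$ for $\mathcal{J}=\mathcal{D}$ through the elliptic argument, but for $s_i\in(1/p,1+1/p)$ the data exponent $s_i-2$ lies in $(-2+1/p,-1+1/p)$, strictly below the window $(-1+1/p,1/p)$ in which the paper constructs $-\Delta_{\mathcal{D}}$ and proves Lemma~\ref{lem:Hs+2pestimateHodgeLap}, and in which Proposition~\ref{prop:SobolevMultiplier} (which you invoke to claim the shifted spaces carry no boundary constraint) is valid. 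The a priori bound $\lVert u_j-u\rVert_{\dot{\mathrm{H}}^{s_i,p}}\lesssim\lVert f_j-f\rVert_{\dot{\mathrm{H}}^{s_i-2,p}}$ is therefore unavailable on that sub-range. The repair is the one the paper uses in the second bullet of Proposition~\ref{prop:DensityResultHomNeumDirBesov}: for $\mathcal{J}=\mathcal{D}$ and $s_i\in(1/p,1+1/p)$ one has $\dot{\mathrm{H}}^{s_i,p}_{\mathcal{D}}=\dot{\mathrm{H}}^{s_i,p}_{0}$ and $\mathrm{C}_c^\infty(\mathbb{R}^n_+)\subset\mathrm{Y}_{\mathcal{D}}$ is dense there. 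You do mention this route, but you file it under the ``inactive boundary condition'' case, which for $\mathcal{D}$ is empty on $(1/p,2+1/p)$, so as organized your case split leaves this sub-range to an argument that does not apply to it.

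On the point you flag as the main obstacle: when $s_0$ and $s_1$ straddle an activation threshold, the difficulty is not mere bookkeeping. With the subscript $\mathcal{J}$ placed only on $s_0$, if $s_1$ sits above the threshold then the corresponding trace functional is continuous for the intersection norm and vanishes identically on $\mathrm{Y}_{\mathcal{J}}$, so the closure is contained in a proper subspace of $\dot{\mathrm{H}}^{s_0,p}_{\mathcal{J}}\cap\dot{\mathrm{H}}^{s_1,p}$ and the stated identity cannot hold verbatim. In the paper the proposition is only ever applied with $s_0,s_1$ on the same side of the relevant threshold (Steps~4 and~5 of the proof of Proposition~\ref{prop:InterpHomDomainLaplacians}), which is precisely the situation your two single-regime arguments, once the Dirichlet sub-range above is reassigned, do cover.
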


The next lemma is inspired from \cite[Lemma~2.4]{Guidetti1991Interp}.
\begin{lemma}\label{lem:ProjectDirNeuBC}Let $p_j\in(1,+\infty)$, $q_j\in[1,+\infty)$, and $s_j>1/p_j$, for $j\in\{ 0,1\}$. Let $T$ be the map
\begin{align*}
 T\,:\,f   &   \longmapsto    \left[    (x',x_n)\mapsto e^{-x_n(-\Delta')^\frac{1}{2}}f(x') \right] \text{.}
\end{align*}

\begin{enumerate}
    \item Assume $s_j\in( 1/p_j,1+2/p_j)$, for $j\in\{0,1\}$. Then the operator defined formally by
    \begin{align*}
        \mathcal{P}_{\mathcal{D}} u := u - T[u_{|_{\partial\mathbb{R}^n_+}}]\text{, }
    \end{align*}
    is such that
    \begin{enumerate}
        \item If $(\mathcal{C}_{s_0,p_0})$ is satisfied, then $\mathcal{P}_{\mathcal{D}} \,:\,[\dot{\mathrm{H}}^{s_0,p_0}\cap\dot{\mathrm{H}}^{s_1,p_1}](\mathbb{R}^n_+) \longrightarrow [\dot{\mathrm{H}}^{s_0,p_0}_{\mathcal{D}}\cap\dot{\mathrm{H}}^{s_1,p_1}](\mathbb{R}^n_+)$ is a well defined linear and bounded projection. For all $u\in [\dot{\mathrm{H}}^{s_0,p_0}\cap\dot{\mathrm{H}}^{s_1,p_1}](\mathbb{R}^n_+)$ the following estimate is true
        \begin{align*}
         \lVert \mathcal{P}_{\mathcal{D}} u \rVert_{\dot{\mathrm{H}}^{s_j,p_j}(\mathbb{R}^n_+)} \lesssim_{p_j,s_j,n} \lVert  u \rVert_{\dot{\mathrm{H}}^{s_j,p_j}(\mathbb{R}^n_+)} \quad\text{, } j\in\{0,1\} \text{. }
        \end{align*}
        \item If instead $(\mathcal{C}_{s_0,p_0,q_0})$ is satisfied, then the above statement still holds with $(\dot{\mathrm{B}}^{s_0}_{p_0,q_0},\dot{\mathrm{B}}^{s_1}_{p_1,q_1})$ replacing $(\dot{\mathrm{H}}^{s_0,p_0},\dot{\mathrm{H}}^{s_1,p_1})$. 
        
        We also have that $\mathcal{P}_{\mathcal{D}}\,:\,\dot{\mathrm{B}}^{s_0}_{p_0,\infty}(\mathbb{R}^n_+)\longrightarrow \dot{\mathrm{B}}^{s_0}_{p_0,\infty,\mathcal{D}}(\mathbb{R}^n_+)$ is also well defined linear and bounded.
    \end{enumerate}
    \item Assume $s_j\in( 1+1/p_j,1+2/p_j)$, for $j\in\{0,1\}$. Then the operator defined formally by
    \begin{align*}
        \mathcal{P}_{\mathcal{N}} u := u + (-\Delta')^{-\frac{1}{2}}T[\partial_{x_n}u_{|_{\partial\mathbb{R}^n_+}}]\text{, }
    \end{align*}
    satisfies points (i)(a) and (i)(b) with $\mathcal{N}$ instead of $\mathcal{D}$.
\end{enumerate}
\end{lemma}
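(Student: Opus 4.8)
The plan is to prove the statement for the Dirichlet projection $\mathcal{P}_{\mathcal{D}}$ first, and then indicate the modifications needed for $\mathcal{P}_{\mathcal{N}}$. The core object is the harmonic extension operator $T$ built from the Poisson semigroup $e^{-x_n(-\Delta')^{1/2}}$ on the boundary $\mathbb{R}^{n-1}$. I would begin by recalling that the trace operator $u\mapsto u_{|_{\partial\mathbb{R}^n_+}}$ is bounded from $\dot{\mathrm{H}}^{s_j,p_j}(\mathbb{R}^n_+)$ into the boundary space $\dot{\mathrm{B}}^{s_j-1/p_j}_{p_j,p_j}(\mathbb{R}^{n-1})$ whenever $s_j>1/p_j$; this is exactly the content of the trace theorems in the Appendix (Theorem \ref{thm:TracesDifferentialformsHomSpaces}) together with the homogeneous trace results of \cite[Section~3]{Gaudin2022}. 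The constraint $s_j<1+2/p_j$ enters precisely to keep the boundary regularity index $s_j-1/p_j$ below $1+1/p_j$, which is the admissible range for the lifting $T$ to land back in $\dot{\mathrm{H}}^{s_j,p_j}$.

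Next I would establish the boundedness of the lifting itself, namely that
\begin{align*}
    T\,:\,\dot{\mathrm{B}}^{s_j-1/p_j}_{p_j,p_j}(\mathbb{R}^{n-1}) \longrightarrow \dot{\mathrm{H}}^{s_j,p_j}(\mathbb{R}^n_+)\text{, }
\end{align*}
is well defined and bounded. The standard route is via the Fourier-analytic characterization of homogeneous Sobolev norms combined with the decay and smoothing estimates for the Poisson kernel: differentiating $e^{-x_n(-\Delta')^{1/2}}g$ in the tangential and normal directions trades normal derivatives for powers of $(-\Delta')^{1/2}$, so the full $\dot{\mathrm{H}}^{s_j,p_j}(\mathbb{R}^n_+)$ norm of $Tg$ is controlled by the homogeneous Besov norm of the boundary datum $g$ through a one-dimensional $\mathrm{L}^{p_j}(\mathbb{R}_+,\mathrm{d}x_n)$ integration of Poisson-semigroup estimates. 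Composing the trace bound with this lifting bound gives that $T[u_{|_{\partial}}]$, and hence $\mathcal{P}_{\mathcal{D}}u=u-T[u_{|_{\partial}}]$, is bounded on each $\dot{\mathrm{H}}^{s_j,p_j}(\mathbb{R}^n_+)$ with the claimed decoupled estimates. That $\mathcal{P}_{\mathcal{D}}$ is a projection follows from the reproducing property of the harmonic extension: since $(Tg)_{|_{\partial\mathbb{R}^n_+}}=g$ because $e^{-0\cdot(-\Delta')^{1/2}}=\mathrm{I}$, one has $(T[u_{|_{\partial}}])_{|_{\partial}}=u_{|_{\partial}}$, so $(\mathcal{P}_{\mathcal{D}}u)_{|_{\partial}}=0$ and thus $\mathcal{P}_{\mathcal{D}}^2=\mathcal{P}_{\mathcal{D}}$, with range inside $\dot{\mathrm{H}}^{s_j,p_j}_{\mathcal{D}}$ exactly because the trace vanishes. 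The Besov statement (i)(b), including the endpoint $q_0=+\infty$, follows either by the same kernel estimates directly or by real interpolation between the Sobolev cases, noting that the completeness condition $(\mathcal{C}_{s_0,p_0})$ (resp. $(\mathcal{C}_{s_0,p_0,q_0})$) is only needed to ensure the target spaces are well behaved.

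For the Neumann case I would replace $T[u_{|_{\partial}}]$ by $(-\Delta')^{-1/2}T[\partial_{x_n}u_{|_{\partial}}]$; the key algebraic identity is that $\partial_{x_n}e^{-x_n(-\Delta')^{1/2}}=-(-\Delta')^{1/2}e^{-x_n(-\Delta')^{1/2}}$, so that the normal trace of $(-\Delta')^{-1/2}T[h]$ is $-h$ at the boundary, which yields $\partial_{x_n}(\mathcal{P}_{\mathcal{N}}u)_{|_{\partial}}=0$ and the projection property. The extra factor $(-\Delta')^{-1/2}$ on the boundary demands the shifted regularity window $s_j>1+1/p_j$, so that the normal-derivative trace $\partial_{x_n}u_{|_{\partial}}$ lives in $\dot{\mathrm{B}}^{s_j-1-1/p_j}_{p_j,p_j}(\mathbb{R}^{n-1})$ and the inverse square root of the tangential Laplacian raises this by one order before lifting. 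The main obstacle I anticipate is controlling these homogeneous boundary operators without completeness or integrability failing at the borderline exponents: the operator $(-\Delta')^{-1/2}$ is a genuinely homogeneous (nonlocal) multiplier on $\mathbb{R}^{n-1}$, so I must be careful that all boundary Besov spaces are point-separating and that the composition $(-\Delta')^{-1/2}T$ is justified on a dense class (e.g.\ $\eus{S}_0(\overline{\mathbb{R}^n_+})$) before extending by density, which is precisely where the trace machinery of the Appendix and the homogeneous-space conventions of \cite{Gaudin2022} must be invoked with care.
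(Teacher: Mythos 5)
Your proposal is correct and follows essentially the same route as the paper: the paper's proof is a one-line citation to \cite[Proposition~B.2, Corollary~B.3]{Gaudin2022}, which are precisely the boundedness of the Poisson-type lifting $T$ (and of $(-\Delta')^{-1/2}T$ for the Neumann variant) on the homogeneous half-space scales, combined with the trace bounds; your argument is an unpacking of those cited results together with the reproducing property $(Tg)_{|_{\partial\mathbb{R}^n_+}}=g$ giving the projection identity. The only part you leave schematic — the kernel/Littlewood--Paley estimates showing $T\,:\,\dot{\mathrm{B}}^{s_j-1/p_j}_{p_j,p_j}(\mathbb{R}^{n-1})\to\dot{\mathrm{H}}^{s_j,p_j}(\mathbb{R}^n_+)$ — is exactly what the paper also outsources, so no gap relative to the paper's own level of detail.
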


\begin{proof} This a direct consequence of \cite[Proposition~B.2,~Corollary~B.3]{Gaudin2022}.
\end{proof}

\begin{proposition}\label{prop:assumptionsareCheckedforLap} Let $p\in(1,+\infty)$, $s\in(-1+1/p,1/p)$, $\mathcal{J}\in\{\mathcal{D},\mathcal{N},\mathcal{H}\}$, then $(\dot{\mathrm{D}}^{s}_{p}(\Delta_\mathcal{J}),-\Delta_\mathcal{J})$ satisfies Assumptions \eqref{asmpt:homogeneousdomaindef} and \eqref{asmpt:homogeneousdomainintersect}. In other words, $-\Delta_\mathcal{J}$ is injective on $\dot{\mathrm{H}}^{s,p}(\mathbb{R}^n_+)$, and we can define
\begin{align*}
    \dot{\mathrm{D}}^{s}_{p}(\mathring{\Delta}_\mathcal{J}) := \{ u\in \dot{\mathrm{H}}^{s+2,p}(\mathbb{R}^n_+)\,|\,\exists (u_j)_{j\in\mathbb{N}}\subset\dot{\mathrm{D}}^{s}_{p}(\Delta_\mathcal{J}),\,\lVert u-u_j \rVert_{\dot{\mathrm{H}}^{s+2,p}(\mathbb{R}^n_+)} \xrightarrow[j\rightarrow +\infty]{} 0  \}
\end{align*}
such that it also satisfies
\begin{align}\label{eq:IntersectHomogeneousDomainsLap}
    \dot{\mathrm{H}}^{s,p}(\mathbb{R}^n_+)\cap\dot{\mathrm{D}}^{s}_{p}(\mathring{\Delta}_\mathcal{J}) = \dot{\mathrm{D}}^{s}_{p}({\Delta}_\mathcal{J})\text{. }
\end{align}
The result holds yields with either $\mathbb{A}_{\mathcal{H},\mathfrak{t}}$ (resp. $\mathbb{M}_{\mathcal{H},\mathfrak{t}}$) on $\dot{\mathrm{H}}^{s,p}_{\mathfrak{t},\sigma}(\mathbb{R}^n_+)$ (resp. $\dot{\mathrm{H}}^{s,p}_{\gamma}(\mathbb{R}^n_+)$), and similarly replacing $(\mathfrak{t},\sigma,\gamma,\mathbb{A},\mathbb{M})$ by $(\mathfrak{n},\gamma,\sigma,\mathbb{M},\mathbb{A})$.
\end{proposition}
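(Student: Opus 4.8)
The plan is to verify the two assumptions in turn, reducing everything to the already-established properties of the Dirichlet and Neumann Laplacians and to the componentwise structure given by Lemma \ref{lem:Hs+2pestimateHodgeLap}. First I would establish \emph{injectivity}. For $\mathcal{J}\in\{\mathcal{D},\mathcal{N}\}$ this is part of the homogeneous theory developed in \cite[Section~4]{Gaudin2022} (the resolvent estimates there give isomorphisms $\lambda\mathrm{I}-\Delta_\mathcal{J}\,:\,\dot{\mathrm{D}}^s_p(\Delta_\mathcal{J})\to\dot{\mathrm{H}}^{s,p}(\mathbb{R}^n_+)$, which at $\lambda=0$ force injectivity of $-\Delta_\mathcal{J}$). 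For $\mathcal{J}=\mathcal{H}$ injectivity was already recorded in Proposition \ref{prop:ExtOpHodgeLapRn+} and Theorem \ref{thm:MetaThm1HodgeLaplacianRn+}, and it also follows directly from Lemma \ref{lem:Hs+2pestimateHodgeLap}, which identifies $-\Delta_{\mathcal{H},\mathfrak{t}}$ componentwise with $-\Delta_\mathcal{N}$ on indices $I\in\mathcal{I}^k_{n-1}$ and $-\Delta_\mathcal{D}$ on indices containing $n$. So the injectivity of the Hodge Laplacian is inherited from that of the scalar Dirichlet and Neumann Laplacians.

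Next I would identify the normed space $Y$ of Assumption \eqref{asmpt:homogeneousdomaindef}. The natural choice is $Y=\dot{\mathrm{H}}^{s+2,p}(\mathbb{R}^n_+)$: indeed Lemma \ref{lem:Hs+2pestimateHodgeLap} (together with the scalar estimate $\lVert\Delta_\mathcal{J}u\rVert_{\dot{\mathrm{H}}^{s,p}}\sim\lVert u\rVert_{\dot{\mathrm{H}}^{s+2,p}}$ from \cite{Gaudin2022}) gives
\begin{align*}
    \lVert \Delta_\mathcal{J} u\rVert_{\dot{\mathrm{H}}^{s,p}(\mathbb{R}^n_+)} \sim_{p,s,n} \lVert u\rVert_{\dot{\mathrm{H}}^{s+2,p}(\mathbb{R}^n_+)}\text{, }\quad u\in \dot{\mathrm{D}}^{s}_{p}(\Delta_\mathcal{J})\text{, }
\end{align*}
which is precisely the equivalence $\lVert Ax\rVert_X\sim\lVert x\rVert_Y$ required, with $X=\dot{\mathrm{H}}^{s,p}(\mathbb{R}^n_+)$. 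With this choice the homogeneous domain $\dot{\mathrm{D}}^s_p(\mathring{\Delta}_\mathcal{J})$ is, by its very definition as the closure of $\dot{\mathrm{D}}^s_p(\Delta_\mathcal{J})$ in the $\dot{\mathrm{H}}^{s+2,p}$-norm, the subspace of $\dot{\mathrm{H}}^{s+2,p}(\mathbb{R}^n_+)$ written in the statement, and the density Propositions \ref{prop:DensityResultHomNeumDirBesov} and \ref{prop:DensityResultHomNeumÎntersecSobol} describe it explicitly as the relevant space with boundary conditions, namely $\dot{\mathrm{H}}^{s+2,p}_\mathcal{J}(\mathbb{R}^n_+)$.

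The main work, and the step I expect to be the real obstacle, is Assumption \eqref{asmpt:homogeneousdomainintersect}, i.e. the identity \eqref{eq:IntersectHomogeneousDomainsLap}:
\begin{align*}
    \dot{\mathrm{H}}^{s,p}(\mathbb{R}^n_+)\cap\dot{\mathrm{D}}^{s}_{p}(\mathring{\Delta}_\mathcal{J}) = \dot{\mathrm{D}}^{s}_{p}({\Delta}_\mathcal{J})\text{. }
\end{align*}
The inclusion $\supseteq$ is immediate. For $\subseteq$, take $u\in\dot{\mathrm{H}}^{s,p}\cap\dot{\mathrm{H}}^{s+2,p}$ lying in the closure $\dot{\mathrm{D}}^s_p(\mathring{\Delta}_\mathcal{J})$; one must recover the boundary condition defining $\dot{\mathrm{D}}^s_p(\Delta_\mathcal{J})$. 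The delicate point is that $u$ is only known to be an $\dot{\mathrm{H}}^{s+2,p}$-limit of elements satisfying the boundary condition, and since $s+2>1/p$ the trace (or normal trace) is continuous on $\dot{\mathrm{H}}^{s+2,p}$ by the trace theory of Appendix \ref{Append:TracesofFunctions} and \cite[Section~3]{Gaudin2022}; this continuity passes the condition $u_{|_{\partial}}=0$ (resp. $\partial_\nu u_{|_{\partial}}=0$) to the limit, giving $u\in\dot{\mathrm{H}}^{s+2,p}_\mathcal{J}$. The projection operators $\mathcal{P}_\mathcal{J}$ of Lemma \ref{lem:ProjectDirNeuBC} furnish a clean bounded splitting realizing this, so I would invoke them to conclude $u\in\dot{\mathrm{D}}^s_p(\Delta_\mathcal{J})$. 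Finally, the passage to the Hodge Laplacian reduces to the scalar cases by the componentwise decomposition of Lemma \ref{lem:Hs+2pestimateHodgeLap} (each coefficient $u_I$ satisfies a Neumann or Dirichlet condition according to whether $n\in I$), and the extension to $\mathbb{A}_{\mathcal{H},\mathfrak{t}}$ and $\mathbb{M}_{\mathcal{H},\mathfrak{t}}$ follows by composing with the bounded projector $\mathbb{P}$ (resp. $\mathbb{Q}$) of Theorem \ref{thm:HodgeDecompRn+} and using the commutation relations of Corollary \ref{cor:commutrelationHodgeProjectorHspBspqRn+}, which restrict the whole situation to the solenoidal (resp. closed) subspace without disturbing the assumptions.
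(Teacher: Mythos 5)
Your proposal is correct and follows essentially the same route as the paper: the paper's proof also reduces everything to the intersection identity \eqref{eq:IntersectHomogeneousDomainsLap} and passes the boundary condition to the limit by applying the trace estimate of \cite[Proposition~3.8]{Gaudin2022} to $u-u_j\in\dot{\mathrm{H}}^{s,p}\cap\dot{\mathrm{H}}^{s+2,p}$, whose trace norm is controlled by $\lVert u-u_j\rVert_{\dot{\mathrm{H}}^{s+2,p}}$ alone. The only caveat is that you should phrase the trace continuity as holding on the intersection space (where the trace is actually defined) rather than on $\dot{\mathrm{H}}^{s+2,p}$ by itself, and the appeal to Lemma \ref{lem:ProjectDirNeuBC} is unnecessary here.
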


\begin{proof} We only show \eqref{eq:IntersectHomogeneousDomainsLap}. The following inclusion is clear 
\begin{align*}
    \dot{\mathrm{D}}^{s}_{p}({\Delta}_\mathcal{J}) \subset\dot{\mathrm{H}}^{s,p}(\mathbb{R}^n_+)\cap\dot{\mathrm{D}}^{s}_{p}(\mathring{\Delta}_\mathcal{J}) \text{. }
\end{align*}
Now, let $u\in \dot{\mathrm{H}}^{s,p}(\mathbb{R}^n_+)\cap\dot{\mathrm{D}}^{s}_{p}(\mathring{\Delta}_\mathcal{J})$, then obviously
\begin{align*}
    u\in \dot{\mathrm{H}}^{s,p}(\mathbb{R}^n_+)\cap\dot{\mathrm{H}}^{s+2,p}(\mathbb{R}^n_+) \text{. }
\end{align*}
It suffices to show that $u$ has appropriate boundary conditions. We assume here that $\mathcal{J}=\mathcal{D}$, other cases would be achieved similarly. Let $(u_j)_{j\in\mathbb{N}}\subset\dot{\mathrm{D}}^{s}_{p}(\Delta_\mathcal{D})$, such that
\begin{align*}
    \lVert u-u_j \rVert_{\dot{\mathrm{H}}^{s+2,p}(\mathbb{R}^n_+)} \xrightarrow[j\rightarrow +\infty]{} 0 \text{. }
\end{align*}
Since $u-u_j\in \dot{\mathrm{H}}^{s,p}(\mathbb{R}^n_+)\cap\dot{\mathrm{H}}^{s+2,p}(\mathbb{R}^n_+)$, one may apply \cite[Proposition~3.8]{Gaudin2022}, and use ${u_j}_{|_{\partial\mathbb{R}^n_+}}=0$ to obtain
\begin{align*}
    \lVert u_{|_{\partial\mathbb{R}^n_+}}\rVert_{\dot{\mathrm{B}}^{s+2-{1/p}}_{p,p}(\mathbb{R}^{n-1})}\lesssim_{s,p,n}\lVert u-u_j \rVert_{\dot{\mathrm{H}}^{s+2,p}(\mathbb{R}^n_+)} \xrightarrow[j\rightarrow +\infty]{} 0\text{.}
\end{align*}
Therefore $u_{|_{\partial\mathbb{R}^n_+}}=0$ so that $u\in\dot{\mathrm{D}}^{s}_{p}({\Delta}_\mathcal{D})$.
\end{proof}

The Proposition \ref{prop:assumptionsareCheckedforLap} tells us that, for all $p\in(1,+\infty)$, $s\in(-1+1/p,1/p)$, it makes sense to consider the semigroup,
\begin{align*}
    e^{t\Delta_\mathcal{H}}\,:\,\dot{\mathrm{H}}^{s,p}(\mathbb{R}^n_+)+\dot{\mathrm{D}}^{s}_{p}(\mathring{\Delta}_\mathcal{H})\longrightarrow\dot{\mathrm{H}}^{s,p}(\mathbb{R}^n_+)+\dot{\mathrm{D}}^{s}_{p}(\mathring{\Delta}_\mathcal{H}),
\end{align*}
thanks to \cite[Chapter~2,~Section~1]{DanchinHieberMuchaTolk2020}.

For convenience of notations, and for later use, one may think about Lemma \ref{lem:Hs+2pestimateHodgeLap}, we also set for all $p\in(1,+\infty)$, $q\in[1,+\infty]$, $s\in(-1+1/p,1+2/p)$, such that \eqref{AssumptionCompletenessExponents} is satisfied, and for $k\in\llb 0,n\rrb$,
\begin{align}\label{eq:defhomBesovwithCompCond}
    \dot{\mathrm{B}}^{s}_{p,q,\mathcal{H}_\mathfrak{t}}(\mathbb{R}^n_+,\Lambda^k) := \begin{cases} \dot{\mathrm{B}}^{s}_{p,q}(\mathbb{R}^n_+,\Lambda^k) & \text{, }s <  \frac{1}{p}\text{, }\\
    \left\{ \,u\in \dot{\mathrm{B}}^{s}_{p,q}(\mathbb{R}^n_+,\Lambda^k)\, \big{|}\, \mathfrak{e}_n\iprod u_{|_{\partial\mathbb{R}^n_+}}=0 \,\right\} &\text{, }\frac{1}{p} < s < 1+\frac{1}{p}\text{,  }\\
    \left\{ \,u\in \dot{\mathrm{B}}^{s}_{p,q}(\mathbb{R}^n_+,\Lambda^k)\, \big{|}\,  \mathfrak{e}_n\iprod u_{|_{\partial\mathbb{R}^n_+}},\,\mathfrak{e}_n\iprod\mathrm{d} u_{|_{\partial\mathbb{R}^n_+}}=0\right\} &\text{, }1+\frac{1}{p} < s < 2+\frac{1}{p}\text{. }
    \end{cases}
\end{align}
It is not difficult to see from point \textit{(iii)} of Theorem \ref{thm:TracesDifferentialformsHomSpaces}, that,
\begin{align*}
    \dot{\mathrm{B}}^{s}_{p,q,\mathcal{H}_\mathfrak{t}}(\mathbb{R}^n_+,\Lambda^k) \simeq \dot{\mathrm{B}}^{s}_{p,q,\mathcal{D}}(\mathbb{R}^n_+)^{\binom{n-1}{k-1}}\times\dot{\mathrm{B}}^{s}_{p,q,\mathcal{N}}(\mathbb{R}^n_+)^{\binom{n-1}{k}} \text{, }
\end{align*}
for which one may check for instance the \textbf{Step 3} of Theorem \ref{thm:TracesDifferentialformsHomSpaces}'s proof.

One can also build in the same fashion $\dot{\mathrm{B}}^{s}_{p,q,\mathcal{H}_\mathfrak{n}}(\mathbb{R}^n_+,\Lambda^k)$, with boundary conditions $\nu\wedge u_{|_{\partial\mathbb{R}^n_+}}=0$ and $\nu\wedge \delta u_{|_{\partial\mathbb{R}^n_+}}=0$, so that
\begin{align*}
    \dot{\mathrm{B}}^{s}_{p,q,\mathcal{H}_\mathfrak{n}}(\mathbb{R}^n_+,\Lambda^k) \simeq \dot{\mathrm{B}}^{s}_{p,q,\mathcal{N}}(\mathbb{R}^n_+)^{\binom{n-1}{k-1}}\times\dot{\mathrm{B}}^{s}_{p,q,\mathcal{D}}(\mathbb{R}^n_+)^{\binom{n-1}{k}} \text{. }
\end{align*}
We denote by $\dot{\mathrm{B}}^{s}_{p,q,\mathcal{H}}(\mathbb{R}^n_+)$, either $\dot{\mathrm{B}}^{s}_{p,q,\mathcal{H}_\mathfrak{t}}(\mathbb{R}^n_+,\Lambda)$ or $\dot{\mathrm{B}}^{s}_{p,q,\mathcal{H}_\mathfrak{n}}(\mathbb{R}^n_+,\Lambda)$.

\begin{proposition}\label{prop:InterpHomDomainLaplacians} Let $p\in(1,+\infty)$, $q\in[1,+\infty]$, and $s\in(-1+1/p,1/p)$. For all $\theta\in(0,1)$ such that $(\mathcal{C}_{s+2\theta,p,q})$ is satisfied, provided $\mathcal{J}\in\{\mathcal{D},\mathcal{N},\mathcal{H}\}$, one has
\begin{align*}
    (\dot{\mathrm{H}}^{s,p}(\mathbb{R}^n_+),\dot{\mathrm{D}}^{s}_{p}(\mathring{\Delta}_\mathcal{J}))_{\theta,q} = \dot{\mathrm{B}}^{s+2\theta}_{p,q,\mathcal{J}}(\mathbb{R}^n_+)\text{, }
\end{align*}
with equivalence of norms, whenever
\begin{itemize}
    \item $s+2\theta\neq 1/p$ if $\mathcal{J}=\mathcal{D}$,
    \item $s+2\theta\neq 1+1/p$ if $\mathcal{J}=\mathcal{N}$,
    \item $s+2\theta\neq 1/p,1+1/p$ if $\mathcal{J}=\mathcal{H}$.
\end{itemize}
\end{proposition}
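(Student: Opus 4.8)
The plan is to reduce the interpolation identity for $-\Delta_{\mathcal{H}}$ to the scalar Dirichlet and Neumann cases, and then to treat the scalar cases directly. First I would dispose of $\mathcal{J}=\mathcal{H}$ immediately: by Lemma \ref{lem:Hs+2pestimateHodgeLap} the homogeneous domain $\dot{\mathrm{D}}^{s}_{p}(\mathring{\Delta}_{\mathcal{H},\mathfrak{t}})$ splits, component by component, into copies of $\dot{\mathrm{D}}^{s}_{p}(\mathring{\Delta}_{\mathcal{N}})$ and $\dot{\mathrm{D}}^{s}_{p}(\mathring{\Delta}_{\mathcal{D}})$ according to whether the index set $I$ contains $n$ or not. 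Since real interpolation commutes with finite direct sums, one gets
\begin{align*}
    (\dot{\mathrm{H}}^{s,p}(\mathbb{R}^n_+,\Lambda^k),\dot{\mathrm{D}}^{s}_{p}(\mathring{\Delta}_{\mathcal{H},\mathfrak{t}}))_{\theta,q} \simeq (\dot{\mathrm{H}}^{s,p},\dot{\mathrm{D}}^{s}_{p}(\mathring{\Delta}_{\mathcal{N}}))_{\theta,q}^{\binom{n-1}{k}}\times(\dot{\mathrm{H}}^{s,p},\dot{\mathrm{D}}^{s}_{p}(\mathring{\Delta}_{\mathcal{D}}))_{\theta,q}^{\binom{n-1}{k-1}},
\end{align*}
which, once the scalar cases are known, is precisely $\dot{\mathrm{B}}^{s+2\theta}_{p,q,\mathcal{H}_\mathfrak{t}}(\mathbb{R}^n_+,\Lambda^k)$ by the identification \eqref{eq:defhomBesovwithCompCond}. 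So the whole proposition rests on the two scalar statements $\mathcal{J}\in\{\mathcal{D},\mathcal{N}\}$.

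For the scalar cases I would argue by a chain of embeddings together with density. The inclusion $(\dot{\mathrm{H}}^{s,p},\dot{\mathrm{D}}^{s}_{p}(\mathring{\Delta}_\mathcal{J}))_{\theta,q}\hookrightarrow \dot{\mathrm{B}}^{s+2\theta}_{p,q}(\mathbb{R}^n_+)$ is free: $\dot{\mathrm{D}}^{s}_{p}(\mathring{\Delta}_\mathcal{J})\hookrightarrow \dot{\mathrm{H}}^{s+2,p}(\mathbb{R}^n_+)$ by Proposition \ref{prop:assumptionsareCheckedforLap} and Lemma \ref{lem:Hs+2pestimateHodgeLap}, so by the homogeneous reiteration/interpolation identities of Proposition \ref{prop:InterpHomSpacesRn} and the $\mathbb{R}^n_+$ interpolation results quoted in Section \ref{sec:FunctionSpacesRn+} one lands in $\dot{\mathrm{B}}^{s+2\theta}_{p,q}(\mathbb{R}^n_+)$. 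The content is therefore twofold: (a) the interpolation space actually carries the boundary condition encoded in $\dot{\mathrm{B}}^{s+2\theta}_{p,q,\mathcal{J}}$, and (b) every element of $\dot{\mathrm{B}}^{s+2\theta}_{p,q,\mathcal{J}}$ lies in the interpolation space. For (a), when $s+2\theta$ exceeds the critical value $1/p$ (resp. $1+1/p$), the trace (resp. normal-derivative trace) is continuous on $\dot{\mathrm{B}}^{s+2\theta}_{p,q}(\mathbb{R}^n_+)$ by the appendix trace theorems, and it vanishes on the dense subset $\mathrm{Y}_{\mathcal{J}}$ of the interpolation space, hence on all of it. For (b), I would use the retraction furnished by the projectors $\mathcal{P}_{\mathcal{J}}$ of Lemma \ref{lem:ProjectDirNeuBC}: these are bounded on the relevant pairs and reproduce exactly the boundary condition, so they realize $\dot{\mathrm{B}}^{s+2\theta}_{p,q,\mathcal{J}}$ as a complemented (retract) subspace, and interpolation of a retract is the retract of the interpolation.

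Concretely, the cleanest route for (b) is via the density Proposition \ref{prop:DensityResultHomNeumDirBesov}: $\mathrm{Y}_{\mathcal{J}}$ is dense in $\dot{\mathrm{B}}^{s+2\theta}_{p,q,\mathcal{J}}(\mathbb{R}^n_+)$ (strongly for $q<+\infty$, weak${}^\ast$ for $q=+\infty$), and $\mathrm{Y}_{\mathcal{J}}$ sits inside both endpoints of the interpolation couple. One then shows the norm of $\dot{\mathrm{B}}^{s+2\theta}_{p,q}$ is equivalent to the interpolation norm on $\mathrm{Y}_{\mathcal{J}}$ using the $K$-functional: the upper bound comes from the embedding already noted, and the lower bound comes from splitting an element against the semigroup resolvent $(\mathrm{I}-t\Delta_\mathcal{J})^{-1}$ as in the Da~Prato--Grisvard characterization $\mathring{\eus{D}}_A(\theta,q)=(X,\mathrm{D}(\mathring A))_{\theta,q}$ recalled in \eqref{eq:hominterpspaceXDA}, together with the resolvent estimates on $\dot{\mathrm{H}}^{s,p}$ from Theorem \ref{thm:MetaThm1HodgeLaplacianRn+}. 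The main obstacle is bookkeeping the excluded critical exponents $s+2\theta\in\{1/p,1+1/p\}$ and the $q=+\infty$ endpoint: away from the critical values the boundary functional is genuinely continuous so the characterization is clean, but at $s+2\theta=1/p$ (resp. $1+1/p$) the trace is not well defined on the full Besov space and the two descriptions of the space genuinely differ, which is exactly why these values are excluded from the statement. Handling the $q=+\infty$ case requires invoking the weak${}^\ast$ density clause of Proposition \ref{prop:DensityResultHomNeumDirBesov} and checking that the trace vanishing passes to weak${}^\ast$ limits, which it does by continuity of the trace operator.
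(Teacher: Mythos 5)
Your outline follows the paper's proof quite closely: the reduction of $\mathcal{J}=\mathcal{H}$ to the scalar Dirichlet/Neumann cases via the componentwise splitting of Lemma \ref{lem:Hs+2pestimateHodgeLap} is exactly the paper's Step 6; the forward embedding via $\dot{\mathrm{D}}^{s}_{p}(\mathring{\Delta}_\mathcal{J})\hookrightarrow\dot{\mathrm{H}}^{s+2,p}(\mathbb{R}^n_+)$, interpolation, density of the intersection and trace continuity is Step 1; and your preferred route for the reverse embedding (density of $\mathrm{Y}_\mathcal{J}$ from Proposition \ref{prop:DensityResultHomNeumDirBesov}, the characterization $\mathring{\eus{D}}_A(\theta,q)=(X,\mathrm{D}(\mathring A))_{\theta,q}$ bounding the interpolation norm by $\lVert t^{1-\theta}\Delta_\mathcal{J}e^{t\Delta_\mathcal{J}}f\rVert_{\mathrm{L}^q_\ast}$, then splitting against a $K$-functional and using analyticity) is precisely Steps 2--5. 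Two cautions. First, your alternative retract argument cannot be applied to the original couple $(\dot{\mathrm{H}}^{s,p}(\mathbb{R}^n_+),\dot{\mathrm{D}}^{s}_{p}(\mathring{\Delta}_\mathcal{J}))$: the projector $\mathcal{P}_\mathcal{J}$ of Lemma \ref{lem:ProjectDirNeuBC} requires the (normal-derivative) trace to be defined, which fails on the low-regularity endpoint $\dot{\mathrm{H}}^{s,p}$ with $s<1/p$. The paper instead deploys $\mathcal{P}_\mathcal{J}$ only inside the $K$-functional estimate, against intermediate couples $(\dot{\mathrm{H}}^{s+2\eta,p},\dot{\mathrm{H}}^{s+2,p})$ or $(\dot{\mathrm{H}}^{s+2\eta,p},\dot{\mathrm{H}}^{s+2\varepsilon,p})$ whose regularity indices are chosen to straddle nothing critical, and then invokes reiteration; the genuine retract argument appears only for the $q=+\infty$ endpoint. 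Second, your sketch suppresses the case analysis on where $s+2\theta$ sits relative to $1/p$ and $1+1/p$, but this split is forced: below the critical value no projector is needed and one interpolates against $(\dot{\mathrm{H}}^{s,p},\dot{\mathrm{H}}^{s+2\varepsilon,p})$ directly, while above it one must first restore the boundary condition with $\mathcal{P}_\mathcal{J}$ before the semigroup estimate applies, and the auxiliary exponents $\eta,\varepsilon$ must be chosen differently in each regime. With those two points made precise, your argument is the paper's.
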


The proof is heavily inspired from the one of \cite[Proposition~4.12]{DanchinHieberMuchaTolk2020}.

\begin{proof}{\textbf{Step 1 :}}  We start applying \cite[Proposition~3.33]{Gaudin2022} which yields the embedding
\begin{align*}
    (\dot{\mathrm{H}}^{s,p}(\mathbb{R}^n_+),\dot{\mathrm{D}}^{s}_{p}(\mathring{\Delta}_\mathcal{J}))_{\theta,q} \hookrightarrow (\dot{\mathrm{H}}^{s,p}(\mathbb{R}^n_+),\dot{\mathrm{H}}^{s+2,p}(\mathbb{R}^n_+))_{\theta,q} = \dot{\mathrm{B}}^{s+2\theta}_{p,q}(\mathbb{R}^n_+) \text{. }
\end{align*}
Now, if $q<+\infty$, we recall that $\dot{\mathrm{H}}^{s,p}(\mathbb{R}^n_+)\cap\dot{\mathrm{D}}^{s}_{p}(\mathring{\Delta}_\mathcal{J}) = \dot{\mathrm{D}}^{s}_{p}({\Delta}_\mathcal{J})$ is dense in $(\dot{\mathrm{H}}^{s,p}(\mathbb{R}^n_+),\dot{\mathrm{D}}^{s}_{p}(\mathring{\Delta}_\mathcal{J}))_{\theta,q}$ by \cite[Theorem~3.4.2]{BerghLofstrom1976}, so that by continuity of traces,
\begin{align*}
    (\dot{\mathrm{H}}^{s,p}(\mathbb{R}^n_+),\dot{\mathrm{D}}^{s}_{p}(\mathring{\Delta}_\mathcal{J}))_{\theta,q} \hookrightarrow \dot{\mathrm{B}}^{s+2\theta}_{p,q,\mathcal{J}}(\mathbb{R}^n_+) \text{. }
\end{align*}
The case $q=+\infty$ will be done in later steps.

{\textbf{Step 2 :}} The reverse embedding when $s+2\theta\in(-1+1/p,1/p)$. Let $f\in \dot{\mathrm{D}}^{s}_{p}({{\Delta}}_{\mathcal{J}})$, then for all $t>0$
\begin{align*}
    f = e^{t\Delta_{\mathcal{J}}}f + \mathring{\Delta}_{\mathcal{J}}\int_{0}^{t} \tau e^{\tau\Delta_{\mathcal{J}}}f \frac{\mathrm{d}\tau}{\tau} =: b + a 
\end{align*}
with obviously $f\in \dot{\mathrm{D}}^{s}_{p}({{\Delta}}_{\mathcal{J}})\subset \dot{\mathrm{H}}^{s,p}(\mathbb{R}^n_+)+\dot{\mathrm{D}}^{s}_{p}(\mathring{\Delta}_\mathcal{J})$ and by definition of the $K$-functional, we obtain
\begin{align*}
    K(t,f,\dot{\mathrm{H}}^{s,p}(\mathbb{R}^n_+),\dot{\mathrm{D}}^{s}_{p}(\mathring{\Delta}_\mathcal{J})) \leqslant \lVert a \rVert_{\dot{\mathrm{H}}^{s,p}(\mathbb{R}^n_+)} + t\lVert {\Delta}_\mathcal{J} b \rVert_{\dot{\mathrm{H}}^{s,p}(\mathbb{R}^n_+)} \text{. }
\end{align*}
So as in  the proof of \cite[Proposition~4.12]{DanchinHieberMuchaTolk2020}, we may apply \cite[Lemma~2.11]{DanchinHieberMuchaTolk2020} so that
\begin{align}\label{eq:referenceEstimate}
    \lVert f \rVert_{(\dot{\mathrm{H}}^{s,p}(\mathbb{R}^n_+),\dot{\mathrm{D}}^{s}_{p}(\mathring{\Delta}_\mathcal{J}))_{\theta,q}} \leqslant \frac{1+\theta}{\theta} \left( \int_{0}^{+\infty} \lVert t^{1-\theta} \Delta_{\mathcal{J}}e^{t\Delta_{\mathcal{J}}} f\rVert_{\dot{\mathrm{H}}^{s,p}(\mathbb{R}^n_+)}^q \frac{\mathrm{d}t}{t} \right)^\frac{1}{q} \text{.}
\end{align}
Now, for $0<\theta<\varepsilon$, such that $s+2\theta<s+2\varepsilon<1/p$ we want to bound the $\mathrm{L}^q_\ast$-norm of $\lVert t^{1-\theta} \Delta_{\mathcal{J}}e^{t\Delta_{\mathcal{J}}} f\rVert_{\dot{\mathrm{H}}^{s,p}(\mathbb{R}^n_+)}$ by the $\mathrm{L}^q_\ast$-norm of the $K$-functional associated with the real interpolation space $(\dot{\mathrm{H}}^{s,p}(\mathbb{R}^n_+),\dot{\mathrm{H}}^{s+2\varepsilon,p}(\mathbb{R}^n_+))_{\frac{\theta}{\varepsilon},q}$.

Let $(\tilde{a},\tilde{b})\in\dot{\mathrm{H}}^{s,p}(\mathbb{R}^n_+)\times\dot{\mathrm{H}}^{s+2\varepsilon,p}(\mathbb{R}^n_+)$, such that $f=\tilde{a}+\tilde{b}$, the fact that $f\in \dot{\mathrm{D}}^{s}_{p}({{\Delta}}_{\mathcal{J}})$ implies  $\tilde{a},\tilde{b}\in [\dot{\mathrm{H}}^{s,p}\cap\dot{\mathrm{H}}^{s+2\varepsilon,p}](\mathbb{R}^n_+)$. By Lemma \ref{lem:FractionalSobestimateHodgeLap}, since $s+2\varepsilon<1/p$ we have
\begin{align*}
   \tilde{b} \in \dot{\mathrm{D}}^{s}_{p}((-{{\Delta}}_{\mathcal{J}})^\varepsilon)
\end{align*}
Therefore, since the semigroup $(e^{t\Delta_\mathcal{J}})_{t>0}$ is analytic, by the use of Lemma \ref{lem:FractionalSobestimateHodgeLap}, we have
\begin{align*}
    \lVert t^{1-\theta} \Delta_{\mathcal{J}}e^{t\Delta_{\mathcal{J}}} f\rVert_{\dot{\mathrm{H}}^{s,p}(\mathbb{R}^n_+)} &\leqslant \lVert t^{1-\theta} \Delta_{\mathcal{J}}e^{t\Delta_{\mathcal{J}}} \tilde{a}\rVert_{\dot{\mathrm{H}}^{s,p}(\mathbb{R}^n_+)} + \lVert t^{1-\theta} \Delta_{\mathcal{J}}e^{t\Delta_{\mathcal{J}}} \tilde{b}\rVert_{\dot{\mathrm{H}}^{s,p}(\mathbb{R}^n_+)}\\
    &\lesssim_{n,p,s} t^{-\theta} \left( \lVert \tilde{a}\rVert_{\dot{\mathrm{H}}^{s,p}(\mathbb{R}^n_+)} + t^{\varepsilon} \lVert \tilde{b}\rVert_{\dot{\mathrm{H}}^{s+2\varepsilon,p}(\mathbb{R}^n_+)}\right)\text{. }
\end{align*}
Taking the infimum of all such $\tilde{a},\tilde{b}$, yields
\begin{align*}
    \lVert t^{1-\theta} \Delta_{\mathcal{J}}e^{t\Delta_{\mathcal{J}}} f\rVert_{\dot{\mathrm{H}}^{s,p}(\mathbb{R}^n_+)} \lesssim_{p,n,s} t^{-\theta}K(t^{\varepsilon},f,\dot{\mathrm{H}}^{s,p}(\mathbb{R}^n_+),\dot{\mathrm{H}}^{s+2\varepsilon,p}(\mathbb{R}^n_+)) \text{. }
\end{align*}
Therefore one may take the $\mathrm{L}^q_{\ast}$-norm on both sides, and use \cite[Proposition~3.33]{Gaudin2022}, to obtain for all $f\in \dot{\mathrm{D}}^{s}_{p}({{\Delta}}_{\mathcal{J}})$,
\begin{align*}
    \lVert f \rVert_{(\dot{\mathrm{H}}^{s,p}(\mathbb{R}^n_+),\dot{\mathrm{D}}^{s}_{p}(\mathring{\Delta}_\mathcal{J}))_{\theta,q}}\sim_{p,s,n,\theta,\varepsilon} \lVert f \rVert_{(\dot{\mathrm{H}}^{s,p}(\mathbb{R}^n_+),\dot{\mathrm{H}}^{s+2\varepsilon}(\mathbb{R}^n_+))_{\frac{\theta}{\varepsilon},q}} \sim_{p,s,n,\theta,\varepsilon} \lVert f \rVert_{\dot{\mathrm{B}}^{s+2\theta}_{p,q}(\mathbb{R}^n_+)}\text{. }
\end{align*}
If $q\in[1,+\infty)$, the result follows from \cite[Corollary~2.34]{Gaudin2022}, since $\mathrm{C}_c^\infty(\mathbb{R}^n_+)\subset \dot{\mathrm{D}}^{s}_{p}({{\Delta}}_{\mathcal{J}})$. The case $q=+\infty$ is obtained via the reiteration theorem \cite[Theorem~3.5.3]{BerghLofstrom1976}.

{\textbf{Step 3 :}} The reverse embedding when $s+2\theta\in(1/p,2+1/p)$, $\mathcal{J}=\mathcal{D}$.
Provided $f\in \mathrm{Y}_\mathcal{D}$, as introduced in Proposition \ref{prop:DensityResultHomNeumDirBesov}, we may reproduce above \textbf{Step 2} up to \eqref{eq:referenceEstimate}. From there, for $0<\eta<\theta$ such that $1/p<s+2\eta<s+2\theta$, we want to prove that one can bound \eqref{eq:referenceEstimate} by the $\mathrm{L}^q_\ast$-norm of the $K$-functional associated with the real interpolation space $(\dot{\mathrm{H}}^{s+2\eta,p}(\mathbb{R}^n_+),\dot{\mathrm{H}}^{s+2,p}(\mathbb{R}^n_+))_{\frac{\theta-\eta}{1-\eta},q}$.

Since $f\in\mathrm{Y}_\mathcal{D}\subset\dot{\mathrm{H}}^{s+2\eta,p}(\mathbb{R}^n_+)+\dot{\mathrm{H}}^{s+2,p}(\mathbb{R}^n_+)$, for $a,b\in\dot{\mathrm{H}}^{s+2\eta,p}(\mathbb{R}^n_+)\times\dot{\mathrm{H}}^{s+2,p}(\mathbb{R}^n_+)$ such that $f=a+b$, we get
\begin{align*}
    a=f-b \in \dot{\mathrm{H}}^{s+2\eta,p}(\mathbb{R}^n_+)\cap(\dot{\mathrm{H}}^{s+2,p}(\mathbb{R}^n_+)+\mathrm{Y}_\mathcal{D}) \subset \dot{\mathrm{H}}^{s+2\eta,p}(\mathbb{R}^n_+)\cap\dot{\mathrm{H}}^{s+2,p}(\mathbb{R}^n_+)\text{, }
\end{align*}
and the same arguments leads to $b \in \dot{\mathrm{H}}^{s+2\eta,p}(\mathbb{R}^n_+)\cap\dot{\mathrm{H}}^{s+2,p}(\mathbb{R}^n_+)$. From Lemma \ref{lem:ProjectDirNeuBC}, we have
\begin{align*}
    f = \mathcal{P}_\mathcal{D}f = \mathcal{P}_\mathcal{D}a + \mathcal{P}_\mathcal{D}b
\end{align*}
where $\mathcal{P}_\mathcal{D}a,\mathcal{P}_\mathcal{D}b\in \dot{\mathrm{H}}^{s+2\eta,p}_\mathcal{D}(\mathbb{R}^n_+)\cap\dot{\mathrm{H}}^{s+2,p}(\mathbb{R}^n_+)$ with estimates
\begin{align*}
    \lVert \mathcal{P}_\mathcal{D}a\rVert_{\dot{\mathrm{H}}^{s+2\eta,p}(\mathbb{R}^n_+)}\lesssim_{p,s,\eta,n} \lVert a \rVert_{\dot{\mathrm{H}}^{s+2\eta,p}(\mathbb{R}^n_+)}\,\text{ and }\,\lVert \mathcal{P}_\mathcal{D}b\rVert_{\dot{\mathrm{H}}^{s+2,p}(\mathbb{R}^n_+)}\lesssim_{p,s,n} \lVert b \rVert_{\dot{\mathrm{H}}^{s+2,p}(\mathbb{R}^n_+)}\text{. }
\end{align*}
Therefore, by above estimate, analyticity of the semigroup $(e^{t\Delta_{\mathcal{D}}})_{t>0}$, and Lemma \ref{lem:Hs+2pestimateHodgeLap}, we are able to obtain
\begin{align*}
    \lVert t^{1-\theta} \Delta_{\mathcal{D}}e^{t\Delta_{\mathcal{D}}} f\rVert_{\dot{\mathrm{H}}^{s,p}(\mathbb{R}^n_+)} &\leqslant \lVert t^{1-\theta} \Delta_{\mathcal{D}}e^{t\Delta_{\mathcal{D}}} \mathcal{P}_\mathcal{D} {a}\rVert_{\dot{\mathrm{H}}^{s,p}(\mathbb{R}^n_+)} + \lVert t^{1-\theta} \Delta_{\mathcal{D}}e^{t\Delta_{\mathcal{D}}} \mathcal{P}_\mathcal{D} {b}\rVert_{\dot{\mathrm{H}}^{s,p}(\mathbb{R}^n_+)}\\
    &\lesssim_{n,p,s} t^{-(\theta-\eta)} \lVert \mathcal{P}_\mathcal{D} {a}\rVert_{\dot{\mathrm{H}}^{s+2\eta,p}(\mathbb{R}^n_+)} + t^{1-\theta}\lVert   \mathcal{P}_\mathcal{D} {b}\rVert_{\dot{\mathrm{H}}^{s+2,p}(\mathbb{R}^n_+)} \\
    &\lesssim_{n,p,s} t^{-(\theta-\eta)} \left( \lVert {a}\rVert_{\dot{\mathrm{H}}^{s+2\eta,p}(\mathbb{R}^n_+)} + t^{1-\eta} \lVert {b}\rVert_{\dot{\mathrm{H}}^{s+2,p}(\mathbb{R}^n_+)}\right)\text{. }
\end{align*}
Taking the infimum of all such couple $({a},{b})$, yields
\begin{align*}
    \lVert t^{1-\theta} \Delta_{\mathcal{D}}e^{t\Delta_{\mathcal{D}}} f\rVert_{\dot{\mathrm{H}}^{s,p}(\mathbb{R}^n_+)} \lesssim_{p,n,s} t^{-(\theta-\eta)}K(t^{1-\eta},f,\dot{\mathrm{H}}^{s+2\eta,p}(\mathbb{R}^n_+),\dot{\mathrm{H}}^{s+2,p}(\mathbb{R}^n_+)) \text{. }
\end{align*}
As in the \textbf{Step 2}, one may take the $\mathrm{L}^q_{\ast}$-norm on both sides, and use \cite[Proposition~3.33]{Gaudin2022}, to obtain for all $f\in \mathrm{Y}_\mathcal{D}$,
\begin{align*}
    \lVert f \rVert_{(\dot{\mathrm{H}}^{s,p}(\mathbb{R}^n_+),\dot{\mathrm{D}}^{s}_{p}(\mathring{\Delta}_\mathcal{D}))_{\theta,q}}\sim_{p,s,n,\theta,\eta}\lVert f \rVert_{(\dot{\mathrm{H}}^{s+2\eta,p}(\mathbb{R}^n_+),\dot{\mathrm{H}}^{s+2,p}(\mathbb{R}^n_+)))_{\frac{\theta-\eta}{1-\eta},q}}\sim_{p,s,n,\theta,\eta} \lVert f \rVert_{\dot{\mathrm{B}}^{s+2\theta}_{p,q}(\mathbb{R}^n_+)}\text{. }
\end{align*}
If $q\in[1,+\infty)$, the result follows from Proposition \ref{prop:DensityResultHomNeumDirBesov}. The case $q=+\infty$ is obtained via the application of the reiteration theorem \cite[Theorem~3.5.3]{BerghLofstrom1976}, by the mean of Lemma \ref{lem:ProjectDirNeuBC}.

{\textbf{Step 4 :}} The reverse embedding $s+2\theta\in[1/p,1+1/p)$, $\mathcal{J}=\mathcal{N}$.
One may pick $f\in \mathrm{Y}_\mathcal{N}$ so that, as before, we can reproduce above \textbf{Step 2} up to \eqref{eq:referenceEstimate}. From there, for $0<\eta<\theta<\varepsilon$ such that $1/p<s+2\eta<s+2\theta<s+2\varepsilon<1+1/p$, we want to prove that one can bound \eqref{eq:referenceEstimate} by the $\mathrm{L}^q_\ast$-norm of the $K$-functional associated with the interpolation space $(\dot{\mathrm{H}}^{s+2\eta,p}(\mathbb{R}^n_+),\dot{\mathrm{H}}^{s+2\varepsilon,p}(\mathbb{R}^n_+))_{\frac{\theta-\eta}{\varepsilon-\eta},q}$.

Since $f\in\mathrm{Y}_\mathcal{D}\subset\dot{\mathrm{H}}^{s+2\eta,p}(\mathbb{R}^n_+)+\dot{\mathrm{H}}^{s+2\varepsilon,p}(\mathbb{R}^n_+)$, for $a,b\in\dot{\mathrm{H}}^{s+2\eta,p}(\mathbb{R}^n_+)\times\dot{\mathrm{H}}^{s+2\varepsilon,p}(\mathbb{R}^n_+)$ such that $f=a+b$, we get
\begin{align*}
    b=f-a \in \dot{\mathrm{H}}^{s+2\eta,p}(\mathbb{R}^n_+)\cap(\dot{\mathrm{H}}^{s+2\varepsilon,p}(\mathbb{R}^n_+)+\mathrm{Y}_\mathcal{N}) \subset \dot{\mathrm{H}}^{s+2\eta,p}(\mathbb{R}^n_+)\cap\dot{\mathrm{H}}^{s+2\varepsilon,p}(\mathbb{R}^n_+)\text{. }
\end{align*}
By Proposition \ref{prop:DensityResultHomNeumÎntersecSobol}, there exists sequences $(a_j)_{j\in\mathbb{N}},(b_j)_{j\in\mathbb{N}}\subset Y_{\mathcal{N}}$ such that
\begin{align*}
    \lVert a_j - a\rVert_{\dot{\mathrm{H}}^{s+2\eta,p}(\mathbb{R}^n_+)} + \lVert b_j - b\rVert_{[\dot{\mathrm{H}}^{s+2\eta,p}\cap\dot{\mathrm{H}}^{s+2\varepsilon,p}](\mathbb{R}^n_+)} \xrightarrow[j\rightarrow +\infty]{} 0 \text{. }
\end{align*}
Therefore, the analyticity of the semigroup $(e^{t\Delta_{\mathcal{N}}})_{t>0}$ and Lemma \ref{lem:Hs+2pestimateHodgeLap} works together to deliver
\begin{align*}
    \lVert t \Delta_{\mathcal{N}}e^{t\Delta_{\mathcal{N}}} a_j\rVert_{\dot{\mathrm{H}}^{s,p}(\mathbb{R}^n_+)} \lesssim_{p,s,n,\eta} t^{\eta} \lVert  a_j\rVert_{\dot{\mathrm{H}}^{s+2\eta,p}(\mathbb{R}^n_+)} \,\text{, }\\
    \lVert t \Delta_{\mathcal{N}}e^{t\Delta_{\mathcal{N}}} b_j\rVert_{\dot{\mathrm{H}}^{s,p}(\mathbb{R}^n_+)} \lesssim_{p,s,n,\varepsilon} t^{\varepsilon} \lVert  b_j\rVert_{\dot{\mathrm{H}}^{s+2\varepsilon,p}(\mathbb{R}^n_+)}\,\text{, }
\end{align*}
so that taking limits, it yields
\begin{align}
    \lVert t \Delta_{\mathcal{N}}e^{t\Delta_{\mathcal{N}}} a\rVert_{\dot{\mathrm{H}}^{s,p}(\mathbb{R}^n_+)} &\lesssim_{p,s,n,\eta} t^{\eta} \lVert  a\rVert_{\dot{\mathrm{H}}^{s+2\eta,p}(\mathbb{R}^n_+)} \text{, } \label{eq:analyticEstimSemigrpNeuForInterp}\\
    \lVert t \Delta_{\mathcal{N}}e^{t\Delta_{\mathcal{N}}} b\rVert_{\dot{\mathrm{H}}^{s,p}(\mathbb{R}^n_+)} &\lesssim_{p,s,n,\varepsilon} t^{\varepsilon} \lVert  b\rVert_{\dot{\mathrm{H}}^{s+2\varepsilon,p}(\mathbb{R}^n_+)}\text{. }\nonumber
\end{align}

Therefore, by the estimates \eqref{eq:analyticEstimSemigrpNeuForInterp}, the following holds
\begin{align*}
    \lVert t^{1-\theta} \Delta_{\mathcal{N}}e^{t\Delta_{\mathcal{N}}} f\rVert_{\dot{\mathrm{H}}^{s,p}(\mathbb{R}^n_+)} &\lesssim_{n,p,s,\eta,\varepsilon} t^{-(\theta-\eta)} \left( \lVert {a}\rVert_{\dot{\mathrm{H}}^{s+2\eta,p}(\mathbb{R}^n_+)} + t^{\varepsilon-\eta} \lVert {b}\rVert_{\dot{\mathrm{H}}^{s+2\varepsilon,p}(\mathbb{R}^n_+)}\right)\text{. }
\end{align*}
From there, we can take the infimum of all such couple $({a},{b})$, and we see that
\begin{align*}
    \lVert t^{1-\theta} \Delta_{\mathcal{N}}e^{t\Delta_{\mathcal{N}}} f\rVert_{\dot{\mathrm{H}}^{s,p}(\mathbb{R}^n_+)} \lesssim_{p,n,s} t^{-(\theta-\eta)}K(t^{\varepsilon-\eta},f,\dot{\mathrm{H}}^{s+2\eta,p}(\mathbb{R}^n_+),\dot{\mathrm{H}}^{s+2\varepsilon,p}(\mathbb{R}^n_+)) \text{. }
\end{align*}
As in the \textbf{Step 2}, one may take the $\mathrm{L}^q_{\ast}$-norm on both sides, and use \cite[Proposition~3.33]{Gaudin2022}, to obtain for all $f\in \mathrm{Y}_\mathcal{N}$,
\begin{align*}
    \lVert f \rVert_{(\dot{\mathrm{H}}^{s,p}(\mathbb{R}^n_+),\dot{\mathrm{D}}^{s}_{p}(\mathring{\Delta}_\mathcal{N}))_{\theta,q}}\sim_{p,s,n,\theta,\eta,\varepsilon}\lVert f \rVert_{(\dot{\mathrm{H}}^{s+2\eta,p}(\mathbb{R}^n_+),\dot{\mathrm{H}}^{s+2\varepsilon,p}(\mathbb{R}^n_+)))_{\frac{\theta-\eta}{\varepsilon-\eta},q}}\sim_{p,s,n,\theta,\eta,\varepsilon} \lVert f \rVert_{\dot{\mathrm{B}}^{s+2\theta}_{p,q}(\mathbb{R}^n_+)}\text{. }
\end{align*}
If $q\in[1,+\infty)$, the result follows from Proposition \ref{prop:DensityResultHomNeumDirBesov}. The case $q=+\infty$ is obtained via the application of the reiteration theorem \cite[Theorem~3.5.3]{BerghLofstrom1976}. The case $s=1/p$ follows from reiteration theorem \cite[Theorem~3.5.3]{BerghLofstrom1976} between \textbf{Step 2} and this one.

{\textbf{Step 5 :}} The reverse embedding $s+2\theta\in(1+1/p,2+1/p)$, $\mathcal{J}=\mathcal{N}$.
For $f\in \mathrm{Y}_\mathcal{N}$, we reproduce again the \textbf{Step 2} up to \eqref{eq:referenceEstimate}. Now let $0<\eta<\theta$ such that $1+1/p<s+2\eta<s+2\theta<2+1/p$, we want to achieve the same estimate obtained at the end of \textbf{Step 3}.

Since $f\in\mathrm{Y}_\mathcal{N}\subset\dot{\mathrm{H}}^{s+2\eta,p}(\mathbb{R}^n_+)+\dot{\mathrm{H}}^{s+2,p}(\mathbb{R}^n_+)$, for $a,b\in\dot{\mathrm{H}}^{s+2\eta,p}(\mathbb{R}^n_+)\times\dot{\mathrm{H}}^{s+2,p}(\mathbb{R}^n_+)$ such that $f=a+b$, we get
\begin{align*}
    b=f-a \in \dot{\mathrm{H}}^{s+2\eta,p}(\mathbb{R}^n_+)\cap(\dot{\mathrm{H}}^{s+2,p}(\mathbb{R}^n_+)+\mathrm{Y}_\mathcal{N}) \subset \dot{\mathrm{H}}^{s+2\eta,p}(\mathbb{R}^n_+)\cap\dot{\mathrm{H}}^{s+2,p}(\mathbb{R}^n_+)\text{. }
\end{align*}
We want to fall in the expected homogeneous domains, \textit{i.e.} to get back the Neumann boundary condition, to do so, we use Lemma \ref{lem:ProjectDirNeuBC}, and we get
\begin{align*}
    f = \mathcal{P}_\mathcal{N}f = \mathcal{P}_\mathcal{N}a +\mathcal{P}_\mathcal{N}b\text{, }
\end{align*}
with estimates
\begin{align*}
    \lVert \mathcal{P}_\mathcal{N}a\rVert_{\dot{\mathrm{H}}^{s+2\eta,p}(\mathbb{R}^n_+)} \lesssim_{p,n,s,\eta}\lVert a\rVert_{\dot{\mathrm{H}}^{s+2\eta,p}(\mathbb{R}^n_+)} \,\text{ and }\, \lVert \mathcal{P}_\mathcal{N}b\rVert_{\dot{\mathrm{H}}^{s+2,p}(\mathbb{R}^n_+)} \lesssim_{p,n,s,\eta}\lVert b\rVert_{\dot{\mathrm{H}}^{s+2,p}(\mathbb{R}^n_+)}\text{. }
\end{align*}
By Proposition \ref{prop:DensityResultHomNeumÎntersecSobol}, there exists sequences $(\mathfrak{a}_j)_{j\in\mathbb{N}},(\mathfrak{b}_j)_{j\in\mathbb{N}}\subset Y_{\mathcal{N}}$ such that
\begin{align*}
    \lVert \mathfrak{a}_j - \mathcal{P}_\mathcal{N}a\rVert_{\dot{\mathrm{H}}^{s+2\eta,p}(\mathbb{R}^n_+)} + \lVert \mathfrak{b}_j - \mathcal{P}_\mathcal{N}b\rVert_{[\dot{\mathrm{H}}^{s+2\eta,p}\cap\dot{\mathrm{H}}^{s+2,p}](\mathbb{R}^n_+)} \xrightarrow[j\rightarrow +\infty]{} 0 \text{. }
\end{align*}
as in \textbf{Step 4}, we obtain
\begin{align*}
    \lVert t \Delta_{\mathcal{N}}e^{t\Delta_{\mathcal{N}}} \mathcal{P}_\mathcal{N}a\rVert_{\dot{\mathrm{H}}^{s,p}(\mathbb{R}^n_+)} \lesssim_{p,s,n,\eta} t^{\eta} \lVert  a\rVert_{\dot{\mathrm{H}}^{s+2\eta,p}(\mathbb{R}^n_+)} \,\text{, }\\
    \lVert t \Delta_{\mathcal{N}}e^{t\Delta_{\mathcal{N}}} \mathcal{P}_\mathcal{N}b\rVert_{\dot{\mathrm{H}}^{s,p}(\mathbb{R}^n_+)} \lesssim_{p,s,n,}  t\lVert  b\rVert_{\dot{\mathrm{H}}^{s+2,p}(\mathbb{R}^n_+)}\text{. }
\end{align*}

Therefore, by above estimates, the following estimate holds
\begin{align*}
    \lVert t^{1-\theta} \Delta_{\mathcal{N}}e^{t\Delta_{\mathcal{N}}} f\rVert_{\dot{\mathrm{H}}^{s,p}(\mathbb{R}^n_+)} &\lesssim_{n,p,s} t^{-(\theta-\eta)} \left( \lVert {a}\rVert_{\dot{\mathrm{H}}^{s+2\eta,p}(\mathbb{R}^n_+)} + t^{1-\eta} \lVert {b}\rVert_{\dot{\mathrm{H}}^{s+2,p}(\mathbb{R}^n_+)}\right)\text{. }
\end{align*}
Finally one may finish the present \textbf{Step 5} with the same arguments present in \textbf{Step 3}.

{\textbf{Step 6 :}} The case $\mathcal{J}=\mathcal{H}$. Let $k\in\llb 0,n \rrb$, from Lemma \ref{lem:Hs+2pestimateHodgeLap}, we deduce that the following holds with equivalence of norms
\begin{align*}
    \dot{\mathrm{D}}^{s}_{p}(\mathring{\Delta}_\mathcal{H,\mathfrak{t}},\Lambda^k) \simeq \dot{\mathrm{D}}^{s}_{p}(\mathring{\Delta}_\mathcal{D})^{\binom{n-1}{k-1}}\times \dot{\mathrm{D}}^{s}_{p}(\mathring{\Delta}_\mathcal{N})^{\binom{n-1}{k}}
\end{align*}
The result is then immediate, by all above steps. The case of the Hodge Laplacian with generalized normal boundary conditions admits a similar proof.
\end{proof}

Finally we want to compute interpolation spaces for the Hodge-Stokes and the Hodge Maxwell operators. To do so, we set for all $p\in(1,+\infty)$, $q\in[1,+\infty]$, $s\in(-1+1/p,2+1/p)$, such that \eqref{AssumptionCompletenessExponents} is satisfied, provided, $k\in\llb 0,n\rrb$,
\begin{align}\label{eq:SolenoidalBesovspaceswithcompcond}
    \dot{\mathrm{B}}^{s,\sigma}_{p,q,\mathcal{H}_\mathfrak{t}}(\mathbb{R}^n_+,\Lambda^k) &:= \begin{cases} \dot{\mathrm{B}}^{s,\sigma}_{p,q,{\mathfrak{t}}}(\mathbb{R}^n_+,\Lambda^k) & \text{, }s <  \frac{1}{p}\text{, }\\
    \left\{ \,u\in \dot{\mathrm{B}}^{s}_{p,q,\mathcal{H}_{\mathfrak{t}}}(\mathbb{R}^n_+,\Lambda^k)\, \big{|}\, \delta u=0 \,\right\} &\text{, }\frac{1}{p} <  s < 2+\frac{1}{p}\text{, } s\neq 1+1/p \text{. }
    \end{cases}\\
    \dot{\mathrm{B}}^{s,\gamma}_{p,q,\mathcal{H}_\mathfrak{t}}(\mathbb{R}^n_+,\Lambda^k) &:= \begin{cases} \dot{\mathrm{B}}^{s,\gamma}_{p,q}(\mathbb{R}^n_+,\Lambda^k) & \text{, }s <  \frac{1}{p}\text{, }\\
    \left\{ \,u\in \dot{\mathrm{B}}^{s}_{p,q,\mathcal{H}_{\mathfrak{t}}}(\mathbb{R}^n_+,\Lambda^k)\, \big{|}\, \mathrm{d} u=0 \,\right\} &\text{, }\frac{1}{p} <  s < 2+\frac{1}{p}\text{, } s\neq 1+1/p \text{. }
    \end{cases}
\end{align}
One may build similarly $\dot{\mathrm{B}}^{s,\sigma}_{p,q,\mathcal{H}_\mathfrak{n}}(\mathbb{R}^n_+,\Lambda^k)$ and $\dot{\mathrm{B}}^{s,\gamma}_{p,q,\mathcal{H}_\mathfrak{n}}(\mathbb{R}^n_+,\Lambda^k)$ replacing $(\mathfrak{t},\sigma,\gamma,\mathrm{d},\delta)$ by $(\mathfrak{n},\gamma,\sigma,\delta,\mathrm{d})$.

\begin{proposition}\label{prop:InterpHomDomainHodgeMaxwellStokes} Let $p\in(1,+\infty)$, $q\in[1,+\infty]$, and $s\in(-1+1/p,1/p)$. For all $\theta\in(0,1)$ such that $(\mathcal{C}_{s+2\theta,p,q})$ is satisfied, one has
\begin{align}
    (\dot{\mathrm{H}}^{s,p}_{{\mathfrak{t}},\sigma}(\mathbb{R}^n_+),\dot{\mathrm{D}}^{s}_{p}(\mathring{\mathbb{A}}_{\mathcal{H},\mathfrak{t}}))_{\theta,q} &= \dot{\mathrm{B}}^{s+2\theta,\sigma}_{p,q,\mathcal{H}_{\mathfrak{t}}}(\mathbb{R}^n_+)\text{, }\label{eq:1stInterpolSolenoidal}\\
    (\dot{\mathrm{H}}^{s,p}_{\gamma}(\mathbb{R}^n_+),\dot{\mathrm{D}}^{s}_{p}(\mathring{\mathbb{M}}_{\mathcal{H},\mathfrak{t}}))_{\theta,q} &= \dot{\mathrm{B}}^{s+2\theta,\gamma}_{p,q,\mathcal{H}_{\mathfrak{t}}}(\mathbb{R}^n_+)\text{, }\label{eq:2ndInterpolSolenoidal}
\end{align}
with equivalence of norms, whenever $s+2\theta\neq 1/p,1+1/p$.

The same result holds replacing $(\mathfrak{t},\sigma,\gamma,\mathbb{A},\mathbb{M})$ by $(\mathfrak{n},\gamma,\sigma,\mathbb{M},\mathbb{A})$.
\end{proposition}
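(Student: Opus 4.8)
The plan is to reduce everything to the Laplacian case already settled in Proposition~\ref{prop:InterpHomDomainLaplacians}, by exploiting that the Helmholtz--Leray projector $\mathbb{P}$ is a bounded projection which commutes with the Hodge semigroup. Concretely, I would treat $\mathbb{A}_{\mathcal{H},\mathfrak{t}}$ as the part of $-\Delta_{\mathcal{H},\mathfrak{t}}$ in its invariant subspace $\dot{\mathrm{H}}^{s,p}_{\mathfrak{t},\sigma}$, run a retract (complemented subspace) argument on the couple $(\dot{\mathrm{H}}^{s,p},\dot{\mathrm{D}}^{s}_{p}(\mathring{\Delta}_{\mathcal{H},\mathfrak{t}}))$, and finally identify the projected Besov space with the solenoidal one.

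First I would record that $\mathbb{P}$ is a bounded projection on each endpoint of the couple $(\dot{\mathrm{H}}^{s,p}(\mathbb{R}^n_+),\dot{\mathrm{D}}^{s}_{p}(\mathring{\Delta}_{\mathcal{H},\mathfrak{t}}))$. Boundedness on $\dot{\mathrm{H}}^{s,p}$ is Theorem~\ref{thm:HodgeDecompRn+}; boundedness on the homogeneous domain follows from Corollary~\ref{cor:commutrelationHodgeProjectorHspBspqRn+}, since $\mathbb{P}$ commutes with $e^{t\Delta_{\mathcal{H},\mathfrak{t}}}$ (extended to $\dot{\mathrm{H}}^{s,p}+\dot{\mathrm{D}}^{s}_{p}(\mathring{\Delta}_{\mathcal{H},\mathfrak{t}})$ thanks to Proposition~\ref{prop:assumptionsareCheckedforLap}) and hence with $\mathring{\Delta}_{\mathcal{H},\mathfrak{t}}$ on its homogeneous domain. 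Because $\mathbb{A}_{\mathcal{H},\mathfrak{t}}$ is the part of $-\Delta_{\mathcal{H},\mathfrak{t}}$ in the range $\dot{\mathrm{H}}^{s,p}_{\mathfrak{t},\sigma}$ of $\mathbb{P}$, the same commutation gives $\dot{\mathrm{D}}^{s}_{p}(\mathring{\mathbb{A}}_{\mathcal{H},\mathfrak{t}})=\mathbb{P}\,\dot{\mathrm{D}}^{s}_{p}(\mathring{\Delta}_{\mathcal{H},\mathfrak{t}})$, the homogeneous domain being well defined by Proposition~\ref{prop:assumptionsareCheckedforLap}.

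Next I would invoke the retract property of the $K$-method: for a compatible couple $(X_0,X_1)$ and a projection $P$ bounded on both $X_j$, one has $K(t,Px;PX_0,PX_1)\sim K(t,Px;X_0,X_1)$ for $x$ in the range of $P$, the bound $\leqslant$ coming from $Px=Pa+Pb$ for any splitting $x=a+b$ and the bound $\geqslant$ from $PX_j\subset X_j$. This uses only the $K$-functional, so it survives the non-completeness of the homogeneous domains. Applying it with $P=\mathbb{P}$ and $(X_0,X_1)=(\dot{\mathrm{H}}^{s,p},\dot{\mathrm{D}}^{s}_{p}(\mathring{\Delta}_{\mathcal{H},\mathfrak{t}}))$, and then Proposition~\ref{prop:InterpHomDomainLaplacians}, yields with equivalent norms $(\dot{\mathrm{H}}^{s,p}_{\mathfrak{t},\sigma},\dot{\mathrm{D}}^{s}_{p}(\mathring{\mathbb{A}}_{\mathcal{H},\mathfrak{t}}))_{\theta,q}=\mathbb{P}\,\dot{\mathrm{B}}^{s+2\theta}_{p,q,\mathcal{H}_{\mathfrak{t}}}(\mathbb{R}^n_+)$ for $s+2\theta\neq 1/p,1+1/p$. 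It then remains to identify this range with $\dot{\mathrm{B}}^{s+2\theta,\sigma}_{p,q,\mathcal{H}_{\mathfrak{t}}}$. On $\dot{\mathrm{B}}^{s+2\theta}_{p,q,\mathcal{H}_{\mathfrak{t}}}$, $\mathbb{P}$ is a bounded projection (by interpolation of the endpoint bounds), so its range is its set of fixed points; since $\mathbb{P}$ projects onto $\mathrm{N}(\mathrm{d}^\ast)$ and $\mathrm{d}^\ast=\underline{\delta}$ encodes both $\delta u=0$ and $\nu\iprod u_{|_{\partial\mathbb{R}^n_+}}=0$ (Lemma~\ref{lem:closdenessNdualityDerivativesL2Rn+}), a form $u\in\dot{\mathrm{B}}^{s+2\theta}_{p,q,\mathcal{H}_{\mathfrak{t}}}$ is fixed exactly when $\delta u=0$, the normal conditions $\nu\iprod u_{|_{\partial\mathbb{R}^n_+}}=0$ (for $s+2\theta>1/p$) and $\nu\iprod\mathrm{d}u_{|_{\partial\mathbb{R}^n_+}}=0$ (for $s+2\theta>1+1/p$) being already imposed in $\dot{\mathrm{B}}^{s+2\theta}_{p,q,\mathcal{H}_{\mathfrak{t}}}$. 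The equivalence $\mathbb{P}u=u\Leftrightarrow\delta u=0$ on this class comes from commuting $\mathrm{d}^\ast$ past $(-\Delta_{\mathcal{H},\mathfrak{t}})^{-1/2}$ (Proposition~\ref{prop:RieszTransformHodgeHspBspqRn+}) on the dense subspace $\dot{\mathrm{D}}^{s}_{p}(\mathbb{A}_{\mathcal{H},\mathfrak{t}})$ and passing to the limit; the traces are meaningful by the appendix trace theorems. This is precisely the definition \eqref{eq:SolenoidalBesovspaceswithcompcond}, proving \eqref{eq:1stInterpolSolenoidal}.

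Identity \eqref{eq:2ndInterpolSolenoidal} is obtained verbatim with $\mathrm{I}-\mathbb{P}$ in place of $\mathbb{P}$, whose range is $\dot{\mathrm{N}}^{s+2\theta}_{p,q}(\mathrm{d})$ and which selects the condition $\mathrm{d}u=0$; and the $(\mathfrak{n},\mathbb{Q})$ statements follow by the symmetric roles of $(\mathrm{d},\delta)$ and $(\mathbb{P},\mathbb{Q})$ used throughout. The one genuinely delicate point is the passage to the homogeneous domain: one must ensure that $\mathbb{P}$ extends to a bounded operator on $\dot{\mathrm{H}}^{s,p}+\dot{\mathrm{D}}^{s}_{p}(\mathring{\Delta}_{\mathcal{H},\mathfrak{t}})$ commuting with the extended semigroup, and that this extension restricts to the identity on the solenoidal part, so that $\dot{\mathrm{D}}^{s}_{p}(\mathring{\mathbb{A}}_{\mathcal{H},\mathfrak{t}})=\mathbb{P}\,\dot{\mathrm{D}}^{s}_{p}(\mathring{\Delta}_{\mathcal{H},\mathfrak{t}})$ holds as an equality of normed spaces rather than merely up to a dense subspace. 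This is exactly what Proposition~\ref{prop:assumptionsareCheckedforLap} together with Corollary~\ref{cor:commutrelationHodgeProjectorHspBspqRn+} secures, and it is where the lack of completeness of the homogeneous domains must be handled with care.
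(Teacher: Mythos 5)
Your strategy is in substance the one the paper uses: project the couple $(\dot{\mathrm{H}}^{s,p},\dot{\mathrm{D}}^{s}_{p}(\mathring{\Delta}_{\mathcal{H},\mathfrak{t}}))$ by $\mathbb{P}$, use the commutation of $\mathbb{P}$ with the semigroup, invoke Proposition \ref{prop:InterpHomDomainLaplacians}, and identify the projected Besov space with the solenoidal one. The only real difference is cosmetic: you phrase the comparison as an abstract $K$-functional retract lemma, whereas the paper runs the same comparison through the semigroup characterization $\lVert f\rVert_{(X,\mathrm{D}(\mathring{A}))_{\theta,q}}\sim\lVert t\mapsto t^{1-\theta}\lVert \mathring{A}e^{-tA}f\rVert_X\rVert_{\mathrm{L}^q_*}$ of \cite[Lemmas~2.11--2.12]{DanchinHieberMuchaTolk2020}.

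The one place where your write-up is weaker than it should be is exactly the point you flag at the end. You assert that $\mathbb{P}$ extends to a bounded projection on the endpoint $\dot{\mathrm{D}}^{s}_{p}(\mathring{\Delta}_{\mathcal{H},\mathfrak{t}})$ and that $\dot{\mathrm{D}}^{s}_{p}(\mathring{\mathbb{A}}_{\mathcal{H},\mathfrak{t}})=\mathbb{P}\,\dot{\mathrm{D}}^{s}_{p}(\mathring{\Delta}_{\mathcal{H},\mathfrak{t}})$, and you claim Proposition \ref{prop:assumptionsareCheckedforLap} together with Corollary \ref{cor:commutrelationHodgeProjectorHspBspqRn+} ``secures'' this. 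It does not, as stated: the corollary gives commutation only on $\dot{\mathrm{H}}^{s,p}$, and boundedness of $\mathbb{P}$ on the homogeneous domain (normed by $\lVert\cdot\rVert_{\dot{\mathrm{H}}^{s+2,p}}$) is not available a priori, since Theorem \ref{thm:HodgeDecompRn+} covers only $s\in(-1+1/p,1/p)$ and the extension of $\mathbb{P}$ to higher-regularity spaces with compatibility conditions is Corollary \ref{cor:extendedHodgeDecompComptabilityConditions}, which is itself a by-product of this very proposition. The way out — and the way the paper actually proceeds — is to run the comparison only for $f$ in the dense subclass $\dot{\mathrm{D}}^{s}_{p}(\mathbb{A}_{\mathcal{H},\mathfrak{t}})$: for any splitting $f=a+b$ with $a\in\dot{\mathrm{H}}^{s,p}$ and $b\in\dot{\mathrm{D}}^{s}_{p}(\mathring{\Delta}_{\mathcal{H},\mathfrak{t}})$, the intersection identity $\dot{\mathrm{H}}^{s,p}\cap\dot{\mathrm{D}}^{s}_{p}(\mathring{\Delta}_{\mathcal{H},\mathfrak{t}})=\dot{\mathrm{D}}^{s}_{p}({\Delta}_{\mathcal{H},\mathfrak{t}})$ of Proposition \ref{prop:assumptionsareCheckedforLap} forces $a,b\in\dot{\mathrm{D}}^{s}_{p}({\Delta}_{\mathcal{H},\mathfrak{t}})$, where $\mathbb{P}$ is defined and the commutation of Corollary \ref{cor:commutrelationHodgeProjectorHspBspqRn+} applies; one then concludes by density for $q<+\infty$ and by reiteration for $q=+\infty$. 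If you reformulate your retract step this way, your argument closes and coincides with the paper's proof; as written, the crucial extension of $\mathbb{P}$ to the homogeneous endpoint is asserted rather than proved.
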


\begin{proof}We only prove \eqref{eq:1stInterpolSolenoidal}, other equalities have  the same proof.

{\textbf{Step 1 :}}  We start with \cite[Proposition~3.33]{Gaudin2022} which yields the embedding
\begin{align*}
    (\dot{\mathrm{H}}^{s,p}_{\mathfrak{t},\sigma}(\mathbb{R}^n_+),\dot{\mathrm{D}}^{s}_{p}(\mathring{\mathbb{A}}_{\mathcal{H},\mathfrak{t}}))_{\theta,q} \hookrightarrow (\dot{\mathrm{H}}^{s,p}(\mathbb{R}^n_+),\dot{\mathrm{H}}^{s+2,p}(\mathbb{R}^n_+))_{\theta,q} = \dot{\mathrm{B}}^{s+2\theta}_{p,q}(\mathbb{R}^n_+) \text{. }
\end{align*}
Now, if $q<+\infty$, we recall that $\dot{\mathrm{H}}^{s,p}_{\mathfrak{t},\sigma}(\mathbb{R}^n_+)\cap\dot{\mathrm{D}}^{s}_{p}(\mathring{\mathbb{A}}_{\mathcal{H},\mathfrak{t}})$ is dense in $(\dot{\mathrm{H}}^{s,p}_{\mathfrak{t},\sigma}(\mathbb{R}^n_+),\dot{\mathrm{D}}^{s}_{p}(\mathring{\mathbb{A}}_{\mathcal{H},\mathfrak{t}}))_{\theta,q}$ by \cite[Theorem~3.4.2]{BerghLofstrom1976}, so that by continuity of traces,
\begin{align*}
    (\dot{\mathrm{H}}^{s,p}_{\mathfrak{t},\sigma}(\mathbb{R}^n_+),\dot{\mathrm{D}}^{s}_{p}(\mathring{\mathbb{A}}_{\mathcal{H},\mathfrak{t}}))_{\theta,q} \hookrightarrow \dot{\mathrm{B}}^{s+2\theta,\sigma}_{p,q,\mathcal{H}_{\mathfrak{t}}}(\mathbb{R}^n_+) \hookrightarrow \dot{\mathrm{B}}^{s+2\theta}_{p,q,\mathcal{H}_{\mathfrak{t}}}(\mathbb{R}^n_+) \text{. }
\end{align*}
Again density of $\dot{\mathrm{H}}^{s,p}_{\mathfrak{t},\sigma}(\mathbb{R}^n_+)\cap\dot{\mathrm{D}}^{s}_{p}(\mathring{\mathbb{A}}_{\mathcal{H},\mathfrak{t}})$ yields $\delta f = 0$ for all $f\in (\dot{\mathrm{H}}^{s,p}(\mathbb{R}^n_+),\dot{\mathrm{D}}^{s}_{p}(\mathring{\Delta}_\mathcal{J}))_{\theta,q}$.
The case $q=+\infty$ is left to the end of \textbf{Step 3}.

{\textbf{Step 2 :}} We want to extend the range of exponents for the boundedness of $\mathbb{P}$, and get a density result.

Let $f\in\dot{\mathrm{D}}^{s}_{p}({\Delta}_{\mathcal{H},\mathfrak{t}})\subset \dot{\mathrm{B}}^{s+2\theta}_{p,q,\mathcal{H}_{\mathfrak{t}}}(\mathbb{R}^n_+)$, we have $\mathbb{P}f\in\dot{\mathrm{D}}^{s}_{p}({\Delta}_{\mathcal{H},\mathfrak{t}})$ and by Proposition \ref{prop:InterpHomDomainLaplacians}, \cite[Proposition~2.12]{DanchinHieberMuchaTolk2020}, Corollary \ref{cor:commutrelationHodgeProjectorHspBspqRn+} and Theorem \ref{thm:HodgeDecompRn+}, we obtain successively
\begin{align*}
    \lVert \mathbb{P} f \rVert_{\dot{\mathrm{B}}^{s+2\theta}_{p,q}(\mathbb{R}^n_+)} &\lesssim_{p,s,n,\theta} \left( \int_{0}^{+\infty} \lVert t^{1-\theta} \Delta_{\mathcal{H},\mathfrak{t}}e^{t\Delta_{\mathcal{H},\mathfrak{t}}} \mathbb{P} f\rVert_{\dot{\mathrm{H}}^{s,p}(\mathbb{R}^n_+)}^q \frac{\mathrm{d}t}{t} \right)^\frac{1}{q}\\
    &\lesssim_{p,s,n,\theta} \left( \int_{0}^{+\infty} \lVert t^{1-\theta} \mathbb{P} \Delta_{\mathcal{H},\mathfrak{t}}e^{t\Delta_{\mathcal{H},\mathfrak{t}}}  f\rVert_{\dot{\mathrm{H}}^{s,p}(\mathbb{R}^n_+)}^q \frac{\mathrm{d}t}{t} \right)^\frac{1}{q}\\
    &\lesssim_{p,s,n,\theta} \left( \int_{0}^{+\infty} \lVert t^{1-\theta} \Delta_{\mathcal{H},\mathfrak{t}}e^{t\Delta_{\mathcal{H},\mathfrak{t}}}  f\rVert_{\dot{\mathrm{H}}^{s,p}(\mathbb{R}^n_+)}^q \frac{\mathrm{d}t}{t} \right)^\frac{1}{q}\\
    &\lesssim_{p,s,n,\theta} \lVert f \rVert_{\dot{\mathrm{B}}^{s+2\theta}_{p,q}(\mathbb{R}^n_+)}\text{. }
\end{align*}
From above estimates, if $q<+\infty$, by density of $\dot{\mathrm{D}}^{s}_{p}({\Delta}_{\mathcal{H},\mathfrak{t}})$ in $\dot{\mathrm{B}}^{s+2\theta}_{p,q,\mathcal{H}_{\mathfrak{t}}}(\mathbb{R}^n_+)$,  we have that
\begin{align*}
    \mathbb{P}\,:\,\dot{\mathrm{B}}^{s+2\theta}_{p,q,\mathcal{H}_{\mathfrak{t}}}(\mathbb{R}^n_+) \longrightarrow \dot{\mathrm{B}}^{s+2\theta}_{p,q,\mathcal{H}_{\mathfrak{t}}}(\mathbb{R}^n_+)
\end{align*}
extends uniquely as a bounded linear projection on $\dot{\mathrm{B}}^{s+2\theta}_{p,q,\mathcal{H}_{\mathfrak{t}}}(\mathbb{R}^n_+)$ with range $\dot{\mathrm{B}}^{s+2\theta,\sigma}_{p,q,\mathcal{H}_{\mathfrak{t}}}(\mathbb{R}^n_+)$. The result still holds for $q=+\infty$, by above \textbf{Step 1}, the reiteration theorem \cite[Theorem~3.5.3]{BerghLofstrom1976} and Proposition \ref{prop:InterpHomDomainLaplacians}.

In particular, $\dot{\mathrm{D}}^{s}_{p}({\mathbb{A}}_{\mathcal{H},\mathfrak{t}})=\mathbb{P}\dot{\mathrm{D}}^{s}_{p}({\Delta}_{\mathcal{H},\mathfrak{t}})$ is dense in $\dot{\mathrm{B}}^{s+2\theta,\sigma}_{p,q,\mathcal{H}_{\mathfrak{t}}}(\mathbb{R}^n_+)$, when $q<+\infty$.

{\textbf{Step 3 :}} For the reverse embedding. For $f\in\dot{\mathrm{D}}^{s}_{p}({\mathbb{A}}_{\mathcal{H},\mathfrak{t}})\subset \dot{\mathrm{B}}^{s+2\theta}_{p,q,\mathcal{H}_{\mathfrak{t}}}(\mathbb{R}^n_+)$ where we recall that $\dot{\mathrm{B}}^{s+2\theta}_{p,q,\mathcal{H}_{\mathfrak{t}}}(\mathbb{R}^n_+) =(\dot{\mathrm{H}}^{s,p}(\mathbb{R}^n_+), \dot{\mathrm{D}}^{s}_{p}(\mathring{\Delta}_{\mathcal{H},\mathfrak{t}}))_{\theta,q} \subset \dot{\mathrm{H}}^{s,p}(\mathbb{R}^n_+)+ \dot{\mathrm{D}}^{s}_{p}(\mathring{\Delta}_{\mathcal{H},\mathfrak{t}})$

If we let $(a,b)\in \dot{\mathrm{H}}^{s,p}(\mathbb{R}^n_+)\times \dot{\mathrm{D}}^{s}_{p}(\mathring{\Delta}_{\mathcal{H},\mathfrak{t}})$ such that $f=a+b$, by Proposition \ref{prop:assumptionsareCheckedforLap}, it is given that
\begin{align*}
    b=f-a \in (\dot{\mathrm{D}}^{s}_{p}({\Delta}_{\mathcal{H},\mathfrak{t}})+\dot{\mathrm{H}}^{s,p}(\mathbb{R}^n_+))\cap \dot{\mathrm{D}}^{s}_{p}(\mathring{\Delta}_{\mathcal{H},\mathfrak{t}} ) \subset \dot{\mathrm{D}}^{s}_{p}({\Delta}_{\mathcal{H},\mathfrak{t}} )
\end{align*}
and for the same reason $a\in \dot{\mathrm{D}}^{s}_{p}({\Delta}_{\mathcal{H},\mathfrak{t}})$. Therefore,
\begin{align*}
    f=\mathbb{P}f= \mathbb{P}a + \mathbb{P}b \in \dot{\mathrm{H}}^{s,p}_{\mathfrak{t},\sigma}(\mathbb{R}^n_+)+ \dot{\mathrm{D}}^{s}_{p}(\mathring{\mathbb{A}}_{\mathcal{H},\mathfrak{t}})\text{. }
\end{align*}
By \eqref{eq:defHodgeStokesAbsBC} and Corollary \ref{cor:commutrelationHodgeProjectorHspBspqRn+}, we have
\begin{align*}
    \lVert t^{1-\theta} \mathring{\mathbb{A}}_{\mathcal{H},\mathfrak{t}}e^{-t{\mathbb{A}}_{\mathcal{H},\mathfrak{t}}} f\rVert_{\dot{\mathrm{H}}^{s,p}(\mathbb{R}^n_+)} &\leqslant \lVert t^{1-\theta} \mathring{\mathbb{A}}_{\mathcal{H},\mathfrak{t}}e^{-t{\mathbb{A}}_{\mathcal{H},\mathfrak{t}}}  \mathbb{P}a\rVert_{\dot{\mathrm{H}}^{s,p}(\mathbb{R}^n_+)}+ \lVert t^{1-\theta} \mathring{\mathbb{A}}_{\mathcal{H},\mathfrak{t}}e^{-t{\mathbb{A}}_{\mathcal{H},\mathfrak{t}}}\mathbb{P}  b\rVert_{\dot{\mathrm{H}}^{s,p}(\mathbb{R}^n_+)}\\
    &\lesssim_{p,n,s}  \lVert t^{1-\theta}\mathbb{P} {\Delta}_{\mathcal{H},\mathfrak{t}}e^{t{\Delta}_{\mathcal{H},\mathfrak{t}}}  a\rVert_{\dot{\mathrm{H}}^{s,p}(\mathbb{R}^n_+)}+ \lVert t^{1-\theta} \mathbb{P} {\Delta}_{\mathcal{H},\mathfrak{t}}e^{t{\Delta}_{\mathcal{H},\mathfrak{t}}}  b\rVert_{\dot{\mathrm{H}}^{s,p}(\mathbb{R}^n_+)}\text{. }
\end{align*}
From there, we use analyticity of the semigroup, boundedness of $\mathbb{P}$ given by Theorem \ref{thm:HodgeDecompRn+}, to obtain
\begin{align*}
    \lVert t^{1-\theta} \mathring{\mathbb{A}}_{\mathcal{H},\mathfrak{t}}e^{-t{\mathbb{A}}_{\mathcal{H},\mathfrak{t}}} f\rVert_{\dot{\mathrm{H}}^{s,p}(\mathbb{R}^n_+)} &\lesssim_{p,n,s} t^{-\theta} \lVert  a\rVert_{\dot{\mathrm{H}}^{s,p}(\mathbb{R}^n_+)}+ t^{1-\theta}\lVert {\Delta}_{\mathcal{H},\mathfrak{t}}  b\rVert_{\dot{\mathrm{H}}^{s,p}(\mathbb{R}^n_+)}\text{. }
\end{align*}
Taking the infimum on all such pairs $(a,b)$ yields
\begin{align*}
    \lVert t^{1-\theta} \mathring{\mathbb{A}}_{\mathcal{H},\mathfrak{t}}e^{-t{\mathbb{A}}_{\mathcal{H},\mathfrak{t}}} f\rVert_{\dot{\mathrm{H}}^{s,p}(\mathbb{R}^n_+)} \lesssim_{p,n,s} t^{-\theta}K(t,f,\dot{\mathrm{H}}^{s,p}(\mathbb{R}^n_+), \dot{\mathrm{D}}^{s}_{p}(\mathring{\Delta}_{\mathcal{H},\mathfrak{t}}))\text{. }
\end{align*}
One may take the $\mathrm{L}^q_{\ast}$-norm of above inequality and apply \cite[Lemma~2.12]{DanchinHieberMuchaTolk2020} and above Proposition \ref{prop:InterpHomDomainLaplacians}, to deduce that
\begin{align*}
    \lVert f \rVert_{(\dot{\mathrm{H}}^{s,p}_{\mathfrak{t},\sigma}(\mathbb{R}^n_+),\dot{\mathrm{D}}^{s}_{p}(\mathring{\mathbb{A}}_{\mathcal{H},\mathfrak{t}}))_{\theta,q}}&\lesssim_{\theta}\left( \int_{0}^{+\infty} \lVert t^{1-\theta} \Delta_{\mathcal{H},\mathfrak{t}}e^{t\Delta_{\mathcal{H},\mathfrak{t}}} \mathbb{P} f\rVert_{\dot{\mathrm{H}}^{s,p}(\mathbb{R}^n_+)}^q \frac{\mathrm{d}t}{t} \right)\\
    &\lesssim_{p,n,s,\theta} \lVert f \rVert_{(\dot{\mathrm{H}}^{s,p}(\mathbb{R}^n_+),\dot{\mathrm{D}}^{s}_{p}(\mathring{\Delta}_{\mathcal{H},\mathfrak{t}}))_{\theta,q}}\\
    &\lesssim_{p,n,s,\theta} \lVert f \rVert_{\dot{\mathrm{B}}^{s+2\theta}_{p,q}(\mathbb{R}^n_+)}\text{. }
\end{align*}
With \textbf{Step 1}, one has for all $f\in \dot{\mathrm{D}}^{s}_{p}({\mathbb{A}}_{\mathcal{H},\mathfrak{t}})$,
\begin{align*}
    \lVert f \rVert_{(\dot{\mathrm{H}}^{s,p}_{\mathfrak{t},\sigma}(\mathbb{R}^n_+),\dot{\mathrm{D}}^{s}_{p}(\mathring{\mathbb{A}}_{\mathcal{H},\mathfrak{t}}))_{\theta,q}}\sim_{p,n,s,\theta} \lVert f \rVert_{\dot{\mathrm{B}}^{s+2\theta}_{p,q}(\mathbb{R}^n_+)}\text{. }
\end{align*}
If $q<+\infty$, then the end of above \textbf{Step 2}, and \cite[Lemma~2.10]{DanchinHieberMuchaTolk2020} allows to conclude by density. If $q=+\infty$, the result follows from the reiteration theorem \cite[Theorem~3.5.3]{BerghLofstrom1976} and the boundedness and the range of $\mathbb{P}$ in \textbf{Step 2} (use a retraction argument \cite[Theorem~6.4.2]{BerghLofstrom1976}).
\end{proof}

The \textbf{Step 2} from the proof above leads to the immediate following corollary.

\begin{corollary}\label{cor:extendedHodgeDecompComptabilityConditions}Let $p\in(1,+\infty)$, $q\in[1,+\infty]$, $s\in(-1+1/p,2+1/p)$, such that $s\notin \mathbb{N}+\frac{1}{p}$, and \eqref{AssumptionCompletenessExponents} is satisfied. Then,
\begin{align*}
     \mathbb{P}&\,:\,\dot{\mathrm{B}}^{s}_{p,q,\mathcal{H}_{\mathfrak{t}}}(\mathbb{R}^n_+) \longrightarrow \dot{\mathrm{B}}^{s,\sigma}_{p,q,\mathcal{H}_{\mathfrak{t}}}(\mathbb{R}^n_+)\text{, }\\
     [\mathrm{I}-\mathbb{P}]&\,:\,\dot{\mathrm{B}}^{s}_{p,q,\mathcal{H}_{\mathfrak{t}}}(\mathbb{R}^n_+) \longrightarrow \dot{\mathrm{B}}^{s,\gamma}_{p,q,\mathcal{H}_{\mathfrak{t}}}(\mathbb{R}^n_+)\text{, }
\end{align*}
are both well defined bounded linear projection, so that the following Hodge decomposition holds
\begin{align*}
    \dot{\mathrm{B}}^{s}_{p,q,\mathcal{H}_{\mathfrak{t}}}(\mathbb{R}^n_+) = \dot{\mathrm{B}}^{s,\sigma}_{p,q,\mathcal{H}_{\mathfrak{t}}}(\mathbb{R}^n_+) \oplus \dot{\mathrm{B}}^{s,\gamma}_{p,q,\mathcal{H}_{\mathfrak{t}}}(\mathbb{R}^n_+) \text{. }
\end{align*}
The result still holds if we replace $(\mathfrak{t},\mathbb{P})$ by $(\mathfrak{n},\mathbb{Q})$.
\end{corollary}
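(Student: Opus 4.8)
The plan is to recognise that this statement is essentially a relabelling of \textbf{Step 2} in the proof of Proposition \ref{prop:InterpHomDomainHodgeMaxwellStokes}, now read off for the full admissible range of the regularity index rather than only for exponents presented in the form $s+2\theta$. First I would fix $s\in(-1+1/p,2+1/p)$ with $s\notin\mathbb{N}+\frac{1}{p}$, and reduce to the setting of the preceding proposition by choosing $s_0\in(-1+1/p,1/p)$ and $\theta\in(0,1)$ with $s=s_0+2\theta$. Such a pair always exists, since the intersection of $(-1+1/p,1/p)$ with $(s-2,s)$ is non-empty over the whole range; moreover the exclusion $s\notin\{1/p,1+1/p\}$ is exactly the exclusion $s_0+2\theta\notin\{1/p,1+1/p\}$ required there, and the completeness hypothesis $(\mathcal{C}_{s,p,q})$ coincides with the $(\mathcal{C}_{s_0+2\theta,p,q})$ used in that step.

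Then I would invoke the estimate established in \textbf{Step 2}: for $f\in\dot{\mathrm{D}}^{s_0}_{p}(\Delta_{\mathcal{H},\mathfrak{t}})$ one has $\lVert \mathbb{P}f\rVert_{\dot{\mathrm{B}}^{s}_{p,q}(\mathbb{R}^n_+)}\lesssim_{p,s,n,\theta}\lVert f\rVert_{\dot{\mathrm{B}}^{s}_{p,q}(\mathbb{R}^n_+)}$, obtained by commuting $\mathbb{P}$ with the semigroup through Corollary \ref{cor:commutrelationHodgeProjectorHspBspqRn+}, using the boundedness of $\mathbb{P}$ from Theorem \ref{thm:HodgeDecompRn+} and the interpolation identity of Proposition \ref{prop:InterpHomDomainLaplacians}. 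Since $\dot{\mathrm{D}}^{s_0}_{p}(\Delta_{\mathcal{H},\mathfrak{t}})$ is dense in $\dot{\mathrm{B}}^{s}_{p,q,\mathcal{H}_{\mathfrak{t}}}(\mathbb{R}^n_+)$ when $q<+\infty$, the operator $\mathbb{P}$ extends uniquely to a bounded operator on that space, with range $\dot{\mathrm{B}}^{s,\sigma}_{p,q,\mathcal{H}_{\mathfrak{t}}}(\mathbb{R}^n_+)$, and it remains idempotent because it is so on the dense subset. Running the same computation with $[\mathrm{I}-\mathbb{P}]=\mathrm{d}(-\Delta_{\mathcal{H},\mathfrak{t}})^{-\frac{1}{2}}\mathrm{d}^\ast(-\Delta_{\mathcal{H},\mathfrak{t}})^{-\frac{1}{2}}$ in place of $\mathbb{P}$ (the Maxwell side corresponding to \eqref{eq:2ndInterpolSolenoidal}) yields boundedness of $[\mathrm{I}-\mathbb{P}]$ with range $\dot{\mathrm{B}}^{s,\gamma}_{p,q,\mathcal{H}_{\mathfrak{t}}}(\mathbb{R}^n_+)$.

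Finally, because $\mathbb{P}$ is a bounded idempotent on $\dot{\mathrm{B}}^{s}_{p,q,\mathcal{H}_{\mathfrak{t}}}(\mathbb{R}^n_+)$, I would conclude the topological splitting $\dot{\mathrm{B}}^{s}_{p,q,\mathcal{H}_{\mathfrak{t}}}=\mathrm{R}(\mathbb{P})\oplus\mathrm{N}(\mathbb{P})$ and identify $\mathrm{N}(\mathbb{P})=\mathrm{R}(\mathrm{I}-\mathbb{P})=\dot{\mathrm{B}}^{s,\gamma}_{p,q,\mathcal{H}_{\mathfrak{t}}}$ from the previous step, which is precisely the asserted Hodge decomposition; the replacement $(\mathfrak{t},\mathbb{P})\leftrightarrow(\mathfrak{n},\mathbb{Q})$ is verbatim. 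The endpoint $q=+\infty$ is not reached by density, so there I would argue as at the end of \textbf{Step 2}, transporting the boundedness already obtained for finite $q$ via the reiteration theorem \cite[Theorem~3.5.3]{BerghLofstrom1976} together with a retraction argument \cite[Theorem~6.4.2]{BerghLofstrom1976}.

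The only genuinely delicate point, and the one I would verify most carefully, is that the extension of $\mathbb{P}$ does not depend on the auxiliary pair $(s_0,\theta)$ and agrees with the operator of Theorem \ref{thm:HodgeDecompRn+}; this holds because all candidate extensions coincide on the common dense subset $\dot{\mathrm{D}}^{s_0}_{p}(\Delta_{\mathcal{H},\mathfrak{t}})$ (indeed already on $\mathrm{C}_c^\infty(\mathbb{R}^n_+,\Lambda)$), so no ambiguity can arise and the projection and its range are well defined throughout the stated range of $s$.
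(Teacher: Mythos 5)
Your proposal is correct and follows essentially the same route as the paper, which derives the corollary directly from \textbf{Step 2} of the proof of Proposition \ref{prop:InterpHomDomainHodgeMaxwellStokes}: boundedness of $\mathbb{P}$ on the dense subset $\dot{\mathrm{D}}^{s_0}_{p}({\Delta}_{\mathcal{H},\mathfrak{t}})$ via the semigroup characterization of the interpolation norm, extension by density for $q<+\infty$, and reiteration plus a retraction argument for $q=+\infty$. Your additional remarks (existence of an admissible pair $(s_0,\theta)$ with $s=s_0+2\theta$, independence of the extension from that choice, and the idempotence giving the topological splitting) are exactly the details the paper leaves implicit.
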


Finally, we mention without its proofs, that follows exactly the same lines, the result for interpolation spaces of the homogeneous domains with Besov spaces as an ambient function space, say, for $\theta\in(0,1)$, $r,q\in[1,+\infty]$, $p\in(1,+\infty)$, $-1+1/p<s<1/p$,
\begin{align*}
    \mathring{\eus{D}}^{s,p,r}_{-\Delta_{\mathcal{H}}}(\theta,q) = (\dot{\mathrm{B}}^{s}_{p,r}(\mathbb{R}^n_+),\dot{\mathrm{D}}^{s}_{p,r}(\mathring{\Delta}_{\mathcal{H}}))_{\theta,q} \text{. }
\end{align*}
We are able to obtain,

\begin{proposition}\label{prop:InterpHomDomainBesovLaplacians} Let $p\in(1,+\infty)$, $r\in[1,+\infty)$ $q\in[1,+\infty]$, and $s\in(-1+1/p,1/p)$.For all $\theta\in(0,1)$ such that $(\mathcal{C}_{s+2\theta,p,q})$ is satisfied, provided $\mathcal{J}\in\{\mathcal{D},\mathcal{N},\mathcal{H}\}$, one has
\begin{align*}
    (\dot{\mathrm{B}}^{s}_{p,r}(\mathbb{R}^n_+),\dot{\mathrm{D}}^{s}_{p,r}(\mathring{\Delta}_\mathcal{J}))_{\theta,q} = \dot{\mathrm{B}}^{s+2\theta}_{p,q,\mathcal{J}}(\mathbb{R}^n_+)\text{, }
\end{align*}
with equivalence of norms, whenever
\begin{itemize}
    \item $s+2\theta\neq 1/p$ if $\mathcal{J}=\mathcal{D}$,
    \item $s+2\theta\neq 1+1/p$ if $\mathcal{J}=\mathcal{N}$,
    \item $s+2\theta\neq 1/p,1+1/p$ if $\mathcal{J}=\mathcal{H}$.
\end{itemize}
\end{proposition}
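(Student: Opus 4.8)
The plan is to transcribe the proof of Proposition~\ref{prop:InterpHomDomainLaplacians} almost verbatim, replacing the ambient Hilbertian scale $\dot{\mathrm{H}}^{s,p}(\mathbb{R}^n_+)$ by the Besov scale $\dot{\mathrm{B}}^{s}_{p,r}(\mathbb{R}^n_+)$ throughout. Before running the argument I would first record, on this scale, the three structural facts that powered the earlier proof. Namely: (a) the analogue of Proposition~\ref{prop:assumptionsareCheckedforLap}, that $-\Delta_{\mathcal{J}}$ is injective on $\dot{\mathrm{B}}^{s}_{p,r}(\mathbb{R}^n_+)$ and that $\dot{\mathrm{B}}^{s}_{p,r}(\mathbb{R}^n_+)\cap\dot{\mathrm{D}}^{s}_{p,r}(\mathring{\Delta}_{\mathcal{J}})=\dot{\mathrm{D}}^{s}_{p,r}(\Delta_{\mathcal{J}})$, so that both $\dot{\mathrm{D}}^{s}_{p,r}(\mathring{\Delta}_{\mathcal{J}})$ and the extended semigroup $e^{t\Delta_{\mathcal{J}}}$ on $\dot{\mathrm{B}}^{s}_{p,r}+\dot{\mathrm{D}}^{s}_{p,r}(\mathring{\Delta}_{\mathcal{J}})$ make sense; (b) the Besov counterpart of the fractional-power equivalences of Lemmas~\ref{lem:Hs+2pestimateHodgeLap} and~\ref{lem:FractionalSobestimateHodgeLap}, that is $\lVert(-\Delta_{\mathcal{H}})^{\alpha/2}u\rVert_{\dot{\mathrm{B}}^{s}_{p,r}(\mathbb{R}^n_+)}\sim\lVert u\rVert_{\dot{\mathrm{B}}^{s+\alpha}_{p,r}(\mathbb{R}^n_+)}$; and (c) the analyticity of $(e^{t\Delta_{\mathcal{J}}})_{t>0}$ on $\dot{\mathrm{B}}^{s}_{p,r}$. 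All three follow from the same extension-operator machinery (\eqref{eq:HodgeReflexExtOp}, Lemma~\ref{lem:ExtOpRn+DiffFormHspBspq}, Theorem~\ref{thm:MetaThm1HodgeLaplacianRn+}) together with the corresponding Besov results of \cite[Section~4]{Gaudin2022}, since Proposition~\ref{prop:SobolevMultiplier} already grants boundedness of $\mathrm{E}_{\mathcal{H}}$ on the Besov scale.

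With these in hand I would carry out the soft embedding first. The homogeneous Besov interpolation identity on the half-space (\cite[Propositions~2.33~\&~2.39]{Gaudin2022}) gives $(\dot{\mathrm{B}}^{s}_{p,r}(\mathbb{R}^n_+),\dot{\mathrm{B}}^{s+2}_{p,r}(\mathbb{R}^n_+))_{\theta,q}=\dot{\mathrm{B}}^{s+2\theta}_{p,q}(\mathbb{R}^n_+)$, whence $(\dot{\mathrm{B}}^{s}_{p,r}(\mathbb{R}^n_+),\dot{\mathrm{D}}^{s}_{p,r}(\mathring{\Delta}_{\mathcal{J}}))_{\theta,q}\hookrightarrow\dot{\mathrm{B}}^{s+2\theta}_{p,q}(\mathbb{R}^n_+)$; the boundary conditions defining $\dot{\mathrm{B}}^{s+2\theta}_{p,q,\mathcal{J}}$ are then recovered exactly as in Step~1 of Proposition~\ref{prop:InterpHomDomainLaplacians}, using density of $\dot{\mathrm{D}}^{s}_{p,r}(\Delta_{\mathcal{J}})$ in the interpolation space (\cite[Theorem~3.4.2]{BerghLofstrom1976}) together with continuity of the partial traces.

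For the reverse embedding I would reproduce Steps~2--5 of the earlier proof without any change of architecture, splitting according to whether $s+2\theta$ lies below $1/p$, in $(1/p,2+1/p)$ for $\mathcal{J}=\mathcal{D}$, or in the two Neumann ranges for $\mathcal{J}=\mathcal{N}$. The representation $f=e^{t\Delta_{\mathcal{J}}}f+\mathring{\Delta}_{\mathcal{J}}\int_0^t\tau e^{\tau\Delta_{\mathcal{J}}}f\,\tfrac{\mathrm{d}\tau}{\tau}$, the estimate \cite[Lemma~2.11]{DanchinHieberMuchaTolk2020}, analyticity, and the projections $\mathcal{P}_{\mathcal{D}},\mathcal{P}_{\mathcal{N}}$ of Lemma~\ref{lem:ProjectDirNeuBC} (whose point~(i)(b) already covers the Besov scale) all carry over, the only modification being that every $\dot{\mathrm{H}}^{\bullet,p}$ becomes $\dot{\mathrm{B}}^{\bullet}_{p,r}$ and the relevant $K$-functional is compared against $(\dot{\mathrm{B}}^{s+2\eta}_{p,r}(\mathbb{R}^n_+),\dot{\mathrm{B}}^{s+2\varepsilon}_{p,r}(\mathbb{R}^n_+))_{\cdot,q}$. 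Density in the endpoint spaces is furnished by Propositions~\ref{prop:DensityResultHomNeumDirBesov} and~\ref{prop:DensityResultHomNeumÎntersecSobol} (this is where the hypothesis $r<+\infty$ enters), and the borderline case $q=+\infty$ is closed by the reiteration theorem \cite[Theorem~3.5.3]{BerghLofstrom1976}. The Hodge case $\mathcal{J}=\mathcal{H}$ reduces coordinatewise to the Dirichlet and Neumann cases through the splitting of Lemma~\ref{lem:Hs+2pestimateHodgeLap}, exactly as in Step~6.

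The only genuinely new bookkeeping is the presence of two distinct secondary indices, the fixed microscopic index $r$ of the ambient space and the interpolation index $q$. I expect this to be the sole point demanding care: one interpolates between two Besov spaces sharing the index $r$ while the output must land in $\dot{\mathrm{B}}^{s+2\theta}_{p,q}$ with the interpolation index $q$, which is precisely what the identity $(\dot{\mathrm{B}}^{s_0}_{p,r},\dot{\mathrm{B}}^{s_1}_{p,r})_{\beta,q}=\dot{\mathrm{B}}^{s}_{p,q}$ and the reiteration theorem deliver. Since no new analytic obstruction arises beyond this index matching, the statement can indeed be asserted to follow along the same lines as Proposition~\ref{prop:InterpHomDomainLaplacians}.
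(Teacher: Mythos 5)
Your proposal is correct and is exactly the route the paper takes: the paper states this proposition without proof, remarking only that it "follows exactly the same lines" as Proposition \ref{prop:InterpHomDomainLaplacians}, i.e. one reruns Steps 1--6 with $\dot{\mathrm{H}}^{s,p}$ replaced by $\dot{\mathrm{B}}^{s}_{p,r}$ and invokes the Besov versions of the structural inputs (extension operator, fractional-power equivalences, density, the projections $\mathcal{P}_{\mathcal{D}},\mathcal{P}_{\mathcal{N}}$, and reiteration for $q=+\infty$). Your additional observation about the two secondary indices $r$ and $q$ being reconciled by the identity $(\dot{\mathrm{B}}^{s_0}_{p,r},\dot{\mathrm{B}}^{s_1}_{p,r})_{\beta,q}=\dot{\mathrm{B}}^{s}_{p,q}$ is precisely the point that makes the transcription go through.
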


Then by the mean of Corollary \ref{cor:extendedHodgeDecompComptabilityConditions},

\begin{proposition}\label{prop:InterpHomDomainBesovHodgeMaxwellStokes} Let $p\in(1,+\infty)$,$r\in[1,+\infty)$, $q\in[1,+\infty]$, and $s\in(-1+1/p,1/p)$. For all $\theta\in(0,1)$ such that $(\mathcal{C}_{s+2\theta,p,q})$ is satisfied, one has
\begin{align*}
    (\dot{\mathrm{B}}^{s,\sigma}_{p,r,\mathfrak{t}}(\mathbb{R}^n_+),\dot{\mathrm{D}}^{s}_{p,r}(\mathring{\mathbb{A}}_{\mathcal{H},\mathfrak{t}}))_{\theta,q} &= \dot{\mathrm{B}}^{s+2\theta,\sigma}_{p,q,\mathcal{H}_{\mathfrak{t}}}(\mathbb{R}^n_+)\text{, }\\
   (\dot{\mathrm{B}}^{s,\gamma}_{p,r}(\mathbb{R}^n_+),\dot{\mathrm{D}}^{s}_{p,r}(\mathring{\mathbb{M}}_{\mathcal{H},\mathfrak{t}}))_{\theta,q} &= \dot{\mathrm{B}}^{s+2\theta,\gamma}_{p,q,\mathcal{H}_{\mathfrak{t}}}(\mathbb{R}^n_+)\text{, }
\end{align*}
with equivalence of norms, whenever $s+2\theta\neq 1/p,1+1/p$.

The same result holds replacing $(\mathfrak{t},\sigma,\gamma,\mathbb{A},\mathbb{M})$ by $(\mathfrak{n},\gamma,\sigma,\mathbb{M},\mathbb{A})$.
\end{proposition}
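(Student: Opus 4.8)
The plan is to reproduce, \emph{verbatim up to the substitution} of the ambient space $\dot{\mathrm{H}}^{s,p}(\mathbb{R}^n_+)$ by $\dot{\mathrm{B}}^{s}_{p,r}(\mathbb{R}^n_+)$, the three-step argument of Proposition~\ref{prop:InterpHomDomainHodgeMaxwellStokes}, now resting on Proposition~\ref{prop:InterpHomDomainBesovLaplacians} instead of Proposition~\ref{prop:InterpHomDomainLaplacians}. As there, it suffices to establish the first identity; the Hodge--Maxwell identity and the normal-boundary-condition case follow from the obvious changes $(\mathfrak{t},\sigma,\gamma,\mathbb{A},\mathbb{M})\leftrightarrow(\mathfrak{n},\gamma,\sigma,\mathbb{M},\mathbb{A})$ together with $\mathbb{P}\leftrightarrow\mathbb{Q}$. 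For the forward embedding I would first combine monotonicity of the $K$-functional with the half-space Besov interpolation identity $(\dot{\mathrm{B}}^{s}_{p,r}(\mathbb{R}^n_+),\dot{\mathrm{B}}^{s+2}_{p,r}(\mathbb{R}^n_+))_{\theta,q}=\dot{\mathrm{B}}^{s+2\theta}_{p,q}(\mathbb{R}^n_+)$ from Subsection~\ref{sec:FunctionSpacesRn+} to get
\begin{align*}
(\dot{\mathrm{B}}^{s,\sigma}_{p,r,\mathfrak{t}}(\mathbb{R}^n_+),\dot{\mathrm{D}}^{s}_{p,r}(\mathring{\mathbb{A}}_{\mathcal{H},\mathfrak{t}}))_{\theta,q}\hookrightarrow \dot{\mathrm{B}}^{s+2\theta}_{p,q}(\mathbb{R}^n_+);
\end{align*}
then, for $q<+\infty$, density of $\dot{\mathrm{D}}^{s}_{p,r}(\mathbb{A}_{\mathcal{H},\mathfrak{t}})$ in the interpolation space via \cite[Theorem~3.4.2]{BerghLofstrom1976}, joined with continuity of the partial traces of the Appendix, forces both the compatibility conditions defining $\dot{\mathrm{B}}^{s+2\theta}_{p,q,\mathcal{H}_{\mathfrak{t}}}$ and the relation $\delta f=0$, so that the embedding lands in $\dot{\mathrm{B}}^{s+2\theta,\sigma}_{p,q,\mathcal{H}_{\mathfrak{t}}}(\mathbb{R}^n_+)$.

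The boundedness of $\mathbb{P}$ on $\dot{\mathrm{B}}^{s+2\theta}_{p,q,\mathcal{H}_{\mathfrak{t}}}(\mathbb{R}^n_+)$ with range $\dot{\mathrm{B}}^{s+2\theta,\sigma}_{p,q,\mathcal{H}_{\mathfrak{t}}}(\mathbb{R}^n_+)$ is already delivered by Corollary~\ref{cor:extendedHodgeDecompComptabilityConditions}, since under the present hypotheses $s+2\theta\in(-1+1/p,2+1/p)\setminus(\mathbb{N}+1/p)$ and $(\mathcal{C}_{s+2\theta,p,q})$ holds; this also gives the density of $\dot{\mathrm{D}}^{s}_{p,r}(\mathbb{A}_{\mathcal{H},\mathfrak{t}})=\mathbb{P}\dot{\mathrm{D}}^{s}_{p,r}(\Delta_{\mathcal{H},\mathfrak{t}})$ in $\dot{\mathrm{B}}^{s+2\theta,\sigma}_{p,q,\mathcal{H}_{\mathfrak{t}}}(\mathbb{R}^n_+)$ when $q<+\infty$, exactly as in \textbf{Step~2} of the proof of Proposition~\ref{prop:InterpHomDomainHodgeMaxwellStokes}.

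For the reverse embedding, given $f\in\dot{\mathrm{D}}^{s}_{p,r}(\mathbb{A}_{\mathcal{H},\mathfrak{t}})$ I would decompose $f=a+b$ with $(a,b)\in\dot{\mathrm{B}}^{s}_{p,r}(\mathbb{R}^n_+)\times\dot{\mathrm{D}}^{s}_{p,r}(\mathring{\Delta}_{\mathcal{H},\mathfrak{t}})$; Proposition~\ref{prop:assumptionsareCheckedforLap}, whose intersection identity is valid on the Besov scale, shows $a,b\in\dot{\mathrm{D}}^{s}_{p,r}(\Delta_{\mathcal{H},\mathfrak{t}})$, hence $f=\mathbb{P}f=\mathbb{P}a+\mathbb{P}b$. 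Using $\mathbb{A}_{\mathcal{H},\mathfrak{t}}=-\Delta_{\mathcal{H},\mathfrak{t}}$ on solenoidal domains, the commutation $e^{-t\mathbb{A}_{\mathcal{H},\mathfrak{t}}}\mathbb{P}=\mathbb{P}e^{t\Delta_{\mathcal{H},\mathfrak{t}}}$ of Corollary~\ref{cor:commutrelationHodgeProjectorHspBspqRn+}, analyticity of the semigroup, and the $\dot{\mathrm{B}}^{s}_{p,r}$-boundedness of $\mathbb{P}$ from Theorem~\ref{thm:HodgeDecompRn+}, one bounds $t^{1-\theta}\mathring{\mathbb{A}}_{\mathcal{H},\mathfrak{t}}e^{-t\mathbb{A}_{\mathcal{H},\mathfrak{t}}}f$ by $t^{-\theta}K(t,f,\dot{\mathrm{B}}^{s}_{p,r}(\mathbb{R}^n_+),\dot{\mathrm{D}}^{s}_{p,r}(\mathring{\Delta}_{\mathcal{H},\mathfrak{t}}))$; taking the $\mathrm{L}^q_\ast$-norm and applying \cite[Lemma~2.12]{DanchinHieberMuchaTolk2020} with Proposition~\ref{prop:InterpHomDomainBesovLaplacians} then identifies the interpolation norm of $f$ with $\lVert f\rVert_{\dot{\mathrm{B}}^{s+2\theta}_{p,q}(\mathbb{R}^n_+)}$. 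One closes by density when $q<+\infty$ and by the reiteration theorem \cite[Theorem~3.5.3]{BerghLofstrom1976} with a retraction argument \cite[Theorem~6.4.2]{BerghLofstrom1976} when $q=+\infty$.

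The only genuinely new input, and the step I expect to carry all the weight, is Proposition~\ref{prop:InterpHomDomainBesovLaplacians}: the Besov half-space interpolation identity $(\dot{\mathrm{B}}^{s}_{p,r},\dot{\mathrm{B}}^{s+2}_{p,r})_{\theta,q}=\dot{\mathrm{B}}^{s+2\theta}_{p,q}$ and the density statements behind it require $r<+\infty$ (so that $\eus{S}_0(\overline{\mathbb{R}^n_+})$ and $\mathrm{Y}_{\mathcal{J}}$ are dense in the ambient space) and force the exclusion of the endpoints $s+2\theta\in\{1/p,1+1/p\}$, where the defining traces of $\dot{\mathrm{B}}^{s+2\theta}_{p,q,\mathcal{J}}$ degenerate. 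Everything else is a formal transcription of the $\dot{\mathrm{H}}^{s,p}$ argument, because the semigroup calculus, the Hodge decomposition of Theorem~\ref{thm:HodgeDecompRn+}, and the commutation relations of Corollary~\ref{cor:commutrelationHodgeProjectorHspBspqRn+} were all proved simultaneously on the $\dot{\mathrm{H}}^{s,p}$ and $\dot{\mathrm{B}}^{s}_{p,q}$ scales.
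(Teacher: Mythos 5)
Your proposal is correct and coincides with the paper's intended argument: the paper omits the proof precisely because, as it states, the result "follows exactly the same lines" as Proposition~\ref{prop:InterpHomDomainHodgeMaxwellStokes}, with Proposition~\ref{prop:InterpHomDomainBesovLaplacians} supplying the Laplacian interpolation identity and Corollary~\ref{cor:extendedHodgeDecompComptabilityConditions} supplying the boundedness of $\mathbb{P}$ on $\dot{\mathrm{B}}^{s+2\theta}_{p,q,\mathcal{H}_{\mathfrak{t}}}(\mathbb{R}^n_+)$. Your transcription of the three steps, including the reiteration/retraction closure for $q=+\infty$, is faithful to that scheme.
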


\subsection{Maximal regularities for Hodge Laplacians and related operators}\label{sec:maxRegMaxHodgeStokes}

We will present here direct application of Theorem \ref{thm:LqMaxRegUMDHomogeneous}, and \cite[Theorem~2.20]{DanchinHieberMuchaTolk2020} with appropriate identification of real interpolation spaces, provided $p,r\in(1,+\infty)$, $s\in(-1+1/p,1/p)$,
\begin{align*}
    \mathring{\eus{D}}^{s,p}_{A}(\theta,q) \text{, }\mathring{\eus{D}}^{s,p,r}_{A}(\theta,q) \text{, } \theta\in(0,1) \text{, } q\in[1,+\infty] \text{ and } A\in\{-\Delta_{\mathcal{H}},\mathbb{A}_\mathcal{H},\mathbb{M}_\mathcal{H}\} 
\end{align*}
subject to either tangential or normal boundary conditions, see Propositions \ref{prop:InterpHomDomainLaplacians}, \ref{prop:InterpHomDomainHodgeMaxwellStokes}, \ref{prop:InterpHomDomainBesovLaplacians} and \ref{prop:InterpHomDomainBesovHodgeMaxwellStokes}.

We recall that the definition of involved spaces are given in Notations \ref{def:newNotationsSoleinodalSpaces}, see also \eqref{eq:defhomBesovwithCompCond} and \eqref{eq:SolenoidalBesovspaceswithcompcond}. To alleviate notations in inequalities, we drop the references to the open set $\mathbb{R}^n_+$.

We give first two Theorems in the case where the ambiant space is an UMD Banach space which is the case of $\dot{\mathrm{H}}^{s,p}$ and $\dot{\mathrm{B}}^{s}_{p,r}$, provided $p,r\in(1,+\infty)$, $s\in(-1+1/p,2+1/p)$. 

\begin{theorem}\label{thm:MaxRxBesovUMDHodgeStokesRn+}Let $p,q,r\in(1,+\infty)$, and for $\alpha\in(-1+1/q,1/q)$ fixed, we set $\alpha_q:= 1+\alpha-1/q$.

Let $s\in(-1+1/p,2+1/p)$ such that $s,s+2\alpha_q\notin \mathbb{N}+\frac{1}{p}$, $(\mathcal{C}_{s+2\alpha_q,p,q})$ is satisfied, and let $T\in(0,+\infty]$.

For any $f\in \dot{\mathrm{H}}^{\alpha,q}((0,T),\dot{\mathrm{B}}^{s}_{p,r,\mathcal{H}_\mathfrak{t}}(\mathbb{R}^n_+,\Lambda))$, $u_0\in \dot{\mathrm{B}}^{s+2\alpha_q}_{p,q,\mathcal{H}_\mathfrak{t}}(\mathbb{R}^n_+,\Lambda)$, there exists a unique mild solution $u \in \mathrm{C}^0_b([0,T],\dot{\mathrm{B}}^{s+2\alpha_q}_{p,q,\mathcal{H}_\mathfrak{t}}(\mathbb{R}^n_+,\Lambda))$ of
\begin{equation*}\tag{HHS${}_{\mathfrak{t}}$}\label{eq:HodgeHeatSystem}
    \left\{ \begin{array}{rclr}
         \partial_t u - \Delta u  & = &f,& \text{ on } (0,T)\times\mathbb{R}^n_+,\\
         \nu\iprod\mathrm{d} u_{|_{\partial\mathbb{R}^n_+}}& = & 0,& \text{ on } (0,T)\times\partial\mathbb{R}^n_+,\\
         \nu\iprod u_{|_{\partial\mathbb{R}^n_+}}& = & 0,& \text{ on } (0,T)\times\partial\mathbb{R}^n_+,\\
         u(0)& = & u_0,& \text{ in } \dot{\mathrm{B}}^{2+s-\frac{2}{q}}_{p,q}(\mathbb{R}^n_+,\Lambda),
    \end{array}
    \right.
\end{equation*}
with estimate
\begin{align*}
    \left\lVert u \right\rVert_{\mathrm{L}^\infty((0,T),\dot{\mathrm{B}}^{s+2\alpha_q}_{p,q})}  \lesssim_{p,q,n}^{\alpha,s} \left\lVert (\partial_t u, \nabla^2 u) \right\rVert_{\dot{\mathrm{H}}^{\alpha,q}((0,T),\dot{\mathrm{B}}^{s}_{p,r})} \lesssim_{p,q,n}^{\alpha,s} \left\lVert f \right\rVert_{\dot{\mathrm{H}}^{\alpha,q}((0,T),\dot{\mathrm{B}}^{s}_{p,r})} + \left\lVert u_0 \right\rVert_{\dot{\mathrm{B}}^{s+2\alpha_q}_{p,q}}.
\end{align*}
For all $\beta\in[0,1]$, we also have
\begin{align}\label{eq:fractionalEstimatePruss1}
    \left\lVert (-\partial_t)^\beta(-\Delta_\mathcal{H,\mathfrak{t}})^{1-\beta} u \right\rVert_{\dot{\mathrm{H}}^{\alpha,q}((0,T),\dot{\mathrm{B}}^{s}_{p,r})} \lesssim_{p,q,n}^{s,\alpha} \left\lVert f \right\rVert_{\dot{\mathrm{H}}^{\alpha,q} ((0,T),\dot{\mathrm{B}}^{s}_{p,r})} + \left\lVert u_0 \right\rVert_{\dot{\mathrm{B}}^{s+2\alpha_q}_{p,q}}\text{. }
\end{align}
\end{theorem}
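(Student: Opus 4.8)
The plan is to read \eqref{eq:HodgeHeatSystem} as the abstract Cauchy problem \eqref{ACP} for the operator $A=-\Delta_{\mathcal{H},\mathfrak{t}}$ acting on the ground space $X=\dot{\mathrm{B}}^{s}_{p,r,\mathcal{H}_\mathfrak{t}}(\mathbb{R}^n_+,\Lambda)$, and then to invoke Theorem~\ref{thm:LqMaxRegUMDHomogeneous}. The two boundary conditions $\nu\iprod u_{|_{\partial\mathbb{R}^n_+}}=0$ and $\nu\iprod\mathrm{d} u_{|_{\partial\mathbb{R}^n_+}}=0$ are exactly the absolute boundary conditions encoded in the domain of $-\Delta_{\mathcal{H},\mathfrak{t}}$, so that solving \eqref{eq:HodgeHeatSystem} with $u(0)=u_0$ is equivalent to solving \eqref{ACP} with forcing $f$ and datum $u_0$. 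Once the hypotheses of Theorem~\ref{thm:LqMaxRegUMDHomogeneous} are verified and the interpolation spaces identified, the first chain of estimates is just \eqref{eq:BoundLqMaxReghomogeneous}, after trading $\lVert(-\Delta_{\mathcal{H},\mathfrak{t}})u\rVert_{\dot{\mathrm{B}}^{s}_{p,r}}$ for $\lVert\nabla^2 u\rVert_{\dot{\mathrm{B}}^{s}_{p,r}}$ via Lemma~\ref{lem:Hs+2pestimateHodgeLap}, while \eqref{eq:fractionalEstimatePruss1} is precisely \eqref{eq:mixedDerivEstimate} (valid for every $\beta\in[0,1]$ since $u_0$ lies in the homogeneous trace space, which is complete under $(\mathcal{C}_{s+2\alpha_q,p,q})$).

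Next I would check the three structural requirements of Theorem~\ref{thm:LqMaxRegUMDHomogeneous}. First, $X$ is UMD: for $p,r\in(1,+\infty)$ the space $\dot{\mathrm{B}}^{s}_{p,r}(\mathbb{R}^n_+,\Lambda)$ is a retract of $\ell^{r}_{s}(\mathbb{Z},\mathrm{L}^p)$, hence UMD, and $\dot{\mathrm{B}}^{s}_{p,r,\mathcal{H}_\mathfrak{t}}$ is a complemented closed subspace (by the compatibility-condition projection, cf.\ Corollary~\ref{cor:extendedHodgeDecompComptabilityConditions}), so UMD as well. Second, $A$ must be $\omega$-sectorial with $\omega<\tfrac{\pi}{2}$ and have $\mathrm{BIP}$ of type $\theta_A<\tfrac{\pi}{2}$; for $s\in(-1+\tfrac1p,\tfrac1p)$ this is immediate from the bounded $\mathrm{\mathbf{H}}^\infty(\Sigma_\mu)$-calculus of Theorem~\ref{thm:MetaThm1HodgeLaplacianRn+}, which forces sectoriality of angle $0$ and $\theta_A=0$. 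Third, Assumptions~\eqref{asmpt:homogeneousdomaindef} and \eqref{asmpt:homogeneousdomainintersect} hold by Proposition~\ref{prop:assumptionsareCheckedforLap}.

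The delicate point, and the one I expect to be the main obstacle, is to promote the sectoriality/$\mathrm{BIP}$ of $A$, and the interpolation identification, from the base range $s\in(-1+\tfrac1p,\tfrac1p)$ to the full range $s\in(-1+\tfrac1p,2+\tfrac1p)$, since the reflection operator $\mathrm{E}_{\mathcal{H},\mathfrak{t}}$ underlying Section~2 is bounded on $\dot{\mathrm{B}}^{s}_{p,r}$ only for $s<\tfrac1p$. I would fix $s_0\in(\max(-1+\tfrac1p,\,s-2),\tfrac1p)$ and realize $X=\dot{\mathrm{B}}^{s}_{p,r,\mathcal{H}_\mathfrak{t}}$ as a homogeneous real interpolation space between $\dot{\mathrm{B}}^{s_0}_{p,r}$ and a (higher-order) homogeneous domain of $-\Delta_{\mathcal{H},\mathfrak{t}}$, through Proposition~\ref{prop:InterpHomDomainBesovLaplacians} and its iterate. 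Because resolvents and imaginary powers of $-\Delta_{\mathcal{H},\mathfrak{t}}$ commute with the operator, they are simultaneously bounded on the base space and on each of its homogeneous domains, hence their norms interpolate; consequently both sectoriality with angle $<\tfrac{\pi}{2}$ and $\mathrm{BIP}$ with type $<\tfrac{\pi}{2}$ transfer to $X$. Along the same scale, the reiteration theorem \cite[Theorem~3.5.3]{BerghLofstrom1976} combined with (the higher-order analogue of) Proposition~\ref{prop:InterpHomDomainBesovLaplacians} identifies $\mathring{\eus{D}}_{-\Delta_{\mathcal{H},\mathfrak{t}}}(\alpha_q,q)=\bigl(X,\dot{\mathrm{D}}^{s}_{p,r}(\mathring\Delta_{\mathcal{H},\mathfrak{t}})\bigr)_{\alpha_q,q}$ with $\dot{\mathrm{B}}^{s+2\alpha_q}_{p,q,\mathcal{H}_\mathfrak{t}}(\mathbb{R}^n_+,\Lambda)$; here the exclusions $s,s+2\alpha_q\notin\mathbb{N}+\tfrac1p$ are exactly the thresholds barred in that proposition, and $(\mathcal{C}_{s+2\alpha_q,p,q})$ guarantees that this trace space (defined in \eqref{eq:defhomBesovwithCompCond}) is complete. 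The verification that the higher-order interpolation identification reaches regularities up to $s+2<4+\tfrac1p$, which requires extending Step~3--5 of the proof of Proposition~\ref{prop:InterpHomDomainLaplacians} to the supplementary boundary-condition thresholds, is where the bulk of the technical work sits.

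Finally I would assemble the pieces: Theorem~\ref{thm:LqMaxRegUMDHomogeneous}, applied on $X$ with the above identifications, yields a unique mild solution $u\in\mathrm{C}^0_b([0,T],\dot{\mathrm{B}}^{s+2\alpha_q}_{p,q,\mathcal{H}_\mathfrak{t}})$ together with both displayed inequalities, the $\nabla^2 u$ term coming from $\lVert\nabla^2 u\rVert_{\dot{\mathrm{B}}^{s}_{p,r}}\sim\lVert\Delta_{\mathcal{H},\mathfrak{t}}u\rVert_{\dot{\mathrm{B}}^{s}_{p,r}}$ of Lemma~\ref{lem:Hs+2pestimateHodgeLap} and the fractional estimate \eqref{eq:fractionalEstimatePruss1} from \eqref{eq:mixedDerivEstimate}. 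The case of normal boundary conditions, obtained by the substitution $(\mathfrak{t},\sigma,\gamma,\mathbb{A},\mathbb{M})\leftrightarrow(\mathfrak{n},\gamma,\sigma,\mathbb{M},\mathbb{A})$, follows verbatim, all the cited ingredients (Proposition~\ref{prop:assumptionsareCheckedforLap}, Lemmas~\ref{lem:Hs+2pestimateHodgeLap}--\ref{lem:FractionalSobestimateHodgeLap}, Proposition~\ref{prop:InterpHomDomainBesovLaplacians}) being stated with the corresponding $\mathfrak{n}$-variant.
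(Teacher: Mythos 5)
Your proposal follows the paper's own proof: realize $-\Delta_{\mathcal{H},\mathfrak{t}}$ on $\dot{\mathrm{B}}^{s}_{p,r,\mathcal{H}_\mathfrak{t}}(\mathbb{R}^n_+,\Lambda)$, use the bounded holomorphic functional calculus of Theorem~\ref{thm:MetaThm1HodgeLaplacianRn+} to secure sectoriality and BIP, apply Theorem~\ref{thm:LqMaxRegUMDHomogeneous}, and identify the trace space through Proposition~\ref{prop:InterpHomDomainBesovLaplacians}, converting $\lVert\Delta_{\mathcal{H},\mathfrak{t}}u\rVert$ into $\lVert\nabla^2u\rVert$ via Lemma~\ref{lem:Hs+2pestimateHodgeLap}. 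The paper's proof is a three-line version of exactly this argument; your extra care in promoting the functional calculus and the interpolation identification from the base range $s\in(-1+\tfrac{1}{p},\tfrac{1}{p})$ to the full range $s\in(-1+\tfrac{1}{p},2+\tfrac{1}{p})$ fills in a step the paper leaves implicit.
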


\begin{proof} From Theorem \ref{thm:MetaThm1HodgeLaplacianRn+} we have the bounded holomorphic calculus of $-\Delta_\mathcal{H,\mathfrak{t}}$ on $\dot{\mathrm{B}}^{s}_{p,r,\mathcal{H}_{\mathfrak{t}}}(\mathbb{R}^n_+)$, so that we may apply Theorem \ref{thm:LqMaxRegUMDHomogeneous} to obtain maximal regularity estimates, whereas Proposition \ref{prop:InterpHomDomainLaplacians} gives an exact description of interpolation spaces.
\end{proof}

\begin{theorem}\label{thm:MaxRxHspUMDHodgeStokesRn+}Let $p,q\in(1,+\infty)$, $s\in(-1+1/p,1/p)$, and $\alpha\in(-1+1/q,1/q)$ fixed, we set $\alpha_q:= 1+\alpha-1/q$. We assume that $s+2\alpha_q\notin \mathbb{N}+\frac{1}{p}$, $(\mathcal{C}_{s+2\alpha_q,p,q})$ is satisfied, and let $T\in(0,+\infty]$.

For any $f\in \dot{\mathrm{H}}^{\alpha,q}((0,T),\dot{\mathrm{H}}^{s,p}_{\mathfrak{n},\gamma}(\mathbb{R}^n_+,\Lambda))$, $u_0\in \dot{\mathrm{B}}^{s+2\alpha_q,\gamma}_{p,q,\mathcal{H}_\mathfrak{n}}(\mathbb{R}^n_+,\Lambda)$, there exists a unique mild solution $u \in \mathrm{C}^0_b([0,T],\dot{\mathrm{B}}^{s+2\alpha_q,\gamma}_{p,q,\mathcal{H}_\mathfrak{n}}(\mathbb{R}^n_+,\Lambda))$ of
\begin{equation*}\tag{HMS${}_{\mathfrak{n}}$}\label{eq:HodgeMaxwellBCSystem}
    \left\{ \begin{array}{rclr}
         \partial_t u - \Delta u  & = &f,& \text{ on } (0,T)\times\mathbb{R}^n_+,\\
         \mathrm{d} u& = & 0,& \text{ on } (0,T)\times\mathbb{R}^n_+,\\
         \nu\wedge\delta u_{|_{\partial\mathbb{R}^n_+}}& = & 0,& \text{ on } (0,T)\times\partial\mathbb{R}^n_+,\\
         \nu\wedge u_{|_{\partial\mathbb{R}^n_+}}& = & 0,& \text{ on } (0,T)\times\partial\mathbb{R}^n_+,\\
         u(0)& = & u_0,& \text{ in } \dot{\mathrm{B}}^{2+s-\frac{2}{q}}_{p,q}(\mathbb{R}^n_+,\Lambda),
    \end{array}
    \right.
\end{equation*}
with estimate
\begin{align*}
    \left\lVert u \right\rVert_{\mathrm{L}^\infty((0,T),\dot{\mathrm{B}}^{s+2\alpha_q}_{p,q})} \lesssim_{p,q,n}^{s,\alpha} \left\lVert (\partial_t u, \nabla^2 u) \right\rVert_{\dot{\mathrm{H}}^{\alpha,q}((0,T),\dot{\mathrm{H}}^{s,p})} \lesssim_{p,q,n}^{s,\alpha} \left\lVert f \right\rVert_{\dot{\mathrm{H}}^{\alpha,q}((0,T),\dot{\mathrm{H}}^{s,p})} + \left\lVert u_0 \right\rVert_{\dot{\mathrm{B}}^{s+2\alpha_q}_{p,q}}.
\end{align*}
For all $\beta\in[0,1]$, we also have
\begin{align}\label{eq:fractionalEstimatePruss2}
    \left\lVert (-\partial_t)^{\beta}(\mathbb{M}_\mathcal{H,\mathfrak{n}})^{1-\beta} u \right\rVert_{\dot{\mathrm{H}}^{\alpha,q}((0,T),\dot{\mathrm{H}}^{s,p})} \lesssim_{p,q,n}^{s,\alpha} \left\lVert f \right\rVert_{\dot{\mathrm{H}}^{\alpha,q}((0,T),\dot{\mathrm{H}}^{s,p})} + \left\lVert u_0 \right\rVert_{\dot{\mathrm{B}}^{s+2\alpha_q}_{p,q}}\text{. }
\end{align}
\end{theorem}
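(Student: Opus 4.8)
The plan is to recognize \eqref{eq:HodgeMaxwellBCSystem} as the abstract Cauchy problem \eqref{ACP} attached to the Hodge--Maxwell operator $\mathbb{M}_{\mathcal{H},\mathfrak{n}}$ with relative boundary conditions, realized on the ground space $X:=\dot{\mathrm{H}}^{s,p}_{\mathfrak{n},\gamma}(\mathbb{R}^n_+,\Lambda)=\dot{\mathrm{N}}^{s}_{p}(\delta^\ast)$, and then to invoke Theorem \ref{thm:LqMaxRegUMDHomogeneous} essentially verbatim, reading off the data and trace spaces from Proposition \ref{prop:InterpHomDomainHodgeMaxwellStokes}. The first task is to confirm that the four side conditions of the system are exactly those carried by the pair (ground space, operator domain): membership $u(t)\in X$ encodes $\mathrm{d}u=0$ together with $\nu\wedge u_{|_{\partial\mathbb{R}^n_+}}=0$, since by Lemma \ref{lem:closdenessNdualityDerivativesL2Rn+} the operator $\delta^\ast$ is realized as $\mathrm{d}$ restricted to forms with $\nu\wedge u_{|_{\partial\mathbb{R}^n_+}}=0$; membership in $\dot{\mathrm{D}}^{s}_{p}(\mathbb{M}_{\mathcal{H},\mathfrak{n}})\subset\dot{\mathrm{D}}^{s}_{p}(\Delta_{\mathcal{H},\mathfrak{n}})$ supplies the remaining condition $\nu\wedge\delta u_{|_{\partial\mathbb{R}^n_+}}=0$; and on $X$ one has $\mathbb{M}_{\mathcal{H},\mathfrak{n}}=\delta\delta^\ast=-\Delta$ by \eqref{eq:defHodgeMaxwellPerfectConducBC}, so that $\partial_t u+\mathbb{M}_{\mathcal{H},\mathfrak{n}}u=f$ is literally $\partial_t u-\Delta u=f$. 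The hypothesis $f\in\dot{\mathrm{H}}^{\alpha,q}((0,T),X)$ is precisely the compatibility ($\mathrm{d}f=0$, $\nu\wedge f=0$) that keeps the evolution inside $X$ for all times.

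Next I would verify, one by one, the hypotheses of Theorem \ref{thm:LqMaxRegUMDHomogeneous} for $A=\mathbb{M}_{\mathcal{H},\mathfrak{n}}$ on $X$. The space $X$ is UMD: for $s\in(-1+1/p,1/p)$ the condition $(\mathcal{C}_{s,p})$ holds (as $s<1/p\leqslant n/p$), so $\dot{\mathrm{H}}^{s,p}(\mathbb{R}^n_+,\Lambda)$ is a complete UMD space, isomorphic through the Riesz potential to a subspace of an $\mathrm{L}^p$-space, and $X$ is the range of the bounded projection $\mathrm{I}-\mathbb{Q}$, hence a complemented, therefore UMD, closed subspace. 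Then Theorem \ref{thm:HinftyFuncCalcHodgeStokesMaxwell}, in its $(\mathfrak{n},\gamma,\mathbb{M})$ form, endows $\mathbb{M}_{\mathcal{H},\mathfrak{n}}$ with a bounded $\mathrm{\mathbf{H}}^{\infty}(\Sigma_\mu)$-calculus on $X$ for every $\mu\in(0,\pi)$; this makes $\mathbb{M}_{\mathcal{H},\mathfrak{n}}$ $0$-sectorial (in particular $\omega$-sectorial with $\omega\in[0,\tfrac{\pi}{2})$) and equips it with BIP of type $\theta_{A}=0<\tfrac{\pi}{2}$. Finally, Assumptions \ref{asmpt:homogeneousdomaindef} and \ref{asmpt:homogeneousdomainintersect} are granted by Proposition \ref{prop:assumptionsareCheckedforLap} in its $(\mathfrak{n},\gamma,\mathbb{M})$ form, which also legitimizes the homogeneous operator $\mathring{\mathbb{M}}_{\mathcal{H},\mathfrak{n}}$ and the extended semigroup on $X+\dot{\mathrm{D}}^{s}_{p}(\mathring{\mathbb{M}}_{\mathcal{H},\mathfrak{n}})$.

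With these verifications in place, Theorem \ref{thm:LqMaxRegUMDHomogeneous} applied with $\theta=\alpha_q\in(0,1)$ yields a unique mild solution $u\in\mathrm{C}^0_b([0,T],\mathring{\eus{D}}_{\mathbb{M}_{\mathcal{H},\mathfrak{n}}}(\alpha_q,q))$ with the estimate \eqref{eq:BoundLqMaxReghomogeneous} and the mixed-derivative estimate \eqref{eq:mixedDerivEstimate}; the latter, read with $A=\mathbb{M}_{\mathcal{H},\mathfrak{n}}$, is exactly \eqref{eq:fractionalEstimatePruss2}. It then remains to replace the abstract spaces by concrete ones: Proposition \ref{prop:InterpHomDomainHodgeMaxwellStokes} (again in its $(\mathfrak{n},\gamma,\mathbb{M})$ form) gives $\mathring{\eus{D}}_{\mathbb{M}_{\mathcal{H},\mathfrak{n}}}(\alpha_q,q)=(\dot{\mathrm{H}}^{s,p}_{\mathfrak{n},\gamma},\dot{\mathrm{D}}^{s}_{p}(\mathring{\mathbb{M}}_{\mathcal{H},\mathfrak{n}}))_{\alpha_q,q}=\dot{\mathrm{B}}^{s+2\alpha_q,\gamma}_{p,q,\mathcal{H}_\mathfrak{n}}(\mathbb{R}^n_+,\Lambda)$, the identification being licit precisely because $s+2\alpha_q\notin\mathbb{N}+\tfrac1p$ excludes the two forbidden levels $1/p$ and $1+1/p$ (and $(\mathcal{C}_{s+2\alpha_q,p,q})$ makes this trace space complete, so $u_0$ may be taken in it), while Lemma \ref{lem:FractionalSobestimateHodgeLap} upgrades $\lVert\mathbb{M}_{\mathcal{H},\mathfrak{n}}u\rVert_{\dot{\mathrm{H}}^{s,p}}$ to $\lVert\nabla^2 u\rVert_{\dot{\mathrm{H}}^{s,p}}$ in the displayed $\mathrm{L}^\infty$--$\dot{\mathrm{H}}^{\alpha,q}$ estimate. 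I expect no genuinely hard analytic step: all heavy machinery (functional calculus, injectivity and homogeneous-domain assumptions, interpolation identification) is already available, so the only real danger is \emph{bookkeeping} --- keeping the dictionary $(\mathfrak{t},\sigma,\gamma,\mathbb{A},\mathbb{M})\leftrightarrow(\mathfrak{n},\gamma,\sigma,\mathbb{M},\mathbb{A})$ consistent across Theorem \ref{thm:HinftyFuncCalcHodgeStokesMaxwell}, Proposition \ref{prop:assumptionsareCheckedforLap} and Proposition \ref{prop:InterpHomDomainHodgeMaxwellStokes}, and checking that the ground space plus domain encode neither more nor fewer boundary conditions than \eqref{eq:HodgeMaxwellBCSystem} prescribes.
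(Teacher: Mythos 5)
Your proposal is correct and follows exactly the paper's own (very terse) argument: apply Theorem \ref{thm:LqMaxRegUMDHomogeneous} to $A=\mathbb{M}_{\mathcal{H},\mathfrak{n}}$ on $X=\dot{\mathrm{H}}^{s,p}_{\mathfrak{n},\gamma}(\mathbb{R}^n_+,\Lambda)$, using the bounded $\mathrm{\mathbf{H}}^\infty$-calculus from Theorem \ref{thm:HinftyFuncCalcHodgeStokesMaxwell}, Assumptions \ref{asmpt:homogeneousdomaindef}--\ref{asmpt:homogeneousdomainintersect} via Proposition \ref{prop:assumptionsareCheckedforLap}, and the interpolation identification of the trace space. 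You simply spell out the verifications the paper leaves implicit, and in fact you cite the appropriate interpolation result (Proposition \ref{prop:InterpHomDomainHodgeMaxwellStokes} in its $(\mathfrak{n},\gamma,\sigma,\mathbb{M},\mathbb{A})$ form) where the paper's proof points to Proposition \ref{prop:InterpHomDomainBesovLaplacians}, which appears to be a slip since the ground space here is $\dot{\mathrm{H}}^{s,p}$ and the operator is the Hodge--Maxwell one.
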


\begin{proof} From Theorem \ref{thm:HinftyFuncCalcHodgeStokesMaxwell} we have the bounded holomorphic calculus of $\mathbb{M}_\mathcal{H,\mathfrak{n}}$ on $\dot{\mathrm{H}}^{s,p}(\mathbb{R}^n_+)$, so that we may apply Theorem \ref{thm:LqMaxRegUMDHomogeneous} to obtain maximal regularity estimates, whereas Proposition \ref{prop:InterpHomDomainBesovLaplacians} gives an exact description of interpolation spaces.
\end{proof}

Finally, it remains to apply the homogeneous Da Prato-Grisvard Theorem \cite[Theorem~2.20]{DanchinHieberMuchaTolk2020} to state our last $\mathrm{L}^q$-maximal regularity theorem.

\begin{theorem}\label{thm:MaxRxBesovHodgeStokesRn+}Let $p\in(1,+\infty)$, $q\in[1,+\infty)$, $s\in(-1+1/p,1/p+2/q)$, such that $s,s+2-2/q\notin \mathbb{N}+\frac{1}{p}$ and $(\mathcal{C}_{s+2-2/q,p,q})$ is satisfied and let $T\in(0,+\infty]$.

For any $f\in \mathrm{L}^q((0,T),\dot{\mathrm{B}}^{s,\sigma}_{p,q,\mathcal{\mathcal{H}_{\mathfrak{t}}}}(\mathbb{R}^n_+,\Lambda))$, $u_0\in \dot{\mathrm{B}}^{2+s-\frac{2}{q},\sigma}_{p,q,\mathcal{H}_\mathfrak{t}}(\mathbb{R}^n_+,\Lambda)$, there exists a unique mild solution $u \in \mathrm{C}^0_b([0,T],\dot{\mathrm{B}}^{2+s-\frac{2}{q},\sigma}_{p,q,\mathcal{H}_\mathfrak{t}}(\mathbb{R}^n_+,\Lambda))$ of
\begin{equation*}\tag{HSS${}_{\mathfrak{t}}$}\label{eq:HodgeStokesAbsoluteBCSystem}
    \left\{ \begin{array}{rclr}
         \partial_t u - \Delta u  & = &f,& \text{ on } (0,T)\times\mathbb{R}^n_+,\\
         \delta u& = & 0,& \text{ on } (0,T)\times\mathbb{R}^n_+,\\
         \nu\iprod\mathrm{d} u_{|_{\partial\mathbb{R}^n_+}}& = & 0,& \text{ on } (0,T)\times\partial\mathbb{R}^n_+,\\
         \nu\iprod u_{|_{\partial\mathbb{R}^n_+}}& = & 0,& \text{ on } (0,T)\times\partial\mathbb{R}^n_+,\\
         u(0)& = & u_0,& \text{ in } \dot{\mathrm{B}}^{2+s-\frac{2}{q}}_{p,q}(\mathbb{R}^n_+,\Lambda),
    \end{array}
    \right.
\end{equation*}
with estimate
\begin{align*}
    \left\lVert u \right\rVert_{\mathrm{L}^\infty((0,T),\dot{\mathrm{B}}^{2+s-\frac{2}{q}}_{p,q})} + \left\lVert (\partial_t u, \nabla^2 u) \right\rVert_{\mathrm{L}^q((0,T),\dot{\mathrm{B}}^{s}_{p,q})} \lesssim_{p,q,n}^{s} \left\lVert f \right\rVert_{\mathrm{L}^q((0,T),\dot{\mathrm{B}}^{s}_{p,q})} + \left\lVert u_0 \right\rVert_{\dot{\mathrm{B}}^{2+s-\frac{2}{q}}_{p,q}}.
\end{align*}
In the case $q=+\infty$, if we assume in addition $u_0\in \dot{\mathrm{D}}^{s}_{p}(\mathbb{A}_{\mathcal{H},\mathfrak{t}}^2)$, we have
\begin{align*}
     \left\lVert (\partial_t u, \nabla^2 u) \right\rVert_{\mathrm{L}^\infty((0,T),\dot{\mathrm{B}}^{s}_{p,\infty})} \lesssim_{p,s,n} \left\lVert f \right\rVert_{\mathrm{L}^\infty((0,T),\dot{\mathrm{B}}^{s}_{p,\infty})} + \left\lVert \mathbb{A}_{\mathcal{H},\mathfrak{t}} u_0 \right\rVert_{\dot{\mathrm{B}}^{s}_{p,\infty}}.
\end{align*}
\end{theorem}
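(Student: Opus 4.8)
The plan is to derive the result as a direct application of the homogeneous Da Prato--Grisvard theorem (Theorem~\ref{thm:DaPratoGrisvardHom2020}) to the Hodge--Stokes operator $A:=\mathbb{A}_{\mathcal{H},\mathfrak{t}}$, once a convenient homogeneous ground space has been fixed and the real interpolation spaces have been read off from Proposition~\ref{prop:InterpHomDomainHodgeMaxwellStokes}. First I would fix an auxiliary exponent $s_0\in(-1+1/p,1/p)$ and take $X:=\dot{\mathrm{H}}^{s_0,p}_{\mathfrak{t},\sigma}(\mathbb{R}^n_+,\Lambda)$ as ground space. On $X$ the operator $A$ is $0$-sectorial (it even has a bounded $\mathrm{\mathbf{H}}^\infty(\Sigma_\mu)$-calculus for every $\mu\in(0,\pi)$ by Theorem~\ref{thm:HinftyFuncCalcHodgeStokesMaxwell}, hence is $\omega$-sectorial with $\omega<\pi/2$), it is injective, and Assumptions~\ref{asmpt:homogeneousdomaindef} and~\ref{asmpt:homogeneousdomainintersect} hold by Proposition~\ref{prop:assumptionsareCheckedforLap}; so the standing hypotheses of Theorem~\ref{thm:DaPratoGrisvardHom2020} are met.

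The core of the argument is the bookkeeping of the parameters. Setting $\theta:=(s-s_0)/2$ and $\theta_q:=\theta+1-1/q$ gives $s_0+2\theta=s$ and $s_0+2\theta_q=s+2-2/q$. I need $\theta\in(0,1/q)$, that is $s_0<s<s_0+2/q$, together with $s_0\in(-1+1/p,1/p)$; such an $s_0$ exists precisely because $s\in(-1+1/p,1/p+2/q)$, by picking $s_0\in(\max(s-2/q,-1+1/p),\min(s,1/p))$. The hypotheses $s,\,s+2-2/q\notin\mathbb{N}+1/p$ ensure $s_0+2\theta\neq 1/p,1+1/p$ and $s_0+2\theta_q\neq 1/p,1+1/p$, so Proposition~\ref{prop:InterpHomDomainHodgeMaxwellStokes} applies and identifies $\mathring{\eus{D}}_{A}(\theta,q)=\dot{\mathrm{B}}^{s,\sigma}_{p,q,\mathcal{H}_\mathfrak{t}}$ and $\mathring{\eus{D}}_{A}(\theta_q,q)=\dot{\mathrm{B}}^{2+s-2/q,\sigma}_{p,q,\mathcal{H}_\mathfrak{t}}$. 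Moreover $(\mathcal{C}_{s+2-2/q,p,q})$ forces $(\mathcal{C}_{s,p,q})$ (since $s\leqslant s+2-2/q$), so $\dot{\mathrm{B}}^{s}_{p,q}(\mathbb{R}^n_+)$ is complete and its closed subspace $\dot{\mathrm{B}}^{s,\sigma}_{p,q,\mathcal{H}_\mathfrak{t}}$, being the range of the bounded projector $\mathbb{P}$, is complete as well; this lets me place the forcing directly in $\mathrm{L}^q((0,T),\mathring{\eus{D}}_A(\theta,q))$.

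Theorem~\ref{thm:DaPratoGrisvardHom2020} then produces a unique mild solution $u\in\mathrm{C}^0_b([0,T],\dot{\mathrm{B}}^{2+s-2/q,\sigma}_{p,q,\mathcal{H}_\mathfrak{t}})$ with $\partial_t u,\,Au\in\mathrm{L}^q((0,T),\dot{\mathrm{B}}^{s,\sigma}_{p,q,\mathcal{H}_\mathfrak{t}})$ and the corresponding abstract estimate. To put it in the stated form I would substitute $Au=\mathrm{d}^\ast\mathrm{d} u=-\Delta u$ on solenoidal forms (formula~\eqref{eq:defHodgeStokesAbsBC}) and use the norm equivalence $\lVert\Delta_{\mathcal{H},\mathfrak{t}}u\rVert_{\dot{\mathrm{B}}^{s}_{p,q}}\sim\lVert\nabla^2u\rVert_{\dot{\mathrm{B}}^{s}_{p,q}}$, which is the Besov analogue of Lemma~\ref{lem:Hs+2pestimateHodgeLap} obtained from it by real interpolation. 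The membership of $u(t)$ in $\dot{\mathrm{B}}^{2+s-2/q,\sigma}_{p,q,\mathcal{H}_\mathfrak{t}}$ and, for a.e. $t$, in $\dot{\mathrm{D}}^{s}_{p,q}(\mathbb{A}_{\mathcal{H},\mathfrak{t}})$ encodes the constraints $\delta u=0$, $\nu\iprod u_{|_{\partial\mathbb{R}^n_+}}=0$ and $\nu\iprod\mathrm{d} u_{|_{\partial\mathbb{R}^n_+}}=0$, through the weak--strong trace correspondence of Theorem~\ref{thm:TracesDifferentialformsHomSpaces}; this is what promotes the abstract mild solution to a genuine solution of~\eqref{eq:HodgeStokesAbsoluteBCSystem}.

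Finally, the endpoint $q=+\infty$ falls outside the window $\theta\in(0,1/q)$ and must be handled by the separate $q=+\infty$ clause of Theorem~\ref{thm:DaPratoGrisvardHom2020}: one fixes any $\theta\in(0,1)$ with $s_0+2\theta=s$, assumes in addition $u_0\in\mathrm{D}(A^2)=\dot{\mathrm{D}}^{s}_{p}(\mathbb{A}^2_{\mathcal{H},\mathfrak{t}})$, and again identifies $\mathring{\eus{D}}_A(\theta,\infty)=\dot{\mathrm{B}}^{s,\sigma}_{p,\infty,\mathcal{H}_\mathfrak{t}}$ via Proposition~\ref{prop:InterpHomDomainHodgeMaxwellStokes}, which yields the last displayed estimate. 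I expect the only genuinely delicate points to be the two just flagged: verifying that the admissible window for $s_0$ is nonempty exactly under $s\in(-1+1/p,1/p+2/q)$ together with the endpoint exclusions that make the interpolation identity applicable, and justifying the weak--strong correspondence that recovers the pointwise-in-time boundary and solenoidal conditions; everything else is routine once the cited propositions are in place.
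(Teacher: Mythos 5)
Your proposal is correct and follows essentially the same route as the paper: the paper's proof is a one-line application of the homogeneous Da Prato--Grisvard theorem (Theorem~\ref{thm:DaPratoGrisvardHom2020}) combined with the interpolation identification of Proposition~\ref{prop:InterpHomDomainHodgeMaxwellStokes}, and your write-up supplies exactly the parameter bookkeeping ($s_0$, $\theta$, $\theta_q$), the completeness remark, and the trace/solenoidal identifications that the paper leaves implicit.
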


\begin{remark}Notice that above Theorem \ref{thm:MaxRxBesovHodgeStokesRn+} is the only one presented here that allows $\mathrm{L}^1$ and $\mathrm{L}^\infty$ in time maximal regularity estimates.\\
In particular, one should notice that in the case $q=1$, that above solution $u$ satisfies for almost every $t\in\mathbb{R}_+$,
\begin{align*}
    u(t),\,\partial_t u(t),\, \nabla^2 u(t) \in  \dot{\mathrm{B}}^{s}_{p,1}(\mathbb{R}^n_+)\text{. }
\end{align*}
\end{remark}

\begin{proof} We may apply Theorem \ref{thm:DaPratoGrisvardHom2020} to obtain maximal regularity estimates, since Proposition \ref{prop:InterpHomDomainHodgeMaxwellStokes} gives an exact description of interpolation spaces.
\end{proof}

\begin{remark}One may perform a cyclic permutation of systems \eqref{eq:HodgeHeatSystem}, \eqref{eq:HodgeMaxwellBCSystem} and \eqref{eq:HodgeStokesAbsoluteBCSystem}, but also exchange $\mathfrak{t}$ and $\mathfrak{n}$, up to appropriate modification on boundary conditions and considered function spaces, to obtain each type of results for each operator
    \begin{align*}
        \{-\Delta_{\mathcal{H},\mathfrak{t}},\mathbb{A}_{\mathcal{H},\mathfrak{t}},\mathbb{M}_{\mathcal{H},\mathfrak{t}},-\Delta_{\mathcal{H},\mathfrak{n}},\mathbb{A}_{\mathcal{H},\mathfrak{n}},\mathbb{M}_{\mathcal{H},\mathfrak{n}}\} \text{. }
    \end{align*}
\end{remark}

\subsection{Maximal regularity for the Stokes system with Navier-slip boundary conditions}\label{sec:NaviernoSlipBCRn+}

The flatness of $\partial\mathbb{R}^n_+$ has the interesting consequence that, for $u\,:\,\mathbb{R}^n_+\longrightarrow\mathbb{C}^n\simeq \Lambda^1$ regular enough, the tangential Hodge boundary conditions
\begin{equation*}
    \left\{ \begin{array}{rcl}
         \nu \iprod u_{|_{\partial\mathbb{R}^n_+}}& = & 0,\\
         \nu\iprod \mathrm{d} u_{|_{\partial\mathbb{R}^n_+}}& = & 0.
    \end{array}
    \right.
\end{equation*}
are equivalent to the Navier-slip boundary conditions
\begin{equation}\label{eq:NavierSlipBCs}
    \left\{ \begin{array}{rcl}
         \nu \cdot u_{|_{\partial\mathbb{R}^n_+}}& = & 0,\\
         {{[} (\prescript{t}{}{{\nabla u}} + \nabla u)\nu {]}_{\mathrm{tan}}}_{|_{\partial\mathbb{R}^n_+}} & = & 0.
    \end{array}
    \right.
\end{equation}

Indeed, recalling that $\nu = -\mathfrak{e}_n$, since one has
\begin{align*}
    {{[} (\prescript{t}{}{{\nabla u}} + \nabla u)\nu {]}_{\mathrm{tan}}} &=  (\prescript{t}{}{{\nabla u}} + \nabla u)(-\mathfrak{e}_n) - [(\prescript{t}{}{{\nabla u}} + \nabla u)(-\mathfrak{e}_n)\cdot (-\mathfrak{e}_n)](-\mathfrak{e}_n)\\  &= -\sum_{k=1}^{n-1} (\partial_{x_k}u_n + \partial_{x_n}u_k)\mathfrak{e}_k\text{. }
\end{align*}
We may use $ -\mathfrak{e}_n \cdot u_{|_{\partial\mathbb{R}^n_+}} = u_n(\cdot,0) = 0$, yielding for all $k\in\llb 1,n-1\rrb$
\begin{align*}
    0=\sum_{k=1}^{n-1} (\partial_{x_k}u_n(\cdot,0) + \partial_{x_n}u_k(\cdot,0))\mathfrak{e}_k = \sum_{k=1}^{n-1}  \partial_{x_n}u_k(\cdot,0)\mathfrak{e}_k \text{.}
\end{align*}
This implies that $u$ satisfies the exact same $n-1$ Neumann boundary conditions, and a single Dirichlet boundary condition on $u_n$. This stands exactly as in Lemma \ref{lem:Hs+2pestimateHodgeLap}. The converse also holds.

As long as one has enough regularity on $u$ at least in the Sobolev / Besov sense on $\mathbb{R}^n_+$, one is still able to perform the same decoupling for the boundary conditions.

This occurs when $s\in(-1+1/p,1+1/p)$, $p\in(1,+\infty)$ for the spaces $\mathrm{H}^{s+2,p}$, $\dot{\mathrm{H}}^{s,p}\cap\dot{\mathrm{H}}^{s+2,p}$, $\dot{\mathrm{H}}^{s+2,p}$ when those are complete. It still occurs when we replace ${\mathrm{H}}^{\cdot,p}$ by ${\mathrm{B}}^{\cdot}_{p,q}$, $q\in[1,+\infty]$.

Therefore, in each previous definitions restricted to $\Lambda^1\simeq\mathbb{C}^n$, such as e.g. \eqref{eq:defhomBesovwithCompCond}, one may replace the boundary condition $\nu\iprod \mathrm{d} u_{|_{\partial\mathbb{R}^n_+}} =0$ by ${{[} (\prescript{t}{}{{\nabla u}} + \nabla u)\nu {]}_{\mathrm{tan}}}_{|_{\partial\mathbb{R}^n_+}}=0$.

However, we mention the fact that, as exhibited in \cite[Section~2]{MitreaMonniaux2009-1}, such identification is no longer true for (even smooth) domains $\Omega$ with non-flat boundary. In this case, the equivalence holds up to correction term $\mathcal{W}u$, \textit{i.e.} \eqref{eq:NavierSlipBCs} is equivalent to
\begin{equation*}
    \left\{ \begin{array}{rcl}
        \nu \cdot u_{|_{\partial\Omega}}& = & 0,\\
        \nu \iprod \mathrm{d}u + \mathcal{W}u_{|_{\partial\Omega}} & = & 0.
    \end{array}
    \right.
\end{equation*}
Here, $\mathcal{W}$ is the \textbf{Weingarten map}. It is linear in $u$ and its coefficients depends linearly on the first derivatives of the outward unit normal $\nu$, requiring then some smoothness on the boundary $\partial\Omega$. With the flat  boundary $\partial\mathbb{R}^n_+$, the outward normal $\nu=-\mathfrak{e}_n$ is constant, which explains why the map $\mathcal{W}$ vanishes.

We can then exhibit the following maximal regularity result, where we identify $\Lambda^1$ with $\mathbb{C}^n$. Similar results such as Theorems \ref{thm:MaxRxBesovUMDHodgeStokesRn+} and \ref{thm:MaxRxHspUMDHodgeStokesRn+} are also available.

\begin{theorem}\label{thm:MaxRxBesovNavierSlipStokesRn+}Let $p\in(1,+\infty)$, $q\in[1,+\infty)$, $s\in(-1+1/p,1/p+2/q)$, such that $s,s+2-2/q\notin \mathbb{N}+\frac{1}{p}$ and $(\mathcal{C}_{s+2-2/q,p,q})$ is satisfied and let $T\in(0,+\infty]$.

For any $f\in \mathrm{L}^q((0,T),\dot{\mathrm{B}}^{s}_{p,q,\mathcal{\mathcal{H}_{\mathfrak{t}}}}(\mathbb{R}^n_+,\mathbb{C}^n))$, $u_0\in \dot{\mathrm{B}}^{2+s-\frac{2}{q},\sigma}_{p,q,\mathcal{H}_\mathfrak{t}}(\mathbb{R}^n_+,\mathbb{C}^n)$, there exists a unique mild solution $(u,\nabla \mathfrak{p}) \in \mathrm{C}^0_b([0,T],\dot{\mathrm{B}}^{2+s-\frac{2}{q},\sigma}_{p,q,\mathcal{H}_\mathfrak{t}}(\mathbb{R}^n_+,\mathbb{C}^n))\times \mathrm{L}^q((0,T),\dot{\mathrm{B}}^{s}_{p,q}(\mathbb{R}^n_+,\mathbb{C}^n))$ of
\begin{equation*}\tag{NSS}\label{eq:StokesNavierSlipBCSystem}
    \left\{ \begin{array}{rclr}
         \partial_t u - \Delta u+\nabla \mathfrak{p}  & = &f,& \text{ on } (0,T)\times\mathbb{R}^n_+,\\
         \div u& = & 0,& \text{ on } (0,T)\times\mathbb{R}^n_+,\\
         {{[} (\prescript{t}{}{{\nabla u}} + \nabla u)\nu {]}_{\mathrm{tan}}}_{|_{\partial\mathbb{R}^n_+}}& = & 0,& \text{ on } (0,T)\times\partial\mathbb{R}^n_+,\\
         \nu \cdot u_{|_{\partial\mathbb{R}^n_+}}& = & 0,& \text{ on } (0,T)\times\partial\mathbb{R}^n_+,\\
         u(0)& = & u_0,& \text{ in } \dot{\mathrm{B}}^{2+s-\frac{2}{q}}_{p,q}(\mathbb{R}^n_+,\mathbb{C}^n),
    \end{array}
    \right.
\end{equation*}
with estimate
\begin{align*}
    \left\lVert u \right\rVert_{\mathrm{L}^\infty((0,T),\dot{\mathrm{B}}^{2+s-\frac{2}{q}}_{p,q})} + \left\lVert (\partial_t u, \nabla^2 u,\nabla \mathfrak{p}) \right\rVert_{\mathrm{L}^q((0,T),\dot{\mathrm{B}}^{s}_{p,q})} \lesssim_{p,q,n}^{s} \left\lVert f \right\rVert_{\mathrm{L}^q((0,T),\dot{\mathrm{B}}^{s}_{p,q})} + \left\lVert u_0 \right\rVert_{\dot{\mathrm{B}}^{2+s-\frac{2}{q}}_{p,q}}.
\end{align*}
In the case $q=+\infty$, if we assume in addition $u_0\in \dot{\mathrm{D}}^{s}_{p}(\mathbb{A}_{\mathcal{H},\mathfrak{t}}^2)$, we have
\begin{align*}
     \left\lVert (\partial_t u, \nabla^2 u,\nabla \mathfrak{p}) \right\rVert_{\mathrm{L}^\infty((0,T),\dot{\mathrm{B}}^{s}_{p,\infty})} \lesssim_{p,s,n} \left\lVert f \right\rVert_{\mathrm{L}^\infty((0,T),\dot{\mathrm{B}}^{s}_{p,\infty})} + \left\lVert \mathbb{A}_{\mathcal{H},\mathfrak{t}} u_0 \right\rVert_{\dot{\mathrm{B}}^{s}_{p,\infty}}.
\end{align*}
\end{theorem}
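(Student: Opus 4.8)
The plan is to reduce \eqref{eq:StokesNavierSlipBCSystem} to the Hodge-Stokes system \eqref{eq:HodgeStokesAbsoluteBCSystem} already solved in Theorem \ref{thm:MaxRxBesovHodgeStokesRn+}, exploiting the equivalence of boundary conditions exhibited just above. On the level of $1$-forms identified with $\mathbb{C}^n$-valued functions, the Navier-slip conditions $\nu\cdot u_{|_{\partial\mathbb{R}^n_+}}=0$ and ${[(\prescript{t}{}{\nabla u}+\nabla u)\nu]_{\mathrm{tan}}}_{|_{\partial\mathbb{R}^n_+}}=0$ coincide with the tangential Hodge conditions $\nu\iprod u_{|_{\partial\mathbb{R}^n_+}}=0$, $\nu\iprod\mathrm{d}u_{|_{\partial\mathbb{R}^n_+}}=0$, valid at the regularity of the expected solution $u\in\dot{\mathrm{B}}^{2+s-2/q}_{p,q}$. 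First I would apply the Helmholtz-Leray projector $\mathbb{P}$ to the forcing term: using Corollary \ref{cor:extendedHodgeDecompComptabilityConditions}, $\mathbb{P}$ maps $\dot{\mathrm{B}}^{s}_{p,q,\mathcal{H}_\mathfrak{t}}$ boundedly onto $\dot{\mathrm{B}}^{s,\sigma}_{p,q,\mathcal{H}_\mathfrak{t}}$, so that $\mathbb{P}f\in\mathrm{L}^q((0,T),\dot{\mathrm{B}}^{s,\sigma}_{p,q,\mathcal{H}_\mathfrak{t}})$. Feeding $\mathbb{P}f$ and $u_0$ into Theorem \ref{thm:MaxRxBesovHodgeStokesRn+} produces a unique $u\in\mathrm{C}^0_b([0,T],\dot{\mathrm{B}}^{2+s-2/q,\sigma}_{p,q,\mathcal{H}_\mathfrak{t}})$ solving $\partial_t u+\mathbb{A}_{\mathcal{H},\mathfrak{t}}u=\mathbb{P}f$ with the stated maximal-regularity estimates.

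Next I would recover the pressure. Since $\mathbb{A}_{\mathcal{H},\mathfrak{t}}u=-\Delta u$ on the solenoidal domain by \eqref{eq:defHodgeStokesAbsBC}, the produced $u$ satisfies $\partial_t u-\Delta u=\mathbb{P}f$ together with $\div u=0$ and, by the equivalence above, the Navier-slip conditions. Setting
\begin{align*}
    \nabla\mathfrak{p}:=(\mathrm{I}-\mathbb{P})f
\end{align*}
then gives $\partial_t u-\Delta u+\nabla\mathfrak{p}=\mathbb{P}f+(\mathrm{I}-\mathbb{P})f=f$. By Corollary \ref{cor:extendedHodgeDecompComptabilityConditions}, $\mathrm{I}-\mathbb{P}$ maps into $\dot{\mathrm{B}}^{s,\gamma}_{p,q,\mathcal{H}_\mathfrak{t}}=\mathrm{N}(\mathrm{d})$, so $(\mathrm{I}-\mathbb{P})f$ is a closed $1$-form, hence genuinely of gradient form, which justifies the notation $\nabla\mathfrak{p}$; moreover its $\mathrm{L}^q((0,T),\dot{\mathrm{B}}^{s}_{p,q})$-norm is controlled by $\lVert f\rVert_{\mathrm{L}^q((0,T),\dot{\mathrm{B}}^{s}_{p,q})}$ through boundedness of the projector, so it absorbs into the right-hand side of the claimed estimate. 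The $q=+\infty$ endpoint follows verbatim from the corresponding part of Theorem \ref{thm:MaxRxBesovHodgeStokesRn+}, upon recalling $\mathbb{A}_{\mathcal{H},\mathfrak{t}}u_0=-\Delta u_0$ for solenoidal $u_0$.

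For uniqueness, I would observe that any admissible pair $(u,\nabla\mathfrak{p})$ with $u$ solenoidal satisfies, after applying $\mathbb{P}$ and using $\mathbb{P}\nabla\mathfrak{p}=0$ and $\mathbb{P}\partial_t u=\partial_t\mathbb{P}u=\partial_t u$, the abstract equation $\partial_t u+\mathbb{A}_{\mathcal{H},\mathfrak{t}}u=\mathbb{P}f$; the uniqueness in Theorem \ref{thm:MaxRxBesovHodgeStokesRn+} then pins down $u$, whence $\nabla\mathfrak{p}=(\mathrm{I}-\mathbb{P})f$ is determined. The main obstacle I anticipate is purely the trace bookkeeping: one must verify that the boundary-condition equivalence genuinely holds in the fractional-regularity class $\dot{\mathrm{B}}^{2+s-2/q}_{p,q}$ of the solution, not merely for smooth fields, so that a solution of the Hodge-Stokes problem is literally a solution of \eqref{eq:StokesNavierSlipBCSystem}. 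This is precisely where the decoupling into $n-1$ Neumann and one Dirichlet condition from Lemma \ref{lem:Hs+2pestimateHodgeLap}, together with the partial-trace results of the appendix, must be invoked at the admissible index range $2+s-2/q\notin\mathbb{N}+\tfrac{1}{p}$.
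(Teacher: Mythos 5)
Your proposal is correct and follows exactly the route the paper intends: the preceding discussion in Section 3.3 establishes the equivalence of the Navier-slip and tangential Hodge boundary conditions on the flat half-space, and the theorem is then read off from Theorem \ref{thm:MaxRxBesovHodgeStokesRn+} for $1$-forms, with the pressure gradient recovered as $(\mathrm{I}-\mathbb{P})f$ via Corollary \ref{cor:extendedHodgeDecompComptabilityConditions}. Your additional care about uniqueness (projecting the equation with $\mathbb{P}$) and about verifying the boundary-condition equivalence at the fractional regularity $\dot{\mathrm{B}}^{2+s-2/q}_{p,q}$ is exactly the bookkeeping the paper leaves implicit.
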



\appendix

\section{Appendix: Partial traces of differential forms}\label{Append:TracesofFunctions}

We state here a trace theorem for generalized normal and tangential traces of differential forms.  The general case for vector fields in inhomogeneous function spaces is well-known, also is the differential form in the inhomogeneous case on bounded Lipschitz domains on Riemannian manifolds, see \cite[Section~4]{MitreaMitreaShaw2008} and references therein.

We recall that $\nu=-\mathfrak{e_n}$ is the outer normal unit at boundary $\partial\mathbb{R}^n_+$.

\begin{theorem}\label{thm:TracesDifferentialformsInhomSpaces}Let $p\in(1,+\infty)$, $q\in[1,+\infty]$, $s\in(-1+\frac{1}{p},\frac{1}{p})$ and let $k\in\llb 0,n\rrb$.
\begin{enumerate}[label=($\roman*$)]
    \item For all $u\in{\mathrm{D}}^{s}_{p}( \mathbf{\delta},\mathbb{R}^{n}_+,\Lambda^k)$. Then there exists a unique function $\nu\iprod u_{|_{\partial\mathbb{R}^n_+}}\in \mathrm{B}^{s-\frac{1}{p}}_{p,p}(\mathbb{R}^{n-1},\Lambda^{k-1})$ called the generalized \textbf{tangential trace}, such that
    \begin{align}
        \int_{\mathbb{R}^{n-1}}\langle \nu\iprod u_{|_{\partial\mathbb{R}^n_+}}, {\Psi}_{|_{\partial\mathbb{R}^n_+}}   \rangle &= \int_{\mathbb{R}^{n}_+} \langle u, \mathrm{d} {\Psi}\rangle - \int_{\mathbb{R}^{n}_+} \langle \mathbf{\delta} u, {\Psi}\rangle  \label{eq:IntegbyPartTangTrace}
    \end{align}
    for all $\Psi\in \mathrm{H}^{1-s,p'}(\mathbb{R}^n_+, \Lambda)$, with estimates
    \begin{align*}
        \lVert\nu\iprod u_{|_{\partial\mathbb{R}^n_+}} \rVert_{\mathrm{B}^{s-\frac{1}{p}}_{p,p}(\mathbb{R}^{n-1})}\lesssim_{p,s,n} \lVert u \rVert_{{\mathrm{H}}^{s,p}(\mathbb{R}^n_+)} + \lVert \mathbf{\delta} u \rVert_{{\mathrm{H}}^{s,p}(\mathbb{R}^n_+)} \text{.}
    \end{align*}
    The same result holds with corresponding estimate, for $u\in {\mathrm{D}}^{s}_{p}( \mathrm{d},\mathbb{R}^{n}_+,\Lambda^k)$ we have a partial trace $\nu\wedge u_{|_{\partial\mathbb{R}^n_+}}\in \mathrm{B}^{s-\frac{1}{p}}_{p,p}(\mathbb{R}^{n-1},\Lambda^{k+1})$ called the generalized \textbf{normal trace}, satisfying the identity
    \begin{align}
        \int_{\mathbb{R}^{n-1}}\langle \nu\wedge u_{|_{\partial\mathbb{R}^n_+}}, {\Psi}_{|_{\partial\mathbb{R}^n_+}}   \rangle &= \int_{\mathbb{R}^{n}_+} \langle \mathrm{d}u, {\Psi}\rangle - \int_{\mathbb{R}^{n}_+} \langle u, \mathbf{\delta} {\Psi}\rangle \text{.} \label{eq:IntegbyPartNormTrace}
    \end{align}
    \item For all $u\in{\mathrm{D}}^{s}_{p,q}(\mathbf{\delta},\mathbb{R}^{n}_+,\Lambda^k)$ we have  $\nu\iprod u_{|_{\partial\mathbb{R}^n_+}}\in \mathrm{B}^{s-\frac{1}{p}}_{p,q}(\mathbb{R}^{n-1},\Lambda^{k-1})$, such that formula \eqref{eq:IntegbyPartTangTrace} holds for all $\Psi\in \mathrm{B}^{1-s}_{p',q'}(\mathbb{R}^n_+, \Lambda)$. Moreover, we have estimates
    \begin{align*}
        \lVert\nu\iprod u_{|_{\partial\mathbb{R}^n_+}} \rVert_{\mathrm{B}^{s-\frac{1}{p}}_{p,q}(\mathbb{R}^{n-1})}\lesssim_{p,s,n} \lVert u \rVert_{{\mathrm{B}}^{s}_{p,q}(\mathbb{R}^n_+)} + \lVert \mathbf{\delta} u \rVert_{{\mathrm{B}}^{s}_{p,q}(\mathbb{R}^n_+)} \text{.}
    \end{align*}
    The same results holds with corresponding estimate, for $u\in {\mathrm{D}}^{s}_{p,q}(\mathrm{d},\mathbb{R}^{n}_+,\Lambda^k)$ we have a partial trace $\nu\wedge u_{|_{\partial\mathbb{R}^n_+}}\in \mathrm{B}^{s-\frac{1}{p}}_{p,q}(\mathbb{R}^{n-1},\Lambda^{k+1})$ such that \eqref{eq:IntegbyPartNormTrace} is satisfied.
    \item For all $u\in {\mathrm{B}}^{s+1}_{p,q}(\mathbb{R}^{n}_+,\Lambda^k)$, we have 
    \begin{align*}
        (\nu\iprod u\oplus\nu\wedge u )_{|_{\partial\mathbb{R}^n_+}}\in \mathrm{B}^{s+1-\frac{1}{p}}_{p,q} (\mathbb{R}^{n-1},\Lambda^{k-1}\oplus\Lambda^{k+1})
    \end{align*}
    with estimate
    \begin{align*}
        \lVert (\nu\iprod u\oplus\nu\wedge u  )_{|_{\partial\mathbb{R}^n_+}} \rVert_{\mathrm{B}^{s+1-\frac{1}{p}}_{p,q}(\mathbb{R}^{n-1})} \lesssim_{p,s,n} \lVert u \rVert_{{\mathrm{B}}^{s+1}_{p,q}(\mathbb{R}^n_+)} \text{,}
    \end{align*}
    and everything still hold with ${\mathrm{H}}^{s+1,p}$ instead of $\mathrm{B}^{s+1}_{p,q}$, when $q=p$.
\end{enumerate}
\end{theorem}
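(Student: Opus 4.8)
The plan is to prove the trace theorem by reducing the differential-form statement to the scalar trace theory on $\mathbb{R}^n_+$ already available in the author's companion work \cite{Gaudin2022}, together with the integration-by-parts formulas \eqref{eq:IntbyParts1}--\eqref{eq:IntbyParts2}. First I would establish point \textit{(iii)}, the genuine trace for forms with one more degree of regularity, since this is the easiest and sets up the duality pairings needed afterwards. For $u\in \mathrm{B}^{s+1}_{p,q}(\mathbb{R}^n_+,\Lambda^k)$ each coefficient $u_I$ lies in $\mathrm{B}^{s+1}_{p,q}(\mathbb{R}^n_+)$, and since $s+1-\tfrac{1}{p}>0$ the scalar trace operator $u_I\mapsto {u_I}_{|_{\partial\mathbb{R}^n_+}}$ is well defined and bounded into $\mathrm{B}^{s+1-\frac{1}{p}}_{p,q}(\mathbb{R}^{n-1})$ by the scalar trace results (and their $\mathrm{H}^{s+1,p}$-analogue when $q=p$). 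The operations $\nu\iprod$ and $\nu\wedge$ are, with $\nu=-\mathfrak{e}_n$, simply fixed algebraic (constant-coefficient) combinations of the coefficients restricted to the boundary; they map continuously into the stated form-valued Besov spaces, yielding the estimate in \textit{(iii)}.

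Next I would treat point \textit{(i)}, which is the heart of the matter. The idea is to \emph{define} the tangential trace by the right-hand side of \eqref{eq:IntegbyPartTangTrace}: for $u\in \mathrm{D}^s_p(\delta,\mathbb{R}^n_+,\Lambda^k)$ the functional
\begin{align*}
    \ell_u(\Psi) := \int_{\mathbb{R}^n_+}\langle u,\mathrm{d}\Psi\rangle - \int_{\mathbb{R}^n_+}\langle \delta u,\Psi\rangle
\end{align*}
is well defined for $\Psi\in \mathrm{H}^{1-s,p'}(\mathbb{R}^n_+,\Lambda)$, because $u\in \mathrm{H}^{s,p}$ pairs with $\mathrm{d}\Psi\in \mathrm{H}^{-s,p'}$ and $\delta u\in\mathrm{H}^{s,p}$ pairs with $\Psi\in\mathrm{H}^{-s,p'}$ (note $1-s>-s$, so the embeddings are in the right direction), using the duality $(\dot{\mathrm{H}}^{s,p})'=\dot{\mathrm{H}}^{-s,p'}_0$ and its inhomogeneous analogue. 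One checks $|\ell_u(\Psi)|\lesssim (\lVert u\rVert_{\mathrm{H}^{s,p}}+\lVert\delta u\rVert_{\mathrm{H}^{s,p}})\lVert\Psi\rVert_{\mathrm{H}^{1-s,p'}}$. When $\Psi\in\mathrm{C}_c^\infty(\mathbb{R}^n_+)$ the boundary integral vanishes and $\ell_u(\Psi)=0$, so $\ell_u$ descends to a functional of the boundary value ${\Psi}_{|_{\partial\mathbb{R}^n_+}}$ only. Since, by point \textit{(iii)} applied with regularity index $1-s-1$, the trace map $\Psi\mapsto{\Psi}_{|_{\partial\mathbb{R}^n_+}}$ is a retraction from $\mathrm{H}^{1-s,p'}(\mathbb{R}^n_+,\Lambda)$ onto $\mathrm{B}^{1-s-\frac{1}{p'}}_{p',p'}(\mathbb{R}^{n-1},\Lambda)$ admitting a bounded right inverse (an extension operator), the functional $\ell_u$ factors through a bounded linear functional on that boundary Besov space. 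Identifying its dual via $(\mathrm{B}^{1-s-\frac{1}{p'}}_{p',p'})'=\mathrm{B}^{s-\frac{1}{p}}_{p,p}$ (using $1-s-\tfrac{1}{p'}=-(s-\tfrac{1}{p})$) produces the desired element $\nu\iprod u_{|_{\partial\mathbb{R}^n_+}}\in\mathrm{B}^{s-\frac{1}{p}}_{p,p}(\mathbb{R}^{n-1},\Lambda^{k-1})$ with the stated bound; consistency with the classical pointwise trace for smooth $u$ follows from \eqref{eq:IntbyParts1}. The normal trace and point \textit{(ii)} are handled identically, swapping the roles of $\mathrm{d}$ and $\delta$ via \eqref{eq:IntbyParts2}, and replacing $\mathrm{H}$-duality by Besov duality $(\mathrm{B}^{1-s}_{p',q'})'=\mathrm{B}^{s-1}_{p,q}$ together with the Besov trace retraction.

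The main obstacle I anticipate is bookkeeping the compatibility of the various duality and trace retraction statements at the endpoints of the admissible range $s\in(-1+\tfrac1p,\tfrac1p)$, precisely because the test-form regularity $1-s$ sits in $(1-\tfrac1p,1+\tfrac1p)$ and the boundary smoothness $1-s-\tfrac1{p'}\in(0,1)$ must stay strictly positive to have a genuine scalar trace with bounded extension — this is exactly why the hypothesis $s<\tfrac1p$ is imposed. One must verify that $\eus{S}_0(\overline{\mathbb{R}^n_+})$ (or $\mathrm{C}_c^\infty$) is dense in the relevant test spaces so that the factorization through boundary values is legitimate, and that the self-adjointness of $\nu\iprod$ and $\nu\wedge$ under $\langle a\wedge u,v\rangle=\langle u,a\iprod v\rangle$ correctly transfers the pairing to the stated form degree $k\mp 1$. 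These are technical but routine once the scalar trace and duality results of \cite{Gaudin2022} are invoked; the genuinely delicate point is ensuring the constant $\nu=-\mathfrak{e}_n$ contractions commute with the trace, which is immediate here by flatness of $\partial\mathbb{R}^n_+$ but is what makes the whole reduction to scalar theory work.
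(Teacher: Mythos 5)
Your proposal is correct and follows essentially the same route as the paper: the paper defers the inhomogeneous case to Mitrea--Mitrea--Shaw but proves the homogeneous analogue (Theorem \ref{thm:TracesDifferentialformsHomSpaces}) by exactly your argument — define the functional by the right-hand side of \eqref{eq:IntegbyPartTangTrace}, bound it bilinearly, kill the dependence on the extension of the boundary datum via density of $\mathrm{C}_c^\infty(\mathbb{R}^n_+)$ in the trace-zero space, factor through the boundary Besov space using the extension operator of \cite[Theorem~3.1]{Gaudin2022}, and conclude by Besov duality, with point \textit{(iii)} handled coefficient-wise by the scalar trace. The only simplification you rightly exploit is that inhomogeneous embeddings $\mathrm{H}^{1-s,p'}\hookrightarrow\mathrm{H}^{-s,p'}$ hold unconditionally, so the case-splitting on the sign of $s$ needed in the homogeneous proof disappears.
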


\begin{theorem}\label{thm:TracesDifferentialformsHomSpaces}Let $p\in(1,+\infty)$, $q\in[1,+\infty]$, $s\in(-1+\frac{1}{p},\frac{1}{p})$ and let $k\in\llb 0,n\rrb$.
\begin{enumerate}[label=($\roman*$)]
    \item For all $u\in{\dot{\mathrm{D}}}^{s}_{p}( \mathbf{\delta},\mathbb{R}^{n}_+,\Lambda^k)$, 
    
    \begin{itemize}
    \item If $s\leqslant 0$, then there exists a unique function $\nu\iprod u_{|_{\partial\mathbb{R}^n_+}}\in \mathrm{B}^{s-\frac{1}{p}}_{p,p}(\mathbb{R}^{n-1},\Lambda^{k-1})$ such that the formula \eqref{eq:IntegbyPartTangTrace} holds for all $\Psi\in \mathrm{H}^{1-s,p'}(\mathbb{R}^n_+, \Lambda)$, with estimate
    \begin{align*}
        \lVert\nu\iprod u_{|_{\partial\mathbb{R}^n_+}} \rVert_{\mathrm{B}^{s-\frac{1}{p}}_{p,p}(\mathbb{R}^{n-1})}\lesssim_{p,s,n} \lVert u \rVert_{{\dot{\mathrm{H}}}^{s,p}(\mathbb{R}^n_+)} + \lVert \mathbf{\delta} u \rVert_{{\dot{\mathrm{H}}}^{s,p}(\mathbb{R}^n_+)} \text{.}
    \end{align*}
    \item If $s> 0$, for $\frac{1}{r}=\frac{1}{p}-\frac{s}{n}\in(\frac{n-1}{pn},\frac{1}{p})$, there exists a unique function $\nu\iprod u_{|_{\partial\mathbb{R}^n_+}}\in \mathrm{B}^{-\frac{1}{r}}_{r,r}(\mathbb{R}^{n-1},\Lambda^{k-1})$, such that the formula \eqref{eq:IntegbyPartTangTrace} holds for all $\Psi\in \mathrm{H}^{1,r'}(\mathbb{R}^n_+, \Lambda^{k-1})$ with estimate
    \begin{align*}
        \lVert\nu\iprod u_{|_{\partial\mathbb{R}^n_+}} \rVert_{\mathrm{B}^{-\frac{1}{r}}_{r,r}(\mathbb{R}^{n-1})}\lesssim_{r,p,s,n} \lVert u \rVert_{{\dot{\mathrm{H}}}^{s,p}(\mathbb{R}^n_+)} + \lVert \mathbf{\delta} u \rVert_{{\dot{\mathrm{H}}}^{s,p}(\mathbb{R}^n_+)} \text{.}
    \end{align*}
    \end{itemize}
    The same result, up to appropriate changes, still holds for $u\in {\dot{\mathrm{D}}}^{s}_{p}(\mathrm{d},\mathbb{R}^{n}_+,\Lambda^k)$ with partial trace $\nu\wedge u_{|_{\partial\mathbb{R}^n_+}}$ satisfying the formula \eqref{eq:IntegbyPartNormTrace}.
    \item For all $u\in {\dot{\mathrm{D}}}^{s}_{p,q}(\delta,\mathbb{R}^{n}_+,\Lambda^k)$,
    \begin{itemize}
    \item If $s < 0$, there exists a unique $\nu\iprod u_{|_{\partial\mathbb{R}^n_+}}\in \mathrm{B}^{s-\frac{1}{p}}_{p,q}(\mathbb{R}^{n-1},\Lambda^{k-1})$ such that the formula formula \eqref{eq:IntegbyPartTangTrace} holds for all $\Psi\in [\eus{S}\cap\mathrm{B}^{1-s}_{p',q'}](\mathbb{R}^n_+, \Lambda^{k-1}\oplus\Lambda^{k+1})$, with estimates
    \begin{align*}
        \lVert\nu\iprod u_{|_{\partial\mathbb{R}^n_+}} \rVert_{\mathrm{B}^{s-\frac{1}{p}}_{p,q}(\mathbb{R}^{n-1})}\lesssim_{p,s,n} \lVert u \rVert_{{\dot{\mathrm{B}}}^{s}_{p,q}(\mathbb{R}^n_+)} + \lVert \mathbf{\delta} u \rVert_{{\dot{\mathrm{B}}}^{s}_{p,q}(\mathbb{R}^n_+)} \text{.}
    \end{align*}
    \item If $s> 0$, for $\frac{1}{r}=\frac{1}{p}-\frac{s}{n}\in(\frac{n-1}{pn},\frac{1}{p})$, there exists a unique $\nu\iprod u_{|_{\partial\mathbb{R}^n_+}}\in \mathrm{B}^{-\frac{1}{\tilde{r}}-\varepsilon}_{\tilde{r},q}(\mathbb{R}^{n-1},\Lambda^{k-1})$, for any sufficiently small $\varepsilon>0$, with $\frac{1}{r}-\frac{\varepsilon}{n}=\frac{1}{\tilde{r}}$, such that the formula \eqref{eq:IntegbyPartTangTrace} holds for all $\Psi\in [\eus{S}\cap\mathrm{B}^{1+\varepsilon}_{\tilde{r}',q'}](\mathbb{R}^n_+, \Lambda)$ with estimate
    \begin{align*}
        \lVert\nu\iprod u_{|_{\partial\mathbb{R}^n_+}} \rVert_{\mathrm{B}^{-\frac{1}{\tilde{r}}-\varepsilon}_{\tilde{r},q}(\mathbb{R}^{n-1})}\lesssim_{p,s,n,\varepsilon} \lVert u \rVert_{{\dot{\mathrm{B}}}^{s}_{p,q}(\mathbb{R}^n_+)} + \lVert \mathbf{\delta} u \rVert_{{\dot{\mathrm{B}}}^{s}_{p,q}(\mathbb{R}^n_+)} \text{.}
    \end{align*}
    \item If $s = 0$, there exists a unique $\nu\iprod u_{|_{\partial\mathbb{R}^n_+}}\in \mathrm{B}^{-\frac{1}{r}-\varepsilon}_{r,q}(\mathbb{R}^{n-1},\Lambda^{k-1})$, where  $\frac{1}{r}=\frac{1}{p}-\frac{\varepsilon}{n}$, for any sufficiently small $\varepsilon>0$, such that the formula \eqref{eq:IntegbyPartTangTrace} holds for all $\Psi\in [\eus{S}\cap\mathrm{B}^{1+\varepsilon}_{r',q'}](\mathbb{R}^n_+, \Lambda)$, with estimates
    \begin{align*}
        \lVert\nu\iprod u_{|_{\partial\mathbb{R}^n_+}} \rVert_{\mathrm{B}^{-\frac{1}{r}-\varepsilon}_{r,q}(\mathbb{R}^{n-1})}\lesssim_{p,s,n,\varepsilon} \lVert u \rVert_{{\dot{\mathrm{B}}}^{0}_{p,q}(\mathbb{R}^n_+)} + \lVert \mathbf{\delta} u \rVert_{{\dot{\mathrm{B}}}^{0}_{p,q}(\mathbb{R}^n_+)} \text{.}
    \end{align*}
    \end{itemize}
    The same result, up to appropriate changes, still holds for $u\in {\dot{\mathrm{D}}}^{s}_{p,q}(\mathrm{d},\mathbb{R}^{n}_+,\Lambda^k)$ with partial trace $\nu\wedge u_{|_{\partial\mathbb{R}^n_+}}$ satisfying the formula \eqref{eq:IntegbyPartNormTrace}.
    \item For all $u\in [{\dot{\mathrm{B}}}^{s}_{p,q}\cap{\dot{\mathrm{B}}}^{s+1}_{p,q}](\mathbb{R}^{n}_+,\Lambda^k)$, if $q\neq +\infty$, we have
    \begin{align*}
        (\nu\iprod u\oplus\nu\wedge u )_{|_{\partial\mathbb{R}^n_+}}\in \mathrm{B}^{s+1-\frac{1}{p}}_{p,q} (\mathbb{R}^{n-1},\Lambda^{k-1}\oplus\Lambda^{k+1})
    \end{align*}
    with estimate
    \begin{align*}
        \lVert (\nu\iprod u\oplus\nu\wedge u  )_{|_{\partial\mathbb{R}^n_+}} \rVert_{\mathrm{B}^{s+1-\frac{1}{p}}_{p,q}(\mathbb{R}^{n-1})} \lesssim_{p,s,n} \lVert u \rVert_{[{\dot{\mathrm{B}}}^{s}_{p,q}\cap{\dot{\mathrm{B}}}^{s+1}_{p,q}](\mathbb{R}^n_+)} \text{,}
    \end{align*}
    and everything still hold with $({\dot{\mathrm{H}}}^{s,p},{\dot{\mathrm{H}}}^{s+1,p})$ instead of $(\dot{\mathrm{B}}^{s}_{p,q},\dot{\mathrm{B}}^{s+1}_{p,q})$, when $q=p$.
    
    If $q=+\infty$, we have
    \begin{align*}
        (\nu\iprod u\oplus\nu\wedge u )_{|_{\partial\mathbb{R}^n_+}}\in \mathrm{L}^{p} (\mathbb{R}^{n-1},\Lambda^{k-1}\oplus\Lambda^{k+1})
    \end{align*}
    with corresponding estimate.
\end{enumerate}
\end{theorem}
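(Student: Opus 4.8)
The plan is to treat the genuine (strong) trace statement (iii) and the generalized (weak) trace statements (i)--(ii) by two different mechanisms, both ultimately reduced to the scalar trace theory on the half-space already available from \cite{Gaudin2022}. Since $\nu=-\mathfrak{e}_n$ is constant, for $u=\sum_{I}u_I\,\mathrm{d}x_I$ the contraction $\nu\iprod u$ (resp.\ $\nu\wedge u$) is a signed reindexing of the coefficients $u_I$ with $n\in I$ (resp.\ $n\notin I$), exactly as in the decoupling of Lemma \ref{lem:Hs+2pestimateHodgeLap}. Hence for (iii) I would apply the scalar homogeneous trace theorem coefficient-by-coefficient: a scalar $v\in[\dot{\mathrm{B}}^{s}_{p,q}\cap\dot{\mathrm{B}}^{s+1}_{p,q}](\mathbb{R}^n_+)$ has a well-defined trace in $\mathrm{B}^{s+1-\frac1p}_{p,q}(\mathbb{R}^{n-1})$, and the full boundary object $(\nu\iprod u\oplus\nu\wedge u)_{|_{\partial\mathbb{R}^n_+}}$ is assembled from these scalar traces, the stated estimate following by summation over $I$. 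The degeneracy to $\mathrm{L}^{p}$ when $q=+\infty$ is exactly the endpoint behaviour of the scalar trace and is inherited directly.

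The heart of the matter is (i)--(ii), where $u$ lies only in the domain of $\delta$ (or $\mathrm{d}$), so no pointwise restriction is available and the trace must be defined by duality. First I would fix the pairing: for $u\in\eus{S}(\overline{\mathbb{R}^n_+},\Lambda^k)$ and a test form $\Psi$, the boundary integral in \eqref{eq:IntegbyPartTangTrace} equals $\int_{\mathbb{R}^n_+}\langle u,\mathrm{d}\Psi\rangle-\int_{\mathbb{R}^n_+}\langle\delta u,\Psi\rangle$ by \eqref{eq:IntbyParts1}. The key reduction is that this right-hand side depends on $\Psi$ only through its boundary trace $\Psi_{|_{\partial\mathbb{R}^n_+}}$: if $\Psi_{|_{\partial\mathbb{R}^n_+}}=0$, then $\Psi$ is approximable in the relevant test norm by elements of $\mathrm{C}^\infty_c(\mathbb{R}^n_+,\Lambda)$ (the density results quoted in the preliminaries), and for compactly supported $\Psi$ the functional vanishes by the distributional definition of $\delta$ as the formal adjoint of $\mathrm{d}$. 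Thus the functional factors through the bounded, surjective, and co-retractable trace operator; identifying its codomain with the dual of the trace space then produces $\nu\iprod u_{|_{\partial\mathbb{R}^n_+}}$ in the asserted space, together with the estimate obtained by optimizing over extensions $\Psi$ of a given boundary datum.

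The case split $s\leqslant 0$ versus $s>0$ is dictated by the homogeneous realization: for the pairings to make sense one needs $\mathrm{d}\Psi,\Psi\in\dot{\mathrm{H}}^{-s,p'}(\mathbb{R}^n_+)$, the predual of $\dot{\mathrm{H}}^{s,p}(\mathbb{R}^n_+)=\dot{\mathrm{H}}^{s,p}_0(\mathbb{R}^n_+)$ in this range of $s$. When $s\leqslant 0$ this holds for $\Psi\in\mathrm{H}^{1-s,p'}$ since $\mathrm{H}^{-s,p'}(\mathbb{R}^n_+)\hookrightarrow\dot{\mathrm{H}}^{-s,p'}(\mathbb{R}^n_+)$ at nonnegative order, and the trace of $\mathrm{H}^{1-s,p'}$ is $\mathrm{B}^{1/p-s}_{p',p'}$ with dual $\mathrm{B}^{s-1/p}_{p,p}$, giving the first alternative. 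When $s>0$, the low-frequency behaviour obstructs $\mathrm{H}^{1-s,p'}\hookrightarrow\dot{\mathrm{H}}^{-s,p'}$, so instead I would invoke the homogeneous Sobolev embedding $\dot{\mathrm{H}}^{s,p}(\mathbb{R}^n_+)\hookrightarrow\mathrm{L}^r(\mathbb{R}^n_+)$ with $\tfrac1r=\tfrac1p-\tfrac sn$, put $u,\delta u\in\mathrm{L}^r$, and test against $\Psi\in\mathrm{H}^{1,r'}$; the pairings are then plain $\mathrm{L}^{r}$--$\mathrm{L}^{r'}$ dualities, and the trace of $\mathrm{H}^{1,r'}$ is $\mathrm{B}^{1/r}_{r',r'}$ with dual $\mathrm{B}^{-1/r}_{r,r}$. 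For the Besov statements in (ii) the same scheme applies with Besov test spaces; the endpoints $s=0$ and $s>0$ require an arbitrarily small loss $\varepsilon>0$ precisely because the critical Besov embedding into $\mathrm{L}^{r}$ fails, so I would embed into a slightly lower-differentiability space ($\dot{\mathrm{B}}^{0}_{\tilde r,q}$ and its relatives) at the cost of $\varepsilon$ in the regularity index.

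I expect the main obstacle to be the rigorous justification that the functional factors through the trace in the homogeneous setting, i.e.\ that a test form with vanishing boundary trace is approximable in the appropriate homogeneous test norm by forms in $\mathrm{C}^\infty_c(\mathbb{R}^n_+,\Lambda)$. This is exactly where the non-completeness of homogeneous spaces and the characterization $\dot{\mathrm{X}}_0=\overline{\mathrm{C}^\infty_c}$ enter, and it must be checked separately for each regime and each test space (e.g.\ $\mathrm{H}^{1,r'}$ versus $\mathrm{H}^{1-s,p'}$, and their Besov analogues). A secondary technical point is the uniformity of the Sobolev embedding constants and the surjectivity with bounded right inverse of the trace operator on the homogeneous scale, both of which I would cite from \cite{Gaudin2022} rather than reprove.
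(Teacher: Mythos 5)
Your proposal is correct and follows essentially the same route as the paper's proof: part (iii) is reduced component-wise to the scalar trace results of \cite{Gaudin2022}, while parts (i)--(ii) are built by the identical duality scheme --- the functional $\kappa_u(\Psi)=\int_{\mathbb{R}^n_+}\langle u,\mathrm{d}\Psi\rangle-\int_{\mathbb{R}^n_+}\langle\delta u,\Psi\rangle$, independence of the choice of extension via density of $\mathrm{C}_c^\infty(\mathbb{R}^n_+)$ in the trace-zero test space, the bounded extension (right inverse of the trace) from \cite[Theorem~3.1]{Gaudin2022}, and the same case split ($s\leqslant 0$ via the embeddings $\mathrm{H}^{-s,p'}\hookrightarrow\dot{\mathrm{H}}^{-s,p'}$ and $\mathrm{H}^{1-s,p'}\hookrightarrow\dot{\mathrm{H}}^{1-s,p'}$ at nonnegative orders, $s>0$ via the Sobolev embedding $\dot{\mathrm{H}}^{s,p}\hookrightarrow\mathrm{L}^{r}$ and testing in $\mathrm{H}^{1,r'}$). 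The only cosmetic differences are that in part (ii) for $s\geqslant 0$ the paper runs the pairings through Lorentz spaces ($\dot{\mathrm{B}}^{s}_{p,q}\hookrightarrow\mathrm{L}^{r,q}$ paired against test forms controlled in $\mathrm{L}^{r',q'}$ via $\mathrm{B}^{\varepsilon}_{\tilde{r}',q'}\hookrightarrow\mathrm{L}^{r',q'}$) rather than lowering the regularity of $u$ itself, and that the test spaces are inhomogeneous throughout, so the density fact you flag as the main obstacle is the classical inhomogeneous one rather than a homogeneous-space subtlety.
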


\begin{remark}The proof of Theorem \ref{thm:TracesDifferentialformsInhomSpaces} in case of inhomogeneous function spaces follows straightforward the same proof provided for corresponding results in \cite[Section~4]{MitreaMitreaShaw2008} and is somewhat sharp. 

Notice that Theorem \ref{thm:TracesDifferentialformsHomSpaces} is certainly not sharp, and investigation of sharp range for partial traces could be of great interest in the treatment of inhomogeneous boundary value problems in homogeneous function spaces.
\end{remark}

\begin{proof}Without loss of generality we only investigate the case of tangential traces $\nu\iprod u_{|_{\partial\mathbb{R}^n_+}}$.

{\textbf{Step 1.1 :}} Proof of \textit{(i)}, for $s\leqslant 0$. For $u\in \dot{\mathrm{D}}^{s}_{p}( {\mathbf{\delta}},\mathbb{R}^{n}_+,\Lambda^k)$, for all $\psi\in\mathrm{B}^{\frac{1}{p}-s}_{p',p'}(\mathbb{R}^{n-1},\Lambda^{k-1})$, and $\Psi\in \mathrm{H}^{1-s,p'}(\mathbb{R}^n_+,\Lambda^{k-1})$ such that $\Psi_{|_{\partial\mathbb{R}^n_+}}=\psi$, we can define the following functional,
\begin{align*}
    \kappa_{u}(\Psi) :=  \int_{\mathbb{R}^{n}_+} \langle u, \mathrm{d} {\Psi}\rangle - \int_{\mathbb{R}^{n}_+} \langle \mathbf{\delta} u, {\Psi}\rangle\text{. }
\end{align*}
First, the map $(u,\Psi)\mapsto\kappa_{u}(\Psi)$ is well defined and bilinear on $\dot{\mathrm{D}}^{s}_{p}( \mathbf{\delta},\mathbb{R}^{n}_+,\Lambda^k)\times\mathrm{H}^{1-s,p'}(\mathbb{R}^n_+,\Lambda^{k-1})$, \textit{i.e.} in particular only depends on the boundary value $\psi$ of $\Psi$. It is straightforward from duality that,
\begin{align*}
    \lvert \kappa_{u}(\Psi) \rvert &\lesssim_{s,p,n}  \lVert u \rVert_{\dot{\mathrm{H}}^{s,p}(\mathbb{R}^n_+)}\lVert \mathrm{d}\Psi \rVert_{\dot{\mathrm{H}}^{-s,p'}(\mathbb{R}^n_+)} + \lVert\mathbf{\delta} u \rVert_{\dot{\mathrm{H}}^{s,p}(\mathbb{R}^n_+)}\lVert\Psi \rVert_{\dot{\mathrm{H}}^{-s,p'}(\mathbb{R}^n_+)}\\
    &\lesssim_{s,p,n}  \lVert u \rVert_{\dot{\mathrm{H}}^{s,p}(\mathbb{R}^n_+)}\lVert \Psi \rVert_{\dot{\mathrm{H}}^{1-s,p'}(\mathbb{R}^n_+)} + \lVert\mathbf{\delta} u \rVert_{\dot{\mathrm{H}}^{s,p}(\mathbb{R}^n_+)}\lVert\Psi \rVert_{\dot{\mathrm{H}}^{-s,p'}(\mathbb{R}^n_+)}\\
    &\lesssim_{s,p,n}  \lVert u \rVert_{\dot{\mathrm{H}}^{s,p}(\mathbb{R}^n_+)}\lVert \Psi \rVert_{{\mathrm{H}}^{1-s,p'}(\mathbb{R}^n_+)} + \lVert\mathbf{\delta} u \rVert_{\dot{\mathrm{H}}^{s,p}(\mathbb{R}^n_+)}\lVert\Psi \rVert_{{\mathrm{H}}^{-s,p'}(\mathbb{R}^n_+)}\\
    &\lesssim_{s,p,n} (\lVert u \rVert_{\dot{\mathrm{H}}^{s,p}(\mathbb{R}^n_+)}+\lVert \mathbf{\delta} u \rVert_{\dot{\mathrm{H}}^{s,p}(\mathbb{R}^n_+)}) \lVert \Psi \rVert_{{\mathrm{H}}^{1-s,p'}(\mathbb{R}^n_+)}\text{, }
\end{align*}
where above inequalities follows from ${\mathrm{H}}^{-s,p'}(\mathbb{R}^n_+)\hookrightarrow \dot{\mathrm{H}}^{-s,p'}(\mathbb{R}^n_+)$, ${\mathrm{H}}^{1-s,p'}(\mathbb{R}^n_+)\hookrightarrow \dot{\mathrm{H}}^{1-s,p'}(\mathbb{R}^n_+)$, since $-1+\frac{1}{p}<s\leqslant0$, then from ${\mathrm{H}}^{1-s,p'}(\mathbb{R}^n_+)\hookrightarrow {\mathrm{H}}^{-s,p'}(\mathbb{R}^n_+)$.

Now, if we have $\Psi_1,\Psi_2\in \mathrm{H}^{1-s,p'}(\mathbb{R}^n_+,\Lambda^{k-1})$ such that ${\Psi_1}_{|_{\partial\mathbb{R}^n_+}}={\Psi_2}_{|_{\partial\mathbb{R}^n_+}}=\psi$, we introduce $\Psi_0 = \Psi_1 - \Psi_2 \in \mathrm{H}^{1-s,p'}_0(\mathbb{R}^n_+,\Lambda^{k-1})$. Therefore, let's consider $(\Phi_{k})_{k\in\mathbb{N}}\subset \mathrm{C}_c^\infty(\mathbb{R}^n_+,\Lambda^{k-1})$ such that,
\begin{align*}
    \Phi_k \xrightarrow[k\rightarrow +\infty]{} \Psi_0\text{ in } \mathrm{H}^{1-s,p'}_0(\mathbb{R}^n_+,\Lambda^{k-1}) \text{. }
\end{align*}
We can deduce,
\begin{align*}
    \kappa_{u}(\Psi_1)- \kappa_{u}(\Psi_2) = \kappa_{u}(\Psi_0) = \lim_{k\rightarrow +\infty}  \left[ \int_{\mathbb{R}^{n}_+} \langle u, \mathrm{d} {\Phi_k}\rangle - \int_{\mathbb{R}^{n}_+} \langle \mathbf{\delta} u, {\Phi_k}\rangle \right] = 0\text{. }
\end{align*}
Thus, the following equality occurs, where we also consider the extension operator $\mathrm{Ext}_{\mathbb{R}^n_+}$ from \cite[Theorem~3.1]{Gaudin2022},
\begin{align*}
    \Tilde{\kappa}_{u}(\psi):={\kappa}_{u}(\mathrm{Ext}_{\mathbb{R}^n_+} \otimes\psi) = \kappa_{u}(\Psi),
\end{align*}
and with estimate, also obtained from \cite[Theorem~3.1]{Gaudin2022},
\begin{align*}
    \lvert\Tilde{\kappa}_{u}(\psi)\rvert \lesssim_{s,p,n} (\lVert u \rVert_{\dot{\mathrm{H}}^{s,p}(\mathbb{R}^n_+)}+\lVert\mathrm{d}u \rVert_{\dot{\mathrm{H}}^{s,p}(\mathbb{R}^n_+)}) \lVert \psi \rVert_{{\mathrm{B}}^{\frac{1}{p}-s}_{p',p'}(\mathbb{R}^{n-1})}\text{. }
\end{align*}
By duality, there exists a unique function depending linearly on $u$, $\nu\iprod u_{|_{\partial\mathbb{R}^n_+}}\in {\mathrm{B}}^{s-\frac{1}{p}}_{p,p}(\mathbb{R}^{n-1},\Lambda^{k-1})$, such that \eqref{eq:IntegbyPartTangTrace} holds. 

To guarantee that the representation formula makes sense, one may use the usual integration by parts formula with $u,\Psi \in\eus{S}(\overline{\mathbb{R}^n_+},\Lambda)$.

{\textbf{Step 1.2 :}} Proof of \textit{(i)}, for $s> 0$. For same assumption on $u$, as before $\Psi$, everything works similarly except the way we bounded bilinearly $(u,\Psi)\mapsto\kappa_{u}(\Psi)$ on $\dot{\mathrm{D}}^{s}_{p}( \mathbf{\delta},\mathbb{R}^{n}_+,\Lambda^k)\times\mathrm{H}^{1-s,p'}(\mathbb{R}^n_+,\Lambda^{k-1})$.
For $r\in(1,+\infty)$ such that  $\frac{1}{r}=\frac{1}{p}-\frac{s}{n}$, we deduce from Sobolev embeddings and duality that
\begin{align*}
    \lvert \kappa_{u}(\Psi) \rvert &\lesssim_{s,p,n}  \lVert u \rVert_{\dot{\mathrm{H}}^{s,p}(\mathbb{R}^n_+)}\lVert \mathrm{d}\Psi \rVert_{\dot{\mathrm{H}}^{-s,p'}(\mathbb{R}^n_+)} + \lVert\mathbf{\delta} u \rVert_{{\mathrm{L}}^{r}(\mathbb{R}^n_+)}\lVert\Psi \rVert_{{\mathrm{L}}^{r'}(\mathbb{R}^n_+)}\\
    &\lesssim_{r,s,p,n}  \lVert u \rVert_{\dot{\mathrm{H}}^{s,p}(\mathbb{R}^n_+)}\lVert \Psi \rVert_{{\mathrm{H}}^{1,r'}(\mathbb{R}^n_+)} + \lVert\mathbf{\delta} u \rVert_{\dot{\mathrm{H}}^{s,p}(\mathbb{R}^n_+)}\lVert\Psi \rVert_{{\mathrm{H}}^{1,r'}(\mathbb{R}^n_+)}\\
    &\lesssim_{r,s,p,n} (\lVert u \rVert_{\dot{\mathrm{H}}^{s,p}(\mathbb{R}^n_+)}+\lVert \mathbf{\delta} u \rVert_{\dot{\mathrm{H}}^{s,p}(\mathbb{R}^n_+)}) \lVert \Psi \rVert_{{\mathrm{H}}^{1,r'}(\mathbb{R}^n_+)}\text{. }
\end{align*}
Thus everything goes similarly.

{\textbf{Step 2.1 :}} Proof of \textit{(ii)}, for $s< 0$, is very similar to the one of above \textbf{Step 1.1}.

{\textbf{Step 2.2 :}} Proof of \textit{(ii)}, for $s> 0$, is somewhat similar to the one of \textbf{Step 1.2} but needs further explanations. We use Sobolev embeddings, and generalized H\"{o}lder inequalities using Lorentz spaces, $\dot{\mathrm{B}}^{s}_{p,q}(\mathbb{R}^n_+) \hookrightarrow {\mathrm{L}}^{r,q}(\mathbb{R}^n_+)$, ${\mathrm{B}}^{\varepsilon}_{\tilde{r}',q'}(\mathbb{R}^n_+) \hookrightarrow {\mathrm{L}}^{r',q'}(\mathbb{R}^n_+) \hookrightarrow\dot{\mathrm{B}}^{-s}_{p',q'}(\mathbb{R}^n_+)$, for $\varepsilon>0$,
\begin{align*}
    \lvert \kappa_{u}(\Psi) \rvert &\lesssim_{s,p,n}  \lVert u \rVert_{\dot{\mathrm{B}}^{s}_{p,q}(\mathbb{R}^n_+)}\lVert \mathrm{d}\Psi \rVert_{\dot{\mathrm{B}}^{-s}_{p',q'}(\mathbb{R}^n_+)} + \lVert\mathbf{\delta} u \rVert_{{\mathrm{L}}^{r,q}(\mathbb{R}^n_+)}\lVert\Psi \rVert_{{\mathrm{L}}^{r',q'}(\mathbb{R}^n_+)}\\
    &\lesssim_{r,s,p,n}  \lVert u \rVert_{\dot{\mathrm{B}}^{s}_{p,q}(\mathbb{R}^n_+)}\lVert \mathrm{d}\Psi \rVert_{{\mathrm{L}}^{r',q'}(\mathbb{R}^n_+)} + \lVert\mathbf{\delta} u \rVert_{\dot{\mathrm{B}}^{s}_{p,q}(\mathbb{R}^n_+)}\lVert\Psi \rVert_{{\mathrm{L}}^{r',q'}(\mathbb{R}^n_+)}\\
    &\lesssim_{r,s,p,n} (\lVert u \rVert_{\dot{\mathrm{B}}^{s}_{p,q}(\mathbb{R}^n_+)}+\lVert \mathbf{\delta} u \rVert_{\dot{\mathrm{B}}^{s}_{p,q}(\mathbb{R}^n_+)}) \lVert \Psi \rVert_{{\mathrm{B}}^{1+\varepsilon}_{\tilde{r}',q'}(\mathbb{R}^n_+)}\text{. }
\end{align*}

{\textbf{Step 2.3 :}} Proof of \textit{(ii)}, for $s= 0$ is shown via similar Sobolev embeddings arguments and is left to the reader.

{\textbf{Step 3 :}} Proof of \textit{(iii)}, follows from \cite[Proposition~3.8]{Gaudin2022}, with explicit representation formula for any suitable $k$-differential forms $u$:
\begin{align*}
     \nu\iprod u_{|_{\partial\mathbb{R}^n_+}}=-\mathfrak{e}_n\iprod u_{|_{\partial\mathbb{R}^n_+}} &= (-1)^k\sum_{1\leqslant \ell_1<\ldots<\ell_{k-1}< n} u_{\ell_1 \ell_2\ldots \ell_{k-1} n}(\cdot,0)\, \mathrm{d}x_{\ell_1}\wedge\ldots\wedge\mathrm{d}x_{\ell_{k-1}}\\
     &= (-1)^k\sum_{I'\in\mathcal{I}^{k-1}_{n-1}} u_{I', n}(\cdot,0)\,\mathrm{d}x_{I'}\text{. }
\end{align*}
A similar treatment yields the same conclusion for the boundary term $\nu\wedge u_{|_{\partial\mathbb{R}^n_+}}$, so that one ends the proof here.
\end{proof}

\begin{remark}Let's make further comment about estimates used in above proof of Theorem \ref{thm:TracesDifferentialformsHomSpaces}, in particular the ones used in $\text{{\textbf{Step 2.2}}}$.

We recall that from Sobolev embeddings, see \cite[Proposition~2.31]{Gaudin2022}, for $0<s_0<s<s_1<1/p$, $r_0,r_1,p\in(1,+\infty)$, $1/r_j = 1/p-s_j/n$, and appropriate $q_j\in[1,+\infty]$ we have
    \begin{align*}
       \dot{\mathrm{B}}^{s_j}_{p,q_j}(\mathbb{R}^n_+) \hookrightarrow \mathrm{L}^{r_j}(\mathbb{R}^n_+)    \,\text{, }\, j\in\{ 0, 1\} \text{. }
    \end{align*}
    If $(s,1/r)=(1-\theta)(s_0,1/r_0)+ \theta (s_1,1/r_1)$, by real interpolation, for $q\in[1,+\infty]$ we obtain,
    \begin{align*}
       \dot{\mathrm{B}}^{s}_{p,q}(\mathbb{R}^n_+) = (\dot{\mathrm{B}}^{s_0}_{p,q_0}(\mathbb{R}^n_+),\dot{\mathrm{B}}^{s_1}_{p,q_1}(\mathbb{R}^n_+))_{\theta,q} \hookrightarrow({\mathrm{L}}^{r_0}(\mathbb{R}^n_+),{\mathrm{L}}^{r_1}(\mathbb{R}^n_+))_{\theta,q} = {\mathrm{L}}^{r,q}(\mathbb{R}^n_+) \text{. }
    \end{align*}
    And one may proceed similarly for the reverse embedding,
    \begin{align*}
        {\mathrm{L}}^{r',q'}(\mathbb{R}^n_+) \hookrightarrow\dot{\mathrm{B}}^{-s}_{p',q'}(\mathbb{R}^n_+)\text{. }
    \end{align*}
    
For more details about Lorentz spaces and their interpolation, one could consult \cite[Section~1,~Examples~1.10,~1.11~\&~1.27]{bookLunardiInterpTheory} and \cite[Chapter~5,~Section~5.3]{BerghLofstrom1976}.
\end{remark}

\typeout{}                                
\bibliographystyle{alpha}
{\footnotesize
\bibliography{Biblio}}

\begin{thebibliography}{DHMT21}

\bibitem[ABHN11]{ArendtBattyHieberNeubranker2011}
W.~Arendt, C.~J.~K. Batty, M.~Hieber, and F.~Neubrander.
\newblock {\em Vector-valued {L}aplace transforms and {C}auchy problems},
  volume~96 of {\em Monographs in Mathematics}.
\newblock Birkh\"{a}user/Springer Basel AG, Basel, 2${}^{\text{nd}}$ edition,
  2011.

\bibitem[ACDH04]{AuscherCouhlonDuongHofmann2004}
P.~Auscher, T.~Coulhon, X.T. Duong, and S.~Hofmann.
\newblock Riesz transform on manifolds and heat kernel regularity.
\newblock {\em Ann. Scient. \'{E}c. Norm. Sup.}, Ser. 4, 37(6):911--957, 2004.

\bibitem[AM04]{AxelssonMcIntosh2004}
A.~Axelsson and A.~Mc{I}ntosh.
\newblock Hodge decompositions on weakly {L}ipschitz domains.
\newblock In {\em Advances in analysis and geometry}, Trends Math., pages
  3--29. Birkh\"{a}user, Basel, 2004.

\bibitem[Ama95]{AmmanBookParabolicVolI1995}
H.~Amann.
\newblock {\em Linear and quasilinear parabolic problems, {V}ol. {I}:
  {A}bstract linear theory}, volume~89 of {\em Monographs in {M}athematics}.
\newblock Birkh\"{a}user Verlag Basel, 1995.

\bibitem[BCD11]{bookBahouriCheminDanchin}
H.~Bahouri, J.-Y. Chemin, and R.~Danchin.
\newblock {\em Fourier analysis and nonlinear partial differential equations},
  volume 343 of {\em Grundlehren der Mathematischen Wissenschaften [Fundamental
  Principles of Mathematical Sciences]}.
\newblock Springer, Heidelberg, 2011.

\bibitem[BL76]{BerghLofstrom1976}
J.~Bergh and J.~L\"{o}fstr\"{o}m.
\newblock {\em Interpolation spaces. {A}n introduction}.
\newblock Grundlehren der Mathematischen Wissenschaften, No. 223.
  Springer-Verlag, Berlin-New York, 1976.

\bibitem[Bog86]{Bogovskii1986}
M.E. Bogovski\v{i}.
\newblock Decomposition of
  \texorpdfstring{$L_p(\Omega,\mathbf{R}^n)$}{LpOmegaRn} into the direct sum of
  subspaces of solenoidal and potential vector fields.
\newblock {\em Soviet Math. Dokl.}, 33(1):161--165, 03 1986.

\bibitem[Che99]{Chemin1999}
J.-Y. Chemin.
\newblock {Théorèmes d’unicité pour le système de {Navier-Stokes}
  tridimensionnel}.
\newblock {\em J. Anal. Math.}, 77(1):27--50, 1999.

\bibitem[CM10]{CostabelMcIntosh2010}
M.~Costabel and A.~Mc{I}ntosh.
\newblock {On {B}ogovskiĭ and regularized Poincar{\'e} integral operators for
  de {R}ham complexes on {L}ipschitz domains}.
\newblock {\em Math. Z.}, 265:297--320, 2010.

\bibitem[DC94]{DoCarmo1994}
M.~Do~Carmo.
\newblock {\em Differential Forms and Applications}.
\newblock Universitext. Springer Berlin Heidelberg, Berlin, Heidelberg, 1994.
\newblock Original Portuguese edition published by IMPA, 1971.

\bibitem[Den22]{Denis2022}
C.~Denis.
\newblock {Existence and uniqueness in critical spaces for the
  magnetohydrodynamical system in \texorpdfstring{$\mathds{R}^n$}{Rn}}.
\newblock Preprint HAL-03660738v2, June 2022.

\bibitem[DHMT21]{DanchinHieberMuchaTolk2020}
R.~{Danchin}, M.~{Hieber}, P.B. {Mucha}, and P.~{Tolksdorf}.
\newblock {Free Boundary Problems via Da Prato-Grisvard Theory}.
\newblock {\em arXiv e-prints}, November 2021.
\newblock arXiv:2011.07918v2.

\bibitem[DHP03]{bookDenkHieberPruss2003}
R~Denk, M~Hieber, and J~P\"{u}ss.
\newblock {R-boundedness, Fourier multipliers, and problems of elliptic and
  parabolic type}.
\newblock {\em Mem. Amer. Math. Soc.}, 166(788):xvii+114, 2003.

\bibitem[DM09]{DanchinMucha2009}
R.~Danchin and P.~B. Mucha.
\newblock A critical functional framework for the inhomogeneous
  {N}avier-{S}tokes equations in the half-space.
\newblock {\em J. Funct. Anal.}, 256(3):881--927, 2009.

\bibitem[DM15]{DanchinMucha2015}
R.~Danchin and P.~B. Mucha.
\newblock Critical functional framework and maximal regularity in action on
  systems of incompressible flows.
\newblock {\em Mémoires de la {S}ociété {M}athématique de France}, 143,
  2015.

\bibitem[DPG75]{DaPratoGrisvard1975}
G.~Da~Prato and P.~Grisvard.
\newblock Sommes d'op\'{e}rateurs lin\'{e}aires et \'{e}quations
  diff\'{e}rentielles op\'{e}rationnelles.
\newblock {\em J. Math. Pures Appl. (9)}, 54(3):305--387, 1975.

\bibitem[Ege15]{EgertPhDThesis2015}
M.~Egert.
\newblock {\em On {K}ato's conjecture and mixed boundary conditions}.
\newblock Sierke {V}erlag, G\"{o}ttingen, 2015.
\newblock {P}h{D} {T}hesis, {T}echnischen Universit\"{a}t {D}armstadt.

\bibitem[FKS05]{FarwigKozonoSohr2005}
R.~Farwig, H.~Kozono, and H.~Sohr.
\newblock {An \texorpdfstring{$L^q$}{Lq} approach to Stokes and Navier-Stokes
  in general domains}.
\newblock {\em Acta Math.}, 195:21--53, 2005.

\bibitem[FKS07]{FarwigKozonoSohr2007}
R.~Farwig, H.~Kozono, and H.~Sohr.
\newblock {On the Helmholtz decomposition in general unbounded domains}.
\newblock {\em Arch. Math.}, 88:239--248, 01 2007.

\bibitem[FMM98]{FabesMendezMitrea1998}
E.~Fabes, O.~Mendez, and M.~Mitrea.
\newblock Boundary layers on {S}obolev-{B}esov spaces and {P}oisson's equation
  for the {L}aplacian in {L}ipschitz domains.
\newblock {\em J. Funct. Anal.}, 159(2):323--368, 1998.

\bibitem[FY07]{FujiwaraYamazaki2007}
H.~Fujiwara and M.~Yamazaki.
\newblock The {H}elmholtz decomposition in {S}obolev and {B}esov spaces.
\newblock In {\em Asymptotic analysis and singularities---hyperbolic and
  dispersive {PDE}s and fluid mechanics}, volume~47 of {\em Adv. Stud. Pure
  Math.}, pages 99--116. Math. Soc. Japan, Tokyo, 2007.

\bibitem[Gal11]{bookGaldi2011}
G.P. Galdi.
\newblock {\em {An Introduction to the Mathematical Theory of the Navier-Stokes
  Equations}}.
\newblock Springer Monographs in Mathematics. Springer New York, NY,
  2${}^{\text{nd}}$ edition, 2011.

\bibitem[Gau22]{Gaudin2022}
A.~Gaudin.
\newblock On homogeneous {S}obolev and {B}esov spaces on the whole and the half
  space.
\newblock {\em arXiv e-prints}, page arXiv:2211.07707, November 2022.

\bibitem[Gau23]{Gaudin2023}
A.~Gaudin.
\newblock {Homogeneous Sobolev global-in-time maximal regularity and related
  trace estimates}.
\newblock {\em arXiv e-prints}, page arXiv:2302.09862, February 2023.

\bibitem[GHT13]{GeissertHeckTrunk2013}
M.~Geissert, H.~Heck, and C.~Trunk.
\newblock {\texorpdfstring{${\mathcal{H}}^\infty$}{Hinfty}-calculus for a
  system of Laplace operators with mixed order boundary conditions}.
\newblock {\em Discrete Contin. Dyn. Syst. Ser. S}, 6(5):1259--1275, 2013.

\bibitem[GS91]{GigaSohr1991}
Y.~Giga and H.~Sohr.
\newblock {Abstract \texorpdfstring{$L^p$}{Lp} estimates for the Cauchy problem
  with applications to the Navier-Stokes equations in exterior domains}.
\newblock {\em J. Funct. Anal.}, 102(1):72--94, 1991.

\bibitem[Gui91a]{Guidetti1991Ell}
D.~Guidetti.
\newblock On elliptic problems in {B}esov spaces.
\newblock {\em Math. Nachr.}, 152:247--275, 1991.

\bibitem[Gui91b]{Guidetti1991Interp}
D.~Guidetti.
\newblock On interpolation with boundary conditions.
\newblock {\em Math. Z.}, 207(3):439--460, 1991.

\bibitem[Haa06]{bookHaase2006}
M.~Haase.
\newblock {\em The functional calculus for sectorial operators}, volume 169 of
  {\em Operator Theory: Advances and Applications}.
\newblock Birkh\"{a}user Verlag, Basel, 2006.

\bibitem[Hie20]{Hieber2020}
M.~Hieber.
\newblock {\em {Analysis of Viscous Fluid Flows: An Approach by Evolution
  Equations}}, volume 2254 of {\em Lecture Notes in Mathematics}, pages 1--146.
\newblock Springer, Cham, 2020.

\bibitem[HM13]{HieberMonniaux2013}
M.~Hieber and S.~Monniaux.
\newblock Well-posedness results for the {N}avier-{S}tokes equations in the
  rotational framework.
\newblock {\em Discrete Contin. Dyn. Syst.}, 33(11-12):5143--5151, 2013.

\bibitem[HMM11]{HofmannMitreaMonniaux2011}
S.~Hofmann, M.~Mitrea, and S.~Monniaux.
\newblock Riesz transforms associated with the {H}odge {L}aplacian in
  {L}ipschitz subdomains of {R}iemannian manifolds.
\newblock {\em Ann. Inst. Fourier (Grenoble)}, 61(4):1323--1349 (2012), 2011.

\bibitem[HNPS16]{HieberNesensohnPrussSchade2016}
M.~Hieber, M.~Nesensohn, J.~Pr\"{u}ss, and K.~Schade.
\newblock {Dynamics of nematic liquid crystal flows: The quasilinear approach}.
\newblock {\em Ann. I. H. Poincar\'{e} - AN}, 33(2):397--408, 2016.

\bibitem[Jos11]{Jost2011}
J.~Jost.
\newblock {\em Riemannian Geometry and Geometric Analysis}.
\newblock Universitext. Springer Berlin Heidelberg, Berlin, Heidelberg,
  6${}^{\text{th}}$ edition, 2011.

\bibitem[KW04]{KunstmannWeis2004}
P.~C. Kunstmann and L.~Weis.
\newblock {\em Maximal {L}p-regularity for {P}arabolic {E}quations, {F}ourier
  {M}ultiplier {T}heorems and $H^{\infty}$-functional {C}alculus}, volume 1855
  of {\em Lecture {N}otes in {M}athematics}, pages 65--311.
\newblock Springer Berlin Heidelberg, 2004.

\bibitem[Lun18]{bookLunardiInterpTheory}
A.~Lunardi.
\newblock {\em Interpolation theory}.
\newblock Appunti. Scuola Normale Superiore di Pisa (Nuova Serie). [Lecture
  Notes. Scuola Normale Superiore di Pisa (New Series)]. Edizioni della
  Normale, Pisa, 3${}^{\text{rd}}$ edition, 2018.

\bibitem[Mc{I}86]{McIntosh1986}
A.~Mc{I}ntosh.
\newblock Operators which have an {$H_\infty$} functional calculus.
\newblock In {\em Miniconference on operator theory and partial differential
  equations ({N}orth {R}yde, 1986)}, volume~14 of {\em Proc. Centre Math. Anal.
  Austral. Nat. Univ.}, pages 210--231. Austral. Nat. Univ., Canberra, 1986.

\bibitem[MM08]{MitreaMonniaux2008}
M.~Mitrea and S.~Monniaux.
\newblock The regularity of the {S}tokes operator and the {F}ujita-{K}ato
  approach to the {N}avier-{S}tokes initial value problem in {L}ipschitz
  domains.
\newblock {\em J. Funct. Anal.}, 254(6):1522--1574, 2008.

\bibitem[MM09a]{MitreaMonniaux2009-1}
M.~Mitrea and S.~Monniaux.
\newblock The nonlinear {H}odge-{N}avier-{S}tokes equations in {L}ipschitz
  domains.
\newblock {\em Differential Integral Equations}, 22(3-4):339--356, 2009.

\bibitem[MM09b]{MitreaMonniaux2009-2}
M.~Mitrea and S.~Monniaux.
\newblock On the analyticity of the semigroup generated by the {S}tokes
  operator with {N}eumann-type boundary conditions on {L}ipschitz subdomains of
  {R}iemannian manifolds.
\newblock {\em Trans. Amer. Math. Soc.}, 361(6):3125--3157, 2009.

\bibitem[MM18]{McintoshMonniaux2018}
A.~Mc{I}ntosh and S.~Monniaux.
\newblock Hodge-{D}irac, {H}odge-{L}aplacian and {H}odge-{S}tokes operators in
  {$L^p$} spaces on {L}ipschitz domains.
\newblock {\em Rev. Mat. Iberoam.}, 34(4):1711--1753, 2018.

\bibitem[MMS08]{MitreaMitreaShaw2008}
D.~Mitrea, M.~Mitrea, and M.-C. Shaw.
\newblock Traces of {D}ifferential {F}orms on {L}ipschitz {D}omains, the
  {B}oundary {D}e {R}ham {C}omplex, and {H}odge {D}ecompositions.
\newblock {\em Indiana Univ. Math. J.}, 57(5):2061--2095, 2008.

\bibitem[Mon13]{Monniaux2013}
S.~Monniaux.
\newblock Various boundary conditions for {N}avier-{S}tokes equations in
  bounded {L}ipschitz domains.
\newblock {\em Discrete Contin. Dyn. Syst. Ser. S}, 6(5):1355--1369, 2013.

\bibitem[Mon21]{Monniaux2021}
S.~Monniaux.
\newblock Existence in critical spaces for the magnetohydrodynamical system in
  {3D} bounded {L}ipschitz domains.
\newblock {\em J. Elliptic Parabol. Equ.}, 7:311--322, 2021.

\bibitem[MS18]{MonniauxShen2018}
S.~Monniaux and Z.~Shen.
\newblock Stokes problems in irregular domains with various boundary
  conditions.
\newblock In {\em Handbook of mathematical analysis in mechanics of viscous
  fluids}, pages 207--248. Springer, Cham, 2018.

\bibitem[OS16]{OgawaShimizu2016}
T.~Ogawa and S.~Shimizu.
\newblock End-point maximal \texorpdfstring{$L^1$}{L1}-regularity for the
  cauchy problem to a parabolic equation with variable coefficients.
\newblock {\em Math. Ann.}, 365(1):661--705, 2016.

\bibitem[OS21]{OgawaShimizu2021}
T.~Ogawa and S.~Shimizu.
\newblock {Global well-posedness for the incompressible Navier–Stokes
  equations in the critical Besov space under the Lagrangian coordinates}.
\newblock {\em J. Differential Equations}, 274:613--651, 2021.

\bibitem[OS22]{OgawaShimizu2022}
T.~Ogawa and S.~Shimizu.
\newblock Maximal \texorpdfstring{$L^1$}{L1}-regularity for parabolic
  initial-boundary value problems with inhomogeneous data.
\newblock {\em J. Evol. Equ.}, 22(30):67, 2022.

\bibitem[Ouh05]{bookOuhabaz2005}
E.~M. Ouhabaz.
\newblock {\em Analysis of heat equations on domains}, volume~31 of {\em London
  Mathematical Society Monographs Series}.
\newblock Princeton University Press, Princeton, NJ, 2005.

\bibitem[Sch95]{Schwartz1995}
G.~Schwarz.
\newblock {\em Hodge {D}ecomposition {A} {M}ethod for {S}olving {B}oundary
  {V}alue {P}roblems}.
\newblock Lecture Notes in Mathematics. Springer Berlin, Heidelberg, 1995.

\bibitem[Soh01]{SohrBook2001}
H.~Sohr.
\newblock {\em The {N}avier-{S}tokes {E}quations: {A}n {E}lementary
  {F}unctional {A}nalytic {A}pproach}.
\newblock Birkh{\"a}user Advanced Texts Basler Lehrb{\"u}cher. Birkh{\"a}user
  Basel, 2001.

\bibitem[SS92]{SimaderSohr1992}
C.G. Simader and H.~Sohr.
\newblock {\em A new approach to the {H}elmholtz decomposition and the
  {N}eumann problem in \texorpdfstring{$\mathrm{L}^q$}{Lq}-spaces for bounded
  and exterior domains}, pages 1--35.
\newblock WORLD SCIENTIFIC, 1992.

\bibitem[Ste70]{Stein1970}
E.M. Stein.
\newblock {\em Singular Integrals and Differentiability Properties of
  Functions}.
\newblock Monographs in Harmonic Analysis. Princeton University Press, 1970.

\bibitem[Tol17]{TolksdorfPhDThesis2017}
P.~Tolksdorf.
\newblock {\em On the \texorpdfstring{$\mathrm{L}^p$}{Lp}-theory of the
  Navier-Stokes equation on Lipschitz domains}.
\newblock PhD thesis, {Technische Universit\"{a}t Darmstadt}, 2017.

\bibitem[Tol18]{Tolksdorf2018-1}
P.~Tolksdorf.
\newblock On the {${\rm L}^p$}-theory of the {N}avier-{S}tokes equations on
  three-dimensional bounded {L}ipschitz domains.
\newblock {\em Math. Ann.}, 371(1-2):445--460, 2018.

\bibitem[Tri83]{bookTriebel1983}
H.~Triebel.
\newblock {\em Theory of {Function Spaces I}}.
\newblock Modern Birkhäuser Classics. Birkh\"{a}user Basel, 1983.

\bibitem[TW20]{TolksdorfWatanabe}
P.~Tolksdorf and K.~Watanabe.
\newblock The {N}avier-{S}tokes equations in exterior {L}ipschitz domains:
  {$L^p$}-theory.
\newblock {\em J. Differential Equations}, 269(7):5765--5801, 2020.

\bibitem[Wei01]{Weis2001}
L.~Weitz.
\newblock {Operator–Valued Multiplier Theorems and Maximal Lp–Regularity}.
\newblock {\em Math. Ann.}, 319(4):735--758, 2001.

\end{thebibliography}

\end{document}